\newcommand{\CP}{\mathbb{P}}
\newcommand{\Sym}{{\rm Sym}}  
\newcommand{\Braid}{{\rm Braid}}  %
\newcommand{\p}{\pi_1}
\newcommand{\PP}{\mathop{\mathbb{P}}}  
\newcommand{\Hom}{\mathop{\rm{Hom}}}
\newcommand{\tensor}{\mathop{\otimes}}
\newcommand{\Aut}{ \mathop{\rm Aut} }
\newcommand{\ord}{ \mathop{\rm ord} }
\newcommand{\coker}{ \mathop{\rm coker} }
\newcommand{\Spec}{ \mathop{\rm Spec} }
\newcommand{\supp}{\mathop{\rm supp}\nolimits}
\newcommand{\Pic}{\mathop{\rm Pic}\nolimits}     
\newcommand{\Sing}{\mathop{\rm Sing}\nolimits}
\newcommand{\Pol}{\mathop{\rm Pol}\nolimits}     
\newcommand{\res}{\mathop{\rm res}\nolimits}     
\newcommand{\can}{ \mathop{\rm can} }
\newcommand{\Weil}{{\rm Weil}}  
\newcommand{\Cartier}{{\rm Cartier}}  
\newcommand{\arrow}[1]{\stackrel{#1}{\to}} 
\newcommand{\QQ}{\mathbb{Q}}
\newcommand{\N}{\mathbb{N}}
\newcommand{\C}{\mathbb{C}}
\newcommand{\ZZ}{\mathbb{Z}}
\newcommand{\Z}{\mathbb{Z}}
\newcommand{\G}{\Gamma}
\newcommand{\g}{\gamma}
\newcommand{\n}{\nu}
\newcommand{\xyIsom}{\ar^-*@{~}}          
\newcommand{\xySimeq}{\ar@{-}^*@{~}}      
\newcommand{\xyRavno}{\ar@{=}}
\newcommand{\displayJ}[8]
{
  \xymatrix
  {
      {} & {} & 0 & 0 & {}  \\
      {} & {} & {#1} \ar[u]\xyIsom[r] & {#2} \ar[u] & {} \\
      0 \ar[r] & {#3} \ar[r]\xyRavno[d] & {#4} \ar[r]\ar[u] & {#5} \ar[r]\ar[u] & 0 \\
      0 \ar[r] & {#6} \ar[r] & {#7} \ar[r]\ar[u] & {#8} \ar[r]\ar[u] & 0 \\
      {} & {} & 0 \ar[u] & 0 \ar[u] & {}
   }
}
\newcommand{\vdim}{ \mathop{\rm vdim} }     
\newcommand{\conv}{{\operatorname{conv}}}
\newcommand{\Del}{{\Delta}}
\newcommand{\Tor}{{\operatorname{Tor}}}
\newcommand{\OO}{\mathcal{O}}    
\newcommand{\Jk}{J_{\xi}}        
\newcommand{\cp}{\mathbb{P}}  
\newcommand{\pp}{\mathbb{P}}  
\newcommand{\intersect}{\cap}                    
\newcommand{\isom}{\simeq}                      
\newcommand{\Isom}{\stackrel{\sim}{\to}}     
\newcommand{\dual}{^{\vee}}
\newtheorem{thm}{Theorem}[section]
\newtheorem*{thm*}{Theorem}
\newtheorem{corollary}[thm]{Corollary}
\newtheorem{lemma}[thm]{Lemma}
\newtheorem{conjecture}[thm]{Conjecture}
\newtheorem*{lemma*}{Lemma}
\newtheorem{definition}[thm]{Definition}
\newtheorem{example}[thm]{Example}
\newtheorem{prs}[thm]{Proposition}
\newtheorem{remark}[thm]{Remark}
\newtheorem{notation}[thm]{Notation}
\newtheorem*{notation*}{Notation}
\begin{document}

\title [On ramified covers of the projective plane]{On ramified covers
  of the projective plane  I:\\ Segre's theory and classification in
  small degrees\\ \small{(with an Appendix by Eugenii  Shustin)}}

\author[M. Friedman, M. Leyenson]{Michael Friedman and Maxim Leyenson$^1$}\address{
Michael Friedman, Department of Mathematics,
 Bar-Ilan University, 52900 Ramat Gan, Israel}
 \email{fridmam@macs.biu.ac.il}
\address{
Maxim Leyenson}
 \email{leyenson@gmail.com}
  \address{
Eugenii  Shustin, School of Mathematical Sciences,
Raymond and Beverly Sackler Faculty of Exact Sciences,
Tel Aviv University,
Ramat Aviv, 69978 Tel Aviv, Israel}
 \email{shustin@post.tau.ac.il}
\stepcounter{footnote} \footnotetext{This work is
partially supported by the Emmy Noether Research Institute for Mathematics (center of the Minerva
Foundation of Germany), the Excellency Center ``Group Theoretic Methods in the Study of Algebraic Varieties"
of the Israel Science Foundation.}

\maketitle
\begin{abstract}

We study ramified covers of the projective plane $\pp^2$. Given a smooth
surface $S$ in $\pp^n$ and a generic enough projection $\pp^n \to \pp^2$, we
get a cover $\pi: S \to \pp^2$, which is ramified over a plane curve
$B$. The curve $B$ is usually singular, but is classically known to have only cusps and
nodes as singularities for a generic projection.

    Several questions arise: First, What is the \emph{geography} of branch curves among all cuspidal-nodal curves?
%
And second, what is the \emph{geometry} of branch curves; i.e., how can one distinguish a
branch curve from a non-branch curve with the same numerical invariants? For
example, a plane sextic with six cusps is known to be a branch curve of a
generic projection iff its six cusps lie on a conic curve, i.e., form a
special 0-cycle on the plane.

We start with reviewing what is known about the answers to these questions,
both simple and some non-trivial results. Secondly, the classical work of
Beniamino Segre gives a complete answer to the second question in the case
when $S$ is a smooth surface in $\pp^3$. We give an interpretation of the work
of Segre in terms of relation between Picard and Chow groups of 0-cycles
on a singular plane curve $B$. We also review examples of small degree.

In addition, the Appendix written by E. Shustin shows the existence of new Zariski pairs.

\end{abstract}

\tableofcontents

\section{Introduction} \label{secIntro}

Let $S$ be a non-singular algebraic surface of degree $\n$ in the complex
projective space $\pp^r$. One can obtain information on $S$ by projecting it
from a generically chosen linear subspace of codimension 3 to the projective
plane $\pp^2$. The ramified covers of the projective plane one gets in this way
were studied extensively by the Italian school (in particular, by Enriques, who called a
surface with a given morphism to $\pp^2$ a ``multiple plane'', and later by
Segre, Zariski and others.) The main questions of their study were which curves
can be obtained as branch curves of the projection, and to which extent the
branch curve determines the pair $(S, \pi: S \to \pp^2)$. In the course of this study, Zariski
studied the fundamental groups of complements of plane curves and in particular
introduced what later became known as Enriques-Zariski-Van Kampen theorem (see
Zariski's foundational paper \cite{Za1}). It was also discovered by the Italian
school that a branch curve of generic projections of surfaces in
characteristics 0 has only nodes and cusps as singularities, though we were not able to trace a reference
to a proof from that era (but see
\cite{CiroFla} for a modern proof). Segre-Zariski criterion (see \cite{Za1} and
also Zariski's book \cite{Za2}) for a degree 6 plane curve with 6 cusps to be a
branch curve is well known and largely used, but Segre's generalization
(\cite{Se}), where he gave a necessary and sufficient condition for a plane
curve to be a branch curve of a ramified cover of a smooth surface
 in $\pp^3$ in terms of adjoint linear systems to the branch curve,
was largely forgotten (see Theorem \ref{thmSegre}).

Two recent surveys, by D'Almeida (\cite{DA}) and  Val. S. Kulikov (\cite{Kul}), were written on
Segre's generalization.
However, our approach is different, for we give an interpretation of Segre's
work in terms of studying various equivalence relations of 0-cycles on
nodal-cuspidal curves. We also emphasize the logic of passing from adjoint curves for
a plane singular curve to rational objects which become regular on the normalization
of this curve, (what would be called "weakly holomorphic functions" in analytic geometry \cite[Chapter VI]{Nara} ),
and control geometry of its space models.

The geometry of ramified covers in dimension two is very
different from the geometry in dimension one. In dimension one, for
any set of points $B$ in the projective line $\pp^1$ we always have many
possible (non isomorphic) ramified covers of $\pp^1$ for which $B$ is the branch locus.
In terms of monodromy data, the fundamental group $G = \pi_1(\pp^1 - B)$ is free,
and thus admits many epimorphic representations $G \to \Sym_\nu$ for multiple
values of $\nu$; and, moreover, every such a representation is actually
coming from a ramified cover due to the Riemann-Grauert-Remmert theorem (see Theorem \ref{thmGR}). However,
in dimension two Chisini made a surprising conjecture (circa 1944, cf. \cite{Ci}) that the
pair $(S, \pi: S \to \pp^2)$, where $\pi$ is generic, can be uniquely determined
by the branch curve $B$, if $\deg \pi \ge 5$ (and in the case of generic linear projections, if this curve is of sufficiently high degree). This conjecture was proved only recently
by Kulikov (\cite{Ku},\cite{Ku2}). In terms of monodromy data, by Grauert-Remmert theorem,
even though it is true that every
representation $\rho: \pi_1(\pp^2 - B) \to \Sym_\nu$ comes from a
ramified cover $S \to \cp^2$ of degree $\n$ with a normal surface $S$, one has to ensure certain ``local'' conditions on the representation $\rho$ which ensure that $S$ is non-singular, which
sharply reduce the number of admissible representations into the
symmetric group. In fact, the Chisini's conjecture implies that
once the degree of the ramified cover sufficiently high, there is
only one such representation.

%
%

The structure of the paper is as follows. Sections \ref{subsecGen} and \ref{subsecZarVar} contain preliminary material. In section
\ref{subsecGen} we recall some facts about ramified coverings
and Grauert-Remmert theorems, and in the following section \ref{subsecZarVar} we look at $V(d,c,n)$ (resp. $B(d,c,n)$) the variety of degree $d$ plane curves (resp. branch curves) with $c$ cusps and $n$ nodes (in addition, we prove the following interesting fact: in coordinates $(d,c,\chi)$, with geometric Euler characteristic $\chi$, the duality map
$(d,c,\chi) \to (d^*,c^*,\chi^*)$ becomes a linear reflection). We also
 recall a number of necessary numerical conditions for a curve to be a branch curve.
 In the main section, Section \ref{secSurInP3} we re-establish the results of Segre for smooth
surfaces in $\pp^3$  and discuss the geography of surfaces with
ordinary singularities and their branch curves.  Looking at the variety of  nodal cuspidal plane curves,  we also compute the dimension of the component which
    consists of branch curves of smooth surfaces in $\pp^3$ (see Subsection \ref{subsecDimB3}).
In Section \ref{subSecExample} we classify  admissible (i.e. nodal-cuspidal irreducible) branch
curves of small degree. Appendix A is written by Eugenii Shustin, where new Zariski pairs are found. Each Zariski pair consists of a branch curve of a smooth surface in $\pp^3$ and a nodal--cuspidal curve which is not a branch curve. In the other Appendices we recall some facts on the Picard and Chow groups of
nodal cuspidal curves we use, and on the bisecants to complete intersection curves in $\pp^3$.

 In the subsequent papers (see \cite{FLL})  we will deal with an analogue  of the Segre
theory for surfaces with ordinary singularities in $\pp^3$,  and also
give a combinatorial reformulation of the Chisini conjecture.

\textbf{Acknowledgments}:
Both authors are deeply thankful to Prof. Mina Teicher and the Emmy Noether Research
Institute at the Bar Ilan University (ENRI) for support during their work, and
also thankful to Prof. Teicher  for attention to the work, various important
suggestions and stimulating discussions.

The authors wish also to thank deeply E. Shustin for writing the Appendix. This is a major and important
contribution to this work.

The authors are also very grateful to Valentine S. Kulikov and to Viktor S.
Kulikov, for paying our attention to the paper of Valentine S. Kulikov on branch curves
in Russian \cite{Kul}.  We are grateful to Fabrizio Catanese for scanning and sending us a
rare paper by Segre (\cite{Se}). We also thank Tatiana Bandman, Ciro Ciliberto, Dmitry Kerner,
Ragni Piene, Francesco Polizzi and Rebecca Lehman  for fruitful discussions and advices.

The second author also wants to thank the department of Mathematics at the Bar Ilan University
for an excellent and warm scientific and working atmosphere.

%

\section{Ramified covers}
\label{subsecGen}
In this section we start with a  general discussion on branched coverings, continuing afterwards
to  investigation of surfaces and generic projections.

\newcommand{\CC}{\mathbb C}

\subsection {\'{E}tale covers}

\newcommand{\San}{S_{an}} Let $S$ be a scheme (of finite type) over
$\CC$, and $\San$ be the corresponding analytic space. Let $Et^f_S$ be
the category of finite \'{e}tale schemes over $S$, and $Et^f_{\San}$ be
the category of finite \'{e}tale complex-anaytic spaces over $\San$. One
can verify (cf. \cite{GR} and \cite[XI.4.3]{Grothendieck.descent-1}) that the
``analytization'' functor
$$
   a_S: Et^f_S \to Et^f_{\San}
$$
is faithfully flat. The following Grauert-Remmert theorem  generalizes the so-called
Riemann existence theorem in case $\dim S = 1$:

\begin{thm}[Grauert-Remmert]
If $S$ is normal, then $a_S$ is equivalence of categories.
\end{thm}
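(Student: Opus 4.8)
The plan is to prove the asserted equivalence through the two standard halves: that $a_S$ is full and that it is essentially surjective. Since $a_S$ is already known to be faithful (it is stated to be faithfully flat), I do not need to re-prove injectivity on Hom-sets, so the two genuine tasks are fullness and essential surjectivity. Before either, I would make the usual reductions, all of which are Zariski-local on $S$ and compatible with analytization: finite étale covers glue over a Zariski cover, and analytization commutes with disjoint unions, so it suffices to treat $S$ connected. A connected normal scheme is integral, so I may assume $S$ is integral and normal. I would also record at the outset the elementary fact that the analytization of a finite étale $S$-scheme is a finite étale analytic space over $\San$, so that $a_S$ indeed has the claimed target.

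For fullness I would use the graph description of morphisms of finite étale covers. Given finite étale $X, Y \to S$, a morphism $X \to Y$ over $S$ is the same datum as its graph, a subscheme of $X \times_S Y$ that is open and closed and projects isomorphically onto $X$; thus $\Hom_S(X,Y)$ is identified with a set of clopen sections, and likewise on the analytic side. Fullness therefore reduces to the statement that every clopen analytic subset of $(X\times_S Y)_{an}$ is the analytization of a clopen algebraic subset, i.e. that $a_S$ is surjective on clopen subsets (equivalently on idempotents of the structure sheaf) of a finite étale $S$-scheme $Z$. Because $S$ is integral and normal, each connected component of such a $Z$ is again integral and normal, and the analytic space attached to an irreducible complex variety is connected; hence the connected components of $Z$ and of $Z_{an}$ correspond bijectively. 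This gives the required bijection on clopen sets and so fullness.

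The substantial half is essential surjectivity. Given a finite étale analytic cover $f\colon T \to \San$, I must produce a finite étale $S$-scheme $X$ with $X_{an} \isom T$ over $\San$. I would first reduce to $T$ connected, so that $T$ is a normal analytic space with $f$ finite, and $f$ induces a finite extension of meromorphic function fields $\mathcal{M}(\San) \hookrightarrow \mathcal{M}(T)$ of degree $\deg f$. Invoking the comparison of meromorphic with rational functions (the GAGA-type statement on a proper model of $S$), I identify this with a finite extension $K(S) \subset L$ of the algebraic function field. Let $X$ be the normalization of $S$ in $L$; it is finite over $S$ and integral normal, and since normalization commutes with analytization, $X_{an}$ is the normalization of $\San$ in $\mathcal{M}(T)$, which is $T$ itself because $T$ is normal. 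It then remains to check that $X \to S$ is étale: it is finite and generically étale, and its analytization $T \to \San$ is étale, so the completed local rings agree and $X \to S$ is unramified and flat, hence étale; equivalently one applies Zariski--Nagata purity, which forces the branch locus to be a divisor that must be empty since it is empty analytically.

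The main obstacle is precisely this essential-surjectivity step, and inside it the two deep inputs are: first, the existence of enough meromorphic functions on $T$ to realize $\mathcal{M}(T)$ as an algebraic extension of $K(S)$, which is the higher-dimensional form of Riemann's existence theorem and leans on properness/GAGA together with the normality of $S$; and second, the passage from ``étale analytically'' to ``étale algebraically'', where normality is again essential through purity or through comparison of complete local rings. For non-proper $S$ I would compactify, extend $T$ to a finite cover of a compactification, apply the proper case, and then invoke purity once more to verify that no spurious ramification has been introduced over the original $S$.
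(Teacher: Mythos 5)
The paper does not actually prove this statement: it quotes it as the Grauert--Remmert theorem, with references to \cite{GR} and \cite[XI.4.3]{Grothendieck.descent-1}, so your proposal has to be measured against the standard proof (SGA~1, Exp.~XII). Within that comparison, your preliminary reductions, the fullness argument via clopen graphs in $X\times_S Y$ (resting on the fact that the analytification of an irreducible variety is connected), and the essential-surjectivity argument in the \emph{proper} case are all sound in outline: for proper $S$, GAGA identifies $\mathcal{M}(S_{an})$ with $K(S)$, and one can algebraize $T$ either through the coherent $\mathcal{O}_{S_{an}}$-algebra $f_*\mathcal{O}_T$ or, as you do, by normalizing $S$ in the function field and using that normalization commutes with analytification.

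The genuine gap is your last paragraph, where the non-proper case is dispatched with ``compactify, extend $T$ to a finite cover of a compactification, apply the proper case.'' That extension step is not a routine verification: it is precisely the deep analytic theorem of Grauert and Remmert (the paper's Theorem \ref{thmGR}), namely that a finite \'{e}tale analytic cover of $U_{an}$, with $U$ open dense in a normal $X$, extends to a normal finite analytic cover of $X_{an}$ branched over the boundary. All of the analytic difficulty of the statement you are proving is concentrated at exactly this point, so as written your argument assumes a form of the theorem it purports to prove. Purity cannot substitute for it: Zariski--Nagata purity constrains the ramification locus of a finite cover that already exists over the compactification; it cannot produce the extension. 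Relatedly, your function-field step only makes sense after the extension has been carried out: for non-proper $S$ the meromorphic function field of $S_{an}$ is vastly larger than $K(S)$ (already for $S=\mathbb{A}^1$, since $\mathcal{M}(\mathbb{C})$ contains ratios of arbitrary entire functions), so the identification of $\mathcal{M}(S_{an})$ with $K(S)$ via GAGA, and hence the very definition of your field $L$, requires properness. A correct completion would either prove the extension theorem --- e.g., for smooth $S$, take a compactification with simple normal crossings boundary and extend using the explicit classification of finite \'{e}tale covers of $(\Delta^*)^k\times\Delta^{n-k}$ (an Abhyankar-type local argument), then handle normal $S$ by resolution --- or cite it separately, which is in effect what the paper does by stating Theorem \ref{thmGR} alongside this one.
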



\subsection{Ramified covers of complex analytic spaces}
         Let $X$ be a complex analytic space, $Y \subset X$ be a closed analytic subspace in
$X$, and $U = X - Y$ be the complement. Assume that $U$ is dense in $X$.

 \begin{thm}[Grauert-Remmert] \label{thmGR} If $X$ is normal, then the
restriction functor
$$
     \res_U: (\text{normal analytic covers of } X \text{ \'{e}tale over } U)
             \longrightarrow
     (\text {\'{e}tale analytic covers of U} )
$$
is an equivalence of categories.
\end{thm}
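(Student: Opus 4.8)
The plan is to verify that $\res_U$ is fully faithful and essentially surjective. Write $j : U \hookrightarrow X$ for the open inclusion. Since the covers in the source category are finite over $X$ and $U$ is dense, the preimage $f^{-1}(U)$ is dense in any normal cover $f : Z \to X$; this density, together with the normality (hence reducedness and separatedness) of the spaces involved, is what drives the whole argument. The decisive analytic tool throughout is the \emph{second Riemann extension theorem in the form valid for normal spaces}: if $X$ is normal and $Y$ is nowhere dense analytic, then a holomorphic function on $X - Y$ that is locally bounded near $Y$ extends uniquely to a holomorphic function on $X$.

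Faithfulness is immediate: two $X$-morphisms $Z_1 \to Z_2$ between normal covers that agree on the dense open $f_1^{-1}(U)$ agree on all of $Z_1$, because $Z_2$ is Hausdorff. For fullness, given a morphism $h$ over $U$ of the restricted \'etale covers, I would extend it across $Y$ by the extension theorem above. Locally on $X$ the cover $Z_2 \to X$ is the analytic spectrum of a finite $\OO_X$-algebra generated by finitely many functions; pulling these back through $h$ gives holomorphic functions on $f_1^{-1}(U)$ that are locally bounded near the fibre over $Y$ (as $Z_2 \to X$ is finite), hence extend to $Z_1$. These extensions assemble into an $\OO_X$-algebra homomorphism, and thus into the desired $X$-morphism $\tilde h : Z_1 \to Z_2$ restricting to $h$.

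Essential surjectivity is the substantive part. Given a finite \'etale cover $p : V \to U$, the direct image $p_* \OO_V$ is a locally free $\OO_U$-algebra of rank $\deg p$. I would define a sheaf of $\OO_X$-algebras on $X$ by
$$\mathcal{A} \;=\; \bigl\{\, s \in j_*(p_* \OO_V) : s \text{ locally bounded near } Y \,\bigr\},$$
the subsheaf of germs integral over $\OO_X$, and set $Z = \mathrm{Specan}(\mathcal{A})$, the relative analytic spectrum. Three points must then be checked: that $\mathcal{A}$ is a coherent, module-finite $\OO_X$-algebra, so that $Z$ is a genuine finite analytic cover of $X$; that $\mathcal{A}$ is integrally closed in its total quotient ring, so that $Z$ is normal; and that over $U$ there is no boundedness constraint, whence $\mathcal{A}|_U = p_* \OO_V$, so $Z|_U \isom V$ and $Z \to X$ is \'etale over $U$. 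The normality in the second point holds essentially by construction, as $\mathcal{A}$ is the normalization of $\OO_X$ inside the covering algebra; the third point is formal.

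The main obstacle is the coherence and finiteness of $\mathcal{A}$, which is where the hard analytic input enters. I would reduce to a local statement by taking $X$ to be a small normal Stein neighborhood on which $Y$ is cut out by finitely many holomorphic functions, and exhibit explicit generators of $\mathcal{A}$ over $\OO_X$ via the elementary symmetric functions of the values of a section on the finitely many sheets of $V$: these are locally bounded and descend to $X$, realizing each branch as a root of an explicit monic equation over $\OO_X$. Establishing coherence and module-finiteness then rests on the finiteness theorems of Oka and Grauert and on the local theory of normalization of complex spaces. Once finiteness is secured, the verification that $Z$ is normal and \'etale over $U$ is routine, and the functoriality of $V \mapsto Z$ shows this construction is a quasi-inverse to $\res_U$, completing the equivalence.
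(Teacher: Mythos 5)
Your proposal cannot be compared with a proof in the paper, because the paper contains none: Theorem \ref{thmGR} is quoted there as background, with the proof delegated to \cite[Proposition 12.5.3, Theorem 12.5.4]{SGA1} (and ultimately to Grauert--Remmert's original paper \cite{GR}). So your sketch has to stand on its own. Its overall route is indeed the classical analytic one: faithfulness by density plus separatedness, fullness by Riemann extension of locally bounded functions on the normal space $Z_1$, and essential surjectivity by forming the subalgebra $\mathcal{A} \subset j_*(p_*\OO_V)$ of locally bounded (equivalently, integral) sections and taking its relative analytic spectrum. The faithfulness and fullness arguments are correct as stated.

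The gap is in the coherence step, and it is concrete. The elementary symmetric functions of the values of a section $s$ on the sheets of $V$ produce the coefficients of a monic polynomial satisfied by $s$; this proves that every section of $\mathcal{A}$ is \emph{integral} over $\OO_X$, but it does not "exhibit explicit generators": integrality of each element is far weaker than module-finiteness, and in the analytic category one cannot bridge that distance by citing finiteness of normalization. Oka's theorem makes the integral closure of $\OO_Z$ \emph{in its own total quotient ring} finite for a reduced complex space $Z$; here the ambient algebra $j_*(p_*\OO_V)$ is not yet the quotient ring of any finite analytic cover of $X$ --- producing such a cover is precisely what is being proved, so the invocation is circular as it stands. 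The standard repair, which your plan gestures at but must be made explicit, is: locally on a Stein open $X' \subset X$, choose a holomorphic function on $V' = p^{-1}(U \cap X')$ separating the sheets over a dense open set (a primitive element); its characteristic polynomial $P(x,T)$ has coefficients holomorphic on $U \cap X'$ and locally bounded, hence extending to $X'$ by Riemann extension on the normal $X'$; the zero set $Z_0 = \{P=0\} \subset X' \times \C$ is an analytic hypersurface, automatically coherent and finite over $X'$; the map $V' \to Z_0|_{U \cap X'}$ is finite and bimeromorphic, so $V'$ is the normalization of $Z_0|_{U \cap X'}$, and the normalization of $Z_0$ --- finite by Oka's theorem, which \emph{is} applicable now that a genuine finite cover $Z_0$ exists --- is a normal finite cover of $X'$ restricting to $V'$. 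These local extensions then glue by the uniqueness you already obtained from full faithfulness. With that replacement, your outline becomes the complete classical proof, i.e.\ essentially the argument behind the references the paper cites.
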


For other formulations of Theorem \ref{thmGR} and the proof, see
\cite[Proposition 12.5.3, Theorem 12.5.4.]{SGA1}.\\

We say that $f: X' \to X$ is a ramified cover branched over $Y$ if
$f|_U$ is \'{e}tale and the ramification locus of $f$ (i.e.
supp$(\Omega^1_{X'/X})$) is contained in $Y$. Note that even if
$X$ is smooth, we still have to allow ramified covers $X' \to X$
with normal singularities in order to get an essentially
surjective restriction functor, as seen in the following example.
Let $X = {\mathbb A}^2$, $Y = (xy = 0)$, $U = X - Y$, and $f: U'
\to U$ be a degree 2 unramified cover given by the monodromy
representation $\pi_1^{an} (U) \isom \ZZ \oplus \ZZ \to \ZZ/2$
which sends both generators to the generator of $\ZZ/2$. It is
easily seen in this example that $f$ can not be extended to a
ramified cover $X' \to X$ with smooth $X'$, but if we allow normal
singularities one gets canonical extension given in coordinated by
$z^2 = x y$, a cone with $A_1$ singularity.

\subsection{Ramified covers of $\pp^2$} \label{subsecGenProj}

From now on, we restrict ourselves to char $ = 0$. Let $S$ be a
smooth surface in $\pp^r = \PP(V)$. Let $W \subset V$ be a
codimension 3 linear subspace such that $\PP(W) \intersect S =
\emptyset$ and let $p$ the resulting projection map $p: \PP(V) \to
\PP(V/W)$ and $\pi:S\to \cp ^2$ its restriction to $S$. It is
clear that $\pi$ is a finite morphism of degree equals to $\deg
S$.

%
%

for a {\it generic} choice of $W$, $\pi$ is called a \emph{
generic projection map} and the following is classical (see, for
example, \cite{SR}, \cite{Piene} and \cite{CiroFla}):
\begin{enumerate}
\item[(i)] $\pi$ is ramified along an irreducible curve $B\subset \pp^2$
  which has only nodes and cusps as singularities;
\item[(ii)] The ramification divisor $B^* \subset S$ is irreducible and
  smooth, and the restriction $\pi: B^* \to B$ is a resolution of
  singularities;
\item[(iii)] $\pi^{-1}(B)=2B^*+Res$ for some residual curve $Res$ which is
  reduced.
\end{enumerate}

\begin{remark} \label{remRamCov} \emph{
Note that not every ramified cover $S$ of $\pp^2$ with a branch curve $B \subset \pp^2$ can be given as a restriction
of generic linear projection $\pp^r \to \pp^2$ (to a smooth surface $S$). See, for example, Remark \ref{remQuarticDoubleLine}.}

\end{remark}

\begin{remark} \emph{
Note that generically cusps to not occur in a generic projection
of a smooth space curve, but do occur for the projection of a
ramification curve of surfaces, already in the basic example of
smooth surfaces in $\pp^3$ and its projection to $\pp^2$.
Consider, for example, the case of a smooth surface $S$ in $\pp^3$
and its generic projection to $\pp^2$. Since the branch curve $B$
is the projection of the ramification curve $B^*$ which is a space
curve, it generically has double points corresponding to bisecants
of $B^*$ containing the projection center $O$. The cuspidal points
are somewhat more unusual for projections of smooth space curves,
since they do not occur in the projections of generic smooth
space curves.  However, the projections of generic ramification
curves have cusps. To give a typical example, consider a family of
plane (affine) cubic curves $z^3 - 3 az + x = 0$ in the $(x,z)$ -
plane, where $a$ is a parameter. The real picture is the
following: for $a > 0$ the corresponding cubic parabola has 2
extremum point, for $a = 0$ one inflection point and for $a < 0$
no real extremums; the universal family in the $(x,z,a)$ space is
the so-called real Whitney singularity , and projection to the
``horizontal'' $(x,a)$ plane gives a semi-cubic parabola $a^2 -
x^3$ with a cusp.
%
%
In other words, substituting $y = -3a$, we see that the affine cubic
surface $S$ can be considered as the ``universal cubic polynomial'' in
$z$, $ p(z) = z^3 + y \cdot z + x = 0,$ and its discriminant $\Delta =
27 y^2 + 4 x^3$ has an $A_2$ singularity, which is a cusp.  (Recall
that in general a discriminant of a polynomial of degree $n$ with
$a_{n-1} = 0$ has singularity of type $A_{n-1}$).}
\end{remark}


\section{Moduli of branch curves and their
geography}\label{subsecZarVar} 
The geography of surfaces was introduced and studied by Bogomolov-Miyaoka-Yau, Persson, Bombieri, Catanese and more.
Parallel to the terminology of geography of surfaces, we will use the term geography of branch curves for the distribution of branch curves in the variety of nodal-cuspidal curves. Subsection \ref{subsecNodeCupsCur}  recalls few facts on nodal-cuspidal degree $d$ curves with $c$ cusps and $n$ nodes and introduces a more natural coordinate to work with: $\chi$ -- the Euler characteristic. The main subsection is Subsection \ref{subsecBranch}, which compares the geography of branch curves in the $(d,c,\chi)$ coordinates to the geography of surfaces in $(c_1^2,c_2)$ coordinates. Subsection \ref{secB_dcn} constructs the variety of branch curves.

\subsection{Severi-Enriques varieties of nodal-cuspidal curves} \label{subsecNodeCupsCur}
%
\begin{notation}
   For a triple $(d, c, n)\in \N^3$
  let $V(d,c,n)$ be the variety of plane curves of degree $d$ with $c$ cusps
  and $n$ nodes as their only singularities.
\end{notation}
It is easy to prove that $V(d,c,n)$ is a disjoint union of locally closed subschemes of $\pp^N$, where
 $N = \frac{1}{2}d(d+3)$.

A curve $C \in V(d,c,n)$, has arithmetic genus $p_a$ and geometric genus $g = p_g$ when

\begin{equation} p_a  = \frac{1}{2} (d-1)(d-2),\end{equation}
\begin{equation}\label{eqnGenus} g = p_a - c - n = \frac{1}{2}(d-1)(d-2) - c - n, \end{equation}
and we let $\chi$ to be  the topological Euler characteristics of the normalization of $C$
\begin{equation} \chi \doteq 2 - 2g.\end{equation}

 We shall use the coordinates $(d,c,\chi)$ instead of  $(d, c, n)$ since many formulas, such as Pl\"{u}cker formulas,
 become {\it linear} in these coordinates.
 Note that one can present $n$ in terms of $(d,c,\chi)$ as follows:
  $$n = \frac{1}{2}(d-1)(d-2) - c + \frac{1}{2} \chi - 1 =
\frac{1}{2} d(d-3) - c + \frac{1}{2} \chi.$$



Let $C\in V(d,c,n)$ be a Pl\"{u}cker curve, i.e.,  a
curve that its dual $C\dual$ is also a curve in some $V(d^*, c^*, n^*)$ (Note that this is an open condition in
$V(d,c,n)$ and that $(C\dual)\dual = C$.)  Then the following Pl\"{u}cker formulas hold:

 \begin{equation} \label{eqnDegree}   d^* = d (d - 1)     - 3c   - 2n,\end{equation}
  \begin{equation}   g = g^* \end{equation}

where $g^*$ is the geometric genus of $C \dual$.

The formula from $c^*$ can be induced from Equations (\ref{eqnGenus}), (\ref{eqnDegree}) for $C\dual$,
i.e.
$$
c^* = 3d^2 - 6d - 8c - 6n.
$$

\subsubsection[Linearity of the Pl\"{u}cker formulas]{Linearity of the Pl\"{u}cker formulas}
The Pl\"{u}cker formulas become linear in the $(d,c,\chi)$
coordinates (and also the formulas for the Chern classes of a
surface whose branch curve $B \in V(d,c,\chi)$. See Lemma
\ref{lem_c_1^2} and \ref{lem_c_2}.), which is the primarily reason
we want to consider them. Namely,


%
\begin{gather}
    d^* = 2d -  c -   \chi, \\
    c^* = 3d - 2c - 3 \chi, \\
    \chi^* = \chi,
\end{gather}
in other words, in these coordinates projective duality is given by a linear transformation
$$
D =
\begin{pmatrix}
   2 & -1 & -1 \\
   3 & -2 & -3 \\
   0 &  0 &  1
\end{pmatrix}
$$
which is diagonalizable with eigenvalues $(-1,1,1)$ where the
eigenvector $d - c - \chi = d^* - d$ corresponds to the eigenvalue (-1), i.e.,
gives a reflection in the lattice $\ZZ \oplus \ZZ \oplus 2\ZZ$. We hope to explain this
phenomenon elsewhere.\\\\


The fact that the invariants $d,c,n$ and $g$ of the curve are not negative implies, in the
$(d,c,\chi)$ coordinates, the following inequalities:
\begin{gather}
     (n \geq 0) \Rightarrow \,  2c - \chi \leq d(d-3),  \\
    (g \geq 0) \Rightarrow \,  \chi \leq 2,\\
    (d^* \geq 0) \Rightarrow \,  c + \chi    \leq 2d,  \\
    (c^* \geq 0) \Rightarrow \,  2c + 3 \chi \leq 3d.
\end{gather}

Zariski also proved (\cite[Section 3]{Za}) the following inequality
\begin{equation} \label{eqZar}
         c < \frac{1}{2}(d - \beta)(d - \beta -3) + 2,
\end{equation}
where $\beta = [(d-1)/6]$. His proof uses the computation of the
virtual dimension of complete linear system of curves of order $d
- \beta -3$ passing through the $c$ cusps of $C$. (see also
\cite[Chapter VIII]{Za2}). But his inequality is stronger then the
ones given by Plucker formulas only for small $d$`s; we use it
once for $d = 8$ when classifying branch curves of small degree (see Section \ref{subSecExample}).

\begin{remark}
{\rm For a nodal--cuspidal curve $C \in V(d,c,n)$ we have the following inequality
$$
2c + n \leq (d-1)^2
$$
or, in $(d,c,\chi)$ coordinates:
$$
2c + \chi \leq d^2 -d + 2,
$$
which is induced from intersecting two generic polars of $C$ and B\'{e}zout theorem.}
\end{remark}

\begin{center}

  \epsfig{file=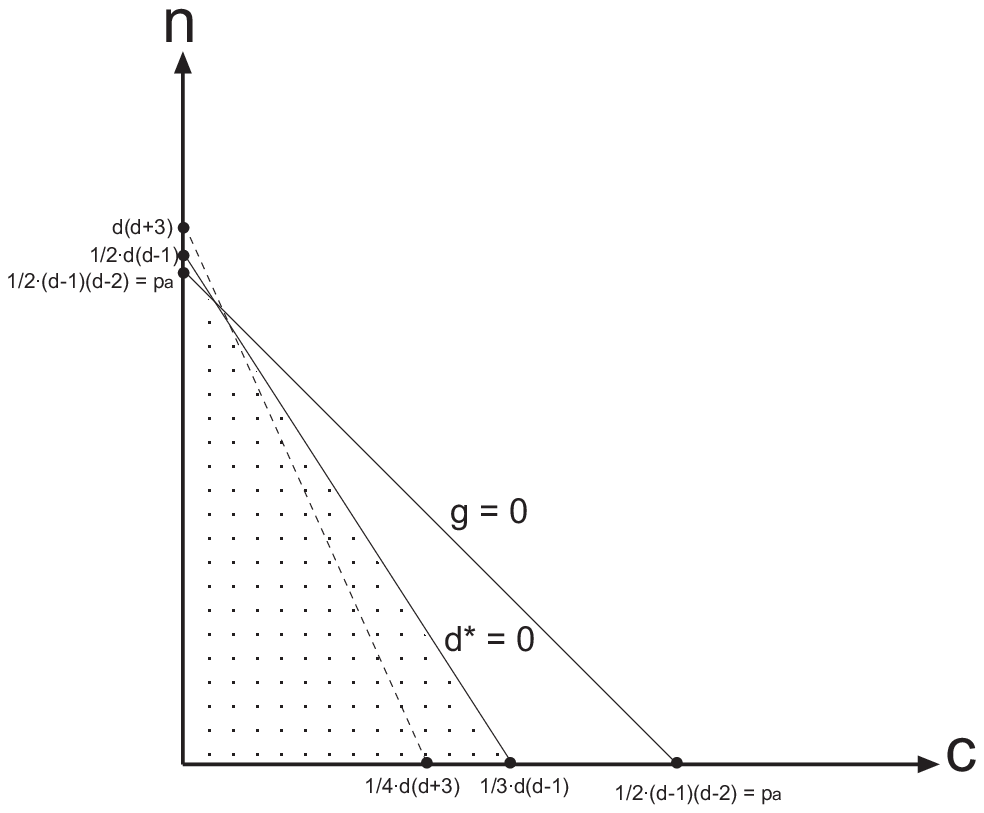, height=7cm, width=7cm} \\

  \small{Figure 1 : Geography of admissible plane curves in the $(c,n)$--plane  for large $d$.\\
    The dashed line is where the expected dimension of \{family of degree $d$ curves with $n$ nodes and $c$ cusps\} = $\frac{1}{2}d(d+3) - n - 2c$ = 0.}

\end{center}

\begin{center}

  \epsfig{file=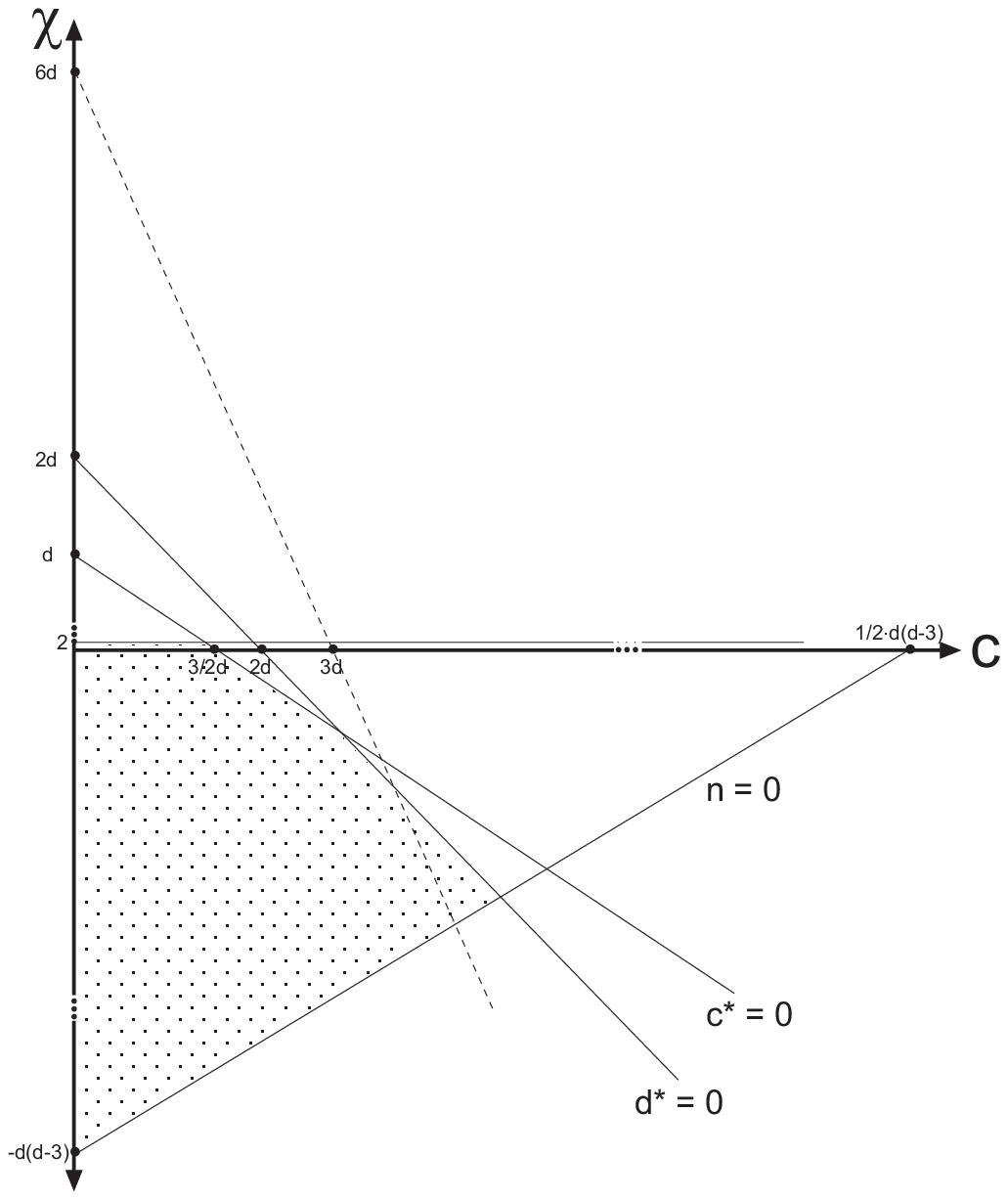, height=8.5cm, width=8.5cm} \\
  \small{Figure 2 : Geography of admissible plane curves in the $(c,\chi)$--plane  for large $d$.\\
   The dashed line is where the expected dimension of \{family of degree $d$ curves with $n$ nodes and $c$ cusps\} = $3d - \frac{1}{2}\chi - c$ = 0.}

\end{center}

For more obstructions on the existence of singular plane curves and a recent survey on equisingular families and, in particular, nodal-cuspidal curves see \cite{Shus}.

\subsection{Geography of branch curves}
\label{subsecBranch}
\begin{notation*}
  Let $B(d,c,n)$ be the subvariety in $V(d,c,n)$ consisting of branch curves of generic linear projections to $\pp^2$.
  We discuss it in Subsection \ref{secB_dcn}.
\end{notation*}

Let $B \in B(d,c,n)$ be the branch curve of a generic linear projection $\pi : S
\to \pp^2$ for a smooth irreducible projective surface $S \subset \pp^r$. Let $\nu = \deg \pi$, and $g = p_g(B)$ be the geometric genus of $B$. An important invariant of $B$ is the fundamental group of its complement $\pi_1(\pp^2 - B)$.

\begin{remark}
\label{remNodalCurve}
  \emph{Let $C \in V(d,c,n)$. If $c = 0$, i.e., $C$ is a nodal curve, then, by
  Zariski-Deligne-Fulton's theorem, the fundamental group $\pi_1(\pp^2 - C)$ of the
  complement of $C$ is abelian (\cite{Za2},\cite{D},\cite{F}). This
  theorem was proved by Zariski under the assumption that the Severi
  variety of nodal curves is irreducible (this was assumed to be
  established by Severi, but later was found to be mistaken). The
  correct proof of the irreducibility of the Severi variety $V(d,0,n)$
  was given by Harris \cite{H}, which completed Zariski's proof.
  Independent proofs were given later by Deligne and Fulton
  (\cite{D},\cite{F}) and others.}
\end{remark}

We begin with a consequence from Nori's result on fundamental groups of complements plane curves. Though the proof is known, we bring it as it is enlightening and brings together various aspects of the subject.
\begin{lemma}[Nori \cite{Nori}]
Let $B \in B(d,c,n)$. Then $6c + 2n \ge d^2$.
\label{lemNoAbelian}
\end{lemma}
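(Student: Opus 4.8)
The plan is to derive the inequality as the contrapositive of Nori's theorem, once we know that the complement of a branch curve has non-abelian fundamental group. Recall Nori's result \cite{Nori}: if $C \subset \pp^2$ is an irreducible curve of degree $d$ whose only singularities are $c$ cusps and $n$ nodes, and if $d^2 > 6c + 2n$, then $\pi_1(\pp^2 - C)$ is abelian (in fact cyclic of order $d$). It therefore suffices to show that for a branch curve $B$ the group $\pi_1(\pp^2 - B)$ is \emph{not} abelian; the bound $6c + 2n \ge d^2$ follows at once.

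To produce the non-abelian group I would use the monodromy of the cover. The generic projection $\pi : S \to \pp^2$ is \'{e}tale over $U = \pp^2 - B$, hence is classified by a monodromy homomorphism $\rho : \pi_1(U) \to \Sym_\nu$ with $\nu = \deg \pi$; since $S$ is irreducible, the image of $\rho$ acts transitively on the $\nu$ sheets. By properties (i)--(iii) of a generic projection, over a generic smooth point of $B$ the cover is a simple fold, so a meridian of $B$ is sent by $\rho$ to a transposition. As $B$ is irreducible, $\pi_1(\pp^2 - B)$ is generated by such meridians, all mutually conjugate; thus the image of $\rho$ is a transitive subgroup of $\Sym_\nu$ generated by transpositions, and hence is all of $\Sym_\nu$. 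For $\nu \ge 3$ the group $\Sym_\nu$ is non-abelian, so $\pi_1(\pp^2 - B)$ surjects onto a non-abelian group and is itself non-abelian. (For $\nu = 2$ the branch curve is a smooth conic, and the statement does not apply.)

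Combining the two steps completes the argument: since $\pi_1(\pp^2 - B)$ is non-abelian, Nori's theorem rules out $d^2 > 6c + 2n$, giving $6c + 2n \ge d^2$. I expect the only real difficulty to lie in Nori's theorem itself, which I would treat as a black box; the delicate points there are the exact weights (6 for each cusp, 2 for each node) and the comparison of the self-intersection $C^2 = d^2$ with these local contributions. The elementary half---identifying the monodromy image with $\Sym_\nu$---rests only on the transitivity coming from irreducibility of $S$ and on the transposition form of the local monodromy at a generic branch point, which is precisely what a generic linear projection supplies.
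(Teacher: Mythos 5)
Your proposal is correct and follows essentially the same route as the paper: both establish that the monodromy representation is an epimorphism onto $\Sym_\nu$ (transitivity from irreducibility of $S$, transpositions from genericity), conclude that $\pi_1(\pp^2 - B)$ is non-abelian for $\nu \ge 3$, and obtain the inequality as the contrapositive of Nori's theorem. The only cosmetic difference is in how the case $\nu = 2$ is dismissed — you invoke the classification (a degree-2 cover forces a smooth conic as branch curve), while the paper notes that a smooth double cover cannot be ramified over a singular curve because of the local models $z^2 = xy$ and $z^2 = x^2 - y^3$ — but these are the same observation.
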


\begin{proof}
Let $\psi : \p(\pp^2 - B) \rightarrow Sym_\nu$ be the monodromy representation, sending each generator to a
permutation, which describes the exchange of the sheets. Since $\pi : S \to \pp^2$ is a generic projection, the image
$H = \text{Im}(\psi)$ is generated by transpositions. As $S$ is irreducible, $H$ is  acts transitively on a set of $n$ points.
This implies that $H = Sym_\n$ and thus $\psi$ is an epimorphism. Thus for $\n > 2$, the fundamental group
$\p(\pp^2 - B)$ is not abelian and therefore $c > 0$ (by Remark \ref{remNodalCurve}).

Nori proved (\cite{Nori}) that for a cuspidal plane
curve $C$ with $d^2 > 6c + 2n$ and $c > 0$ the fundamental group of
the complement $\pi_1(\pp^2 - C)$ is abelian.
Thus, by the above discussion, $\n = 2$. However, there is no smooth double
cover of $\pp^2$ ramified over a singular $C$ (indeed, locally $S$ would be isomorphic to the singular cone $z^2 =
xy$ in a formal neighborhood of a node of $C$ and to the singular
surface $z^2 = x^2 - y^3$ in a formal neighborhood of a cusp). This
implies that Nori's condition cannot hold for a branch curve. Therefore

\begin{equation}
\label{eq2_2}
     6c + 2n -d^2 \geq 0,
\end{equation}
 or in  $(d,c,\chi)$ coordinates: $$ 4c + \chi -3d \geq 0.$$
\end{proof}
%

In the spirit of the above Lemma, we have the following result of
Shimada:

\begin{lemma} [Shimada \cite{Sh}]
Let $B \in B(d,c,n)$. Then $2n < d^2 - 5d + 8$.
\end{lemma}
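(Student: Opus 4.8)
The plan is to first replace the inequality by a cleaner equivalent statement about the geometric genus and the number of cusps, and then to prove that by passing to the ramification curve. Writing $g = p_g(B)$, the genus formula \eqref{eqnGenus} gives $n + c = \binom{d-1}{2} - g$, so $2n < d^2 - 5d + 8$ is equivalent to $n \le \binom{d-2}{2}$, and, since $\binom{d-1}{2} - \binom{d-2}{2} = d-2$, to the single inequality $g + c \ge d - 2$. This is the form I would aim to establish: a branch curve cannot have geometric genus plus cusp number below $d-2$, i.e. it cannot be ``too rational''. The advantage is that $g$ and $c$ are both intrinsic to the smooth model, whereas $n$ (apparent double points) is sensitive to the special position of the projection centre.

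Next I would work on the smooth ramification curve $B^* \subset S$, using properties (ii)--(iii) of the generic projection. Since $\pi|_{B^*}\colon B^* \to B$ is the normalization we have $g = g(B^*)$, while the $c$ cusps of $B$ are exactly the points of $B^*$ at which $\pi$ fails to immerse, i.e. where the tangent line $T_{B^*}$ falls into the kernel line field $L = \ker(d\pi)|_{B^*}$. Three intersection numbers then control everything. With $H = \pi^*\OO_{\pp^2}(1)$ one has $B^* = K_S + 3H$ in $\Pic(S)$, hence $d = B^*\cdot H = K_S\cdot H + 3\n$ and, by adjunction, $2g-2 = B^*\cdot(B^*+K_S) = 2K_S^2 + 9\,K_S\cdot H + 9\n$. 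The cusp number is the degree of the natural map of line bundles $T_{B^*} \to T_S|_{B^*}/L$, which vanishes precisely at the cusps, so $c = \deg\!\big(T_S|_{B^*}/L\big) - (2-2g)$; evaluating $\deg\!\big(T_S|_{B^*}/L\big)$ from the exact sequence defining $d\pi$ expresses $c$ as well as a linear form in $K_S^2$, $K_S\cdot H$ and $\n = H^2$ (this is Piene's generic-projection formula).

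Substituting these linear expressions, the target $g + c \ge d - 2$ becomes a single linear inequality among $K_S^2$, $K_S\cdot H$ and $\n$ --- equivalently, via Lemmas \ref{lem_c_1^2} and \ref{lem_c_2}, among the Chern numbers $c_1^2(S)$, $c_2(S)$ and $\deg S$. I would finish by showing that this inequality holds for every smooth projective surface carrying such a curve, which I expect to reduce to a standard positivity for surfaces (non-negativity of $c_2(S)$, or a Noether/Castelnuovo-type bound, together with the non-degeneracy of $B^*$ in $\pp^r$).

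The main obstacle is exactly this last step. The passage to intersection numbers is mechanical, but getting the constant $d-2$ sharp requires the honest computation of $\deg L$ in the cusp formula and the identification of the precise surface positivity that forces the bound; this is the genuinely non-trivial geometric input, playing here the role that Nori's abelianity theorem played in Lemma \ref{lemNoAbelian}. As a fallback I would keep in reserve a purely topological argument in the spirit of the proof of Lemma \ref{lemNoAbelian}: bounding $n$ directly through the monodromy $\psi\colon \p(\pp^2 - B) \to \Sym_\n$, whose image is generated by transpositions and acts transitively, together with the Riemann--Hurwitz constraints this places on the $d$ simple branch points cut out on a generic pencil of lines.
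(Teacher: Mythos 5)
Your reduction of the statement to $g+c\ge d-2$ and the intersection-theoretic bookkeeping on $B^*$ are correct, but every step there is an \emph{equivalence}, so the inequality you are left to prove at the end,
$$
3c_1^2(S)-c_2(S)+\tfrac{25}{2}\,d-24\,\n+3\;\ge\;0
$$
(obtained by substituting Lemmas \ref{lem_c_1^2} and \ref{lem_c_2} into $g+c\ge d-2$; in your notation the middle terms read $\tfrac{25}{2}K_S\cdot H+\tfrac{27}{2}H^2$), carries the \emph{entire} content of the lemma: the Chern-class translation makes no progress, and the step you yourself flag as the ``main obstacle'' is the whole theorem. Moreover, none of the positivity statements you propose can close it. Non-negativity of $c_2(S)$ is false for the surfaces at hand (a ruled surface over a curve of genus $q\ge2$ has $c_2=4(1-q)<0$, and every smooth projective surface admits a generic projection, hence contributes to some $B(d,c,n)$); Bogomolov--Miyaoka--Yau bounds $c_1^2$ from \emph{above}, i.e.\ bounds $3c_1^2-c_2$ in the wrong direction; Noether's inequality applies only to minimal surfaces of general type and contains no term in the polarization, whereas the inequality you need visibly couples the Chern numbers to $d$ and $\n$. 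The inequality is also too tight for any crude positivity argument: it is an equality for the quadric in $\pp^3$, and has slack only $2$ for $\pp^1\times\pp^1$ embedded by $|\ell_1+2\ell_2|$ (the curve in $B(6,6,4)$).

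The paper's proof is of a completely different, and much softer, nature: it cites Shimada's theorem \cite{Sh} as a black box --- for any plane curve $C$ with $2n\ge d^2-5d+8$ the group $\p(\pp^2-C)$ is abelian --- and observes that this is incompatible with $C$ being a branch curve, since $\p(\pp^2-B)$ surjects onto $\Sym_\n$ with $\n\ge3$ (the case $\n=2$ being impossible over a singular branch curve), as established in the proof of Lemma \ref{lemNoAbelian}. Your fallback plan, ``bounding $n$ directly through the monodromy together with Riemann--Hurwitz constraints,'' amounts to asking for an elementary re-proof of Shimada's theorem in the branch-curve case; both Nori's and Shimada's bounds are genuinely deep results about fundamental groups of curve complements, and no pencil-of-lines counting is known to produce them. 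So either quote \cite{Sh} and argue non-abelianness (the paper's route), or accept that your plan requires proving a new Chern-number inequality that is equivalent to, not easier than, the statement itself.
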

\begin{proof}
Let $C \in V(d,c,n)$. By \cite{Sh}, if $2n \geq d^2 - 5d + 8$ then
$\p(\pp^2 - C)$ is abelian. However, for a branch curve $B \in
B(d,c,n)$, the corresponding fundamental group is not abelian, and
we have
\begin{equation}
\label{eqShimada}
     0 < \frac{1}{2}(d^2 - 5d + 8) - n,
\end{equation}
or, in $(d,c,\chi)$ coordinates: $$2c - \chi - 2d + 8 > 0.$$
\end{proof}

The following conditions on $c$ and $n$ are less obvious then the
previous Lemmas:

\begin{lemma}

\begin{equation} \label{eq2_1} %
          c = 0 \mod 3, \quad  n = 0 \mod 4
\end{equation}
or in  $(c,\chi)$ coordinates:
$$          c = 0 \mod 3,\quad \chi = 2c - d(d-3) \mod 8$$

\end{lemma}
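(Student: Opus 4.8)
The plan is to deduce both divisibility relations from the classical enumerative formulas for a generic projection, combined with the two universal integrality constraints that any smooth projective surface must satisfy: Noether's formula and the parity of the arithmetic genus of a hyperplane section. Write $L=\pi^*\OO_{\pp^2}(1)$ (so that $L^2=\deg\pi$), let $K_S$ be the canonical class, and recall that the ramification curve $B^*$ lies in the class $K_S+3L$, since $K_S=\pi^*K_{\pp^2}+B^*=-3L+B^*$. The two facts about $S$ alone that I would use are Noether's formula $12\,\chi(\OO_S)=K_S^2+c_2(S)$, giving $K_S^2+c_2(S)\equiv 0\pmod{12}$, and the parity relation $K_S\cdot L\equiv L^2\pmod 2$, which holds because a smooth member of $|L|$ has integral arithmetic genus, i.e. $L\cdot(L+K_S)$ is even.

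Next I would express $c$ and $n$ as intersection numbers on $S$. The cusps are exactly the pleat ($\Sigma^{1,1}$) points of the generic projection, whose local model is the universal cubic $z^3+yz+x=0$ recalled in the Remark above; counting them by the corresponding Thom polynomial in the relative Chern classes of $d\pi$ (equivalently, by the classical second-polar computation, see \cite{Piene}) gives
\[
c=2K_S^2+9\,K_S\cdot L+12\,L^2-c_2(S) .
\]
As a check, for a smooth surface of degree $m$ in $\pp^3$ this specializes to the B\'ezout number $m(m-1)(m-2)$ coming from intersecting $S$ with its first and second polars. For the nodes no separate Thom polynomial is needed: since $B^*\to B$ is the normalization and $B$ has only nodes and cusps, each of $\delta$-invariant $1$, one has $c+n=p_a(B)-p_g(B)$, where $p_a(B)=\tfrac12(d-1)(d-2)$ with $d=K_S\cdot L+3L^2$, and $p_g(B)=g(B^*)$ is computed by adjunction from the class $K_S+3L$. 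Subtracting the cusp formula then presents $n$ as an explicit integral polynomial in $K_S^2,\,K_S\cdot L,\,L^2,\,c_2(S)$.

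The two congruences should now fall out by reduction. Modulo $3$ the cusp formula collapses to $c\equiv 2K_S^2-c_2(S)\equiv-(K_S^2+c_2(S))\pmod 3$, which vanishes by Noether, giving $c\equiv 0\pmod 3$. For the nodes I would reduce the expression for $2n$ modulo $8$: the term $2c_2(S)$ may be replaced by $-2K_S^2$ using Noether (valid already mod $8$), after which $2n$ becomes a quadratic expression in $K_S\cdot L$ and $L^2$ whose residue mod $8$ I analyze using only $K_S\cdot L\equiv L^2\pmod 2$; splitting into the even/even and odd/odd cases forces $2n\equiv 0\pmod 8$, i.e. $n\equiv 0\pmod 4$. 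Translating to the coordinates of the statement, $n\equiv 0\pmod 4$ means $2n\equiv 0\pmod 8$, which by $n=\tfrac12 d(d-3)-c+\tfrac12\chi$ is exactly $\chi\equiv 2c-d(d-3)\pmod 8$.

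I expect the main obstacle to be the honest justification of the cusp formula in the middle step: one must verify that a generic linear projection is a stable map whose only singularities are folds and pleats, so that the $\Sigma^{1,1}$ locus is zero-dimensional, reduced, and enumeratively valid, and then identify the correct relative-Chern-class Thom polynomial. By contrast the node reduction is elementary once $n$ is written out; its only delicate point is that the clean vanishing of $2n$ modulo $8$ genuinely requires the parity relation $K_S\cdot L\equiv L^2\pmod 2$ and Noether's congruence simultaneously, neither alone being sufficient.
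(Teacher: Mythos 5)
Your proof is correct, but the route is necessarily different from the paper's, because the paper gives no argument at all for this lemma: its entire proof is the citation ``see \cite{MoTe2}''. Your derivation is self-contained, and in fact all of its non-elementary inputs are theorems the paper itself establishes (independently of this lemma, so there is no circularity even though they appear a few paragraphs later): your cusp formula $c=2K_S^2+9\,K_S\cdot L+12L^2-c_2(S)$ is precisely Lemma \ref{lem_c_2}, $c_2(S)=3\nu-\chi-c$, rewritten via $B^*\in|K_S+3L|$, $d=K_S\cdot L+3L^2$ and $2g-2=B^*\cdot(B^*+K_S)$; that lemma is proved there by a Lefschetz-pencil count, so the Thom-polynomial/stability justification you flag as the ``main obstacle'' is not needed. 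I verified your reductions: $2n=(K_S\cdot L)^2+6(K_S\cdot L)L^2+9(L^2)^2-30\,K_S\cdot L-42\,L^2-6K_S^2+2c_2(S)$; Noether's formula kills $2c_2(S)-6K_S^2$ modulo $8$, and the remaining quadratic is $\equiv 0 \pmod 8$ in both the even/even and odd/odd cases (it is $\equiv 3$ in the mixed case, confirming your point that the adjunction parity $K_S\cdot L\equiv L^2 \pmod 2$ is genuinely needed, and neither ingredient alone suffices). What each approach buys: the citation is short, while your argument makes the arithmetic provenance of the congruences transparent (Noether plus parity of $L\cdot(L+K_S)$) and shows they hold for any generic cover to which Lemmas \ref{lem_c_1^2} and \ref{lem_c_2} apply. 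Finally, a streamlining within your own framework: the paper's inverted formulas (Remark \ref{remNumCuspsNodes}) give $c=2c_1^2(S)-c_2(S)-15\nu+9d$ and $n=-3c_1^2(S)+c_2(S)+24\nu+\tfrac{d^2}{2}-15d$, whence $c\equiv 2\bigl(c_1^2(S)+c_2(S)\bigr)\equiv 0\pmod 3$ directly, and, writing $d=2m$ (the degree of a branch curve is even, Remark \ref{remEvenDeg}), $n\equiv \bigl(c_1^2(S)+c_2(S)\bigr)+2m(m+1)\equiv 0\pmod 4$ with no case analysis at all.
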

\begin{proof} see \cite{MoTe2}. \end{proof}

%

%
\subsubsection{Geography of branch curves in $(d,c,\chi)$ versus geography
of surfaces in $(c_1^2,c_2)$}
Let $S$ be a smooth algebraic surface and $\pi: S \to \pp^2$ be a
generic ramified cover. Let $B$ be the branch curve of $\pi$, $B \in
B(v)$ for some vector $v \in L$,\, and let $\nu = \deg \pi$. It is well known that
$d \geq 2\nu-2$ but for the convenience of the reader we bring the proof of this fact.
\begin{lemma}
\label{lem0}
\begin{equation}\label{eqn1}
       d \geq 2\nu-2.
\end{equation}
\end{lemma}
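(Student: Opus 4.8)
The plan is to restrict the cover to a generic line and apply the Riemann--Hurwitz formula. Let $L \subset \pp^2$ be a generic line and set $C = \pi^{-1}(L)$. Since the projection $p : \pp^r \to \pp^2$ realizes $\pi^{-1}(L)$ as a hyperplane section $S \intersect H$ of $S$, where $H \supset \PP(W)$ is the hyperplane with $p(H) = L$, Bertini's theorem guarantees that for a generic choice of $L$ the curve $C$ is smooth and irreducible. The restriction $\pi|_C : C \to L \cong \pp^1$ is then a finite cover of degree $\nu = \deg \pi$, and the whole statement will follow by computing the genus of $C$ in two ways.

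First I would identify the branch locus of $\pi|_C$. Since $L$ is generic it meets $B$ transversally in exactly $d = \deg B$ points, all lying in the smooth locus of $B$ and avoiding its nodes and cusps. Because $\pi|_{B^*} : B^* \to B$ is a birational resolution of singularities (item (ii) above), over each of these $d$ points there is exactly one point of the ramification curve $B^*$ lying on $C$, so $C$ meets $B^*$ transversally in precisely $d$ points. At each such point, the genericity of the projection forces the local monodromy to be a single transposition, so $\pi|_C$ has one ramification point there of ramification index $2$; and for generic $L$ there are no other ramification points. Hence the total ramification of $\pi|_C$ equals $d$.

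Then I would apply Riemann--Hurwitz to $\pi|_C : C \to \pp^1$:
$$
2 g(C) - 2 \;=\; \nu\,(2 \cdot 0 - 2) \;+\; \sum_{p}(e_p - 1) \;=\; -2\nu + d .
$$
Rearranging gives $d = 2\nu - 2 + 2 g(C)$, and since $g(C) \geq 0$ we obtain $d \geq 2\nu - 2$, as claimed (with equality exactly when the generic hyperplane section $C$ is rational).

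The main obstacle is the middle step: rigorously justifying that over each of the $d$ points of $L \intersect B$ the ramification of $\pi|_C$ is simple, with a single preimage of index $2$, so that the total ramification is \emph{exactly} $d$ rather than larger. This is precisely where the genericity of the projection enters, via the local model of the cover over a generic point of $B$, and it is what makes the count sharp. The remaining ingredients---Bertini's theorem for the smoothness and irreducibility of $C$, the Riemann--Hurwitz formula, and the bound $g(C) \geq 0$---are entirely routine.
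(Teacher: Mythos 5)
Your proposal is correct and follows essentially the same route as the paper: restrict to a generic line $l$, observe that $C = \pi^{-1}(l)$ is smooth and irreducible, note that all ramification points of $\pi|_C$ have index $2$ so the total ramification equals $d$, and apply Riemann--Hurwitz to get $d = 2\nu - 2 + 2g(C) \geq 2\nu - 2$. The step you flag as the main obstacle (simple ramification over the $d$ points of $l \intersect B$) is exactly the point the paper also invokes, citing the genericity of the projection.
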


\begin{proof}
\, Denote by $\pi: S \to \cp^2$ the projection map and let $C = f^{-1}(l)$ for a generic line $l$.
The curve $C$ is irreducible and smooth. Applying the Riemann-
Hurwitz's formula to the map $\pi|_C: C \to l$, we get $2g(C) - 2 =
-2\nu + d$, since all the ramification points of the map $\pi|_C$ are
of ramification index 2, and $(C^2)_{S} = \nu$, which implies
$$
   d =   2\nu - 2 + 2g(C) \geq 2\nu - 2
$$

A different proof will be given in Subsection \ref{subSecSingHyp}
when we discuss the geometry of surfaces with ordinary singularities
in $\pp^3$.
\end{proof}

\begin{remark} \label{remEvenDeg}
\emph{Note that the proof of the above Lemma implies that the
degree of a branch curve is even.}
\end{remark}


 We want to express
the Chern invariants $c_1^2(S)$ and $c_2(S)$ in terms of
$(d,c,\chi)$ and, equivalently, in terms of $(d,c,n)$, so we give 2 formulas for each invariant.

\begin{lemma} (see \cite{Ku})
\label{lem_c_1^2}
\begin{gather}
\label{eqn5}
   c_1^2(S) = 9 \nu -  \frac{9}{2} d - \frac{1}{2} \chi \\
   c_1^2(S) = 9 \nu -  \frac{9}{2} d +
           \left( \frac{(d-1)(d-2)}{2} - n - c \right)  - 1
\end{gather}
\end{lemma}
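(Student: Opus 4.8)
The plan is to compute $c_1^2(S) = K_S^2$ directly from the canonical bundle (Riemann--Hurwitz) formula for the ramified cover $\pi : S \to \pp^2$. Since $\pi$ is a generic projection, properties (i)--(iii) of Subsection \ref{subsecGenProj} tell us that $\pi$ is simply ramified along the smooth irreducible curve $B^* \subset S$ and that $\pi|_{B^*} : B^* \to B$ is a resolution of singularities. In particular the ramification divisor is reduced and equal to $B^*$, so Riemann--Hurwitz for the finite map $\pi$ gives
$$
    K_S = \pi^* K_{\pp^2} + B^* = -3\,\pi^* H + B^*,
$$
where $H$ is the line class on $\pp^2$.

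First I would expand $K_S^2$ using this expression,
$$
    c_1^2(S) = K_S^2 = 9\,(\pi^* H)^2 - 6\,(\pi^* H \cdot B^*) + (B^*)^2 ,
$$
and evaluate the first two terms by the projection formula for the finite map $\pi$ of degree $\nu$. Since $(H^2) = 1$ we get $(\pi^* H)^2 = \nu$; and since $\pi|_{B^*}$ is birational onto $B$ we have $\pi_*[B^*] = [B]$, whence $\pi^* H \cdot B^* = H \cdot [B] = \deg B = d$. Thus the only nontrivial quantity left is the self-intersection $(B^*)^2$.

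Next I would compute $(B^*)^2$ by adjunction on the smooth surface $S$. As $B^*$ is smooth with $g(B^*) = g$, the geometric genus of $B$, adjunction reads $2g - 2 = (B^*)^2 + K_S \cdot B^*$. Substituting $K_S \cdot B^* = -3\,(\pi^* H \cdot B^*) + (B^*)^2 = -3d + (B^*)^2$ yields $2g - 2 = 2(B^*)^2 - 3d$, so $(B^*)^2 = g - 1 + \tfrac{3}{2}d$; using $\chi = 2 - 2g$, i.e. $g - 1 = -\tfrac{1}{2}\chi$, this becomes $(B^*)^2 = \tfrac{3}{2}d - \tfrac{1}{2}\chi$. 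Plugging everything back in gives
$$
    c_1^2(S) = 9\nu - 6d + \tfrac{3}{2}d - \tfrac{1}{2}\chi = 9\nu - \tfrac{9}{2}d - \tfrac{1}{2}\chi ,
$$
which is the first formula. The second formula then follows by substituting $\chi = 2 - 2g = 2 - (d-1)(d-2) + 2c + 2n$ from Equation~\eqref{eqnGenus}, so that $-\tfrac{1}{2}\chi = -1 + \tfrac{(d-1)(d-2)}{2} - c - n$.

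The computation itself is forced once the setup is right, so the main point to secure is not algebraic but geometric: one must ensure that the ramification divisor of the generic projection is exactly the reduced curve $B^*$ (no codimension-one ramification beyond the simple branching), so that the canonical formula takes the clean form $K_S = -3\pi^*H + B^*$, and that $\pi|_{B^*}$ is genuinely birational, giving simultaneously $\pi_*[B^*] = [B]$ and $g(B^*) = g$. All three are guaranteed by the classical properties (i)--(iii) recalled earlier, so the proof reduces to invoking them and carrying out the two intersection-theoretic computations above.
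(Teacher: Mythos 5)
Your proposal is correct and follows essentially the same route as the paper: both write $K_S = -3\pi^*H + B^*$ via Riemann--Hurwitz, compute $(\pi^*H)^2 = \nu$ and $\pi^*H\cdot B^* = d$ by the projection formula, obtain $(B^*)^2 = \tfrac{3}{2}d + g - 1$ by adjunction using that $B^*$ is the smooth normalization of $B$, and expand $K_S^2$. The only difference is cosmetic (the order in which the intersection numbers are evaluated), so nothing further is needed.
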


\begin{proof}

Let $\pi: S \to \pp^2$ be the ramified cover, $R = B^*$, the
ramification curve and $C = f^{-1}(l)$ for $l$ a generic line.
%

First, we want to compute $[R]^2$ and $[C]^2$.

By Riemann-Hurwitz, $K_S = -3 f^* ([l]) + [R] = -3[C] + [R]$. As $\pi: R \to B$ is a
normalization of the branch curve $B$, we apply
adjunction formula to $R$ we get
\begin{gather*}
   2g-2 = (K_S + [R]) \cdot R = (-3[C]  + 2[R])\cdot R =
   -3[C] \cdot R + 2[R] \cdot R =  -3 f^* [l] \cdot R + 2 [R]^2 =
   \\
   =  -3 [l] \cdot f_* [R] + 2 [R]^2 = -3 \deg B + 2 [R]^2 = -3 d + 2 [R]^2
\end{gather*}
and thus
$$
     [R]^2 = \frac{3}{2}d + g - 1.
$$
We also have
\begin{equation} \label{eqnCsquare}
    [C]^2 = f^* [l] \cdot [C] = [l] \cdot f_* [C] =  [l] \cdot (\deg f  [l] )   = \nu [l]^2 = \nu.
\end{equation}
\\
We can now compute $c_1^2(S)$:
\begin{gather*}
    c_1^2(S) = K_S^2 = (-3 [C] + [R])^2 = 9 [C]^2 - 6 [C] \cdot [R]
    + [R]^2 = %
    9 \nu - 6 d  + \frac{3}{2} d + g - 1 = \\
    = 9\nu - \frac{9}{2}d + g - 1  =
    1 = 9 \nu -  \frac{9}{2} d - \frac{1}{2} \chi
\end{gather*}
The expression in $(d,c,n)$-coordinates follows easily.
\end{proof}

\begin{lemma} \label{lem_c_2} (see \cite{Ku})
\begin{gather}
\label{eqn4}
   c_2(S) = 3 \nu - \chi - c,  \text{ or}  \\
   c_2(S) = 3 \nu + d^2 - 3d  - 3c - 2n.
\end{gather}
\end{lemma}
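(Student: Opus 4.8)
The plan is to identify $c_2(S)$ with the topological Euler characteristic $e(S)$ and to compute the latter by additivity of the Euler characteristic over a stratification of the base $\pp^2$, exploiting the fact that $\pi$ restricts to an honest unbranched covering over each open stratum. First I would stratify $\pp^2 = U \sqcup B^{\mathrm{sm}} \sqcup N \sqcup K$, where $U = \pp^2 - B$, where $B^{\mathrm{sm}}$ is the smooth locus of $B$, and where $N$ and $K$ are the sets of the $n$ nodes and the $c$ cusps. Over each stratum the fibre has a constant number of points, say $\nu - \delta_i$, and since $\pi$ restricts to a topological covering there, additivity yields
\begin{equation*}
   e(S) = \sum_i (\nu - \delta_i)\, e(X_i) = \nu\, e(\pp^2) - \sum_i \delta_i\, e(X_i),
\end{equation*}
so the whole computation reduces to finding the \emph{deficiencies} $\delta_i = \nu - \#\pi^{-1}(\mathrm{pt})$ together with the Euler characteristics of the strata.

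The deficiencies are read off from the local structure of a generic projection. Over a point of $U$ the cover is \'etale, so $\delta = 0$. Over a generic point of $B$ the ramification is simple: exactly two sheets come together and the other $\nu - 2$ stay apart, so the fibre has $\nu - 1$ points and $\delta = 1$. Over a node two smooth branches of $B$ cross, corresponding to a bisecant of $B^*$ through the projection centre; the two associated ramification points of $S$ merge two \emph{disjoint} pairs of sheets, leaving $\nu - 2$ points and $\delta = 2$. Over a cusp the relevant local model is the universal cubic $z^3 + yz + x = 0$ recalled above, whose discriminant has an $A_2$ singularity: over the cusp the three participating sheets collapse to a single point while the remaining $\nu - 3$ stay apart, so again the fibre has $\nu - 2$ points and $\delta = 2$. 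Thus $\delta_{B^{\mathrm{sm}}} = 1$ and $\delta_N = \delta_K = 2$.

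Next I would compute the stratum Euler characteristics. Since $\pi|_{B^*}\colon B^* \to B$ is the normalization (fact (ii)), and normalization separates the two branches at each node while being a homeomorphism at each cusp, one gets $e(B) = e(B^*) - n = \chi - n$, whence $e(B^{\mathrm{sm}}) = e(B) - n - c = \chi - 2n - c$, while the nodes and cusps are isolated points contributing $n$ and $c$. Substituting into the displayed identity,
\begin{equation*}
   c_2(S) = e(S) = 3\nu - \bigl[(\chi - 2n - c) + 2n + 2c\bigr] = 3\nu - \chi - c ,
\end{equation*}
which is the first formula. The second then follows formally by replacing $\chi = 2 - 2g$ with $g = \tfrac{1}{2}(d-1)(d-2) - c - n$ from (\ref{eqnGenus}), which gives $c_2(S) = 3\nu + d^2 - 3d - 3c - 2n$.

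I expect the only genuinely delicate point to be the justification of the singular deficiencies, and in particular the claim that at a cusp all three sheets collapse to one point, so that a node and a cusp yield the \emph{same} value $\delta = 2$ for entirely different reasons; this is exactly where the precise local analytic normal forms of a generic projection are required. Everything else is bookkeeping: additivity of the Euler characteristic (which holds because, for complex algebraic varieties, the ordinary Euler characteristic agrees with the always-additive compactly supported one) and the genus substitution above.
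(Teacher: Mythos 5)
Your proof is correct, and it takes a genuinely different route from the paper's. The paper computes $c_2(S)=\chi(S)$ by pulling back a Lefschetz pencil of lines through a generic point of $\pp^2$ and applying the fibration formula $\chi(S) = 2\chi(\text{generic fiber}) + \#(\text{singular fibers}) - [C_t]^2$: the generic fiber's Euler characteristic $2\nu-d$ comes from Riemann--Hurwitz, and the number of singular fibers is the degree $d^* = d(d-1)-3c-2n$ of the dual curve via the Pl\"ucker formulas, so the $(d,c,n)$ form of the identity appears first and the $\chi$ form is derived from it. You instead stratify the base, use additivity of the (compactly supported) Euler characteristic, and read off the fibre deficiencies from the local normal forms of a generic projection; this yields the $(d,c,\chi)$ form directly, with no pencil, no dual curve, and no Pl\"ucker formula. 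What your route buys: it makes the shape of the answer transparent (the $-\chi$ is the fold correction along $B$, the $-c$ is the extra collapse at cusps, and the nodes drop out exactly because $\delta=2$ there compensates the two branches), and it is precisely the Riemann--Hurwitz/Iversen viewpoint that the paper itself records in the remark immediately following the lemma, namely $c_2(S) - \nu\, c_2(\pp^2) = -\chi - c$. What the paper's route buys: brevity given the Pl\"ucker formulas, at the cost of silently using facts your analysis makes explicit --- that lines through nodes and cusps pull back to fibers that are smooth near those points, so that the singular fibers are exactly the $d^*$ simple tangent lines, each contributing a single node. The two points you flag as delicate are indeed the crux, but both are available in the paper's own framework: the disjointness of the two sheet-pairs over a node and the three-sheet collapse over a cusp are equivalent to the monodromy conditions (iii) and (ii) for generic covers (commuting versus non-commuting transpositions), the latter realized by the universal cubic $z^3+yz+x$ whose fibre over the cusp of its discriminant is a single point.
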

\begin{proof}
%

To compute $c_2(S)$ we use the usual trick of considering a
pencil of lines in $\pp^2$ passing through a
generic point $p \in \pp^2$ and its corresponding preimage with
respect to $\pi: S \to \pp^2$ -- the Lefshetz pencil $C_t$ of curves on $S$.

 We then apply the following formula on $C_t$
$$
  c_2(S) = \chi(S) = 2 \chi( \text{generic fiber} ) + \# (\text{singular
   fibers})- (\text{self-intersection of } C_t)
$$
(see, for example, \cite[section 4.2]{GH}).

%
The generic fiber of $C_t$ is a ramified cover of a line $l$ with
$d$ simple ramification points (i.e. ramification index 2 at every
point), and thus $\chi (\text{generic fiber}) = 2 \nu - d$ by the Riemann--Hurwitz
formula.

The number of singular fibers in the pencil $C_t$ is clearly equal
to the degree $d^*$ of the curve $B^{\vee}$ (the dual to the branch curve
$B$), which by the Pl\"ucker formulas for $B$ satisfies  $d^* = d(d-1) - 3c - 2n.$
The self-intersection $[C_t] ^2$ of the fiber equals to $\nu$
(by (\ref{eqnCsquare})). Thus

\begin{gather*}
   c_2(S) = \chi(S)= 2 (2 \nu - d) + d^* - \nu =
    2(2\nu-d) + (d(d-1) - 3c -2n) - \nu = \\
   = 3\nu + d^2 - 3d - 3c - 2n = 3 \nu - \chi - c.
\end{gather*}
\end{proof}

\begin{remark}
\emph{Equation (\ref{eqn4}) can be written as  an analog to Riemann-Hurwitz formula for the map $S \to \pp^2$
$$c_2(S) - \nu c_2(\pp^2) = -\chi - c,$$  as Iversen described in \cite{Iver}.}
\end{remark}

\begin{remark}
\label{remNumCuspsNodes} \emph{Inverting the formulas above, we
get $n$ and $c$ in terms of $c_1^2,c_2,\n$ and $d$:
\begin{gather*}
n = -3c_1^2(S) + c_2(S) + 24\nu + \frac{d^2}{2} - 15d, \\
c = 2c_1^2(S) - c_2(S) -15\nu + 9d
\end{gather*}
We use these formulas below in Subsection \ref{subsecExSingSur}.}
\end{remark}

The next two  results are rather surprising, as one gets an inequality for the branch curve which is independent
of the degree of the projection:
\begin{lemma} (see, .e.g., the introduction of \cite{Libgo}) Let $B \in B(d,c,n)$ a branch curve of a linear projection to $\pp^2$ of a surface of general type, where $d,c,n,\chi$ and $\n$ as above. Then
\begin{equation}
\label{eqnBogomolov}
    5\chi + 6c - 9d \leq 0
\end{equation}
or, equivalently, in $(d,c,n)$ coordinates:
$$
10n + 16c - 5d^2 + 6d \leq 0
$$

\end{lemma}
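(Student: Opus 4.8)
The plan is to recognize this inequality as nothing more than the Bogomolov--Miyaoka--Yau inequality $c_1^2(S) \le 3 c_2(S)$ transported into the branch-curve invariants through the Chern-number formulas already in hand. Since $S$ is a smooth surface of general type, BMY applies and yields $c_1^2(S) \le 3 c_2(S)$; this is precisely the point at which the hypothesis ``of general type'' enters, since for an arbitrary smooth surface the inequality need not hold. A preliminary point I would dispatch first is that BMY is classically stated for minimal models: blowing up a point replaces $(c_1^2, c_2)$ by $(c_1^2 - 1, c_2 + 1)$ and hence can only decrease $c_1^2 - 3 c_2$, so the inequality is inherited by any smooth (possibly non-minimal) model of general type and the formulas of Lemmas \ref{lem_c_1^2} and \ref{lem_c_2} may be applied directly to $S$.

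Next I would substitute the two Chern-number expressions, $c_1^2(S) = 9\nu - \frac{9}{2} d - \frac{1}{2}\chi$ and $c_2(S) = 3\nu - \chi - c$, into $c_1^2(S) - 3 c_2(S) \le 0$. The two $9\nu$ terms cancel at once, which is the mechanism behind the degree-of-projection independence flagged just before the statement, and what survives is
$$ -\tfrac{9}{2} d - \tfrac{1}{2}\chi + 3\chi + 3c \le 0, $$
that is, $\tfrac{5}{2}\chi + 3c - \tfrac{9}{2} d \le 0$. Clearing the denominator by multiplying through by $2$ gives exactly $5\chi + 6c - 9d \le 0$, which is (\ref{eqnBogomolov}).

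Finally, to record the equivalent form in the $(d,c,n)$ coordinates I would eliminate $\chi$ using $\chi = 2 - 2g$ together with the genus formula (\ref{eqnGenus}), which rearranges to $\chi = -d^2 + 3d + 2c + 2n$. Inserting this into $5\chi + 6c - 9d \le 0$ and collecting terms produces $10n + 16c - 5d^2 + 6d \le 0$, as asserted. The argument is genuinely short, so there is no real obstacle to speak of; the only substantive content is the invocation of BMY itself, and the one place warranting care is verifying that the general-type hypothesis is truly needed and is robust under blow-ups, since the cancellation step imposes no constraint whatsoever on $\nu$.
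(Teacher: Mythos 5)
Your proof is correct and is essentially the paper's own argument: substitute the Chern-number formulas of Lemmas \ref{lem_c_1^2} and \ref{lem_c_2} into the Bogomolov inequality $c_1^2(S) \leq 3c_2(S)$, cancel the $\nu$ terms, and convert coordinates via the genus formula. Your additional observation that the inequality is stable under blow-ups (so minimality of $S$ need not be assumed) is a careful point the paper leaves implicit, but the route is the same.
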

%
\begin{proof}
Substituting the expressions for $c_1^2(S)$ and $c_2(S)$ (from Lemmas \ref{lem_c_1^2},\ref{lem_c_2}) in terms of
$\nu$ and $(d,c,\chi)$  and $(d,c,n)$ into the Bogomolov inequality
$
   c_1^2(S) \leq 3c_2(S),
$
we get the desired inequality.
\end{proof}

There is, however, an inequality which is true for every branch curve, restricting the sum of the nodes
and the cusps (though it is  weaker than inequality (\ref{eqnBogomolov})).
\begin{lemma} Let $B \in B(d,c,n)$. Then
$$
15d - 5\chi - 6c > 0
$$
or
$$
10n+16c < 5d^2.
$$
\end{lemma}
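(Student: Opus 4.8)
The plan is to translate the inequality into the Chern numbers computed in Lemmas \ref{lem_c_1^2} and \ref{lem_c_2} and then replace the Bogomolov--Miyaoka--Yau inequality that powered (\ref{eqnBogomolov}) by a version valid for \emph{every} smooth projective surface. First I would record the identity
$$15d - 5\chi - 6c = 2\bigl(3c_2(S) - c_1^2(S) + 3d\bigr),$$
obtained by substituting $c_1^2(S) = 9\nu - \tfrac92 d - \tfrac12\chi$ and $c_2(S) = 3\nu - \chi - c$. Thus the assertion is \emph{equivalent} to the single surface inequality $c_1^2(S) - 3c_2(S) < 3d$, and since $d > 0$ any bound of the form $c_1^2 - 3c_2 \le 0$ already suffices.

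Second, I would split according to whether $c_1^2(S) \le 3c_2(S)$ holds. By Miyaoka--Yau this holds for every minimal surface of non-negative Kodaira dimension, and blowing up a point sends $c_1^2 - 3c_2$ to $c_1^2 - 3c_2 - 4$, so it can only decrease; a direct check on $\mathbb{P}^2$, the Hirzebruch surfaces, and ruled surfaces over a base of genus $\le 1$ shows the bound holds there too. Hence $c_1^2 - 3c_2 \le 0 < 3d$ for every surface \emph{except} those birationally ruled over a smooth curve $\Gamma$ of genus $q \ge 2$. In that remaining case one is reduced to the explicit value $c_1^2(S) - 3c_2(S) = 4(q - 1 - k)$, where $q = q(S)$ is the irregularity (equal to $g(\Gamma)$) and $k \ge 0$ counts the blown-up points; in particular $c_1^2 - 3c_2 \le 4(q-1)$.

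Third, for this ruled case I would bound the irregularity by the branch data. The curve $C = \pi^{-1}(\ell)$ is a smooth ample divisor, being the pullback of a line under the finite map $\pi$, so the Lefschetz hyperplane theorem gives $q(S) \le g(C)$, while the Riemann--Hurwitz computation in the proof of Lemma \ref{lem0} yields $g(C) = \tfrac12 d - \nu + 1$. Combining,
$$c_1^2(S) - 3c_2(S) \le 4\bigl(q(S) - 1\bigr) \le 4\bigl(g(C) - 1\bigr) = 2d - 4\nu < 3d,$$
the last step being $-4\nu < d$. Together with the first case this gives $c_1^2 - 3c_2 < 3d$ unconditionally, hence $15d - 5\chi - 6c > 0$; the equivalent form $10n + 16c < 5d^2$ then follows from the coordinate change $\chi = -d^2 + 3d + 2c + 2n$.

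The hard part is the structural input in the second step: one must know precisely which surfaces violate $c_1^2 \le 3c_2$ — exactly the irrational ruled ones over a base of genus $\ge 2$ — and pin down the \emph{exact} value of $c_1^2 - 3c_2$ for them (rather than just an inequality), since it is this value, fed through the Lefschetz bound $q(S) \le g(C)$, that produces a constant small enough to beat $3d$. Everything else — the Chern-number identity, the ampleness of $C$, and the genus formula for $C$ — is routine once this classification is in hand.
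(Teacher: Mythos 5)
Your proof is correct, but it follows a genuinely different route from the paper's: the paper disposes of this lemma in one line, quoting Nemirovski's inequality for branch curves, $\frac{3d-\chi}{3d-\chi-c}<6$, from \cite{Ne} and rewriting it as $15d-5\chi-6c>0$, whereas you derive the inequality from scratch. Your reduction is right: substituting Lemmas \ref{lem_c_1^2} and \ref{lem_c_2} gives $15d-5\chi-6c=2\left(3c_2(S)-c_1^2(S)+3d\right)$, so the claim is equivalent to $c_1^2(S)-3c_2(S)<3d$. Your case analysis is also right: by Miyaoka--Yau and the Enriques--Kodaira classification (with the blow-up formula $c_1^2-3c_2\mapsto c_1^2-3c_2-4$), the only smooth surfaces with $c_1^2>3c_2$ are those birationally ruled over a curve of genus $q\ge 2$, where $c_1^2-3c_2=4(q-1-k)\le 4(q-1)$; and for these, the bound $q(S)\le g(C)$ for the smooth ample curve $C=\pi^{-1}(\ell)$ (via Lefschetz, or even more simply via Kodaira vanishing applied to $0\to\OO_S(-C)\to\OO_S\to\OO_C\to 0$, or via the surjection from $C$ onto the base curve), combined with $g(C)=\frac{d}{2}-\nu+1$ from the proof of Lemma \ref{lem0}, yields $c_1^2-3c_2\le 2d-4\nu<3d$; strictness holds in both cases since $d>0$ and $\nu\ge 1$, and the passage to $10n+16c<5d^2$ via $\chi=-d^2+3d+2c+2n$ is correct. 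What your approach buys is a self-contained argument that exposes the mechanism --- the Bogomolov--Miyaoka--Yau inequality can fail only for irrational ruled surfaces, and for those the branch data itself caps the irregularity --- whereas the paper's proof is opaque without consulting \cite{Ne}. What the paper's citation buys is brevity and freedom from heavy inputs (surface classification, BMY); in fact your argument is essentially the proof given in Nemirovski's note, so you have in effect reproved the cited result rather than found a new path around it.
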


\begin{proof}
Note that Nemirovski's inequality (see \cite{Ne}) for branch curves is
$$
\frac{3d - \chi}{3d - \chi - c} < 6
$$
or, equivalently
$$
15d - 5\chi - 6c > 0.
$$
\end{proof}

\begin{remark}
\emph{The variety $B(d,c,n)$ is not necessarily connected. See, for example, \cite{FrTe}, where it is proven that $B(48,168,840)$ has at least two disjoint irreducible components.
}\end{remark}

\subsubsection{Chisini's conjecture} \label{secChisiniConj}

The following theorem was known as Chisini's Conjecture, by now proved
by Victor Kulikov (see \cite{Ku}, \cite{Ku2}):

\begin{thm} \label{thmChisiniConj} Let $B$ be the branch curve of generic
 projection $\pi:~S \to \cp^2$ of degree at
least 5. Then $(S,f)$ is uniquely determined by the pair
$(\cp^2,B)$.
\end{thm}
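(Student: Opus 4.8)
The plan is to recast the statement as a rigidity property of monodromy data and then to extract uniqueness from the Bogomolov--Miyaoka--Yau inequality applied to an auxiliary surface manufactured from two hypothetical distinct covers. First I would pass from geometry to group theory. By the Grauert--Remmert theorem (Theorem \ref{thmGR}), a degree-$\nu$ ramified cover of $\cp^2$ with smooth irreducible total space $S$ and branch curve $B$ is the same datum as a monodromy homomorphism $\rho : \pi_1(\cp^2 - B) \to \Sym_\nu$ whose image acts transitively (irreducibility of $S$) and which is \emph{geometric} in the sense dictated by smoothness of $S$: a meridian of a smooth point of $B$ maps to a transposition; the two meridians at a node map to a pair of disjoint commuting transpositions $(ab)(cd)$; and at a cusp the local monodromy is a pair of adjacent transpositions $(ab),(bc)$ obeying the braid relation. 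Under this dictionary the theorem becomes the assertion that, for $\nu \ge 5$, any two geometric representations of the fixed group $\pi_1(\cp^2 - B)$ are conjugate inside a common symmetric group, and in particular share the same degree.

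Next I would argue by contradiction. Suppose $(S_1,\pi_1)$ and $(S_2,\pi_2)$, of degrees $\nu_1,\nu_2 \ge 5$, are two \emph{inequivalent} generic covers with the same branch curve $B$, corresponding to geometric representations $\rho_1,\rho_2$. Let $\pi_1(\cp^2 - B)$ act diagonally on the product set $\{1,\dots,\nu_1\}\times\{1,\dots,\nu_2\}$, select a transitive orbit off the ``diagonal'' (the diagonal is an orbit precisely when $\rho_1,\rho_2$ agree, which is exactly what we are assuming fails), and let $X$ be the normalization of the corresponding component of the fiber product $S_1 \times_{\cp^2} S_2$. Then $X$ carries finite maps $p_1 : X \to S_1$ and $p_2 : X \to S_2$ whose degrees are controlled by the inequivalence of $\rho_1,\rho_2$, and whose ramification sits over the ramification curves $B_1^{*},B_2^{*}$. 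The local node/cusp conditions above pin down the singularities of $X$ lying over the nodes and cusps of $B$.

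The engine is then the inequality $c_1^2 \le 3 c_2$. I would compute the Chern numbers of a smooth minimal model $\tilde X$ of $X$ by combining Riemann--Hurwitz and adjunction bookkeeping for $p_1,p_2$ with the formulas for $c_1^2(S_i)$ and $c_2(S_i)$ in terms of $(d,c,\chi)$ recorded in Lemmas \ref{lem_c_1^2} and \ref{lem_c_2}, reducing to the general-type case so that Bogomolov--Miyaoka--Yau applies. Imposing $c_1^2(\tilde X) \le 3 c_2(\tilde X)$ together with the same inequality for $S_1$ and $S_2$ yields a single numerical relation in $d,c,\chi$ and the degrees; the dominant correction terms come from the cusps, where the two covers attach their adjacent transpositions, and for $\nu \ge 5$ the relation is violated unless the local cusp pictures of $\rho_1$ and $\rho_2$ coincide, forcing $\rho_1$ and $\rho_2$ to agree globally.

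The main obstacle is precisely this last estimate: the singularity analysis of the fiber product $X$ over the cusps and nodes of $B$ and the extraction from BMY of a contradiction whose threshold is exactly $\nu = 5$. One must verify that a genuine discrepancy between $\rho_1$ and $\rho_2$ forces enough additional singularity on $X$ (lowering $c_2(\tilde X)$ or raising $c_1^2(\tilde X)$) to break the inequality. For large degree this is comparatively soft and is essentially what the Nemirovski inequality $15d - 5\chi - 6c > 0$ recorded above encodes; the genuinely hard part, and the content of Kulikov's work, is the borderline range $5 \le \nu \le 11$, where the logarithmic BMY estimates are tight and the cusp contributions must be accounted for with no slack.
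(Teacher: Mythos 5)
Your overall strategy---translate covers of $\cp^2$ branched along $B$ into monodromy representations $\pi_1(\cp^2 - B) \to \Sym_\nu$, form the fiber product $S_1 \times_{\cp^2} S_2$ of two hypothetical inequivalent covers, normalize, and extract a numerical contradiction---is indeed the strategy of Kulikov's proof, and it is all the paper itself records: Theorem \ref{thmChisiniConj} is not proved in the paper but cited to \cite{Ku}, \cite{Ku2}, together with a one-sentence sketch of exactly this fiber-product construction. However, there are two genuine gaps in your version. First, you have the wrong engine. Kulikov's contradiction comes from the \emph{Hodge Index Theorem} applied to divisor classes on the normalization of the fiber product, not from Bogomolov--Miyaoka--Yau. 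The difference is not cosmetic: Hodge index holds for every smooth projective surface, whereas $c_1^2 \le 3c_2$ requires (minimal) general type---it fails, for instance, for irrational ruled surfaces, and indeed the paper's own Bogomolov-type inequality (\ref{eqnBogomolov}) is stated only for surfaces of general type. Your parenthetical ``reducing to the general-type case'' is precisely the step that cannot be waved through: a priori one controls nothing about the Kodaira dimension of the normalized fiber product, and there is no known derivation of the threshold $\nu \ge 5$ from BMY.

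Second, and more fundamentally, your proposal defers its decisive step to the theorem it is meant to prove: you state that the singularity analysis over the nodes and cusps and the extraction of the contradiction ``is the content of Kulikov's work,'' especially in the borderline range $5 \le \nu \le 11$. That concedes that what is written is a plan whose hard kernel is the statement itself, not a proof. Relatedly, your closing inference is a non sequitur: even if a numerical violation forced the local monodromy pictures of $\rho_1$ and $\rho_2$ at the cusps to coincide, agreement of local monodromies at the singular points of $B$ does not imply conjugacy of the global representations. In Kulikov's argument the logic runs differently: the mere existence of a cover inequivalent to the first one produces a component of the fiber product whose normalization carries classes violating the Hodge index theorem (under the degree hypothesis), whence no such second cover exists; one never needs to propagate local agreement to global agreement.
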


Kulikov proved this conjecture for generic covers of  degree greater than 11 and for
 generic linear projections of degree greater than 4. Kulikov considered two surfaces $S_1, S_2$
ramified over the same branch curve, and studied the fibred product $S_1
\times_{\cp^2} S_2$, proving that the normalization of this fibred
product contradicts Hodge's Index Theorem if $(S_1,f_1)$ is not
isomorphic to $(S_2,f_2)$.


\begin{remark}
\emph{
We want to mention that a version of a Generalized Chisini's conjecture also exists, for surfaces
with normal isolated singular points:
\begin{conjecture}
Let $f_i : S_i \to \pp^2,\,i=1,2$ be two generic coverings with the same branch curve $B$ where $S_i$
can have singular points, denoted as $\Sing S_i$. Assume $f_1(\Sing S_1) = f_2(\Sing S_2)$. Then either
there exists a morphism $\phi : S_1 \to S_2$ s.t. $f_1 = f_2 \circ \phi$ or $(f_1,f_2)$ is an exceptional pair.
\end{conjecture}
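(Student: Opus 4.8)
The natural plan of attack is to adapt Kulikov's proof of Theorem \ref{thmChisiniConj} to the present singular setting. Writing $\nu_i = \deg f_i$, I would begin by forming the fibered product $X = S_1 \times_{\pp^2} S_2$ with its two projections $\mathrm{pr}_i : X \to S_i$, and then pass to the normalization $\tX$ of a suitably chosen irreducible component of $X$. Over the open set $U = \pp^2 - B$ both $f_i$ are étale, so $\tX$ is étale of degree $\nu_1 \nu_2$ over $U$; exactly as in the smooth case, the whole problem is to control the degenerations of $\tX$ over $B$ and over the common singular locus, and to repackage them into a single intersection-theoretic inequality to which the Hodge Index Theorem can be applied.

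The steps I would carry out, in order, are the following. First, a purely local analysis of $\tX \to \pp^2$ over each stratum of the base: over a generic point of $B$ the generic cover has local monodromy a single transposition, so $f_i$ is, up to $\nu_i - 2$ trivial sheets, the double cover $z^2 = t$, and the local structure of $X$ and its normalization is read off as Kulikov does; over a node and a cusp of $B$ the local pictures are governed by the standard models $z^2 = xy$ and $z^2 = x^2 - y^3$; and, the genuinely new stratum, over a point of $Z := f_1(\Sing S_1) = f_2(\Sing S_2)$, where I would classify the admissible local monodromies using that each $S_i$ has only isolated normal singularities. Second, I would resolve the singularities of $\tX$ to obtain a smooth projective surface $Y$ dominating (resolutions of) $S_1$ and $S_2$, tracking the exceptional divisors contributed over $Z$ and over the ramification and their effect on $K_{\tX}$ and hence on $K_Y$. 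Third, on $Y$ I would form the difference of the two pullbacks of an ample (or canonical) class from $S_1$ and from $S_2$, compute its self-intersection, and invoke the Hodge Index Theorem; as in Kulikov's argument the resulting sign can be compatible only when the two projections identify $S_1$ and $S_2$ over $\pp^2$, i.e. when the desired $\phi$ exists. The remaining configurations, in which the Hodge-index inequality fails to produce $\phi$, are by definition the \emph{exceptional pairs} that the statement permits.

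The main obstacle, and where the bulk of the work lies, is the local and global bookkeeping at the singular points $Z$. In the smooth case every local contribution to the intersection numbers is pinned down by the transposition/node/cusp trichotomy, whereas here the normal singularities of the $S_i$ inject extra correction terms into both the adjunction computation of $K_Y$ and the self-intersection that feeds Hodge Index. The hypothesis $f_1(\Sing S_1) = f_2(\Sing S_2)$ is precisely what makes these corrections comparable on the two sides: it forces the two fibered-product degenerations to sit over the \emph{same} points of $\pp^2$, so that a term-by-term comparison is meaningful; without it the two surfaces could acquire singularities over disjoint loci and no clean inequality would survive. I expect the outcome of this bookkeeping to be a Hodge-index inequality whose equality case is rigid except on a short explicit list of configurations, and the content of the conjecture is exactly that this list is what deserves the name ``exceptional pairs.'' Showing that the inequality is \emph{strict} off this list — which is where the genericity of the covers must be used in an essential way, rather than formally — is the step I expect to resist a routine treatment, and it is the reason the statement is still phrased as a conjecture rather than a theorem.
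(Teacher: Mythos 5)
There is nothing in the paper for your proposal to be compared against: the statement you were given is stated in the paper as a \emph{conjecture} (the Generalized Chisini Conjecture of \cite{Gen}), explicitly left unproven. The paper only records that it has been partially established by V.~S.~Kulikov and Vik.~S.~Kulikov --- for generic $m$-canonical coverings with $m \geq 5$ in \cite{ADE}, and when $\max(\deg f_1, \deg f_2) \geq 12$ or $\leq 4$ in \cite{Gen} --- and refers to \cite{Gen} for the definition of an exceptional pair. Your proposal is likewise not a proof but a strategy outline, and you acknowledge as much in your closing sentence; so the honest verdict is that you have reconstructed (correctly, in broad strokes) the approach of the cited partial results --- fibered product $S_1 \times_{\pp^2} S_2$, normalization, local analysis over $B$ and over the common singular image, then the Hodge Index Theorem --- without supplying the content that makes those results theorems in their restricted ranges, namely the complete local classification over the singular stratum and the numerical estimates that force the index inequality to be strict.

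One point in your write-up is a genuine conceptual error rather than an admitted gap: you say the exceptional pairs are ``by definition'' the configurations in which the Hodge-index inequality fails to produce $\phi$. That reading makes the conjecture a tautology --- any dichotomy of the form ``either the argument works or we are in the complementary case'' is vacuously true. In the actual conjecture, ``exceptional pair'' has a fixed, prior meaning given in \cite{Gen} (a specific list of pairs of coverings, e.g.\ arising from particular singular configurations), and the mathematical content is precisely that every failure of uniqueness lands on that explicit, independently described list. Proving this identification --- that your Hodge-index failure cases coincide with the list of \cite{Gen}, for all degrees and not just the ranges covered there --- is the open problem, and no amount of bookkeeping of the kind you sketch is currently known to close it.
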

See \cite{Gen} for the definition of an exceptional pair. This theorem was partially proven by V. S. Kulikov and Vik. S. Kulikov for $f_1, f_2$ generic $m$--canonical
coverings, for $m \geq 5$ (see \cite{ADE}) or when $max(\deg f_1, \deg f_2) \geq 12$ or $max(\deg f_1, \deg f_2) \leq 4$ (see \cite{Gen}).
}
\end{remark}

\begin{remark}
\emph{One of the theorems induced from the proof of the
Chisini's conjecture was the fact that a class of certain
factorization associated to the branch curve $B$ (i.e. the Braid
Monodromy Factorization)  determines the diffeomorphism
type of $S$ as a smooth 4-manifold.  We refer the reader to \cite{Mo0}, \cite{MoTe1}
for an introduction of this factorization, and to Kulikov and Teicher's proof
 \cite{KuTe} of the above theorem.}
 %
%
%
%
\end{remark}

\subsubsection{Representation-theoretic reformulation}

Let $G_i$ (resp. $\G_i$) be the local fundamental group of $\cp^2 - B$
at the neighborhood of a cusp (resp. a node) of $B$.  Note that each
$G_i$ is isomorphic to the group with presentation $\{a,b : aba =
bab\}$ and every $\G_i$ is isomorphic to the group with presentation
$\{a,b : ab=ba\} = \Z^2$.

Let $l$ be a line in $\cp^2$ in generic position with $B$, $p_i$ ($i =
1, \dots, d$) be the intersection points of $B$ and $l$, $p_*$ be a
generically chosen point in $l$ and $\gamma_i$ be a small loop around
$p_i$ starting and ending at $p_*$. The map $Free_d \to \pi_1(\cp^2 -
B)$ sending generators of $Free_d$ to $[\gamma_i]$ is epimorphic by
Zariski--Van Kampen theorem, and the classes $[\gamma_i]$ are called
\textsl{geometric generators} of $\pi_1(\cp^2 - B)$.

It is well known (see \cite{Mo} or \cite[Proposition 1]{Ku}) that given a ramified cover  $S \to \cp^2$, the monodromy map
$ \varphi:\p(\cp^2 - B)\to Sym_\nu$ satisfies the following three conditions:
\begin{enumerate}
  \item[(i)] for each geometric generator $\g$, the image
    $\varphi(\g)$ is a transposition in $\Sym_\nu$;
  \item[(ii)] for each cusp $q_i$, the image of the two geometric
    generators of $G_i$ is two non-commuting transpositions in
    $\Sym_\nu$;
  \item[(iii)] for each node $p_i$, the images of two geometric
    generators of $\G_i$ are two different commuting transpositions in
    $\Sym_\nu.$
\end{enumerate}

The inverse assertion is a group theoretic reformulation on the Chisini's theorem (\cite{Mo}):
%

\begin{prs} \label{prsChisiniEquiv} The map associating the monodromy
  representation with each ramified cover $S \to \cp^2$ gives an
  isomorphism of the set of the isomorphism classes of generic
  ramified covers of $\cp^2$ of degree $\nu$ with the branch curve $B$
  and the set of isomorphism classes of epimorphisms $ \varphi:\p(\cp^2 -
  B)\to Sym_\nu$ satisfying the  conditions (i),(ii) and (iii) above,
with respect to the action of $\Sym_\nu$ on the set of such
representations by inner automorphisms.
\end{prs}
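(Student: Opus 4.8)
The plan is to deduce the statement from the Grauert--Remmert equivalence (Theorem \ref{thmGR}) together with a purely local analysis of the cover along the smooth points, the cusps, and the nodes of $B$. Write $U = \cp^2 - B$. Connected finite \'etale covers of $U$ of degree $\n$ are classified, up to isomorphism, by transitive permutation representations $\p(U) \to \Sym_\n$ taken up to conjugacy in $\Sym_\n$, the conjugation corresponding to a relabeling of the $\n$ sheets (an inner automorphism). Applying Theorem \ref{thmGR} with $X = \cp^2$ and $Y = B$, each such \'etale cover extends uniquely to a normal analytic cover $\pi : S \to \cp^2$ \'etale over $U$; by the algebraic Grauert--Remmert theorem recalled above together with GAGA this $S$ is in fact a normal projective surface and $\pi$ a finite morphism. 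The full faithfulness of the equivalence gives that non-conjugate representations produce non-isomorphic covers (injectivity) and its essential surjectivity gives that every normal cover arises this way (surjectivity). Thus one already has a bijection between conjugacy classes of transitive representations $\p(U) \to \Sym_\n$ and isomorphism classes of normal degree-$\n$ covers of $\cp^2$ \'etale over $U$; it remains only to match, on the cover side, the \emph{smooth generic} covers with branch curve exactly $B$, and on the representation side, the epimorphisms satisfying (i)--(iii).

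The forward direction is exactly the classical list of conditions recalled before the statement (see \cite{Mo}, \cite[Proposition 1]{Ku}): the monodromy of a generic cover sends every geometric generator to a transposition, so its image $H \subset \Sym_\n$ is generated by transpositions; since $S$ is connected $H$ is transitive, and a transitive subgroup of $\Sym_\n$ generated by transpositions is all of $\Sym_\n$, whence $\varphi$ is an epimorphism, and the local pictures at the singular points of $B$ force (ii) and (iii). For the converse, which is the substance of the proof, I would argue analytic-locally along $B$. Near a smooth point of $B$ the local fundamental group is $\Z$, generated by a single geometric loop; condition (i) makes its image a transposition, so precisely two sheets come together and the normal extension supplied by Theorem \ref{thmGR} has local model $z^2 = t$, which is smooth and simply ramified, and in particular shows that the branch locus is all of $B$ rather than a proper subcurve. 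Near a cusp the local group is $G_i = \{a,b : aba = bab\}$, and by (ii) its generators map to two non-commuting transpositions, necessarily of the form $(1\,2)$ and $(2\,3)$ after relabeling, generating a copy of $\Sym_3$ acting on three sheets and fixing the rest. Near a node the local group is $\G_i = \Z^2 = \{a,b : ab = ba\}$, and by (iii) its generators map to two distinct commuting transpositions, of the form $(1\,2)$ and $(3\,4)$ after relabeling.

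In each of these three local situations one computes the normalization of the corresponding germ of cover determined by the prescribed monodromy and recognizes it as the standard smooth generic model: $z^2 = t$ over a smooth branch point, the projection to the discriminant of the universal cubic (an $A_2$ germ, as in the Whitney example above) over a cusp, and two transverse smooth sheets over a node. This shows that $S$ is smooth and $\pi$ generic with branch curve $B$ precisely when (i)--(iii) hold, so the Grauert--Remmert bijection restricts to the asserted one. Passing to isomorphism classes on both sides, an isomorphism of covers of $\cp^2$ restricts over $U$ to a relabeling of sheets, i.e. to conjugation by an element of $\Sym_\n$, so the bijection becomes the claimed identification of isomorphism classes with conjugacy classes of epimorphisms, which is the statement.

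I expect the main obstacle to be exactly this last local verification: one must show that the normal-but-a-priori-possibly-singular surface produced by Theorem \ref{thmGR} is genuinely smooth under (ii) and (iii), by identifying the normalized local cover with a smooth germ and ruling out the competing normal singular extensions --- for instance the $A_1$-cone $z^2 = xy$ which arises from the ``wrong'' node monodromy (cf. the example immediately following Theorem \ref{thmGR}, where both generators map to the same transposition). Granting these local models, the remainder is formal covering theory organized by the Grauert--Remmert equivalence.
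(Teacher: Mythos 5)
The paper does not actually prove this proposition: it is stated as ``a group theoretic reformulation of the Chisini's theorem'' and attributed to Moishezon \cite{Mo} (see also \cite[Proposition 1]{Ku}), so there is no in-text proof to compare yours against. Your argument is, however, precisely the strategy the paper itself sketches in its introduction --- Theorem \ref{thmGR} attaches to every representation $\p(\cp^2-B)\to\Sym_\nu$ a unique normal cover of $\cp^2$ \'etale over the complement, and conditions (i)--(iii) are exactly the local constraints forcing that normal cover to be smooth and generic with branch curve all of $B$ --- and it is correct in outline. The one step you assert rather than prove is the decisive local statement at a cusp: that the connected triple cover of the complement of the germ $y^2=x^3$ with monodromy $a\mapsto(1\,2)$, $b\mapsto(2\,3)$ extends to a smooth germ. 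To close this, note that the universal cubic $\{z^3+xz+y=0\}$, projected to the $(x,y)$-plane, is a smooth triple cover branched over a cusp realizing exactly this monodromy, and the uniqueness (full-faithfulness) half of Theorem \ref{thmGR}, applied to the germ, identifies any normal extension having this monodromy with that model; the same uniqueness argument pins down the node germ as $\{z^2=x\}\sqcup\{w^2=y\}$ (smooth, versus the excluded $A_1$-cone $z^2=xy$ when the two transpositions coincide) and the smooth-point germ as $z^2=t$. With that remark supplied, your proof is complete and is, as far as one can reconstruct, the argument of Moishezon and Kulikov that the paper is quoting.
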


\subsection{Construction of the variety of branch curves $B(d,c,n)$}
%
%
\label{secB_dcn}

Let $V = V(d,c,n)$ be the Severi-Enriques subvariety  in $|dh|$ of degree $d$ plane curves with $n$ nodes
and $c$ cusps. Let $B = B(d,c,n) \subseteq V$ 
the subset consists of branch curves. In this subsection we show that
 $B(d,c,n)$ is a subvariety of $V(d,c,n)$.
Although it is standard, we have not found it in
the literature, though references to its existence can be found in
 \cite{Wahl} or in \cite{Vak}. The following lemma proves that the variety of branch curves of ramified
 covers is a union of connected components of $V$. Using the same techniques in the following proof, and the
fact that the
Chisini's conjecture is proven (for generic linear projections), one can prove that also $B$ is a
union of connected components of $V$.

\begin{lemma} \label{lemBranchVar}
Over the field $k = \CC$, every connected component $V_i$ of $V$
either does not contain branch curves of generic covers at all, or
every curve $C \in V_i$ is a branch curve of a generic cover. Explicitly,
 every component of $B$ is a connected component of $V$.
\end{lemma}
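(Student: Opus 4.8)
The plan is to reduce the geometric statement to the purely group-theoretic criterion of Proposition~\ref{prsChisiniEquiv} and then to show that this criterion is locally constant over $V = V(d,c,n)$. Recall that by Proposition~\ref{prsChisiniEquiv} a curve $C \in V$ is the branch curve of a generic cover of degree $\nu$ if and only if the fundamental group $\pi_1(\pp^2 - C)$ admits an epimorphism $\varphi : \pi_1(\pp^2 - C) \to \Sym_\nu$ satisfying conditions (i), (ii), (iii) (transposition on each geometric generator, non-commuting transpositions on the two generators of each local cusp group $G_i$, distinct commuting transpositions on the two generators of each local node group $\G_i$). So it suffices to prove that the existence of such a $\varphi$ is constant on each connected component $V_i$ of $V$.

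First I would fix a connected component $V_i$ and use that the universal family of curves over $V(d,c,n)$ is equisingular: every fibre has exactly $c$ cusps and $n$ nodes and no further singularities. Applying equisingularity theory --- for instance Thom's first isotopy lemma to a Whitney stratification of the total space of the universal curve inside $V_i \times \pp^2$ --- the family of pairs $(\pp^2, C_t)$, $t \in V_i$, is topologically locally trivial over $V_i$. Since $V_i$ is connected, choosing a path between two parameters yields a homeomorphism of pairs, hence an isomorphism $\pi_1(\pp^2 - C_t) \isom \pi_1(\pp^2 - C_{t'})$. Because the trivializing homeomorphism respects the singular locus and its local topology, this isomorphism carries meridians to meridians and maps the local group $G_i$ (resp.\ $\G_i$) at each cusp (resp.\ node) isomorphically onto the corresponding local group, preserving the distinguished pairs of generators. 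Consequently a representation to $\Sym_\nu$ satisfies conditions (i)--(iii) and is surjective for $C_t$ exactly when the transported representation does so for $C_{t'}$.

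Combining these two points, Proposition~\ref{prsChisiniEquiv} shows that $C_t$ is a branch curve of a generic cover of degree $\nu$ if and only if the admissible $\varphi$ exists, and the second step shows this existence is independent of $t \in V_i$. Hence each $V_i$ either lies entirely in the locus of branch curves or is disjoint from it, which is precisely the assertion that this locus is a union of connected components of $V$. The hard part is the topological local triviality step: one must know that a family of plane curves with constant numbers of nodes and cusps is topologically equisingular, so that the pairs $(\pp^2, C_t)$ form a locally trivial fibration preserving the meridional and local structure. For curves this is classical --- nodes and cusps are the $A_1$ and $A_2$ singularities, which admit simultaneous resolution and whose equisingular type is detected by the mere count of each --- but it is where the real content sits; everything else is bookkeeping with the monodromy characterization.

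For the sharper claim about $B = B(d,c,n)$, i.e.\ branch curves of generic \emph{linear} projections of smooth surfaces rather than arbitrary generic covers, I would run the same argument and additionally propagate the property that the cover reconstructed from $\varphi$ arises as a linear projection of a smooth surface. This is exactly where Chisini's conjecture (Theorem~\ref{thmChisiniConj}) enters: the uniqueness of the cover associated to a branch curve lets one identify the covers over nearby parameters and thereby carry the ``linear-projection of a smooth surface'' property along $V_i$, so that $B$ too is a union of connected components of $V$.
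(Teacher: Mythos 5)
Your proposal is correct and follows essentially the same route as the paper: the paper likewise reduces to the existence of a ``geometric'' epimorphism $\pi_1(\pp^2 - C) \to \Sym_\nu$ satisfying conditions (i)--(iii), identifies the fundamental groups $\pi_1(\pp^2 - C_t)$ along a path in a connected component via equisingularity of the family, observes that this identification preserves geometric homomorphisms, and concludes that being a branch curve is constant on components, with the refinement for generic linear projections handled by Chisini's conjecture exactly as in your last paragraph. Your write-up is merely more explicit about the topological input (Whitney stratification and Thom's first isotopy lemma) that the paper compresses into the phrase ``as these curves are equisingular, we get an identification of fundamental groups.''
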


\newcommand{\geomHom}{\mathop{\Hom_{ \text{geom} }}  }   

\begin{proof}

Let us fix a connected component $V_1$ of $V = V(d,c,n)$, let $p
\in V_1$, and let $C$ be the corresponding plane curve. Take $q
\in V_1,\, q \neq p$ and choose a path $I = [0,1] \to V_1$
connecting $p$ and $q$. Let us denote $G_C = \p(\pp^2-C), G_{C_t}
= \p(\pp^2-C_t)$ with $C_t \in V_1, t \in I$ where $C_1$
corresponds to $q$. As these curves are equisingular, we get an
identification of fundamental groups
$$
G_{C_t} \overset{\sim}{\rightarrow} G_C.
$$
For every $t \in I$. Consider the group $\Hom(G_C,\Sym_N)$ and its
subgroup $\Hom_{\text{geom}}(G_C,\Sym_N)$ of geometric homomorphisms
-- i.e., homomorphisms which satisfy the conditions (i),(ii),(iii)
above -- which can be empty. From the above identification, we get
a canonical set bijection from $\Hom(G_{C_t},\Sym_N) \to \Hom(G_C,\Sym_N)$
preserving the set of geometric homomorphisms. In particular,
$\Hom_{\text{geom}}(G_C,\Sym_N)$ is empty if and only if
$\Hom_{\text{geom}}(G_{C_1},\Sym_N)$ is empty, and thus $C$ is a branch
curve if and only if $C_1$ is. Therefore $B(d,c,n)$ is a union of
connected components of $V$ and thus it is a subvariety.
\end{proof}

\begin{remark}
\emph{We want to describe here on the action of the fundamental group $\pi_1(V)$
on $G = \p(\pp^2-C)$. Let $p \in V$, $C$ be the corresponding
degree $d$ plane curve and $U = \cp^2 - C$.
A loop $\gamma: I \to V$ (starting and ending at $p$), induces an
automorphism of the group $G = \pi_1(U)$, and thus an automorphism
of the set of representations $\Hom(\pi_1(U),Sym_N)$
which preserves the set of geometric representations
$\Hom_{\text{geom}}(\pi_1(U),Sym_N)$. To
describe it more explicitly, note that we can choose a line $l
\subset \cp^2$ in generic position to every $C_t$, $t \in I$
(since the set of lines in special position to a fixed curve in
$\cp^2$ forms a dual curve in the dual plane, and thus the space
of lines which are special to some $C_t$ is of real codimension 1
in the dual plane). Note that $l - l \intersect C = l \intersect U
\simeq \pp^1 - \{ d \text{ points}\}$. Let us now choose a base point
$a_*$ on $l$ not belonging to any of the curves $C_t$, and a
``geometric basis'' $\G$ of $\pi_1(U,a_*) = \pi_1(\pp^2 - C,a_*)$,
 which gives an epimorphism
$$
      e(\G): \pi_1(l \intersect U , a_*) \to \pi_1(U,a_*).
$$
 Recall that the group of classes of diffeomorphism of ($\cp^1 - d$ points)
modulo diffeomorphisms homotopic to identity can be identified
with the commutator of the braid group $B_d' = Braid_d/ Center(Braid_d)$
(see e.g. \cite{Mo0}).
A loop $\gamma \in \p(V)$ gives a diffeomorphism of $l \intersect
U$, which in turn induces an
automorphism of $\pi_1(l \intersect U)$, i.e. an element in
$\Aut(\pi_1(l \intersect U))$ or equivalently, an element $b_{\g} \in B_d'$.
%
%
It follows that there is a natural diagram
$$
   \xymatrix
   {
        { \pi_1(V) }  \ar[r]^{\alpha}\ar[rd]^{\beta}    & {\Aut \pi_1(U)}
                       \ar[r]  &  { \Aut (\Hom_{\text{geom}} (\pi_1(U), Sym_N)) } \\
        {  }                                  & {\Aut \pi_1(l \intersect U) \simeq B_d'} &  \\
   }
$$
and a commutative triangle:
$$
   \xymatrix
   {
        { \pi_1(V) }  \ar[r]^-{\alpha} \ar[rd]^{\beta}  & {\text{Im}(\alpha) \subseteq \Aut \pi_1(U)}
                        \\
        {  }                                  & {\text{Im}(\beta) \subseteq B_d'} \ar[u]&  \\
   }
$$
An element $b_\g \in B_d'$ 
which is the image of $\gamma$ admits a decomposition of $b_\g$ 
 into a product of canonical generators of $B'_d$, i.e. $b_\g = x_1^{\pm 1} \cdot ... \cdot x_k^{\pm 1}$.
Since $\pi_1(l \intersect U) = Free_d = \langle y_1,...,y_d
\rangle$,  we can describe explicitly the action of each $x_i$ on
$\Aut (Free_d)$:
$$
x_i(y_j) = y_j \,\,   \text{if} \,\,  j \neq i,i+1$$$$ x_i(y_i) =
y_{i+1}$$
$$ x_i(y_{i+1}) = y_{i+1}^{-1}\cdot y_i \cdot y_{i+1}.
$$
Thus, the action of an element $\g \in \pi_1(V)$ on the group $G = \pi_1(U,a_*)$ can be expressed
as a map on the generators $\{y_i\}$ of $G$ : $(y_i \mapsto b_{\g}(y_i) = (x_1^{\pm 1} \cdot ... \cdot x_k^{\pm 1})\cdot y_i)$
where $x_j \cdot y_i$ is given by the above action.
Note that this action is non-trivial in general, and thus
$\pi_1(V,p)$ acts generically non-trivially on the set of good
covers $S \to \cp^2$ ramified over a given curve $C$. However, in
a situation when such a cover is unique up to a deck
transformation, like in the case of a high degree ramified cover, (due to Chisini's conjecture), this action reduces to the
action of the deck transformation group $\Aut(S / \cp^2)$ which is the trivial group, for geometric reasoning.}
\end{remark}


\newcommand{\Kpure}{B_{res}}   
\newcommand{\Kres}{\Kpure}
\newcommand{\Qpure}{Q_{res}}   
\newcommand{\Qres}{\Qpure}

\section{Surfaces in $\cp^3$} \label{secSurInP3}

Let $X$ be a smooth surface in $\pp^r$ and $p: \pp^r \to \pp^2$ be
generic projection; we decompose $p$ as a composition of projections $
\pp^r \stackrel{p_1}{\to} \pp^3 \stackrel{p_2}{\to} \pp^2$ such that
$S = p_1(X)$ is smooth or has ordinary singularities in $\pp^3$. We begin in section
\ref{subsecSmHyp} with the examination of branch curves of smooth surfaces in $\cp^3$ and proceed to
singular surface in section  \ref{subSecSingHyp}.


\subsection{Smooth surfaces in $\cp^3$} \label{subsecSmHyp}

 Our goal here is to reformulate and give a more
modern proof to a result of Segre \cite{Se} published in 1930. Segre
proved that the set of singular points of the branch curve of a
smooth surface in $\cp^3$ is a special 0-cycle with respect to
some linear systems on $\cp^2$, i.e., it lies on some curves of
unexpectedly low degree. (We remind that a curve passing through
the singularities of a given one is called adjoint curve. See
Definition \ref{defAdjoint}).  For example, if $\deg S = 3$, we
get the following result Zariski published in 1929 (cf.
\cite{Za1}): the variety
of plane 6-cuspidal sextics
has two disjoint irreducible components. Every curve in
the first component is a branch curve of a smooth cubic surface
and all its six cusps are lying on a conic, while the second
component does not contain any branch curves.
(Miraculously, this condition does not define a subvariety of
positive codimension in the
variety of all plane curves of degree 6 with 6 cusps,
but rather selects one
of its two irreducible components, which was probably the most
surprising discovery of Zariski concerning this variety.)

In the following paragraphs we recall Segre's method for constructing some
adjoint curves to branch curves of ramified covers. The main result is the
following: a nodal--cuspidal curve $B$ is a branch curve iff there are two
adjoint curves of (some particular) low degree passing through all the
singularities of $B$ (see Theorem \ref{thmSegre}).  Though this result was
presented in  \cite{Kul} (by Val. S.  Kulikov) and in \cite{DA} (by J.
D'Almeida), our point of view is different, as we emphasize  the relations
between the Picard and Chow groups of 0--cycles of the singularities of the
branch curve.  We also investigate the connections between adjoint curves and
the sheaf of weakly holomorphic rational functions on a nodal--cuspidal curve
$C$. We hope that the study of the Picard group of branch curves and the study
of adjunction with values in sheaf of weakly holomorphic rational functions
(see \cite{Nara}) gives a new understanding of the work of Segre.

Let $S$ be a smooth surface of degree $\nu$ in $\cp^3$, and let
$\pi: \cp^3 \to \cp^2$ be a projection from a point $O$ which is not on
$S$. Let $B \subset \cp^2$ be a branch curve of $\pi$. It is easy
to see that the degree of $B$ is $d = \n(\n-1)$: indeed, $B$ is
naturally a discriminant of a homogeneous polynomial of degree
$\nu$ in one variable. The curve $B$ is in general singular,
however, for a generic projection it has only nodes and cusps as
singularities (see e.g. \cite{CiroFla}).

Assume now that $S$ is given by a homogeneous form $f(x_0, \dots
,x_3)$ of degree $\n$, and $O = (O_0,..,O_3)$ is a point in
$\cp^3$ which is not on $S$. The polar surface $\Pol_{O}(S)$ is
given by the degree $\n - 1$ form $\sum O_i f_i$, where $f_i =
\frac{\partial f}{\partial x_i}$. The following lemma is well
known:

\begin{lemma} Let $\pi:S \to \pp^2$ be the projection with center $O$. The ramification curve
  $B^*$ of $\pi$ is the intersection of $S$ and the first polar
  surface $\Pol_{O}(S)$.
\end{lemma}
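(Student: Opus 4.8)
The plan is to identify the ramification locus of $\pi$ with the locus where the projecting line is tangent to $S$, and then to recognise that tangency condition as the equation $\sum_i O_i f_i = 0$ cutting out $\Pol_O(S)$. Since $O \notin S$, projection from $O$ is a genuine morphism on all of $S$, and its fibres are the intersections $\ell \cap S$ with $\ell$ a line through $O$, each of which is $\nu$ points counted with multiplicity. The first thing I would pin down is the differential: for $x \in S$, the kernel of $d\pi_x \colon T_x\pp^3 \to T_{\pi(x)}\pp^2$ is precisely the one-dimensional direction of the projecting line $\overline{Ox}$, so the restriction of $d\pi_x$ to the tangent plane $T_x S$ fails to be injective exactly when $T_x S$ contains this line direction. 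Because the line $\overline{Ox}$ passes through $x$, this is the same as saying that the whole line $\overline{Ox}$ lies in $T_x S$, i.e. that $\overline{Ox}$ is tangent to $S$ at $x$. This gives the geometric description of $B^*$ as the set of points $x \in S$ for which the projecting line is tangent to $S$.

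Next I would convert this into an explicit equation. The tangent plane to $S$ at $x = (x_0, \dots, x_3)$ is $\{\, \sum_i f_i(x)\, X_i = 0 \,\}$, and by Euler's relation $\sum_i x_i f_i(x) = \nu f(x) = 0$ for $x \in S$, so $x$ always lies on its own tangent plane. Hence the line $\overline{Ox}$, spanned by $x$ and $O$, lies in $T_x S$ if and only if $O$ lies in $T_x S$ as well, that is,
$$ \sum_i O_i f_i(x) = 0. $$
This is exactly the defining equation of the polar surface $\Pol_O(S)$ evaluated at $x$. Therefore $x \in B^*$ if and only if $x \in S \cap \Pol_O(S)$, which gives the set-theoretic equality $B^* = S \cap \Pol_O(S)$.

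Finally I would check that the equality holds with the correct reduced structure and not merely as point sets. Since $\deg S = \nu$ and $\deg \Pol_O(S) = \nu - 1$, the complete intersection $S \cap \Pol_O(S)$ is a space curve of degree $\nu(\nu-1)$, which matches the degree of $B^*$: its image $B$ has degree $\nu(\nu-1)$ and $\pi \colon B^* \to B$ is birational. Combined with the fact (recalled earlier for a generic projection) that $B^*$ is smooth, hence reduced, this identifies the complete intersection with the reduced ramification curve, with no excess or embedded components. The main obstacle is the differential computation of the first step: one must verify in an affine chart around a point $x \in S$, using $O \notin S$, that the kernel of $d\pi$ is genuinely the projecting direction and that $\pi|_S$ is unramified away from the tangency locus, so that no spurious ramification is introduced. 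Once this local transversality statement is in hand, the remainder is the Euler-relation bookkeeping above.
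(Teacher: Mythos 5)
Your proposal is correct and takes essentially the same approach as the paper: both arguments identify the ramification locus of $\pi$ with the set of points $p \in S$ whose tangent plane $T_p S$ contains $O$ (equivalently, where the projecting line $\overline{Op}$ is tangent to $S$), and recognize that condition as the polar equation $\sum_i O_i f_i = 0$ cutting out $\Pol_O(S)$. You merely spell out the differential computation, the Euler-relation step, and the reducedness check that the paper's two-sentence proof leaves implicit.
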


Indeed, the intersection of $S$ and $\Pol_O(S)$ consists of such points
$p$ on $S$ that the tangent plane to $S$ at $p$, $T_p S$, contains the
point $O$.  This implies that the line joining $O$ and $p$ intersects
$S$ with multiplicity at least 2 at $p$.

Note that this gives yet another proof that $\deg B^* = \deg S \cdot \deg
(\Pol_O(S)) = \nu (\nu - 1)$.\\


\noindent {\textbf{Notation}}:
\begin{enumerate}
  \item [(1)] $H \in A_2 \cp^3$ is a class of hyperplane in $\cp^3$;
  \item [(2)] $h \in A_1 \cp^2$ is a class of a line in $\cp^2$;
  \item [(4)] $\ell^* = H|_{B^*}$, $\ell^*  \in A_0 B^*$;
  \item [(3)] $\ell = h|_B$, $\ell \in A_0 B$; 
\end{enumerate}

We also denote
\begin{enumerate}
  \item [(5)] $S'_O =  \Pol_{O}(S) \subset \cp^3$, and
  \item [(6)] $S''_O = \Pol^2_{O}(S) $ is the second polar surface to $S$ w.r.t. the point $O$;
it is given by a homogeneous form %
$ f'' = (\sum O_i \frac{\partial}{\partial x_i})^2 f = \sum O_{i}
O_{j} f_{ij}$ of degree $\n -2$.

 \item [(7)] We call a 0-subscheme
with length 1 at every point a \emph{0-cycle}.
 \item [(8)] Let $P
\subset B$ be the 0-cycle of nodes on $B$, and $P^*$ be its
preimage on $B^*$.  Note that $\deg P^* = 2 \deg P$, as can be
seen from Lemma \ref{lemNumNodeCuspSm}. \item [(9)] Let $Q \subset
B$ be the 0-cycle of cusps on $B$, and $Q^*$ be its preimage on
$B^*$.  Note that $\deg Q^* = \deg Q$ (see Lemma
\ref{lemNumNodeCuspSm}).
\item[(10)]$\xi$  be the 0-cycle of singularities
of $B$.
\end{enumerate}

From now on we assume that $O$ is chosen generically for a given surface $S$.
It follows that $B^*$ is smooth, and  $B$ has only nodes and cusps as
singularities. Already in the $19^{th}$ century the number of nodes and cusps
of a branch curve was computed for a smooth surface of a given degree.

\begin{lemma}[Salmon \cite{Sa}]\label{lemNumNodeCuspSm}
\emph{(a)} There is one-to-one correspondence between bisecant
lines for $B^*$ passing through $O$ and nodes of $B$. Moreover,
the number of bisecant lines through $O$ does not depend on $S$,
and is equal to
\begin{equation} \label{eqnNumNode}
     n = n(\nu) = \frac{1}{2}\n(\n-1)(\n-2)(\n-3)
\end{equation}
\emph{(b)} If $Q^*$ is the set  of points $q$ on $B^*$ such that the tangent
line $T_q B^*$ contains the point $O$, then the set $Q = \pi(Q^*)$ iis the set
of cusps of $B$.

\emph{(c)} Moreover, $Q^*$ is the scheme-theoretic intersection of
$B^*$ and the second polar surface $S''_O$. In other words, they
intersect transversally at each point of $Q^*$, and
$B^*\intersect S'' = Q^*$. In particular, the class $[Q^*]$ in $A_0 B^*$
is equal to $(\n - 2) l^*$.

\emph{(d)} It follows that $\deg Q$ does not depend on a choice of
the surface $S$, and is equal to
\begin{equation} \label{eqnNumCusp}
   c = c(\nu) = \n(\n-1)(\n-2)
\end{equation}

\end{lemma}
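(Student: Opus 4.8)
The plan is to prove parts (b), (c), (d) first, since these identify and count the cusps, and then to deduce the node count in (a) from a genus computation. For part (b), I would use that the projection $\pi$ from $O$ has, at a point $q \neq O$, differential $d\pi_q$ whose kernel is spanned by the direction of the line $\overline{Oq}$. Hence $\pi|_{B^*}$ fails to be an immersion at $q$ exactly when $T_q B^*$ is contained in this kernel, i.e. when $T_q B^* = \overline{Oq}$, which (since $q$ already lies on $T_qB^*$) means precisely $O \in T_q B^*$. As $\pi : B^* \to B$ is the normalization, each singular point of $B$ is the image either of two distinct points (producing a node) or of a single point where $\pi|_{B^*}$ ramifies (producing a cusp); for generic $O$ these are ordinary, so $Q^* = \{q \in B^* : O \in T_q B^*\}$ is exactly the ramification locus and $Q = \pi(Q^*)$ is the cusp set.

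For part (c), the key is to rewrite the condition $O \in T_q B^*$ algebraically. Writing $S = \{f = 0\}$ and $S'_O = \Pol_O(S) = \{\sum_i O_i f_i = 0\}$, I would first observe that every $q \in B^* \subset S'_O$ automatically satisfies $O \in T_q S$: the defining equation $\sum_i O_i f_i(q) = 0$ of $S'_O$ is literally the statement that $O$ lies on the tangent plane $\{\sum_i f_i(q) x_i = 0\}$ of $S$ at $q$. Since for generic $O$ the curve $B^*$ is the transversal (smooth) intersection $S \cap S'_O$, one has $T_q B^* = T_q S \cap T_q S'_O$, so the remaining condition $O \in T_q S'_O$ reads $\sum_{i,j} O_i O_j f_{ij}(q) = 0$, i.e. $q \in S''_O = \Pol^2_O(S)$. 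This gives $Q^* = B^* \cap S''_O$ as sets; transversality of this intersection for generic $O$ then makes it scheme-theoretic and reduced, and since $\deg S''_O = \nu - 2$ one gets $[Q^*] = [S''_O]|_{B^*} = (\nu - 2)\,\ell^*$ in $A_0 B^*$.

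Part (d) then follows by B\'ezout, since $\deg Q^* = \deg B^* \cdot \deg S''_O = \nu(\nu-1)(\nu-2)$, and because $\pi$ is injective on $Q^*$ (each cusp has a single preimage) this equals $\deg Q = c$. For part (a), the bijection between nodes of $B$ and bisecants of $B^*$ through $O$ is the set-theoretic fact that $\pi(p_1) = \pi(p_2)$ with $p_1 \neq p_2$ holds iff $p_1, p_2, O$ are collinear, i.e. iff $\overline{p_1 p_2}$ is a bisecant through $O$; for generic $O$ there are no trisecants or stationary secants through $O$, so each such line gives exactly one node. For the count I would use genus: $B^* = S \cap S'_O$ is a smooth complete intersection of type $(\nu, \nu-1)$, so adjunction gives $g(B^*) = 1 + \tfrac12 \nu(\nu-1)(2\nu-5)$, while $B$ has degree $d = \nu(\nu-1)$ and arithmetic genus $p_a = \tfrac12(d-1)(d-2)$. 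By \eqref{eqnGenus}, $n = p_a - g(B^*) - c$; substituting the value of $c$ from (d) and simplifying (the factor $\nu(\nu-1) + 6 - 4\nu = (\nu-2)(\nu-3)$ emerges) yields $n = \tfrac12\nu(\nu-1)(\nu-2)(\nu-3)$, manifestly independent of $S$.

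The step I expect to be the main obstacle is the transversality assertion in part (c): that for a generic center $O$ the ramification curve $B^*$ meets the second polar $S''_O$ transversally, so that $Q^*$ is reduced and the corresponding cusps of $B$ are ordinary $A_2$ singularities. This is precisely what legitimizes both the class computation $[Q^*] = (\nu-2)\ell^*$ and the identity $\deg Q^* = \deg Q$, and it is also what guarantees in part (a) that the apparent double points of the projection split cleanly into $n$ honest nodes and $c$ honest cusps, with no worse singularities arising from the non-generic position of $O$ relative to $B^*$. I would establish it by a dimension count over the choice of $O$, exhibiting that the locus of centers forcing a tangency or higher contact along $B^* \cap S''_O$ is of positive codimension.
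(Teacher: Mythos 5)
Your proof is correct, but it takes a genuinely different route from the paper's. The paper disposes of this lemma largely by citation: parts (a) and (b) are referred to Salmon's treatise, (c) is called a straightforward computation, and (d) is noted to follow from (c); for the node count the paper points instead to Proposition \ref{prs2P3Q}, where the relation $2[P]+3[Q]=\nu(\nu-2)\ell$ in $A_0(B)$ is established by intersecting $B$ with its plane polar curve $\Pol_{O'}(B)$ for a generic point $O'$ in the plane and splitting off the residual cycle $R$ with $[R]=(\nu-1)\ell$; combined with $[Q]=(\nu-2)\ell$ and a degree count this gives $2n=\nu(\nu-1)(\nu-2)(\nu-3)$. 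Your arguments for (b), (c), (d) --- the kernel of $d\pi$ being the direction of $\overline{Oq}$, the polar algebra identifying $Q^*=B^*\cap S''_O$ via $T_qB^*=T_qS\cap T_qS'_O$ and $O\in T_qS$ automatic on $B^*$, and B\'ezout --- are precisely the classical computations the paper delegates to Salmon, so there you are filling in what the paper cites rather than diverging from it. Where you genuinely diverge is in (a): you obtain $n$ from the genus of the smooth complete intersection $B^*$ (adjunction for type $(\nu,\nu-1)$ gives $g=1+\tfrac12\nu(\nu-1)(2\nu-5)$) together with the formula $g=p_a-n-c$ of Equation (\ref{eqnGenus}) for the nodal-cuspidal image $B$, and your arithmetic checks out. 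This genus argument is more elementary and self-contained, but it produces only the numerical count; the paper's route through the Chow group yields the stronger statement that $2P+3Q$ is rationally equivalent to $\nu(\nu-2)\ell$ as a 0-cycle on $B$, which is what the subsequent Segre theory (the construction of the adjoint curves $W$ and $L$) actually requires. Both approaches rest on the same genericity facts --- no trisecants through $O$, transversality of $B^*$ with $S''_O$ --- which you rightly identify as the technical core and propose to settle by a dimension count; the paper implicitly absorbs these into the generic choice of $O$.
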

\begin{proof}
(a) The first statement is  geometrically clear; for the second
see \cite[art. 275, 279]{Sa}. Yet another proof is given below,
in Proposition \ref{prs2P3Q}. See also \cite[Chapter IX, sections
1.1,1.2]{SR} for a way to induce the formula for the number of bisecant of a
complete intersection curve in $\pp^3$ (i.e. the number $n+c$).
 For (b), see \cite[art. 276]{Sa}. (c) is a straightforward computation, and
 (d) follows from (c).  \end{proof}

\begin{lemma} \label{lemClassOfCusps}
Let $\ell \in A_0(B)$ be the
class of a plane section on $B$. Then

$$
    [Q] = (\n-2)\ell  \;\;\;   \text{ in } A_0(B),
$$

(2) The  equality above can be lifted to $\Pic B$: there is a Cartier divisor
$Q_0$ such that $\can (Q_0) = Q$ with respect to the canonical map
$$
   \can:  \Cartier(B) \to \Weil(B)
$$
associating Weil divisor with a Cartier divisor, and

$$
     [Q_0] = (\n-2)\ell
$$
in  $\Pic(B)$.

\end{lemma}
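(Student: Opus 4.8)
The plan is to obtain the Chow-group identity (1) by pushing the class $[Q^*]$ forward along the normalization $\pi\colon B^*\to B$, and then to promote it to the Picard-group identity (2) by exploiting the freedom in the choice of a Cartier lift of the cycle $Q$. For (1), recall from Lemma \ref{lemNumNodeCuspSm}(c) that $[Q^*]=(\n-2)\ell^*$ in $A_0 B^*$. Each cusp of $B$ is unibranch, so $\pi$ maps $Q^*$ bijectively onto $Q$ and $\pi_*[Q^*]=[Q]$; moreover a general hyperplane section $H\cap B^*$ maps isomorphically onto the line section $h\cap B$, so $\pi_*\ell^*=\ell$ (equivalently $\ell^*=\pi^*\ell$, and $\pi$ has degree one as it is the normalization). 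Applying $\pi_*$ to the relation $[Q^*]=(\n-2)\ell^*$ then yields $[Q]=(\n-2)\ell$ in $A_0 B$.

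For (2) I would first exhibit \emph{some} Cartier lift of the cycle $Q$. Away from the cusps the cycle $Q$ has empty support (in particular it avoids the nodes), so only the cusps require attention. At a cusp $p$, with local model $y^2=x^3$ and normalization $x=t^2,\ y=t^3$, the rational function $y/x$ pulls back to $t$ on $B^*$ and satisfies $\ord_p(y/x)=3-2=1$; hence the principal Cartier divisor cut out locally by $y/x$ has associated Weil divisor $[p]$ (and is a unit at the neighbouring smooth points). Using $y/x$ near each cusp and the unit $1$ elsewhere glues to a Cartier divisor $Q_0$ on $B$ with $\can(Q_0)=Q$, which proves the first assertion of (2).

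It remains to arrange $[Q_0]=(\n-2)\ell$ in $\Pic B$, and here lies the only real subtlety. The map $\can\colon\Pic B\to A_0 B$ is \emph{not} injective --- its kernel reflects the extra local structure of the Picard group at the cusps and nodes of $B$ (the additive and multiplicative factors $\mathbb{G}_a,\mathbb{G}_m$ of the generalized Jacobian) --- so the equality already proved in $A_0 B$ does not by itself fix the class in $\Pic B$ of a single chosen lift. What rescues the argument is that a fixed cycle has many Cartier lifts: any two differ by a Cartier divisor with vanishing associated cycle, and as one ranges over all lifts the resulting Picard classes sweep out exactly the coset $[Q_0]+\ker(\can)$. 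Now $(\n-2)\ell$ is itself the class of the line bundle $\OO_{\pp^2}(\n-2)|_B$, and $\can$ sends it to the cycle $(\n-2)\ell$; by part (1) the difference $[Q_0]-(\n-2)\ell$ therefore maps to $0$ in $A_0 B$, i.e.\ lies in $\ker(\can)$. Hence $(\n-2)\ell$ belongs to the coset of lift-classes, and replacing $Q_0$ by the lift that realizes it gives $\can(Q_0)=Q$ together with $[Q_0]=(\n-2)\ell$ in $\Pic B$, as desired.

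The genuinely computational input is confined to the local analysis at a cusp in the second paragraph; the main obstacle is conceptual, namely to recognize that the non-injectivity of $\can$ must be absorbed by the freedom in the choice of lift rather than circumvented by identifying one distinguished divisor. Indeed, pinning down a single effective divisor cut on $B$ by a plane curve of degree $\n-2$ would amount to producing a special adjoint curve through the cusps, which is essentially the content of the main theorem (Theorem \ref{thmSegre}); the present lemma deliberately avoids this and only needs the abstract comparison of $\Pic B$ with $A_0 B$.
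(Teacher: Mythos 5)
Your proof is correct, and part (1) is essentially the paper's own argument (pushforward of $[Q^*]=(\n-2)\ell^*$ along the normalization; note only that a \emph{general} hyperplane section of $B^*$ does not map to a line section of $B$ --- one must use hyperplanes through the projection center $O$, which is what your parenthetical $\ell^*=\pi^*\ell$, degree-one argument implicitly does and what the paper does explicitly). For part (2), however, you take a genuinely different route. The paper is constructive: it considers the rational function $r=f''_O/H^{\n-2}$, the equation of the second polar surface divided by the $(\n-2)$-nd power of a generic hyperplane through $O$, transports it to $B$ via the birational map $B^*\to B$, and its principal Cartier divisor directly realizes the equivalence between a \emph{canonical} lift $Q_0$ of $Q$ --- the ``tangent'' Cartier class $Q_{\tau}$ cut out by $S''_O$, referred to after Proposition \ref{prs2P3Q} --- and $(\n-2)\ell$. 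You instead build an ad hoc lift out of the local functions $y/x$ at the cusps (exactly as in Remark \ref{remNodeCuspCartier}) and then argue abstractly: the Picard classes of all Cartier lifts of the cycle $Q$ sweep out a full coset of $\ker(\Pic B\to A_0 B)$, and this kernel equals the image in $\Pic B$ of the group $G_S$ of Cartier divisors with trivial associated Weil divisor (the exactness displayed in Appendix B; the inclusion $\supseteq$ uses that if $\can(D)$ is a principal Weil divisor $\mathrm{div}(f)$ then $D-\mathrm{div}(f)\in G_S$), so the Chow identity of part (1) lets you slide within the coset to a lift whose class is $(\n-2)\ell$. This is valid, and it has the merit of isolating exactly what the statement needs --- the Chow identity plus the formal comparison of $\Pic$ and $A_0$ --- but it is non-constructive: it loses the geometric identification of $Q_0$ as the tangent Cartier class, which the paper exploits afterwards (tangency of $W$ versus $L$ at the cusps, the description of $\zeta_L$). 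One correction to your closing remark: producing a distinguished lift does not amount to Segre's theorem and is not circular; since $B$ is by hypothesis a branch curve, the second polar surface in $\pp^3$ is available data, and that --- not an adjoint plane curve of degree $\n-2$ --- is what the paper uses to pin $Q_0$ down.
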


\begin{proof}  We have $Q = \pi_*(Q^*)$, and $Q^* = B^* \intersect
  S''_O$. Since $[S''_O] = (\n -2) H$ in $A_2 \cp^3$, we have $[Q^*] =
  (\n - 2) \ell^*$ in $A_0 B^*$, and thus
$$
   [Q] = [\pi_*(Q^*)] = \pi_* ( [Q^*] ) = (\n - 2) \pi_* \ell^*  = (\n - 2) \ell
$$
in $A_0 B$.

To see that $\pi_* \ell^* = \ell$ it is enough to consider a
hyperplane in $\cp^3$ containing the point $O$.

(2)  Consider the rational function $r = f''_O/H^{(\n - 2)}$, where $f''_O$ is by
definition the equation of the second polar $\Pol^2(O,S)$, and $H$ is an equation of a
generic hyperplane containing the projection center $O$. Since the curves
$B^*$ and $B$ are birational, $r$ can be considered as a rational function on
$B$, where it gives the desired linear equivalence.

\end{proof}

%

\begin{remark} \label{remNodeCuspCartier}
\emph{
Note that both cusps and nodes on a curve are associated with Cartier divisors
on the curve, even though these Cartier divisors are {\it not positive}.
For example, on the affine cuspidal curve $C$ given by the equation $y^2 - x^3
= 0$ the divisor $(y/x) = 3[0] - 2[0] = [0]$ is a principle
Cartier, but since $y/x$ is not in the local ring of the point
[0], it is not locally given a section of the sheaf $\OO_C$.
For the nodal curve $C$ given by the equation $xy=0$, the divisor
$\left( \frac{y-x^2}{y-2x} \right) = 3[0] - 2[0] = [0]$ is also a principle Cartier,
though not positive.  }
\end{remark}

\subsubsection{Example: smooth cubic surface in $\pp^3$} \label{subsecCubicSurExample}
Let $S$ be a smooth cubic surface in $\cp^3$. Then Lemma \ref{lemNumNodeCuspSm}
imply that $B$ is a plane curve with 6 cusps and no other singularities, and
Lemma \ref{lemClassOfCusps} implies that
$$
      [Q] = \ell
$$
in $A_0 B$.
$Q$ is, of course, not a line section of the curve $B$; the linear
equivalence above implies that the map
$$
     \PP H^0(\pp^2, \OO(1)) \isom \PP H^0(B, \OO(1)) \to |\ell|
$$
is not epimorphic, where $|\ell|$ is the set of all Weil divisors linearly equivalent
to a generic line section of $B$. 
Even though $Q$ is associated with a Cartier divisor $b/a$, this Cartier divisor
is not positive.

It is well known that 6 points in general position on $\cp^2$ do
not lie on a conic. As for the 6 cusps $Q$ on the branch curve we
have the following result of Zariski and Segre (see \cite{Za1},\cite{Se}). 

\begin{corollary} \label{corSexticBranch} All 6 cusps of a degree 6
  plane curve $B$ which is a branch curve of a smooth cubic surface
  lie on a conic.
\end{corollary}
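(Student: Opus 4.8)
The plan is to exhibit an explicit conic through the six cusps by working upstairs on the ramification curve $B^{*}\subset\cp^{3}$ and then projecting. Set $\nu=\deg S=3$, so that by Lemma \ref{lemNumNodeCuspSm} the branch curve $B$ has exactly six cusps and no nodes. I would rely on the two structural facts already established: the ramification curve is the complete intersection $B^{*}=S\intersect S'_O$ of the cubic $S$ with the first polar quadric $S'_O=\Pol_O(S)$, and, by Lemma \ref{lemNumNodeCuspSm}(c), the cusp preimages form the scheme-theoretic intersection $Q^{*}=B^{*}\intersect S''_O$, where the second polar $S''_O=\Pol^2_O(S)$ is a \emph{plane} (its degree is $\nu-2=1$).

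First I would introduce the plane conic $\Gamma=S'_O\intersect S''_O$, the trace of the polar quadric on the polar plane $\Pi:=S''_O\isom\cp^{2}$. The key observation is the chain of inclusions
$$ Q^{*}=B^{*}\intersect S''_O=(S\intersect S'_O)\intersect S''_O=S\intersect(S'_O\intersect S''_O)=S\intersect\Gamma\subseteq\Gamma, $$
so all six points of $Q^{*}$ already lie on the conic $\Gamma$ inside $\Pi$. Next I would project from $O$: using Euler's relations for the degree-$\nu$ form $f$ defining $S$, one has $\sum_i O_i f_i(O)=\nu f(O)$ and $\sum_{i,j}O_iO_j f_{ij}(O)=\nu(\nu-1)f(O)$, both nonzero since $O\notin S$, whence $O\notin S'_O$ and $O\notin S''_O$. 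In particular $\Pi=S''_O$ is a plane missing the center $O$, so $\pi$ restricts to a linear isomorphism $\pi|_\Pi:\Pi\Isom\cp^{2}$. It therefore carries $\Gamma$ to a conic $\pi(\Gamma)\subset\cp^{2}$ and the cusp preimages to the cusps, $\pi(Q^{*})=Q$; combined with the inclusion above this gives $Q\subseteq\pi(\Gamma)$, i.e. all six cusps lie on the conic $\pi(\Pol_O(S)\intersect\Pol^2_O(S))$.

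I expect the only delicate points for this direct argument to be bookkeeping rather than substance: checking that $\Gamma$ is a genuine (possibly degenerate) conic and not all of $\Pi$ — which holds because for generic $O$ the polar quadric $S'_O$ is smooth and cannot contain a plane — and confirming that the scheme-theoretic equality of Lemma \ref{lemNumNodeCuspSm}(c) really does place the reduced points $Q^{*}$ on $\Gamma$.

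An alternative route, more in the Picard-theoretic spirit of this section, would start from $[Q]=\ell$ in $\Pic(B)$ (Lemma \ref{lemClassOfCusps}), double it to $2[Q]=2\ell$, and use the isomorphism $H^{0}(\cp^{2},\OO(2))\isom H^{0}(B,\OO_B(2\ell))$ — valid since $H^{1}(\cp^{2},\OO(-4))=0$ — to reduce the statement to the claim that the six cusps fail to impose independent conditions on conics. Here the main obstacle would be exactly the Cartier–Weil subtlety flagged in Remark \ref{remNodeCuspCartier}: the class $\ell=[Q]$ is not represented by an \emph{effective} Cartier divisor, so producing the required vanishing section of $\OO_B(2\ell)$ forces one to track the local conductor at each cusp. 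For this reason I would present the direct geometric argument as the main proof.
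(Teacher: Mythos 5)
Your main argument is correct, and every ingredient it uses is available in the paper: the identity $B^*=S\intersect \Pol_O(S)$, parts (b), (c) of Lemma \ref{lemNumNodeCuspSm}, and the genericity of $O$; your Euler-relation check that $O\notin S''_O$ (so $\pi|_{S''_O}$ is a linear isomorphism) is exactly what is needed, and the one degeneracy you worry about can be ruled out without any genericity of the polar quadric, since $S''_O\subseteq S'_O$ would force $Q^*=S\intersect S''_O$ to be a plane cubic curve, contradicting $\deg Q^*=6$. Your route is, however, genuinely different in presentation from the paper's. The paper establishes the corollary through Remark \ref{remConicConst}: it normalizes the equation of $S$ to $z^3-3az+b$ --- precisely the choice of coordinates in which your plane $S''_O$ becomes $\{z=0\}$ --- computes the discriminant $b^2-4a^3$, and reads off that the six cusps are $\{a=b=0\}$, hence lie on the conic $\{a=0\}$; for general $\nu$ the same conclusion (with nodes included) comes from the Picard/Chow-theoretic Propositions \ref{prs2P3Q} and \ref{prsLcurve}. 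Your conic is literally the same object, since in those coordinates $S'_O\intersect S''_O=\{z=0,\ a=0\}$ projects to $\{a=0\}$; what your coordinate-free derivation buys is brevity, no discriminant computation, and a transparent explanation of why $\nu=3$ is special (the second polar is a plane). What it gives up is generality: for $\nu>3$ the curve $\pi(S'_O\intersect S''_O)$, of degree $(\nu-1)(\nu-2)$, catches only the cusps, whereas the paper's adjoint curve $L$ of the same degree also passes through the nodes, which is what Segre's theorem ultimately requires. Finally, your reason for setting aside the "double $[Q]=\ell$" route is exactly the right one --- that Cartier--Weil gap is precisely what the paper circumvents in Proposition \ref{prs2P3Q} by producing positive Cartier divisors as restrictions of polar curves rather than by doubling Weil classes.
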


\begin{remark} \label{remConicConst}
 \emph{\emph{Explicit construction of a branch curve of a cubic}. By change of coordinates a cubic surface $S$
is given by the equation
$$
      f(z) = z^3 - 3 a z + b,
$$
where $a$ and $b$ are homogeneous forms in $(x,y,w)$ of degrees 2
and 3, and the projection $\pi$ is given by $(x,y,w,z) \mapsto
(x,y,w)$. In these coordinates the ramification curve is given by
the ideal $(f, f') = (f,z^2 - a) = (z^3 - \frac{1}{2} b, z^2 - a)$ and the
branch curve $B$ is given by the discriminant
$$
   \Delta(f) = b^2 - 4 a^3
$$
In particular, one can easily see that it has 6 cusps at the
intersection of the plane conic defined by $a$ and the plane cubic
defined by $b$, as illustrated on the Figure 3. It is also clear
that the conic defined by $a$ coincides with one constructed in
 Corollary \ref{corSexticBranch}.
\begin{center}
  \epsfig{file=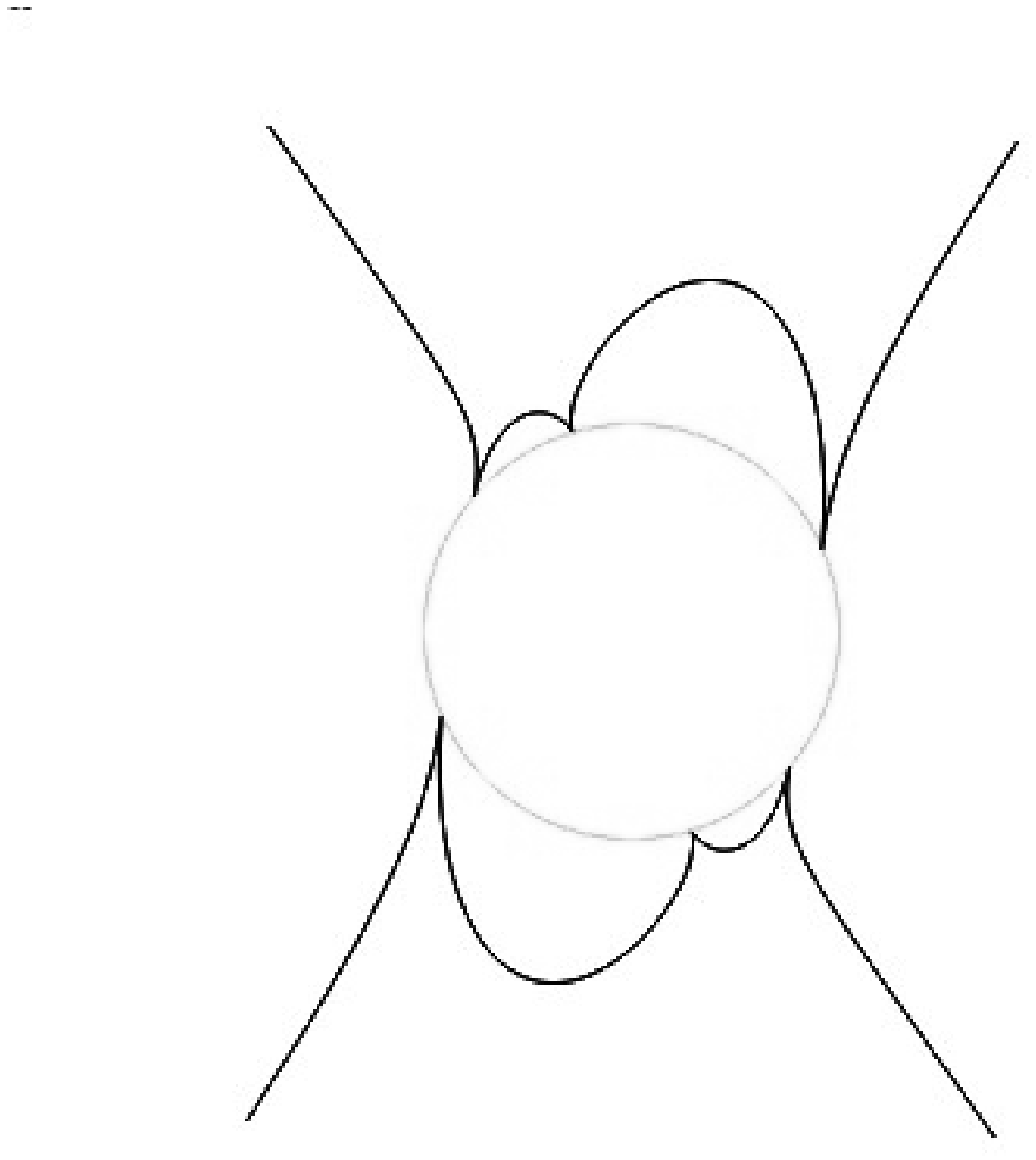, height=4cm, width=4cm}\\
  \small{Figure 3 : The branch curve of a smooth cubic surface}
\end{center}
The ideal of $Q^*$ is equal to $(f,f',f'') = (f,f',z) = (a,b,z)$.
Note that $z$ equal to $\frac{b}{2a}$ as a rational section of
$\OO_{B^*}(1)$. We want to explicate the linear equivalence of
$Q^*$ and the intersection of $B^*$ with the ``vertical" plane
(one containing the point $O$). For this, let $l(x,y,w)$ be a
linear form in $x,y,w$, and consider the rational function on
$B^*$
$$
   \phi = \frac{z}{l} = \frac{b}{2al}
$$
Then $\phi$ gives the
linear equivalence
$$
    0 = (\phi) = (b) - (a) - (l) = 3Q - 2Q - (l) = Q - (l),
$$
which gives an explicit proof that $[Q] = \ell$ in $A_0 B$. (We
used the fact that cubic $b$ is tangent to $B$ at the cusps, while conic
$a$ is not.) This example has a ``natural" continuation in
example  \ref{exmCubicSur}.}
\end{remark}

\subsection[Adjoint curves to the branch curve]{Adjoint curves to the branch curve}

We begin with the definition of an \textsl{adjoint} curve. This
type of curves will play an essential role when studying branch
curve.

\begin{definition} \label{defAdjoint}
Given a plane curve $C$, a second curve $A$ is said to be
\textsl{adjoint} to $C$ if it contains each singular point of $C$ of
multiplicity $r$ with multiplicity at least $r-1$. In particular, $A$ is
adjoint to a nodal-cuspidal curve $C$ if it contains all nodes and all cusps of
$C$.
 \end{definition}
For more on adjoint curves see \cite[$\S$ 7]{BN},
\cite[Chapter II, $\S$ 2]{SR}, or \cite{GV} for a more recent survey.

  Below,
following Segre, we construct more adjoint curves to $B$ (i.e. $W$, $L$, $L_1$) and
relate them to the geometry of $B^*$ in $\PP^3$.

We continue this subsection with Proposition \ref{prs2P3Q} from
\cite{Se} and we bring its proof for the convenience of the
reader.

\begin{prs} \label{prs2P3Q}
(a) One has in $A_0(B)$
$$
    2 [P] + 3 [Q] = \n(\n-2)\ell.
$$

(b) The   equality above can be lifted to $\Pic B$: there are Cartier divisors
$P_1$ and $Q_1$ such that in $\Pic B$:
\begin{gather*}
     \can(P_1) = 2 P, \\
     \can(Q_1) = 3 Q,    \;\;\;  \text{and}  \\
     [P_1] + [Q_1] = \nu (\nu - 2) \ell
\end{gather*}
\end{prs}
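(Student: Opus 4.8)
The plan is to do all the computation upstairs on the smooth ramification curve $B^*$, where linear equivalence of divisors is unambiguous, and then transport the result down to the singular curve $B$ by the birational morphism $\pi$. For part (a) the starting point is that $B^*$ is the complete intersection $S \intersect S'_O$ of surfaces of degrees $\nu$ and $\nu-1$ in $\pp^3$. Adjunction for a complete intersection then gives
$$
   K_{B^*} = \left( K_{\pp^3} + \nu H + (\nu-1)H \right)|_{B^*} = (2\nu - 5)\,\ell^*.
$$
On the other hand, viewing $B^*$ as the normalization of the plane curve $B$ of degree $d = \nu(\nu-1)$, the conductor--adjunction formula reads $K_{B^*} = (d-3)\ell^* - \mathfrak{c}$, where the conductor divisor is $\mathfrak{c} = P^* + 2Q^*$: each node contributes its two preimages with coefficient one, each cusp contributes its single preimage with coefficient two. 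Comparing the two expressions yields
$$
   P^* + 2Q^* = \left( d - 3 - (2\nu-5) \right)\ell^* = (\nu-1)(\nu-2)\,\ell^* \quad \text{in } \Pic(B^*).
$$

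Next I would invoke the cusp relation already established in Lemma \ref{lemNumNodeCuspSm}(c), namely $[Q^*] = (\nu-2)\ell^*$, which came from $Q^* = B^* \intersect S''_O$. Adding it to the previous identity gives the clean relation
$$
   P^* + 3Q^* = \nu(\nu-2)\,\ell^* \quad \text{in } \Pic(B^*).
$$
Pushing this forward by $\pi_*$ and using $\pi_* P^* = 2P$ (two preimages over each node), $\pi_* Q^* = Q$ (one preimage over each cusp, since $\deg Q^* = \deg Q$), and $\pi_* \ell^* = \ell$, proves part (a): $2[P] + 3[Q] = \nu(\nu-2)\ell$ in $A_0(B)$.

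For part (b) the idea is to realize the $B^*$-relation by an explicit rational function and read off its Cartier divisor downstairs. Since $P^* + 3Q^* \sim \nu(\nu-2)\ell^*$, I can choose a rational function $\rho$ on $B^*$ whose divisor is exactly $P^* + 3Q^* - D_0$, where $D_0$ is a divisor of $\nu(\nu-2)$ general line sections; as $|\ell^*|$ is base-point free, $D_0$ can be taken disjoint from the finite set $P^* \cup Q^*$. Because $\pi$ is birational, $\rho$ is simultaneously a rational function on $B$, and I would compute $\mathrm{div}_B(\rho)$ branch by branch. Over a node $\rho$ has a simple zero on each of the two branches, so its local Cartier equation defines a divisor whose associated Weil cycle has coefficient $2$ at the node; over a cusp $\rho$ has a triple zero on the single branch, giving coefficient $3$ — this is precisely the bookkeeping of Remark \ref{remNodeCuspCartier}, where the orders along the branches are summed to produce $\can$. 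Collecting the node part as $P_1$ and the cusp part as $Q_1$ gives Cartier divisors with $\can(P_1) = 2P$ and $\can(Q_1) = 3Q$, while $D_0$ descends to a line-section Cartier divisor of class $\nu(\nu-2)\ell$; the principality of $\mathrm{div}_B(\rho)$ then forces $[P_1] + [Q_1] = \nu(\nu-2)\ell$ in $\Pic(B)$.

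The main obstacle is exactly the local bookkeeping at the singularities in part (b). One must check that the descended function $\rho$, whose orders along the branches of $B^*$ are prescribed, produces on the singular curve $B$ the non-reduced cycles $2P$ and $3Q$ rather than the reduced ones, and that the resulting local pieces are genuine Cartier divisors (given by rational local equations, as with $y/x$ in Remark \ref{remNodeCuspCartier}) and not merely Weil divisors — equivalently, that no spurious contribution to $\mathrm{div}_B(\rho)$ is hidden in the conductor. This is the step where the distinction between $\Pic$ and $A_0$ that the paper emphasizes is genuinely used, and it is what separates the honest $\Pic$-statement (b) from the weaker cycle identity (a); the computation on $B^*$, by contrast, is routine once the two adjunction formulas are in hand.
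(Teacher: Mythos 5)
Your proof is correct, and it takes a genuinely different route from the paper's, which follows Segre's original argument. The paper stays essentially in the plane: it intersects $B$ with its polar curve $B' = \Pol_{O'}(B)$ with respect to a generic point $O'$, writes $[B \cap B'] = 2P + 3Q + R \sim (d-1)\ell$, and then disposes of the residual tangency cycle $R$ by identifying its preimage $R^*$ with $B^* \intersect \Pol_{O'}(S)$, so that $[R] = (\nu-1)\ell$ and $[2P+3Q] = (d-1)\ell - (\nu-1)\ell = \nu(\nu-2)\ell$; part (b) is then dismissed as ``parallel''. You instead work entirely on the smooth model: adjunction for the complete intersection $B^* = S \intersect \Pol_O(S)$ of type $(\nu,\nu-1)$ gives $K_{B^*} = (2\nu-5)\ell^*$, the conductor (Rosenlicht) formula for the normalization of a plane curve gives $K_{B^*} = (d-3)\ell^* - (P^* + 2Q^*)$, and comparison yields $P^* + 2Q^* \sim (\nu-1)(\nu-2)\ell^*$ in $\Pic(B^*)$, to which you add the relation $[Q^*] = (\nu-2)\ell^*$ of Lemma \ref{lemNumNodeCuspSm}(c) and push forward. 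Note that this reverses the paper's logic: your intermediate identity is precisely the content of Proposition \ref{prsLcurve}(1) (the adjoint relation $2P+2Q \sim a\ell$, $a=(\nu-1)(\nu-2)$), from which you deduce Proposition \ref{prs2P3Q}, whereas the paper deduces \ref{prsLcurve} from \ref{prs2P3Q} by subtracting $[Q]=(\nu-2)\ell$. What your route buys: the linear equivalence is obtained at once in $\Pic(B^*)$, where there is no Cartier/Weil ambiguity; the conductor interpretation explains conceptually why an adjoint curve of degree $(\nu-1)(\nu-2)$ must exist; and your part (b) is an actual argument where the paper offers one sentence. What the paper's route buys: it is elementary (no duality or conductor theory), and it exhibits $2P+3Q$ directly as part of an intersection with an explicit adjoint curve (the polar), which is the template for Segre's subsequent constructions of $W$, $L$ and $L_1$.

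One point in your part (b) needs more care than you give it, although your construction as stated survives. For the descended Cartier divisor to have class $\nu(\nu-2)\ell$ in $\Pic(B)$, and not merely in $A_0(B)$, the divisor $D_0$ must be taken as the pullback of $\nu(\nu-2)$ generic line sections of $B$, i.e., sections of $B^*$ by hyperplanes through the projection center $O$; this is what your phrase ``descends to a line-section Cartier divisor'' implicitly does, but your justification via base-point freeness of $|\ell^*|$ suggests an arbitrary member of that system, which would not do. Indeed, for a generic hyperplane $H'$ not through $O$, the pushforward of $H'|_{B^*}$ is supported at smooth points and hence Cartier, with Weil class $\ell$ in $A_0(B)$, but its class in $\Pic(B)$ can differ from $\ell$ by a nontrivial element of the kernel $G_S$ of $\Pic B \to A_0 B$ described in Appendix B: concretely, the ratio $f_{H'}/f_H$ (with $H \ni O$) generically takes different values at the two preimages of a node, so it fails to be a unit in the local ring $\OO_{B,p}$ even though its Weil order there is zero, producing exactly the ``spurious contribution'' you warn about. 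With $D_0$ a pullback of plane lines this issue disappears, since such divisors are tautologically restrictions of $\OO_{\pp^2}(1)$; the remaining bookkeeping, namely $\ord_p(\rho) = \sum_{p^* \mapsto p} \ord_{p^*}(\rho)$ giving $\can(P_1)=2P$ and $\can(Q_1)=3Q$, and principality of $\mathrm{div}_B(\rho)$ forcing $[P_1]+[Q_1]=\nu(\nu-2)\ell$, is correct.
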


\newcommand{\Qt}{Q_{\tau}}
In fact, $Q_1$ is the canonically defined ``tangent" Cartier class $\Qt$.

\begin{proof} (following Segre \cite{Se}).

  Let us choose a plane $\Pi$ in $\cp^3$ not containing the point $O$,
and consider the projection  with center $O$ as a map to $\Pi$.
Let us also choose a generic point $O'  = (O'_0,O'_1,O'_2) \in
\Pi$, and let $B' = \Pol_{O'}(B)$ be the polar curve of $B$,
defined as follows: if $B$ is given by the homogeneous form
$g(x_0,x_1,x_2)$ of degree $d = \n(\n-1)$, then $\Pol_{O'}(B) =
\{\sum_{i=0}^2 O'_i\frac{\partial g}{\partial x_i}=0\}$.  Note that the first polar $B' = \Pol_O B$ is adjoint to $B$.

It is clear that
\begin{equation} \label{eq1}
      [B \cap B'] = 2P + 3Q + R,
\end{equation}
where $R$ (for ``residual") is the set of non-singular points $p$ on
$B$ such that the tangent line to $B$ at $p$ contains $O'$, and thus
\begin{equation} \label{eq2}
      [2P + 3Q + R] = (d-1) \ell
\end{equation}
in $A_0 B$ (Here we used the fact that $O'$ is generic, in particular,
it does not belong to $B$ and to the union of tangent cones to $B$ at
nodes and cusps.)

Let $R^*$ be the preimage of $R$ on $B^*$. We claim that $R^* = B^*
\intersect S'_{O'}$, where $S'_{O'} = \Pol_{O'}(S)$.  Indeed, if $p
\in R$, then $T_p B$ contains the point $O'$, and if $p^*$ is the
preimage of $p$ on $B^*$, then the tangent space to $S$ at $p^*$ can
be decomposed into a direct sum of the line $l$ joining $p$ and $p^*$
(and containing $O$) and the tangent line $T_{p^*}B^*$ which projects
to the tangent line $T_p B$, (as illustrated on Figure 4 below).
\begin{center}
  \epsfig{file=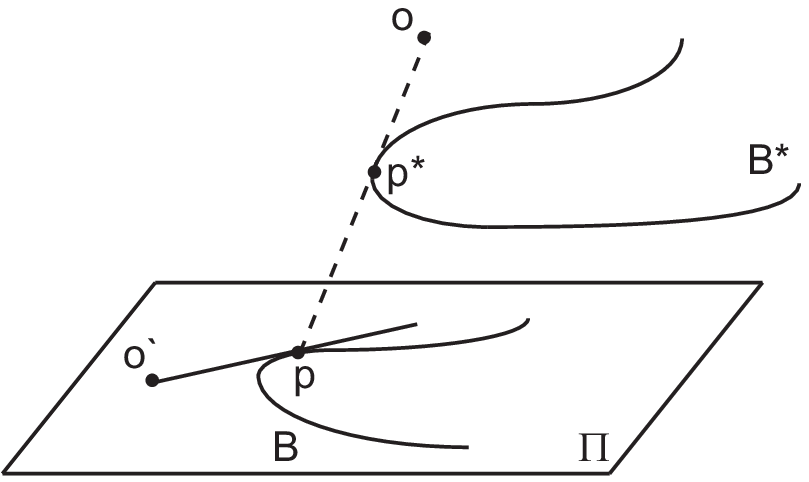}\\    
  \small{Figure 4 : $R^* = B^* \intersect S'_{O'}$}
\end{center}

It follows that
$$
   [R] = \pi_* ([R^*]) = \pi_* ([B^* \intersect S'_{O'}]) =
           \pi_* ((\n -1)\ell^*) = (\n - 1) \ell
$$
in $A_0 B$, and thus

$$
    [2P + 3Q] = [2P + 3Q + R] - [R] = %
           (d-1) \ell - (\n - 1) \ell = (d - \n) \ell = \n (\n-2) \ell
$$

The proof of the second part is parallel, as the Weil divisors $2P$ and $3Q$ can be lifted to $\Pic B$.

\end{proof}

Note that this gives yet another proof for the formula for the number
of nodes $n = n(\nu)$.

%
%

\begin{prs} \label{prsWcurve} There exist a (unique) curve $W$ in the
  plane $\Pi$ of degree $\n(\n-2)$ such that in $A_0(B)$ $$[W \cap B] = 2[P] + 3[Q].$$
\end{prs}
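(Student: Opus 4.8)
The plan is to deduce the proposition from the Picard-group statement of Proposition \ref{prs2P3Q}(b) by a cohomological extension argument, and then to prove uniqueness by a pencil-and-B\'ezout argument. The essential input is that the $0$-cycle $2P+3Q$ has \emph{already} been lifted to an effective Cartier divisor $D = P_1 + Q_1$ on $B$ with $\can(D) = 2P+3Q$ and $[D] = \n(\n-2)\ell$ in $\Pic(B)$; equivalently $\OO_B(D) \isom \OO_{\pp^2}(\n(\n-2))|_B$. This linear equivalence is precisely the ``special position'' of the singularities of a branch curve: a naive count (a node imposes one condition, a cusp imposes two) shows that the family of degree-$\n(\n-2)$ curves through the $n$ nodes and tangent to $B$ at the $c$ cusps has expected dimension $\frac{e(e+3)}{2}-(n+2c) = -\n(\n-2)^2 < 0$, where $e = \n(\n-2)$, so generically no such curve should exist --- yet the equivalence furnished by Proposition \ref{prs2P3Q} forces one to exist.

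For existence I would argue as follows. The canonical section $s_D \in H^0(B, \OO_B(D)) = H^0(B, \OO_B(\n(\n-2)))$ vanishes, as a Cartier divisor, exactly along $D$, so that $\can(\mathrm{div}(s_D)) = 2[P]+3[Q]$. It remains to extend $s_D$ to a form of degree $e=\n(\n-2)$ on $\pp^2$. Writing $d = \n(\n-1)$ and twisting the ideal-sheaf sequence of the hypersurface $B$ by $\OO_{\pp^2}(e)$ yields
$$
   0 \to \OO_{\pp^2}(e-d) \to \OO_{\pp^2}(e) \to \OO_B(e) \to 0 .
$$
Since $e - d = -\n$ and $H^1(\pp^2, \OO_{\pp^2}(-\n)) = 0$, the restriction map $H^0(\pp^2, \OO_{\pp^2}(e)) \to H^0(B, \OO_B(e))$ is surjective. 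Lifting $s_D$ to a form $\tilde s$ and setting $W = \{\tilde s = 0\}$, the intersection cycle $W \intersect B$, computed from the Cartier divisor $\mathrm{div}(\tilde s|_B) = D$, satisfies $[W \intersect B] = \can(D) = 2[P] + 3[Q]$ in $A_0(B)$, as desired.

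For uniqueness I would use B\'ezout. Since $\deg W = e < d = \deg B$ and $B$ is irreducible, no degree-$e$ curve contains $B$, and the total intersection number is $e\cdot d = \n(\n-2)\cdot\n(\n-1) = 2n + 3c = \deg(2[P]+3[Q])$. Hence any degree-$e$ curve that merely passes through every node and is tangent to the cuspidal tangent at every cusp already meets $B$ in precisely $2P+3Q$, by exhaustion of the intersection number. Now suppose $W_1 \neq W_2$ are two such curves and consider the pencil $\lambda W_1 + \mu W_2$; passing through a node and being tangent at a cusp are linear conditions, so every member of the pencil also meets $B$ in exactly $2P+3Q$. Choosing a general smooth point $b \in B$ (so $b$ is not a node or cusp) and taking the member of the pencil through $b$ would force that member to meet $B$ in $2P + 3Q + b$, i.e.\ in more than $e\cdot d$ points, contradicting B\'ezout (the member, having degree $e<d$, cannot contain $B$). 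Therefore $W_1 = W_2$, and $W$ is unique.

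The main obstacle is conceptual rather than computational: it is the input from Proposition \ref{prs2P3Q}(b), namely promoting the Chow-group identity $2[P]+3[Q] = \n(\n-2)\ell$ to a genuine \emph{Cartier}/Picard equivalence, which is exactly what guarantees that the section $s_D$ --- and hence the curve $W$ --- exists despite the negative expected dimension. (Geometrically this is Segre's operation of removing the residual locus $R$ of the polar $B'$ from Proposition \ref{prs2P3Q}.) Once that equivalence is in hand, the extension step is pure line-bundle cohomology on $\pp^2$, and the only point requiring care is the bookkeeping distinguishing the Weil cycle $2[P]+3[Q]$ from its Cartier lift $D$, i.e.\ transversality at the nodes and tangency of order three at the cusps.
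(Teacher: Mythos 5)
Your existence argument is essentially the paper's own proof: both take the Picard-level identity $[P_1]+[Q_1]=\n(\n-2)\ell$ from Proposition \ref{prs2P3Q}(b), regard the effective Cartier lift of $2P+3Q$ as (the divisor of) a section of $\OO_B(\n(\n-2))$, and extend that section to a plane curve via the twisted ideal-sheaf sequence $0 \to \OO_{\pp^2}(-\n) \to \OO_{\pp^2}(\n(\n-2)) \to \OO_B(\n(\n-2)) \to 0$, using $H^1(\pp^2,\OO_{\pp^2}(-\n))=0$; this is exactly the restriction isomorphism the paper invokes (which also uses $H^0(\pp^2,\OO_{\pp^2}(-\n))=0$ for injectivity). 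Where you genuinely differ is uniqueness. The paper just says the restriction isomorphism ``completes the proof,'' but injectivity of restriction only shows that each \emph{Cartier} divisor in $|\OO_B(\n(\n-2))|$ is cut by a unique plane curve; a priori two distinct curves could cut two distinct Cartier divisors (differing, say, in their tangent directions at the nodes) with the same underlying Weil cycle $2[P]+3[Q]$. Your pencil-and-B\'ezout argument closes precisely this loophole: every member of the pencil $\lambda W_1+\mu W_2$ still meets $B$ with multiplicity $\ge 2$ at each node and $\ge 3$ at each cusp (these being linear conditions), so the member through an extra smooth point of $B$ would exceed the B\'ezout bound $\n(\n-2)\cdot\n(\n-1)=2n+3c$, impossible since no curve of degree $\n(\n-2)$ can contain the irreducible curve $B$. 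So your proof is correct and, on the uniqueness point, more complete than the paper's. One small caveat: effectiveness of the lift $D=P_1+Q_1$ is not formally contained in the \emph{statement} of Proposition \ref{prs2P3Q}(b) --- positivity of the cycle $\can(D)$ does not imply positivity of $D$ (cf.\ Remark \ref{remNodeCuspCartier}) --- but it does follow from the construction in its proof, where $D$ is cut out at the singular points by local equations of the polar curve $B'$; you should cite that construction rather than the bare statement.
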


\begin{proof}
  By the previous proposition, the cycle $2[P] + 3[Q]$ is in the linear
  system $|\n (\n-2) \ell|$ on $B$. Note that $2P + 3Q$ is actually a Cartier divisor (see Remark
\ref{remNodeCuspCartier}). Now, since $\deg B = \n
(\n-1)$
  is greater than $\n (\n-2)$, there is a restriction isomorphism
$$
0 \to H^0(\Pi, \OO( \n(\n-2) ) ) \to H^0(B, \OO( \n(\n-2) ) ) \to 0
$$
%
which completes the proof.
\end{proof}

Note that $W$ is an adjoint curve to $B$ which is tangent to $B$ at
each cusp of $B$.

\begin{prs} \label{prsLcurve}Let $a = (\n-1)(\n-2)$.

(1) We have $$[2P + 2Q] = a\ell$$
In $A_0(B)$.

(b) The equality above can be lifted to $\Pic(B)$: there are Cartier divisors
$P_2$ and $Q_2$ such that in $\Pic(B)$:
\begin{gather*}
     \can(P_2) = 2 P, \\
     \can(Q_2) = 2 Q,    \;\;\;  \emph{and}  \\
     [P_2] + [Q_2] =  a \ell
\end{gather*}

(2) There is a (unique) curve $L$ of degree
  $a$ such that $$[L \cap B] = 2P + 2Q.$$
\end{prs}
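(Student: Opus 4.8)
The plan is to deduce parts (1) and (b) by a one-line subtraction from the two facts already established, namely Proposition \ref{prs2P3Q} and Lemma \ref{lemClassOfCusps}, and then to produce the curve $L$ in part (2) by repeating the argument used for the curve $W$ in Proposition \ref{prsWcurve}, the only genuinely new input being a local multiplicity check at the cusps.

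For part (1) I would simply compute, in $A_0(B)$,
$$[2P+2Q] = [2P+3Q] - [Q] = \n(\n-2)\ell - (\n-2)\ell = (\n-1)(\n-2)\ell = a\ell,$$
invoking Proposition \ref{prs2P3Q}(a) for $[2P+3Q]$ and Lemma \ref{lemClassOfCusps} for $[Q]=(\n-2)\ell$. For part (b) I would set $P_2 := P_1$, so that $\can(P_2) = 2P$ holds automatically, and $Q_2 := Q_1 - Q_0$, where $P_1,Q_1$ are the Cartier divisors of Proposition \ref{prs2P3Q}(b) and $Q_0$ is the Cartier divisor of Lemma \ref{lemClassOfCusps}(2). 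Then $\can(Q_2) = \can(Q_1) - \can(Q_0) = 3Q - Q = 2Q$, and in $\Pic(B)$ one finds $[P_2]+[Q_2] = [P_1]+[Q_1] - [Q_0] = \n(\n-2)\ell - (\n-2)\ell = a\ell$, which is precisely the required lift.

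For part (2) the strategy mirrors Proposition \ref{prsWcurve}. By part (1) the effective cycle $2P+2Q$ lies in the linear system $|a\ell|$ on $B$, and by part (b) it is the $\can$-image of a Cartier divisor; moreover it is a \emph{positive} Cartier divisor, being cut locally by a generic line through each singular point, so it corresponds to an honest section of $\OO_B(a)$. I would then note that $a = (\n-1)(\n-2) < \n(\n-1) = \deg B$, the difference being $2(\n-1) > 0$, so that in the ideal-sheaf sequence
$$0 \to \OO_\Pi(a - \deg B) \to \OO_\Pi(a) \to \OO_B(a) \to 0$$
both $H^0(\Pi, \OO(a-\deg B))$ and $H^1(\Pi, \OO(a-\deg B))$ vanish (the latter for all twists on $\Pi \isom \pp^2$). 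Hence the restriction map $H^0(\Pi, \OO(a)) \to H^0(B, \OO(a))$ is an isomorphism, and the section cutting out $2P+2Q$ on $B$ extends to a unique plane curve $L$ of degree $a$ with $[L\cap B] = 2P+2Q$; uniqueness follows since $B$ is irreducible, so two sections with the same divisor differ by a scalar.

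The point requiring care — the analogue of the subtlety flagged in Remark \ref{remNodeCuspCartier} — is the verification that $2P+2Q$ is realized as an \emph{effective} Cartier divisor with exactly multiplicity $2$ at each node and $2$ at each cusp, rather than as a mere class in $\Pic(B)$. Concretely I must confirm that a line transverse to the two branches at a node contributes $2\cdot(\text{node})$, and that a line through a cusp transverse to the cuspidal tangent contributes $2\cdot(\text{cusp})$ — the tangent direction being exactly the choice that would instead raise the multiplicity to $3$ and produce $W$ rather than $L$. Once this local analysis is in place, the global extension across the degree gap is automatic, exactly as for $W$.
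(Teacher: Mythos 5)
Your proposal matches the paper's proof essentially line for line: part (1) is the same subtraction $[2P+2Q]=[2P+3Q]-[Q]=\n(\n-2)\ell-(\n-2)\ell=a\ell$ via Proposition \ref{prs2P3Q} and Lemma \ref{lemClassOfCusps}, part (b) uses the identical lifts $P_2=P_1$ and $Q_2=Q_1-Q_0$, and part (2) rests on the same observation $a=(\n-1)(\n-2)<\deg B$ combined with the restriction isomorphism already invoked for the curve $W$ in Proposition \ref{prsWcurve}. The additional material you supply — the ideal-sheaf sequence on $\pp^2$ with the vanishing of $H^0$ and $H^1$ of negative twists, and the local check that a line transverse to the branches (respectively, to the cuspidal tangent) cuts multiplicity exactly $2$ at a node (respectively, a cusp) — is precisely what the paper leaves implicit, so this is elaboration of the same argument rather than a different route.
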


\begin{proof}
(1) We have
$$
   [2P + 2Q] = [2P + 3Q] - [Q] = \n(\n-2)\ell  -(\n-2)\ell = (\n-1)(\n-2)
   \ell= a\ell.
$$
The computation in $\Pic(B)$ is parallel: we let $P_2 = P_1$ and $Q_2 = Q_1 - Q_0$.

(2) Note that $(\n-1)(\n-2) < \deg B$, which completes the proof.
\end{proof}

Note that $L$ is an adjoint curve to $B$ which is not tangent to $B$ at the cusps of $B$.

\begin{notation}
\emph{Let $\zeta_L$ be the Cartier divisor on $B$ given by restricting the
equation of
$L$ to $B$. Recall that $\zeta_L$ is supported on the 0--cycle of singularities $\xi$.}
\end{notation}

\begin{definition}
 Let $V(d,c,n)$ be the variety of plane curves of degree $d$ with $c$ cusps and $n$
 nodes, abd let $B(d,c,n)$ be the subvariety in $V(d,c,n)$ consisting of
 branch curves of all ramified covers of $\PP^2$.
\end{definition}

\begin{example}
\emph{ By substituting $\n=3$ and $\n=4$ we get the classical
example of a sextic with six cusps on a conic we discussed above, and the example of
a degree 12 curve with 24 cusps and 12 nodes, all of them are on a
sextic:
\item [(1)] The branch curve $C$ of a smooth cubic surface in $\pp^3$ is a
sextic
  with six cusps, $C \in B(6,6,0)$. We have $\deg L = 2$; two different
  constructions of this conic was given above in Corollary
  \ref{corSexticBranch} and Remark \ref{remConicConst}. See also Figure 3 above.
\item [(2)] The branch curve $C$ of a smooth quartic surface  in $\pp^3$ is of
  degree $12$, and has $24$ cusps and $12$ nodes, i.e., $C \in B(12,24,12)$.
  We have $\deg L = 6$. Moreover, the $24$ cusps lie on the intersection of a quartic and a sextic curves (see e.g. \cite{Se}).}

\end{example}

\subsection[Adjoint curves and Linear systems]{Adjoint curves and Linear systems}

We start with the following easy Lemma:

\begin{lemma} (Adjunction for a flag  $(\xi, \zeta, K, P)$)
Assume we are given a flag of 4 (arbitrary) schemes $(\xi, \zeta, K, P)$. Then there
is a diagram

$$
   \displayJ{J_{\xi,\zeta}}{J_{\xi,\zeta}}{J_K}{J_{\xi}}{J_{\xi,K}}{J_K}{J_{\zeta}}{J_{\zeta,K}}
$$

where $J_X = J_{X,P}$, and $X$ is either $\xi$, $\zeta$, or $K$.
\end{lemma}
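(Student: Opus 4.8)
The plan is to unwind the notation and recognize that this is a purely formal statement about ideal sheaves and their quotients attached to a flag of subschemes. Given the flag $\xi \supseteq \zeta \supseteq K \supseteq P$ (or whatever the inclusion convention is), the notation $J_{X} = J_{X,P}$ suggests $J_{X,P}$ denotes the ideal (sheaf) of $X$ relative to $P$, i.e. a quotient $J_P / J_X$ or the ideal of $X$ inside the structure sheaf of $P$. The diagram produced by the macro \texttt{\textbackslash displayJ} is a $3\times 3$-type commutative diagram with exact rows and columns and a squiggle isomorphism in the top row; so the real content is that the various short exact sequences relating these ideals fit together into a single commutative grid. My approach would therefore be to identify each of the two horizontal short exact sequences and the two vertical ones, check commutativity of the squares, and then invoke the standard fact that such data assemble into the displayed diagram.

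First I would write down the two rows. Each row should be a short exact sequence of the form $0 \to J_K \to J_\xi \to J_{\xi,K} \to 0$ (top middle row, with $J_{\xi,K}$ the quotient measuring $K$ inside $\xi$) and $0 \to J_K \to J_\zeta \to J_{\zeta,K} \to 0$ (bottom row), arising simply from the inclusions in the flag and the third isomorphism theorem for modules/sheaves. Next I would write the two columns as short exact sequences, again coming directly from the nested inclusions: the left column relates $J_{\xi,\zeta}$, $J_\xi$, $J_\zeta$ and the right column relates $J_{\xi,\zeta}$, $J_{\xi,K}$, $J_{\zeta,K}$. The top squiggle $J_{\xi,\zeta} \xrightarrow{\sim} J_{\xi,\zeta}$ is the assertion that the two cokernels one obtains (the vertical cokernel on the left and on the right) are canonically the same object, which is again the third isomorphism theorem applied to $\xi \supseteq \zeta \supseteq K$.

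Having assembled the four exact sequences, the only remaining verification is that the central square commutes and that the induced maps on kernels and cokernels are the claimed canonical ones. This is where I would be most careful: I would check that the square with corners $J_K, J_\xi, J_\zeta, J_{\zeta,K}$ (or whichever the central square is) commutes by chasing a local section, using that all maps are the natural quotient or inclusion maps, and then appeal to the snake lemma (or simply to the elementary fact that a commutative square of surjections/injections with the right kernels induces the boundary maps) to populate the exact corners of the grid with zeros and to identify the top isomorphism. The statement that the scheme are ``arbitrary'' tells me no geometry is needed: everything is diagram-chasing in an abelian category, so I can work with the sheaves of ideals as sheaves of $\OO_P$-modules and argue stalkwise.

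The main obstacle I anticipate is purely bookkeeping: matching the author's index conventions (which of $J_{\xi,\zeta}$, $J_{\xi,K}$, etc.\ sits in which corner of \texttt{\textbackslash displayJ}, and in which direction each inclusion runs) so that the two rows and two columns are genuinely exact in the orientation drawn. Once the conventions are pinned down, the exactness of rows and columns and the top isomorphism all follow from a single application of the third isomorphism theorem $J_\zeta / J_\xi \cong (J_\zeta/J_K)\big/(J_\xi/J_K)$ together with the snake lemma, so there is no hard analytic or geometric content; the lemma is a formal scaffolding that will be used to organize the adjunction arguments for the curves $W$, $L$, and $L_1$ in the sequel.
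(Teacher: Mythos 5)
Your proof is correct and coincides with the argument the paper intends: the lemma is stated there without proof as an easy formal fact, and its content is exactly what you describe --- the rows $0 \to J_K \to J_{\xi} \to J_{\xi,K} \to 0$ and $0 \to J_K \to J_{\zeta} \to J_{\zeta,K} \to 0$ coming from the flag, the columns coming from $J_{\xi,\zeta} = J_{\xi}/J_{\zeta}$, and the top isomorphism $(J_{\xi}/J_K)\big/(J_{\zeta}/J_K) \cong J_{\xi}/J_{\zeta}$ supplied by the third isomorphism theorem and assembled into the grid by the snake lemma. The only thing to pin down is the bookkeeping you already flagged: in the paper's usage the flag runs $\xi \subset \zeta \subset K \subset P$ ($\xi$ the $0$-cycle of singularities, $\zeta$ a Cartier divisor supported on it, $K$ the curve, $P = \pp^2$), so $J_{\zeta} \subset J_{\xi} \subset \OO_P$, and $J_{\xi,K} = J_{\xi}/J_K$ is the ideal sheaf of $\xi$ inside $K$ (not of $K$ inside $\xi$).
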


\begin{corollary}
Coming back to our standard notations, let  $K$ be a plane curve,
$\xi \subset \zeta \subset K$ be a flag of 0-subschemes on $K$ such that
$\zeta$ is given by a positive {\it Cartier} divisor, and $\xi = \supp \zeta$.
In this case $J_{\zeta,K} =
\OO_K(- \zeta)$.  Then, given an integer $n < \deg K$, the diagram above gives
isomorphisms

$$
   \xymatrix
   {
     0 \ar[r]  &  H^0(\pp^2, J_{\xi,\zeta}(n)) \ar[r]       & H^0(K, J_{\xi,\zeta}(n)) \ar[r]  & 0 \\
     0 \ar[r]  &  H^0(\pp^2, J_{\xi}(n))       \ar[u]\ar[r] & H^0(K, J_{\xi,K}(n)) \ar[u]\ar[r]  & 0 \\
     0 \ar[r]  &  H^0(\pp^2, J_{\zeta}(n))     \ar[u]\ar[r] & H^0(K, \OO_K(-\zeta)(n)) \ar[u]\ar[r]  & 0 \\
     {}        &  0 \ar[u] &  0 \ar[u] & {} \\
   }
$$
\end{corollary}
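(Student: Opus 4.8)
The plan is to read off all three isomorphisms at once, by twisting the $3\times 3$ diagram of the preceding Lemma (taken with $P=\pp^2$) by $\OO(n)$ and applying the left-exact functor $H^0(\pp^2,-)$. The only inputs will be the cohomology of line bundles on $\pp^2$ together with the numerical hypothesis $n<\deg K$.

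First I would pin down the left column of that diagram, which is the identity $J_K = J_K$. Since $K$ is a plane curve, $J_K=\OO_{\pp^2}(-\deg K)$, so after the twist the relevant term is $\OO_{\pp^2}(n-\deg K)$. Everything hinges on the double vanishing that this term enjoys: as $n<\deg K$ we have $n-\deg K<0$, whence $H^0(\pp^2,\OO_{\pp^2}(n-\deg K))=0$, while $H^1(\pp^2,\OO_{\pp^2}(m))=0$ for every integer $m$. These are the sole cohomological facts required.

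Next I would feed this into the two horizontal short exact sequences supplied by the Lemma. The middle row, after twisting, is $0\to\OO_{\pp^2}(n-\deg K)\to J_\xi(n)\to J_{\xi,K}(n)\to 0$, and its associated long exact sequence
$$0\to H^0(\pp^2,\OO_{\pp^2}(n-\deg K))\to H^0(\pp^2,J_\xi(n))\to H^0(\pp^2,J_{\xi,K}(n))\to H^1(\pp^2,\OO_{\pp^2}(n-\deg K))$$
has vanishing outer terms, so the middle arrow is an isomorphism. Since $J_{\xi,K}=J_\xi/J_K$ is (the pushforward of) an $\OO_K$-module, one has $H^0(\pp^2,J_{\xi,K}(n))=H^0(K,J_{\xi,K}(n))$ by the projection formula, which produces the middle row of the asserted diagram. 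The bottom row is handled identically with $\zeta$ replacing $\xi$, now using the identification $J_{\zeta,K}=\OO_K(-\zeta)$ recorded in the statement, giving $H^0(\pp^2,J_\zeta(n))\cong H^0(K,\OO_K(-\zeta)(n))$.

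For the top row I would simply note that $J_{\xi,\zeta}=J_\xi/J_\zeta$ is supported on the $0$-dimensional scheme $\zeta$ (with reduced support $\xi=\supp\zeta$), so its sections over $\pp^2$, over $K$, and over $\zeta$ all coincide and the restriction map is tautologically an isomorphism. Commutativity of the resulting array is automatic, since it is the image of the (commuting) twisted Lemma diagram under the functor $H^0(\pp^2,-)$, so no extra verification is needed. I do not anticipate a genuine obstacle here: the argument is a diagram chase in the cohomology of $\pp^2$, and the only points demanding care are the identification $J_K=\OO_{\pp^2}(-\deg K)$ and the use of the \emph{strict} inequality $n<\deg K$ to force $H^0(\OO_{\pp^2}(n-\deg K))=0$ (the companion $H^1$-vanishing being automatic on the plane).
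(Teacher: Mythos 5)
Your proposal is correct and coincides with the argument the paper leaves implicit: the Corollary is presented as an immediate consequence of the Lemma's $3\times3$ diagram, and the intended justification is precisely your twist-and-take-$H^0$ computation, using $J_K(n)=\OO_{\pp^2}(n-\deg K)$ with $H^0$ killed by the hypothesis $n<\deg K$ and $H^1(\pp^2,\OO(m))=0$ for all $m$, plus the tautological identification of sections of sheaves pushed forward from $K$ (resp.\ from the $0$-dimensional scheme $\zeta$). No gaps; this is the same approach, with the routine details written out.
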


\begin{corollary}
Assume that there is an integer $a$ and a positive Cartier divisor $\zeta = \zeta_0$ on $K$
such that there is a linear equivalence $\zeta_0 \sim a l$,
where $l$ is the class of a line section on $K$. (Such is the case of a branch curve and
the class $\zeta_L$ constructed above.)

Then, setting $n = a+i$, we get isomorphisms
$$
     j_{K,\zeta_0}(i): H^0(\pp^2, J_{\zeta_0}(a + i))  \to  H^0(K, \OO_K(i))
$$
for every $i \ge 0$.

In other words, adjoint curves on $\pp^2$ with given tangent conditions
at the singularities of the curve $K$ correspond to homogeneous forms on $K$ with given tangent conditions
at the singularities of the curve $K$ correspond to homogeneous forms on $K$.
\end{corollary}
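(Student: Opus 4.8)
The plan is to obtain $j_{K,\zeta_0}(i)$ as the composite of two isomorphisms: the restriction isomorphism furnished by the previous Corollary, followed by a twist coming from the linear equivalence $\zeta_0 \sim al$. First I would specialize the bottom row of the diagram in the previous Corollary to $\zeta = \zeta_0$ and $n = a+i$. Since $\zeta_0$ is a positive Cartier divisor, $J_{\zeta_0,K} = \OO_K(-\zeta_0)$ is a genuine line bundle on $K$, so that row reads as a short exact sequence with zero ends and gives the restriction isomorphism
$$H^0(\pp^2, J_{\zeta_0}(a+i)) \Isom H^0(K, \OO_K(-\zeta_0)(a+i)).$$

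Next I would convert the target into $H^0(K,\OO_K(i))$ using the hypothesis $\zeta_0 \sim al$. Because $\zeta_0$ is Cartier this equivalence lifts to an isomorphism of line bundles $\OO_K(-\zeta_0) \isom \OO_K(-al)$ on $K$; twisting by $\OO_K((a+i)l)$ then yields
$$\OO_K(-\zeta_0)(a+i) \isom \OO_K\bigl((-a + a + i)\,l\bigr) = \OO_K(i).$$
Passing to global sections and composing with the restriction isomorphism above defines $j_{K,\zeta_0}(i)$ and exhibits it as a bijection. Unwinding the construction gives exactly the stated dictionary: a plane curve of degree $a+i$ adjoint to $K$ and cutting out $\zeta_0$ on $K$ restricts, after removing the fixed part $\zeta_0 \sim al$, to an arbitrary section of $\OO_K(i)$, and every such section lifts uniquely to such an adjoint form.

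I expect the only delicate point to be the restriction isomorphism itself, and specifically the degree bound it requires. The previous Corollary produces the isomorphism only for $n < \deg K$, so here one needs $a + i < \deg K$; for $a+i \ge \deg K$ the restriction $H^0(\pp^2,\OO(a+i)) \to H^0(K,\OO_K(a+i))$ acquires a kernel consisting of the multiples of the defining equation of $K$, and only surjectivity survives. For the branch curves $B$ that motivate the statement this is harmless, since $\deg B = \nu(\nu-1)$ while $a = (\nu-1)(\nu-2)$, so the relevant range $0 \le i < 2(\nu-1)$ is already covered; the content of the Corollary is precisely the identification of adjoint forms with sections on $B$ in this range.
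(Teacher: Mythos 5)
Your proof is correct and is essentially the paper's own argument: the paper obtains $j_{K,\zeta_0}(i)$ in exactly this way, composing the restriction isomorphism from the preceding corollary (the bottom row of the flag diagram, valid for $n < \deg K$) with the twist coming from the linear equivalence $\zeta_0 \sim a l$, realized concretely as division by the equation $f_{L_0}$ of the adjoint curve $L_0$. Your caveat about the degree bound is also well taken: the isomorphism genuinely requires $a+i < \deg K$ rather than all $i \ge 0$, a restriction the paper itself imposes in Theorem \ref{theorem:main-adjunction} and patches for $i \ge \deg K - a$ by allowing adjoint curves containing $K$ as a component.
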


We only need this isomorphism for $i = 0$; it implies that there is a curve
$L_0 \in  H^0(\pp^2, J_{\zeta_0}(a))$ of degree $a$ corresponding to the element $1 \in  H^0(K,
\OO_K)$, and $\zeta_0$ is locally given by the equation of $L_0$.
(This is exactly the case
of a branch curve $K=B$, where $\zeta_0 = \zeta_L$ and $L_0 = L$).

The isomorphism $j_{K,\zeta_0}(i)$ is given by $h \mapsto \frac{h}{f_{L_0}}$,
where $f_{L_0}$ is an equation of the curve $L_0$.

Our next goal is to study curves of various degrees $n > a$ containing the
0-cycle $\xi$ but restricting to different Cartier divisors with support on
$\xi$, not necessarily coinciding with  $\zeta_0$.
Assume that we are given a positive Cartier divisor $\zeta_1$ on $K$; We will
study adjoint curves restricting to $K$ as $\zeta_1$ .

Note that $J_{\zeta_1,K} = \OO_K(- \zeta_1)$, and consider the
restriction map
$$
   \res_K: H^0(\pp^2, J_{\zeta_1}(a+i) ) \to   H^0(K, \OO_K(- \zeta_1)(a+i) )
$$

To introduce notations we need to recall some basic facts about
linear equivalence of Cartier divisors.  Assume that we are given two positive
Cartier divisors $D_1$ and $D_2$ on a scheme $X$ and a linear equivalence $D_1
- D_2 = (r)$ for a meromorphic function $r$.  We realize both  $\OO_X(D_1)$ and
$\OO_X(D_2)$ as subsheaves of the sheaf $M_X$ of meromorphic functions on $X$,
and describe the isomorphism $\OO_X(D_1) \to \OO_X(D_2)$ given by the
function $r$ explicitly.  Locally, on a small enough affine open set $U
\subset X$, $U \isom \Spec A$, $D_1$ and $D_2$  are given by equations $f_1$
and $f_2$, $f_i \in A$,  $f_1/f_2 = r$ in the full ring of fractions $M_A$ of $A$, and $\OO(D_i)$
is given by the $A$-submodule $\frac{1}{f_i} \, A$ in $M_A$, $i = 1, 2$. The isomorphism
$j_r: \frac{1}{f_1} \, A  \to \frac{1}{f_2} \, A $, $a / f_1 \mapsto r \cdot (a/f_1)
= a/f_2 $ gives rise to an automorphism of the sheaf $M_X$ given by the
multiplication by $r$. Thus, globally, the sheaf automorphism $j_r: M_X \to M_X$ given
by the multiplication by $r$ takes $\OO(D_1)$ to $\OO(D_2)$.

Now, using the linear equivalence $\zeta_0 \sim a l$ on $K$, we get an isomorphism
$$
 j_r: \OO_K(- \zeta_1)(a+i) \to  \OO_K(- \zeta_1)(\zeta_0)(i) \isom \OO_K(\zeta_0 - \zeta_1)(i)
$$
given by multiplication with the rational function
$$
    r =  f_l^a / f_{L_0}
$$
where $f_l$ is an  equation of a line  $l$, and $f_{L_0}$ is the  equation of $L_0 \in  H^0(\pp^2, J_{\zeta_0}(a))$.
Thus the image for an adjunction belongs to the sheaf $\OO_K(\zeta_0 - \zeta_1) \tensor \OO_K(i)$,
which is the sheaf of meromorphic functions on $K$ with zeroes at $\zeta_1$ and poles at
$\zeta_0$, shifted by $i$.

Since we want to study adjoint curves to $K$, we are interested in positive
Cartier divisors of the form $\zeta_1 = \zeta_1^{\xi} + \zeta_1^{\res}$, where
$\zeta_1^{\xi}$ is supported on $\xi$, i.e.,
$\supp(\zeta_1^{\xi}) = \supp(\zeta_0) = \xi$,
and $\zeta_1^{\res}$ ($\res$ for "residual") is supported on the set of smooth points
of $K$.  Note that the sections of the sheaf $\OO_K(\zeta_0 -
\zeta_1)$ can locally be given by $r = h_1/h_0 \cdot g$, where
$h_i$ is the local equation for the Cartier divisor $\zeta_i$, and
$g$ is regular, i.e., $g \in \OO_{K,p}$.

Thus we introduce the following module and sheaf:

\begin{definition}
\label{definition:R}
{\rm For a commutative ring $A$, we define an $A$-submodule $R_A$ in the full
ring of fractions $M_A$,
$$
A \subset R_A \subset M_A,
$$
as the set of all fractions $r = g_1/g_0$ such that $\ord_p(g_1) \ge \ord_p(g_0)$
for each height one ideal $p$ of $A$.

Given a scheme $X$, one can define the sheaf $R_X$ ;
this sheaf is the subsheaf of the sheaf of
meromorphic functions $M_X$ given locally by fractions $r = g_1/g_0$ such that
$\ord_Z(r) = \ord_Z(g_1) - \ord_Z(g_0) \ge 0$ for each codimension one
subvariety $Z$ of $X$.}
\end{definition}

The sheaf $R_X$ coincides with the structure sheaf $O_X$ at the set of
smooth points of $X$, and there is a filtration
$$ O_X \subset R_X \subset M_X. $$

Moreover, we have the following easy Lemma:

\begin{lemma}
The normalization $N_A$ of $A$ in the full ring of fractions
$M_A$ is a submodule of $R_A$. I.e., there is a filtration
$$ A \subset N_A \subset  R_A \subset M_A $$
\end{lemma}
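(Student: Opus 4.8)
The two outer inclusions are immediate: every element of $A$ is a root of $T-a$ and so lies in $N_A$, while $R_A\subseteq M_A$ holds by construction. The content of the lemma is therefore the middle inclusion $N_A\subseteq R_A$, i.e.\ that an element $x\in M_A$ which is integral over $A$ satisfies $\ord_p(x)\ge 0$ for every height-one prime $p$.

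First I would reduce to a local statement. Fix a height-one prime $p$ of $A$; since $\ord_p$ depends only on the local ring $A_p$, I may replace $A$ by the one-dimensional Noetherian local domain $A_p$ and $x$ by its image, keeping $M_A=\operatorname{Frac}(A_p)$. The element $x$ remains integral over $A_p$, hence lies in the integral closure $\widetilde{A_p}$ of $A_p$ in $M_A$. In the setting of the paper (finitely generated algebras over a field, the local rings of a curve) this normalization is a finite $A_p$-module, so $\widetilde{A_p}$ is a semilocal one-dimensional normal domain, i.e.\ a semilocal Dedekind ring; its localizations at the maximal ideals $q_1,\dots,q_r$ are discrete valuation rings with valuations $v_{q_1},\dots,v_{q_r}$.

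The heart of the argument is to compare $\ord_p$ with these genuine valuations. I would establish the classical identity
\[
   \ord_p(x)\;=\;\sum_{j=1}^r [\kappa(q_j):\kappa(p)]\,v_{q_j}(x)
   \qquad (x\in M_A^{*}),
\]
where $\kappa(\,\cdot\,)$ denotes residue fields. Both sides are homomorphisms $M_A^{*}\to\Z$, so it suffices to check the identity on $A_p\setminus\{0\}$; there it follows by applying the snake lemma to multiplication by $x$ on the short exact sequence $0\to A_p\to\widetilde{A_p}\to\widetilde{A_p}/A_p\to 0$, which yields $0\to \widetilde{A_p}/A_p\to A_p/xA_p\to\widetilde{A_p}/x\widetilde{A_p}\to\widetilde{A_p}/A_p\to 0$ and hence $\ell_{A_p}(A_p/xA_p)=\ell_{A_p}(\widetilde{A_p}/x\widetilde{A_p})$ (the two contributions of the finite-length module $\widetilde{A_p}/A_p$ cancel), combined with the decomposition $x\widetilde{A_p}=\prod_j q_j^{\,v_{q_j}(x)}$ in the Dedekind ring $\widetilde{A_p}$. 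With this identity in hand the conclusion is immediate: since $x\in\widetilde{A_p}$ we have $v_{q_j}(x)\ge 0$ for every $j$, and the coefficients $[\kappa(q_j):\kappa(p)]$ are strictly positive, whence $\ord_p(x)\ge 0$. As $p$ was arbitrary, $x\in R_A$.

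The main obstacle is precisely that for a \emph{singular} point the order $\ord_p$, defined by $\ord_p(f)=\ell_{A_p}(A_p/fA_p)$ and extended additively, is only a homomorphism and \emph{not} a valuation, so the usual one-line argument ``$v(x)<0$ forces the leading term of the integral equation to dominate'' does not apply to $\ord_p$ directly. The displayed formula is exactly what repairs this: it transports the valuation-theoretic fact (integral elements have nonnegative value in every DVR of the normalization) back to the length-theoretic function $\ord_p$. I would finally remark that the inclusion $N_A\subseteq R_A$ is in general \emph{strict}, since at a point with several branches the single inequality $\sum_j [\kappa(q_j):\kappa(p)]\,v_{q_j}(x)\ge 0$ is weaker than requiring $v_{q_j}(x)\ge 0$ for all $j$; this is consistent with $R_A$ being the larger sheaf of functions that are regular in codimension one.
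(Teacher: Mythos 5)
The paper never proves this statement: it is labeled an ``easy Lemma,'' asserted, and immediately used, so your proposal is not competing with an argument in the text but supplying the one the authors omitted. Your argument is correct, and it is the classical one: reduce to the one-dimensional local ring $A_p$, pass to the finite integral closure $\widetilde{A_p}$ (a semilocal Dedekind ring), and prove the comparison formula
$$
   \ord_p(x)\;=\;\sum_{j} [\kappa(q_j):\kappa(p)]\, v_{q_j}(x)
$$
by the snake-lemma length count (this is the standard comparison between the order function and the valuations of the normalization, cf.\ \cite[Appendix A.3]{Ful}). You also correctly isolate why there is anything to prove: at a singular point the length-based $\ord_p$ is a homomorphism but not a valuation, so integrality cannot be played off against $\ord_p$ directly and must be transported through the valuations $v_{q_j}$. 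The two facts you use without proof are harmless here: additivity of $\ord_p$ (already presupposed by Definition \ref{definition:R} for $R_A$ to be well defined; for a local domain it follows from $fA/fgA \isom A/gA$), and finiteness of the normalization (Noether's theorem, valid for the coordinate rings of curves over a field, as you note).

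Your closing remark about strictness is also correct, and it is worth spelling out because it bears on the paper itself. On the nodal curve $y^2=x^2+x^3$ the function $r=(y-x)/(y+x)$ has $\ord_{\mathfrak m}(r)=3-3=0$ at the node, so $r\in R_A$; but on the normalization $x=t^2-1$, $y=t(t^2-1)$ one has $r=(t-1)/(t+1)$, which has a pole at $t=-1$, so $r\notin N_A$. Thus $N_A\subsetneq R_A$ at any multibranch point. Since the paper never actually defines $\ord_p$, this example shows the section is sensitive to the choice of reading: under the length-based reading (yours, and Fulton's), the present lemma genuinely requires your argument, but then Proposition \ref{prsRsepRl0Cuspidal}, which asserts $R_K=\pi_*\OO_{K^*}$ for nodal-cuspidal $K$, fails at nodes by this very example; under the branch-wise reading ($\ord$ taken at each point of the normalization), that proposition holds but the present lemma becomes immediate. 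Either way, your proof documents exactly what the ``easy'' lemma rests on, and it is the proof one needs if Definition \ref{definition:R} is kept as written.
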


Note that sheaf $N_X$ coincides with the sheaf $\pi_*(\OO_{X^*})$, the
pushforward of the structure sheaf along the normalization $X^* \to X$.

Combining this all together, we get an adjunction sequence
\begin{multline*}
a_{K,i,\zeta_1}: \; \;
J_{\zeta_1, \pp}(a+i)    \stackrel{\res_K}{\to}
    \OO_K(-\zeta_1)(a+i)  \stackrel{\frac{f_l^a}{f_L}}{\Isom}
    \OO_K(\zeta_0-\zeta_1)(i) = \\     \\
    =
    \OO_K(\zeta_0-\zeta_1^{\xi})(i)(- \zeta_1^{res}) \subset
    \OO_K(\zeta_0-\zeta_1^{\xi})(i) \subset
    R_K(i)
\end{multline*}

and, taking union over all positive Cartier divisors $\zeta_1$, we finally get
our main adjunction

\newcommand{\aKi}{a_{K,i}}

\begin{equation}
\label{definition:adjunction-map-a}
     \aKi :
     J_{\xi,\pp}(a+i)
     \stackrel{r}{\to}   %
     R_K(i),
\end{equation}
where
$$
      r = f_l^a / f_L.
$$

Now we study the image of the map $a_{K,i}$.

\begin{definition} \label{defStrictTan}
{\rm
  Let  $C$ be a plane curve. We say that a line $l$ containing a cuspidal or
nodal point $p$ of $C$ is {\it strictly tangent} to $C$ at $p$ if $l$
intersects $C$ with multiplicity 3 at $p$.

We also say that a curve $C_1$ containing $p$ is strictly tangent to $C$ at the
nodal or cuspidal point $p$ of $C$ if $C_1$ intersects $C$ with multiplicity at
least 3 at $p$.
}
\end{definition}

Assume from now on that the adjoint  curve $L_0$ to $K$ is not (strictly)
tangent to $K$ at its singular points, and does not intersect $K$ elsewhere.

We want to introduce a sheaf of rational functions with denominator vanishing
exactly along $L_0$.  This sheaf is clearly the image of the adjunction map $a_K$
defined above.

\begin{definition}
  \emph{ Let $R^{L_0}_K$ be a subsheaf of $R_K$ consisting of sections $r$
  which can be given by $r = f/f_{L_0}$, where $f$ is a  homogeneous polynomial
  on $\pp^2$, and $f_{L_0}$ is an equation of the curve $L_0$.}
\end{definition}

\begin{prs}\label{prsRsepRl0Cuspidal}
If $K$ is a nodal-cuspidal curve and $L_0$ is an adjoint curve not tangent to $K$
at the singularities of $K$ and not intersecting it elsewhere, then the natural inclusion
$$
  R^{L_0}_K  \subset R_K
$$
is an equality. Moreover, they both coincide with the sheaf $\pi_* \OO_{K^*}$.
\end{prs}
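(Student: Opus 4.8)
The plan is to verify both asserted equalities locally, at each point $p\in K$, after recording that the containment $R^{L_0}_K\subset R_K$ holds by the very definition of $R^{L_0}_K$ as a subsheaf of $R_K$. Write $N_K=\pi_*\OO_{K^*}$ for the normalization sheaf, which by the Lemma above sits in the filtration $\OO_K\subset N_K\subset R_K\subset M_K$. I would first dispose of the \emph{moreover} clause by proving $R_K=N_K$. The inclusion $N_K\subseteq R_K$ is already given; for the reverse, a local section $r$ of $R_K$ is a meromorphic function whose order is non-negative along every codimension-one subvariety of $K$, i.e. along every branch, so its pullback to the smooth curve $K^*$ has no poles and hence lies in $\OO_{K^*}$. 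Thus $r\in\pi_*\OO_{K^*}=N_K$; equivalently, $r$ is integral over $\OO_K$ since the normalization is finite. This gives $R_K=N_K=\pi_*\OO_{K^*}$, so it remains only to prove $N_K\subseteq R^{L_0}_K$, which I would check on stalks.

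At a smooth point $p$ of $K$ the hypothesis that $L_0$ does not meet $K$ away from its singularities makes $\overline{f_{L_0}}$ a unit in $\OO_{K,p}$, so every germ $g\in\OO_{K,p}=N_{K,p}$ is $g=\overline{f}/\overline{f_{L_0}}$ with $\overline{f}=g\,\overline{f_{L_0}}$ a restriction of a form; hence $R^{L_0}_{K,p}=\OO_{K,p}=N_{K,p}$. At a singular point $p$ (a node or a cusp) the task, given $r\in N_{K,p}$, is to produce a form $f$ with $r=\overline{f}/\overline{f_{L_0}}$, for which it suffices that $r\,\overline{f_{L_0}}$ already lie in $\OO_{K,p}$ (then it is the restriction of a homogeneous polynomial, realized by the adjunction isomorphisms $j_{K,\zeta_0}(i)$ constructed above, after passing to a suitable twist). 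Note that $\overline{f_{L_0}}$ is a non-zero-divisor in $\OO_{K,p}$, since $L_0$ contains no component of $K$. Everything thus comes down to the single containment
$$
   \overline{f_{L_0}}\cdot N_{K,p}\ \subseteq\ \OO_{K,p},
$$
that is, to $\overline{f_{L_0}}$ lying in the conductor $\mathfrak{c}_p=\{g\in\OO_{K,p}:g\,N_{K,p}\subseteq\OO_{K,p}\}$.

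The heart of the argument is therefore the local computation of $\mathfrak{c}_p$. For both a node and a cusp one has $\dim_{\mathbb{C}}(N_{K,p}/\OO_{K,p})=1$, and the singularity is Gorenstein, being a plane-curve singularity, so $\dim_{\mathbb{C}}(\OO_{K,p}/\mathfrak{c}_p)=1=\dim_{\mathbb{C}}(\OO_{K,p}/\mathfrak{m}_p)$, forcing $\mathfrak{c}_p=\mathfrak{m}_p$. One can also see this by hand: at a node $xy=0$ the normalization is $\mathbb{C}[[x]]\times\mathbb{C}[[y]]$ and $\mathfrak{m}_p\cdot N_{K,p}\subseteq\OO_{K,p}$; at a cusp, parametrized by $x=t^2,\ y=t^3$, one has $N_{K,p}=\mathbb{C}[[t]]$, $\mathfrak{m}_p=t^2\mathbb{C}[[t]]$, and $\mathfrak{m}_p\cdot N_{K,p}=t^2\mathbb{C}[[t]]=\mathfrak{m}_p\subset\OO_{K,p}$. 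Since $L_0$ is adjoint to $K$, $\overline{f_{L_0}}$ vanishes at $p$, i.e. $\overline{f_{L_0}}\in\mathfrak{m}_p=\mathfrak{c}_p$; the hypotheses that $L_0$ is not tangent to $K$ at $p$ and meets it only along $\xi$ are what guarantee that $\overline{f_{L_0}}$ cuts out the intended Cartier divisor $\zeta_0$ (with the expected local multiplicity) and is a unit off $\xi$, so that $R^{L_0}_K$ is neither too small at smooth points nor fails to capture the extra normalization direction at the singularities. Combining the three cases gives $N_{K,p}\subseteq R^{L_0}_{K,p}$ at every $p$, whence $R^{L_0}_K=R_K=\pi_*\OO_{K^*}$.

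The step I expect to be the main obstacle is the passage from ``$r\,\overline{f_{L_0}}\in\OO_{K,p}$'' to an actual homogeneous polynomial $f$ of the prescribed degree with $\overline{f}=r\,\overline{f_{L_0}}$: one must realize the local germ by a global form \emph{without altering the denominator} $f_{L_0}$, which is exactly what the restriction isomorphisms $j_{K,\zeta_0}(i)\colon H^0(\pp^2,J_{\zeta_0}(a+i))\to H^0(K,\OO_K(i))$ established earlier provide, after twisting. The conductor identification $\mathfrak{c}_p=\mathfrak{m}_p$ is then the clean algebraic input through which the nodal-cuspidal hypothesis does its work.
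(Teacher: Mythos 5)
Your argument is correct in substance, but it runs on a different engine than the paper's. The paper's own proof is a two-line local remark: nodes and cusps are resolved by a single blow-up, and one may take $t=f_1/f_{L_0}$ as a local coordinate on the normalization, where $f_1$ is a second adjoint form whose tangent at each singular point is separated from that of $L_0$; consequently every germ of $\pi_*\OO_{K^*}$ is $a+bt=(af_{L_0}+bf_1)/f_{L_0}$ with numerator a local section of $J_{\xi}$, so both $R_K$ and $R^{L_0}_K$ coincide with $\pi_*\OO_{K^*}$. You instead isolate the precise commutative-algebra input: $\overline{f_{L_0}}$ lies in the conductor $\mathfrak{c}_p=\mathrm{Ann}\bigl(\pi_*\OO_{K^*}/\OO_K\bigr)_p$, and for nodes and cusps $\mathfrak{c}_p=\mathfrak{m}_p$ (via $\delta=1$ together with the Gorenstein property, or by the direct computations you give), so adjointness alone yields $\overline{f_{L_0}}\cdot(\pi_*\OO_{K^*})_p\subseteq\mathfrak{m}_{K,p}$ and division by $\overline{f_{L_0}}$ exhibits every weakly holomorphic germ with denominator $f_{L_0}$. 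Both proofs ultimately rest on nodes and cusps having $\delta$-invariant $1$, but the keys are genuinely different: the paper's explicit generator $t$ is recycled later (it is exactly what Lemma A in the proof of Theorem \ref{thmSegre} needs), whereas your route is the classical ``universal denominator $=$ conductor'' argument of adjoint theory, generalizes verbatim to arbitrary adjoints of arbitrary curve singularities (with $\mathfrak{c}_p$ replacing $\mathfrak{m}_p$), and makes transparent that for the inclusion $\pi_*\OO_{K^*}\subseteq R^{L_0}_K$ only adjointness and the absence of common components are used --- non-tangency of $L_0$ is needed only so that ratios $f/f_{L_0}$ with $f$ adjoint are automatically weakly holomorphic, i.e.\ for the containment the paper builds into the definition of $R^{L_0}_K$.

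One caveat, precisely at the step you flag as the main obstacle. The passage from $r\,\overline{f_{L_0}}\in\OO_{K,p}$ to a \emph{global} homogeneous numerator cannot be accomplished by $j_{K,\zeta_0}(i)$: those are isomorphisms onto global sections $H^0(K,\OO_K(i))$, and a germ in $\OO_{K,p}$ is not in general the germ of such a section (over $\C$ no countable-dimensional space of forms can produce every germ). Indeed, if one insists that sections of $R^{L_0}_K$ be ratios with a single polynomial numerator, the stalkwise equality asserted in Proposition \ref{prsRsepRl0Cuspidal} fails for dimension reasons; the statement is true --- and is what the exact sequence of Theorem \ref{thmRsep} actually requires --- when $R^{L_0}_K$ is read as the image sheaf of $J_{\xi,\pp}(a)\to M_K$, $f\mapsto f/f_{L_0}$, so that local sections of the ideal sheaf are admitted as numerators. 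Under that (the only workable) reading your obstacle evaporates: equality of sheaves is checked on stalks, one lifts $r\,\overline{f_{L_0}}\in\mathfrak{m}_{K,p}$ through the surjection $\OO_{\pp^2,p}\twoheadrightarrow\OO_{K,p}$ to a germ of $J_{\xi}$, and your conductor computation is then the entire proof; the isomorphisms $j_{K,\zeta_0}(i)$ are a global-sections consequence of the sheaf statement, not an ingredient of it.
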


\begin{proof} The proof follows easily from the fact that nodal and cuspidal
singularities of curves are resolved by a single blow-up, and, moreover, we can
take $t = f_1/f_0$ or $t = f_1/f_{L_0}$ as a local coordinate on the
resolution, where $f_1$  and $f_0$ vanish at the singular points of $K$ and
have separated tangents to $K$ at the singularities of $K$. In this way, both of
the sheaves are equal to $\pi_* \OO_{K^*}$, and thus they coinside.
\end{proof}

\begin{remark}\emph{
This proposition is an example for the analytic theory of weakly
holomorphic functions and universal denominator theorem (see, for example, \cite{Nara})
in case our base field is the field of complex numbers. In this case
the equation of the adjoint curve $L_0$ works as the universal denominator
for the sheaf of weakly holomorphic functions at each point of $K$.}
\end{remark}

Combining the proposition above and the construction of the adjunction map
$\aKi$ (which is essentially a division by the equation of $L_0$), we get the
following theorem:

\begin{thm} \label{thmRsep}
For a nodal-cuspidal curve $K$ and an adjoint curve $L_0$ as above,
the map $\aKi$ is epimorphic onto $R_K(i)$, and there is an exact sequence
$$
   \xymatrix
   {
    0 \ar[r] &   J_{K,\pp}(a+i)  \ar[r] &
    J_{\xi,\pp}(a+i)             \ar[r]^{\aKi} &
    {R^{L_0}_K(i)}                 \ar[r] \xyIsom[d]  & 0       \\
    &&& {R_K(i)}             &
   }
$$
In other words, adjoint curves of degree $a+i$ to the curve $K$ on the plane induce
rational functions on the curve $K$ for which $\ord_p(r) \ge 0$ for each point $p \in K$.

The map $\aKi$ is an isomorphism modulo ideal spanned by the equation of $K$.
\end{thm}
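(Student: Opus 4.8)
The plan is to prove exactness of the sheaf sequence one stalk at a time, exploiting the factorization of $\aKi$ that is built into its construction in (\ref{definition:adjunction-map-a}). Recall that $\aKi$ is the composite of the restriction $\res_K \colon J_{\xi,\pp}(a+i) \to J_{\xi,K}(a+i)$ with the map $j_r$ given by multiplication by the meromorphic function $r = f_l^a/f_{L_0}$, and that $j_r$ was already shown to be an isomorphism $\OO_K(-\zeta_1)(a+i) \Isom \OO_K(\zeta_0-\zeta_1)(i)$ onto its image. Since both the identification of the kernel and surjectivity are local questions, and the two singularity types of $K$ are analytically standard, the argument splits cleanly into the smooth locus of $K$ and the finite set $\xi = \Sing K$. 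Throughout let $d = \deg K$ and $f_K$ be an equation of $K$.

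For the kernel I would first observe that $j_r$ is multiplication by the invertible meromorphic function $r$, hence injective, so $\ker \aKi = \ker \res_K$. A local section of $J_{\xi,\pp}(a+i)$ restricts to $0$ on $K$ exactly when it is divisible by $f_K$, i.e.\ lies in $J_{K,\pp}(a+i)$. Because $\xi \subset K$ is reduced, $f_K$ lies in the maximal ideal at each point of $\xi$, so $J_{K,\pp} \subseteq J_{\xi,\pp}$ and the inclusion $J_{K,\pp}(a+i) \hookrightarrow J_{\xi,\pp}(a+i)$ is legitimate. This gives left exactness and identifies the kernel with $J_{K,\pp}(a+i)$, as claimed.

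Surjectivity is the heart of the matter. By construction the image of $\aKi$ is the subsheaf $R^{L_0}_K(i) \subseteq R_K(i)$ of functions of the shape $f/f_{L_0}$ with $f$ homogeneous and adjoint. Away from $\xi$ there is nothing to prove: there $R_K = \OO_K$, and since $L_0$ meets $K$ only along $\xi$, the form $f_{L_0}$ is a local unit, so $\aKi$ is locally division by a unit and hence an isomorphism. The genuine obstacle is local surjectivity at the nodes and cusps, where $f_{L_0}$ must act as a \emph{universal denominator}: one needs every germ of $R_K = \pi_*\OO_{K^*}$ at such a point to be expressible as $f/f_{L_0}$. This is precisely Proposition \ref{prsRsepRl0Cuspidal}, whose proof uses that a node or a cusp is resolved by a single blow-up and that $L_0$ is not strictly tangent to $K$ there, so that $f_{L_0}$ vanishes simply along each branch and $R^{L_0}_K = R_K = \pi_*\OO_{K^*}$. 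Granting that proposition, the image of $\aKi$ is all of $R_K(i)$, so $\aKi$ is an epimorphism.

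Combining the two steps yields the exact sequence $0 \to J_{K,\pp}(a+i) \to J_{\xi,\pp}(a+i) \stackrel{\aKi}{\to} R_K(i) \to 0$, and the induced isomorphism $J_{\xi,\pp}(a+i)/J_{K,\pp}(a+i) \isom R_K(i)$ is exactly the assertion that $\aKi$ is an isomorphism modulo the ideal generated by $f_K$. To deduce the stated consequence on global sections, namely that adjoint curves of degree $a+i$ induce all rational functions in $H^0(K, R_K(i))$, I would pass to the long exact cohomology sequence and invoke $H^1(\pp^2, J_{K,\pp}(a+i)) = H^1(\pp^2, \OO_{\pp^2}(a+i-d)) = 0$, which holds for every twist on $\pp^2$. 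I expect the only delicate point to be the local computation at the cusp, already absorbed into Proposition \ref{prsRsepRl0Cuspidal}; everything else is bookkeeping with ideal sheaves and twists.
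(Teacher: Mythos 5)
Your proposal is correct and follows essentially the same route as the paper, whose entire proof is to combine Proposition \ref{prsRsepRl0Cuspidal} with the construction of $a_{K,i}$ (restriction to $K$ followed by division by the equation of $L_0$); you simply spell out the details the paper leaves implicit, namely the identification of the kernel with $J_{K,\pp}(a+i)$ via divisibility by the equation of $K$, triviality of surjectivity off $\xi$ where $f_{L_0}$ is a unit, and the reduction of surjectivity at the nodes and cusps to the universal-denominator statement of that proposition. Your closing remark on global sections via $H^1(\pp^2,\OO_{\pp^2}(a+i-\deg K))=0$ is not needed for this theorem itself, but it is exactly the bookkeeping used for the paper's subsequent Theorem \ref{theorem:main-adjunction}.
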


Passing to the global sections for $a + i < \deg K$, we get the following theorem:
\begin{thm}
\label{theorem:main-adjunction}
For $a+i < \deg K$, there are isomorphisms
$$
    \bigoplus H^0(\pp^2,J_{\xi}(a+i))
    \Isom
    \bigoplus H^0(K, R_K(i))
    \Isom
    \bigoplus H^0(K^*, \OO_{K^*}(i))
$$
\end{thm}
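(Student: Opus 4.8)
The plan is to obtain both isomorphisms by simply passing to global sections in the short exact sequence already produced in Theorem \ref{thmRsep}, and to read off the two ends from cohomology vanishing on $\pp^2$ together with the finiteness of the normalization map. Everything substantive (that $\aKi$ is epimorphic onto $R_K(i)$, and that $R_K = R_K^{L_0} = \pi_*\OO_{K^*}$) is already in hand from Theorem \ref{thmRsep} and Proposition \ref{prsRsepRl0Cuspidal}, so the present statement is really the cohomological shadow of those results, proved degree-by-degree in $i$ and then summed.

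For the first isomorphism I would start from the exact sequence of Theorem \ref{thmRsep},
$$ 0 \to J_{K,\pp}(a+i) \to J_{\xi,\pp}(a+i) \xrightarrow{\aKi} R_K(i) \to 0, $$
and take its long exact cohomology sequence on $\pp^2$. The key observation is that $K$ is a plane curve, so its ideal sheaf is a line bundle, $J_{K,\pp} \cong \OO_{\pp^2}(-\deg K)$, and hence $J_{K,\pp}(a+i) \cong \OO_{\pp^2}(a+i-\deg K)$. The hypothesis $a+i<\deg K$ forces $H^0(\pp^2, \OO_{\pp^2}(a+i-\deg K)) = 0$, while $H^1(\pp^2, \OO_{\pp^2}(m)) = 0$ for every $m$; with both neighbouring groups vanishing, the middle map gives an isomorphism $H^0(\pp^2, J_{\xi}(a+i)) \Isom H^0(\pp^2, R_K(i))$. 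Since $R_K$ is a sheaf supported on $K$ (pushed forward along the closed immersion $K \hookrightarrow \pp^2$), its global sections are the same computed on $\pp^2$ or on $K$, so $H^0(\pp^2, R_K(i)) = H^0(K, R_K(i))$, which is the first isomorphism claimed.

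For the second isomorphism I would invoke Proposition \ref{prsRsepRl0Cuspidal}, which identifies $R_K = \pi_*\OO_{K^*}$ for the normalization $\pi: K^* \to K$. Twisting and using the projection formula, $R_K(i) = \pi_*\OO_{K^*}\otimes\OO_K(i) \cong \pi_*\bigl(\OO_{K^*}\otimes \pi^*\OO_K(i)\bigr) = \pi_*\OO_{K^*}(i)$, where $\OO_{K^*}(i) = \pi^*\OO_K(i)$. Because $\pi$ is finite, hence affine, $\pi_*$ is exact and preserves global sections, so $H^0(K, R_K(i)) = H^0(K, \pi_*\OO_{K^*}(i)) = H^0(K^*, \OO_{K^*}(i))$. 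Concatenating the two isomorphisms, and summing over all $i\ge 0$ with $a+i<\deg K$ (the isomorphisms being degreewise), yields the asserted chain.

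I do not expect a genuine obstacle here, since the difficult geometric input is already packaged in the preceding results; the one point that must be handled correctly is the twofold vanishing on $\pp^2$. Injectivity of the comparison map uses $H^0(J_{K,\pp}(a+i))=0$, which is exactly where the hypothesis $a+i<\deg K$ is needed, and surjectivity onto $H^0(R_K(i))$ uses $H^1(J_{K,\pp}(a+i))=0$, which holds unconditionally precisely because $J_{K,\pp}$ is a line bundle (the ideal of a hypersurface) rather than a general ideal sheaf. Keeping track of the distinction between sections of $R_K(i)$ on $K$ versus on $\pp^2$, and the projection-formula identification on the normalization, is the only bookkeeping required.
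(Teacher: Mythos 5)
Your proof is correct and follows essentially the same route as the paper: both deduce the theorem from the exact sequence of Theorem \ref{thmRsep}, the identification $R_K = \pi_*\OO_{K^*}$ of Proposition \ref{prsRsepRl0Cuspidal}, and the projection formula for the normalization $\pi: K^* \to K$. The only difference is that you spell out what the paper dismisses as "immediate" — namely the long exact cohomology sequence and the two vanishings $H^0(\OO_{\pp^2}(a+i-\deg K))=0$ (where $a+i<\deg K$ enters) and $H^1(\OO_{\pp^2}(a+i-\deg K))=0$ — which is a useful clarification but not a different argument.
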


For higher degrees $i \ge \deg K - a$ one can modify these isomorphisms
readily to get a correct version including adjoint curves containing $K$ as a component.
\begin{proof}
This theorem  follows immediately from Theorem \ref{thmRsep} and Proposition \ref{prsRsepRl0Cuspidal}
if we take into account the projection formula for $\pi: K^* \to K$,
$$
   \pi_* (\OO_{K^*} (i) ) \isom
\pi_* (\OO_{K^*} \tensor \pi^* \OO_K(i) )  \isom
\pi_* (\OO_{K^*})  \tensor  \OO_K(i)   \isom
R_K \tensor  \OO_K(i).
$$

The meaning of the theorem is that plane curves through $\xi$ exactly
correspond to homogeneous functions on $K^*$.\end{proof}

\begin{remark} {\bf (Graded algebras interpretation)  }
{\rm
Assume we are given a smooth space curve $K^*$ not contained in a plane in
$\pp^3$ and a projection $p: K^* \to K$ to a plane curve $K$.  Since $K^*$ is
birational to $K$, in order to reconstruct $K^*$ from $K$, we have to say what
is the ``vertical coordinate $z$" on $K^*$ in terms of $K$. Since $K^*$ and $K$
are birational, the regular (holomorphic) objects on $K^*$ are rational
(meromorphic) objects on $K$, and thus we should have an equality of the form
$z = f_{n+1}/f_n$ for some integer $n$ and plane curves $f_n$ and $f_{n+1}$ of
degrees $n$ and $n+1$.

More precisely, let $S = \oplus S_i$, $S_i = H^0(K, \OO(i))$ be the graded
algebra of homogeneous functions on $K$, and $T$ be the graded algebra of
homogeneous functions on $K^*$. The inclusion $S \to T$ gives an isomorphism of
fraction fields $\QQ(S) \to \QQ(T)$, since  $K$ and $K^*$ are birational.  Now
$T_1 = S_1 \oplus k z$ for some element (``vertical coordinate") $z \in T_1$;
since $T_1 \subset \QQ(T) \isom \QQ(S)$, we would have $$ z =
\frac{f_{n+1}}{f_n} $$ for some integer $n$ and plane curves $f_n$ and
$f_{n+1}$ of degrees $n$ and $n+1$, both passing through the singularities of $K$.

} 
\end{remark}

\begin{corollary}
\label{choice-of-L1}

As in the previous remark, assume that we are  given a
smooth space curve $K^*$, a projection $p: \pp^3 \to \pp^2$ with center $O$ not
on $K^*$ such that $K = p(K^*)$ is a nodal-cuspidal curve, and an adjoint curve
$L_0$ of degree $a$ to $K$ which is smooth at the singularities of $K$ and
is not (strictly) tangent to $K$ there.

Then the ``vertical coordinate" $z$ on $K^*$,  $z \in H^0(K^*, \OO_{K^*}(1))$,
is the image of a uniquely defined plane curve $L_1$ of degree $a+1$ under the
adjunction map $a_{K,1}$ defined by the formula (\ref{definition:adjunction-map-a}).

In other words, we can choose $n = a$ in the remark above, and
$$ z = \frac{f_{L_1}}{f_{L_0}},$$
where $f_C$ is an equation of a  plane curve $C$, $C = L_0$ or $L_1$,
$\deg L_1 = a+1$, and the curve $L_1$ is not a union of $L_0$ and a line, i.e.,
is a ``new" adjoint curve.

The curves $L_0$ and $L_1$ are smooth at the points of $\xi$ and have different
tangents at every point $p \in \xi$.
%

\end{corollary}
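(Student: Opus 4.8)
The plan is to realize $L_1$ as the preimage of the vertical coordinate $z$ under the degree-one piece of the main adjunction isomorphism of Theorem \ref{theorem:main-adjunction}, and then to read off all the asserted geometric properties of $L_1$ from a purely local study of the rational function $z = f_{L_1}/f_{L_0}$ at the points of $K^*$ lying over the singularities of $K$. The first three claims (existence, uniqueness, and that $L_1$ is genuinely new) are formal consequences of the adjunction machinery; the tangency statements require a short computation with parametrized branches.

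First I would apply Theorem \ref{theorem:main-adjunction} with $i=1$, which is legitimate in the cases of interest since $a+1 < \deg K$ (for instance $a=(\nu-1)(\nu-2)$ and $\deg K = \nu(\nu-1)$). This gives an isomorphism
$$
a_{K,1}: H^0(\pp^2, J_{\xi}(a+1)) \Isom H^0(K^*, \OO_{K^*}(1)) = T_1,
$$
which by the definition (\ref{definition:adjunction-map-a}) sends a plane curve $C$ of degree $a+1$ through $\xi$ to the restriction to $K^*$ of $f_C/f_{L_0}$. Injectivity on global sections, which underlies the uniqueness claim, follows from the exact sequence of Theorem \ref{thmRsep} together with $H^0(\pp^2, J_{K,\pp}(a+1)) = H^0(\pp^2,\OO(a+1-\deg K)) = 0$. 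I then set $L_1 := a_{K,1}^{-1}(z)$, where $z$ is the vertical coordinate of the preceding remark regarded as an element of $T_1 = S_1 \oplus kz$. By construction $L_1$ is the unique curve of degree $a+1$ through $\xi$ with $z = f_{L_1}/f_{L_0}$ on $K^*$, which is exactly the assertion with $n=a$.

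Next I would check that $L_1$ is not a union of $L_0$ and a line. Under $a_{K,1}$ a curve of the form $f_{L_0}\cdot\lambda$ (with $\lambda$ a linear form, i.e. $L_0$ union the line $\lambda=0$) maps to $f_{L_0}\lambda/f_{L_0} = \lambda|_{K^*}$, so these curves map precisely onto the subspace $S_1 = H^0(K,\OO(1)) \subset T_1$ of linear forms pulled back from the plane. Since $z$ was chosen as a complement to $S_1$ in $T_1$, its preimage $L_1$ cannot lie in the span of the curves $f_{L_0}\cdot\lambda$, which is the claim that $L_1$ is a genuinely new adjoint curve.

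Finally, and this is the step I expect to be the main obstacle, I would establish smoothness of $L_1$ at $\xi$ and distinctness of the tangents of $L_0$ and $L_1$ by a local computation at each preimage $p^*$ of a singular point $p$. Parametrizing the branch(es) of $K^*$ over $p$ by a uniformizer $t$ and writing $f_{L_0}(t) = \alpha_0 t^m + \cdots$ and $f_{L_1}(t) = \alpha_1 t^m + \cdots$ (with $m=2$ on the single branch of a cusp and $m=1$ on each branch of a node, since $L_0$ is assumed transverse, i.e. not strictly tangent), one computes that $z=f_{L_1}/f_{L_0}$ is a local coordinate on $K^*$ at $p^*$ precisely when the relevant jets of $L_0$ and $L_1$ are not proportional, equivalently when $L_0$ and $L_1$ have distinct tangent lines. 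The point is that $z$ \emph{is} such a coordinate: at a cusp the tangent line $T_{p^*}K^*$ is vertical, containing $O$ by Lemma \ref{lemNumNodeCuspSm}(b), so $dz$ is nonzero along $K^*$; at a node the two preimages lie on the vertical fiber over $p$ and hence have distinct values of $z$. Each of these facts is, via the computation above, equivalent to $L_0$ and $L_1$ having different tangents, which yields the tangency claim. Smoothness of $L_1$ (that $\ord_{p^*}f_{L_1}=m$, not larger) is equivalent to $z(p^*)\neq 0$, which holds once the vertical coordinate is chosen generically in its coset modulo $S_1$, since adjusting $z$ by a linear form $\lambda$ shifts its value at $p$ by $\lambda(p)$ and we need only avoid finitely many values. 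The delicate bookkeeping is exactly the matching of the orders of vanishing of $f_{L_0}$ and $f_{L_1}$ along each branch, which is where the transversality hypothesis on $L_0$ and the genericity of $z$ are both used.
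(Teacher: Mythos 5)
Your first two steps follow the paper's own (first) proof: $L_1$ is obtained as the preimage of $z$ under the degree-one piece of the isomorphism of Theorem \ref{theorem:main-adjunction}, uniqueness comes from $H^0(\pp^2,J_{K,\pp}(a+1))=0$ since $a+1<\deg K$, and ``newness'' comes from $z\notin S_1$, exactly as in the paper (the paper also records a second, more hands-on argument, showing directly that $t=z\cdot f_{L_0}$ lies in $S_{a+1}$ via the sequence $0\to\OO_K(a+1)\to p_*\OO_{K^*}(a+1)\to F(a+1)\to 0$ and a local computation at the nodes and cusps; you do not need that route). So far your proposal is sound and matches the paper.

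The gap is in your final step. Your claimed equivalence, ``smoothness of $L_1$ (that $\ord_{p^*}f_{L_1}=m$, not larger) is equivalent to $z(p^*)\neq 0$,'' is false: the condition $\ord_{p^*}f_{L_1}=m$ is equivalent to ($L_1$ smooth at $p$ \emph{and} transverse to the branch), not to smoothness alone. Concretely, if $z(p^*)=0$ at a cusp, then $\ord_{p^*}f_{L_1}=3$ (the order-one vanishing of $z$, forced since the tangent $T_{p^*}K^*$ is vertical, times the order-two vanishing of $f_{L_0}$); but a curve \emph{singular} at $p$ would meet the cuspidal branch with multiplicity at least $4$, so $L_1$ is still smooth there --- it is merely strictly tangent to $K$ --- and its tangent (the branch tangent) still differs from that of $L_0$, which is transverse by hypothesis. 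Likewise at a node: since $z$ separates the two preimages, at most one of $z(p_1^*),z(p_2^*)$ vanishes, so the linear part of $f_{L_1}$ is nonzero (smoothness), and comparing $(z(p_1^*)\alpha_1,z(p_2^*)\alpha_2)$ with $(\alpha_1,\alpha_2)$ shows the tangents of $L_1$ and $L_0$ coincide iff $z(p_1^*)=z(p_2^*)$, which never happens. Hence smoothness and distinctness of tangents hold for \emph{every} representative of the vertical coordinate, with no genericity needed. Your genericity fix is therefore not merely superfluous: it proves a weaker statement than the corollary asserts, producing \emph{some} vertical coordinate whose $L_1$ behaves well rather than showing that the $L_1$ attached to the \emph{given} $z$ does; and it is inconsistent with the remark immediately following the corollary, which considers all the curves $W+l_0L$ --- including those strictly tangent to $K$ at a cusp, i.e.\ exactly the ``bad'' values you discard --- and asserts the different-tangent property for all of them.
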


\begin{proof}
There are two ways to prove it. First, this statement is a corollary of the
theorem \ref{theorem:main-adjunction}. The fact $L_1$ is ``new", i.e., not a
union of $L_0$ and a line, follows from the fact that $z$ is ``new", i.e., does
not come from a linear form on $\pp^2$ (explicitly, $z \in H^0(\pp^3,\OO(1))
\simeq H^0(K^*,\OO_{K^*}(1))$). The fact that $L_1$ is smooth at the singularities of $K$
follows from the
fact that the fraction $z = f_{L_1}/f_{L_0}$ resolves the singularities of $K$.

A more straightforward proof is the following: let $S$ be the graded homogeneous algebra
of $K$ and $T$ be the graded homogeneous algebra of $K^*$; and consider the
element $t = z \cdot f_{L_0}$ of $T_{a + 1}$.  It is enough to prove that $t$
actually belongs to $S_{a + 1}$, since then we can let $f_{a+1} = t$ and $z =
f_{a+1}/f_{L_0}$. Now this is an easy local computation for each singular point
of $K$, since the exact sequence
$$ 0 \to S_{a+1} \to T_{a+1} \to T_{a+1}/S_{a+1} \to 0 $$
is obtained from the sheaf exact sequence
$$ 0 \to \OO_K(a+1) \to p_* \OO_{K^*}(a+1) \to F(a+1) \to 0, $$
where $F$ is by definition the factor sheaf $p_* \OO_{K^*} / \OO_K $ , by passing to global sections:
$$ 0 \to H^0(K, \OO_K(a+1)) \arrow{p^*} H^0(K^*, \OO_{K^*}(a+1)) \to \coker p^* \to 0
$$
Since the factorsheaf $F$ is a product of sheaves supported at singular points
of $K$, this makes computing the image of $t$ in $H^0(K,F(a + 1) ))$ an easy local
computation at nodes and cusps.

The intuitive meaning of this computation is
that $f_{L_0}$ vanishes at the singularities of $K$, which implies that $t = z f_{L_0}$
is a regular (holomorphic) object on $K$, and thus belongs to $S_{a+1}$.
\end{proof}
%


In particular, this is the case when $K = B$ is a branch curve of a smooth surface
$S$ in $\pp^3$, where $\xi$ is the 0--cycle of singularities of $K$.
In this case we can take $L = L_0$, $a = (\nu - 1)(\nu - 2)$, where $\nu = \deg S$.
Segre refers to the existence of the second adjoint curve $L_1$ as
something known from the Cayley's "mono\"{\i}de construction" (see \cite[pg. 278]{Hal}).


\begin{remark} \label{remSummery}
\emph{Summarizing what is written above, the branch curve
$B$ has an adjoint curve $L$ of degree equal to $a$. In this case,
we have $$ z = \frac{f_{L_1}}{f_L}$$
The curves $L$ and $L_1$ are smooth at the points of $\xi = P + Q$ and have
separated tangents at every point $p \in \xi$.}
\end{remark}

\begin{remark}\emph{
Note that if the plane nodal-cuspidal curve $K$ has two adjoint curves
of degrees $n$ and $n+1$ with separated tangents at $\Sing K$ for any integer $n$,
then $K$ is the image of a smooth space curve $K^*$ under the projection from
$\pp^3$, but it is only $n = a = (\nu-1)(\nu-2)$ that $K$ may actually be a branch curve
of a surface projection.}
\end{remark}

\begin{remark}
\label{remO1Iso}
\emph{ We the following isomorphisms:
$$ H^0(\pp^2,J_{\xi}(a+1))
    \isom
  H^0(K^*, \OO_{K^*}(1)) \isom H^0(\pp^3,\OO(1)). $$
}

I.e., linear forms on $K^*$ correspond to adjoint curves of degree equal to $a+1$
on $K$.
\end{remark}

\begin{example} \label{exmCubicSur}
{\rm
For a cubic surface $f = z^3 - 3 a z + b$ the branch curve $B  = b^2
- 4 a^3$. The six  cusps of $B$ are given by the intersection of a conic and
a cubic $(a=b=0)$, and in this case $L = a$ is a conic in general position to
$B$ at the cusps, the cubic $W = b$ is strictly tangent to $B$ at the cusps (see definition \ref{defStrictTan}),
and both of them do not intersect $B$ elsewhere. We claim that $L_1 = W$ in this case.
Indeed, we have on $B^*$
\begin{gather*}
    f  = z^3 - 3 a z + b = 0, \\
    f' = 3 (z^2 - a) = 0
\end{gather*}
and thus
$$
     z = \frac{1}{2} \frac{b}{a}
$$
on $B^*$. It follows that $L_1$ is given by $b$.
} 
\end{example}

\begin{remark}
\emph{In the previous example we can choose the curve $L_1$
as any of the curves $W + l_0 L$, where $l_0$ is a linear form on $\pp$ (perhaps 0).
An easy computation shows that $L_1$ is strictly tangent to $K$ at $q \in Q$
iff $l_0$ contains the point $q$ (or if $l_0 = 0$), but even in this case $l_0$ the
curves $L$ and $L_1$ have different tangents at $q$.}
\end{remark}

\subsection{Segre's theorem}


Consider again a smooth surface $S$ in $\PP^3$ and  a projection $\pi: S \to
\PP^2$ with  a center $O \in \pp^3 - S$. %
Let $B$ be the branch curve of $p$, and $\xi$  be the 0-cycle of singularities
of $B$.

Consider now the graded vector space $\oplus H^0(\pp^2,J_{\xi}(n))$.  It follows
from the Segre's computation that $a = (\nu-1)(\nu - 2)$ is the smallest
integer such that there are adjoint curves of degree $a$ to $B$. The vector
space $H^0(\pp^2,J_{\xi}(a))$ is one-dimensional and generated by the the curve
$L$. Let $\zeta_L = L|_B$ be the corresponding divisor class in $\Pic (B)$.
Note that for $n=a$ the class $\zeta_L$ gives a canonical lifting of $2 \xi =
2P + 2Q$ to $\Pic B$, and thus $H^0(\cp^2,J_{\xi}(a)) \simeq
H^0(\cp^2,J_{\zeta}(a))$.  We have

\begin{gather}
 \zeta_L \in |a l|, \\
 [\zeta_L] = 2 \xi     \text{ in } A_0(B), \\
  k = k L \Isom H^0(\pp^2, J_{\zeta_L}(a)) \Isom  H^0(\pp^2, J_{\xi}(a)), \\
     H^0(\pp^2, J_{\zeta_L}(a))  \isom  H^0(B, \OO_B(-\zeta_L)(a)) \isom  H^0(B, \OO_B)
\end{gather}


Now $L$ is smooth at the points of $\xi$ and is not strictly tangent to
$B$ at these points by Remark \ref{remSummery}, and thus $\zeta_L$ is given by a tangent
vector to $p$ at each point $p \in \xi$, which follows from the descriptio of
Cartier divisors supported at nodes and cusps. The picture for
the branch curve of a smooth cubic surface is drawn below.

\begin{center}
  \epsfig{file=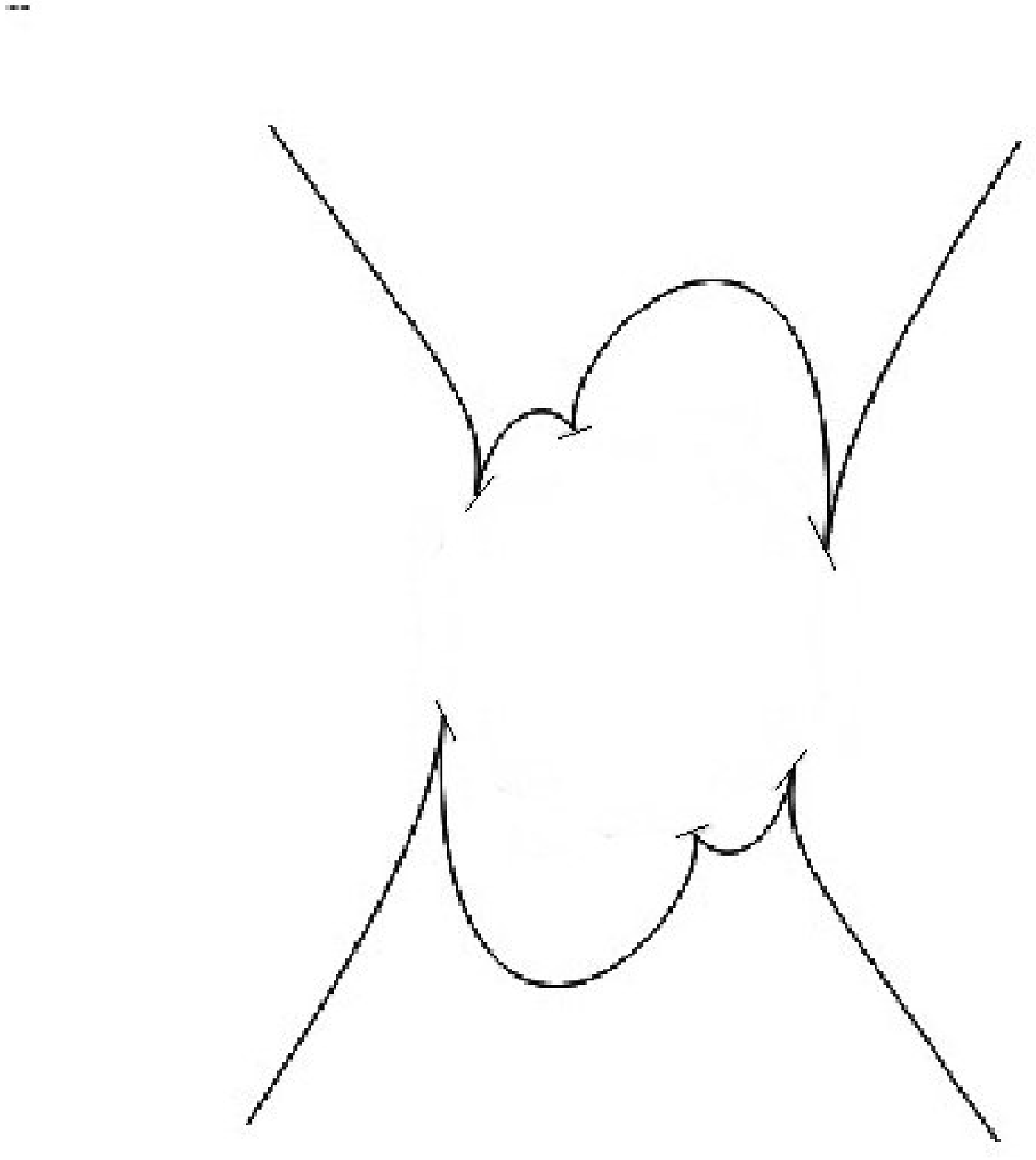, width = 6cm, height = 6cm}\\
  \small{Figure 5 : Cartier divisor $\zeta_L$}
\end{center}

Segre proves that this data is {\it sufficient} to reconstruct the surface $S$:

\begin{thm}[Segre]
\label{thmSegre}
   A plane curve $B$ of  degree $d = \n(\n-1)$  is a branch curve of a smooth
surface of degree $\n$ in $\pp^3$ if and only if

\begin{enumerate}
   \item [(1)] $B$ has $n =\frac{1}{2}\n(\n-1)(\n-2)(\n-3)$ nodes;
   \item [(2)] $B$ has $c = \n(\n-1)(\n-2)$ cusps;
   \item [(3)] There are two curves, $L$ of degree $a = (\n-1)(\n-2)$ and
$L_1$ of degree $a + 1$, which both contain the 0-cycle $\xi$ of singularities of $B$
and have separated tangents at the points of $\xi$.
\end{enumerate}
\end{thm}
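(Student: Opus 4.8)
The statement is an equivalence, and its forward (``only if'') direction has essentially been assembled already: conditions (1) and (2) are exactly Salmon's node and cusp counts from Lemma \ref{lemNumNodeCuspSm}, while condition (3) is the existence of the adjoint curves $L=L_0$ of degree $a$ and $L_1$ of degree $a+1$ with separated tangents, produced in Proposition \ref{prsLcurve} and Corollary \ref{choice-of-L1} and recorded in Remark \ref{remSummery}. So the real work is the converse, sufficiency: starting from a nodal--cuspidal $B$ with the prescribed $n,c$ and two such adjoint curves, I must reconstruct a \emph{smooth} degree $\nu$ surface $S\subset\pp^3$ whose branch curve is exactly $B$.

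\textbf{Step 1 (vertical coordinate and space model).} I would set $z=f_{L_1}/f_{L_0}$, a rational function on $B$. Since $L_0,L_1$ are smooth at $\xi$ with separated tangents there, this fraction separates the two branches at each node and cusp; by Proposition \ref{prsRsepRl0Cuspidal} and Corollary \ref{choice-of-L1} it is regular on the normalization $B^{*}\to B$, giving $z\in H^0(B^{*},\OO_{B^{*}}(1))$. Hence the map sending a point of $B^{*}$ to $(x_0:x_1:x_2:z)$, where $(x_0:x_1:x_2)$ are its coordinates in $\pp^2$, embeds $B^{*}$ as a smooth space curve $\tilde B\subset\pp^3$ projecting from $O$ isomorphically (over the smooth locus of $B$) onto $B$. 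This is the candidate ramification curve.

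\textbf{Step 2 (reconstructing the surface).} The goal is to exhibit $S=\{F=0\}$ with $F=z^{\nu}+g_2 z^{\nu-2}+\dots+g_\nu$, each $g_i$ a form of degree $i$ on $\pp^2$, such that $\tilde B$ is its ramification, i.e. $F=F_z=0$ along $\tilde B$ and $\operatorname{disc}_z(F)=f_B$. The coefficient forms are supplied by the full tower of adjoint curves: by Theorem \ref{theorem:main-adjunction} each power $z^{k}$ is the image of a unique degree $a+k$ adjoint curve $L_k$ through $\xi$, with $z^{k}=f_{L_k}/f_{L_0}$. The requirements $F=0$ and $F_z=0$ on $\tilde B$ then become identities among the forms $f_{L_k}$ modulo the ideal of $B$, and I would use these, together with the fact that the cusps satisfy $Q^{*}=\tilde B\cap\{F_{zz}=0\}$, to solve for $g_2,\dots,g_\nu$ as genuine forms of the stated degrees. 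Here conditions (1) and (2) are indispensable: they force $\deg\tilde B=\nu(\nu-1)$ and $g(\tilde B)=g(B)$ to coincide with the degree and genus of a ramification curve of a smooth degree $\nu$ surface, so that the reconstructed $F$ has discriminant of degree $d=\nu(\nu-1)$ acquiring exactly the prescribed $n$ nodes and $c$ cusps.

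\textbf{Step 3 (verification) and the main obstacle.} Finally I would verify that $S$ is smooth and that its branch curve is $B$, not some larger or nonreduced curve: smoothness of $S$ should follow from that of $\tilde B$ together with the strict-tangency data of Definition \ref{defStrictTan} (which controls the transversality of $F_{zz}$ with $\tilde B$ at the cusps), while the identification of the branch locus amounts to checking that $\operatorname{disc}_z(F)$ vanishes to order one along $B$ with ordinary nodal and cuspidal behaviour precisely at its nodes and cusps. I expect the decisive difficulty to be the surface-existence step: showing that the conditions $F=F_z=0$ on $\tilde B$ can be realized by global forms $g_i$ on $\pp^2$ (rather than merely by functions on the curve) and that the resulting discriminant equals $f_B$ with no spurious residual component. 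This is the genuinely nontrivial content of Segre's theorem, and it is where the full strength of the adjunction isomorphisms of Theorems \ref{thmRsep} and \ref{theorem:main-adjunction}, not just the bare existence of $L$ and $L_1$, must be brought to bear.
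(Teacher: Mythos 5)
Your treatment of necessity and your Step 1 match the paper: conditions (1)--(3) in the forward direction are exactly Lemma \ref{lemNumNodeCuspSm}, Proposition \ref{prsLcurve} and Corollary \ref{choice-of-L1}, and your space model $z=f_{L_1}/f_{L_0}$ is precisely the paper's Lemma (A), which lifts the rational map defined by the linear system $(f_{L_1},xf_L,yf_L,wf_L)$ to a regular map $B^*\to\pp^3$ using the separated-tangents hypothesis at $\xi$. The gap is in your Step 2, and you have in effect flagged it yourself: you never show that the conditions $F=F_z=0$ along the space model can be realized by honest forms $g_2,\dots,g_\nu$ on $\pp^2$ with $\operatorname{disc}_z(F)=f_B$. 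Saying that ``the full strength of the adjunction isomorphisms must be brought to bear'' is not an argument. Theorem \ref{theorem:main-adjunction} identifies $H^0(\pp^2,J_{\xi}(a+i))$ with $H^0(B^*,\OO_{B^*}(i))$, so it tells you which rational functions on $B$ come from adjoint curves; but solving your system requires producing forms of degrees $\nu$ and $\nu-1$ on $\pp^3$ vanishing on the space curve with prescribed polar behaviour, which is a statement of a different nature, and it is essentially equivalent to the hard part of the theorem rather than a consequence of the adjunction isomorphisms.

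The paper closes exactly this gap by a different mechanism, and this is the idea missing from your proposal. First, conditions (1) and (2) are used to count that the space model has $n+c=\frac{1}{2}\nu(\nu-1)^2(\nu-2)$ bisecants through $O$, all lying on the cone of degree $(\nu-1)(\nu-2)$ over $L$ with vertex $O$; Lemma (C) shows the curve lies on no surface of degree less than $\nu-1$ (otherwise the polar of such a surface would meet too few cusps). Halphen's theorem --- or, following D'Almeida's Lemma (D), the Gruson--Peskine speciality theorem with $t=\nu-2$ together with the conductor computation $s(B^*)=2\nu-5$ --- then gives that the space curve is a complete intersection of type $(\nu,\nu-1)$. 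Only after that is the surface found, and not by solving for coefficients but by a dimension count: the polar map $\partial_O$ is injective on the $5$-dimensional space $W=H^0(\pp^3,J_{B^*}(\nu))$, the residual cycles $r_t$ of $\eta_t=B^*\intersect\Pol_O S_t=Q^*+r_t$ define a linear map $W\to H^0(B^*,\OO(1))\isom k^4$ (linear normality of complete intersections), and any $S_0$ in its kernel satisfies $B^*\subset\Pol_O S_0$, hence $B^*=S_0\intersect\Pol_O S_0$, i.e.\ $B$ is the branch curve of the projection of $S_0$ from $O$. Without the complete-intersection step and this kernel argument, your Step 2 remains a plan rather than a proof.
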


\begin{proof}
The necessity of these conditions was proved in the preceding
sections. We now prove that they are sufficient.

Let $B$ be such a curve in the plane $\CP^2$. First, since $L$ is adjoint to $B$, the 0-cycle associated with
the scheme-theoretic intersection $L \cap B$ contains $2 \xi = 2P + 2Q$,
but by conditions of the theorem
$$
   \deg B \cdot \deg L = 2 \deg \xi = \n (\n-1)^2 (\n-2)
$$
It follows that the 0-cycle associated with $L \intersect B$ is
$$
   [L \intersect B] = 2 P + 2 Q.
$$

Let us denote $\xi = P + Q$. It follows immediately that $2 \xi$ is in the linear system $|a
\ell|$ on $B$, where $|\ell|$ is the linear system associated with the
given plane embedding of $B$. In particular, we conclude that
$$
       \xi \in \left| \frac{1}{2} a \cdot \ell \right|
$$

Note also that $[L_1 \cap B] = 2P + 2Q + R$, where $\deg R = d =
\n(\n-1)$.

Now the space $H^0(\CP^2, \Jk(a+1)) )$ contains a 4--dimensional
subspace of the form $k f_1 + k x f + k y f + k w f$, where $f_1$
is the equation of $L_1$ and $f$ is the equation of $L$. (Recall that $k$ is
our base field.)

Now consider the linear system on $B$ given by restriction of
$(f_1,xf, yf, wf) = k L_1 \oplus H^0(\cp^2,\OO(1)) \tensor kL$. It
has $\xi$ as a set of base points. It follows that it defines
a rational map
$$
    \phi: B - \xi \to \cp^3.
$$
Let $\pi: B^* \to B$ be the normalization of $B$. We claim that
the rational map $\phi$ can be lifted to give a regular map
$\phi^*: B^* \to \cp^3$. Indeed, we have the following lemma:

\begin{lemma*}[\textbf{A}] \label{lemModelP3}
    Let $B$ be a plane nodal-cuspidal curve with the set of singularities $\xi$,
and let $f  \in H^0(B,\Jk(j))$ and
        $f_1 \in H^0(B,\Jk(j+1))$ be non-zero elements determining adjoint curves
      $C = Z(f)$ and $C_1 = Z(f_1)$ on the plane,
such that $T_p C \neq T_p C_1$ at any point $p \in \xi$.

Let
$$
   \Omega = k f_1 \oplus H^0(\pp^2,\OO(1)) \tensor kf = (f_1, xf, yf, wf).
$$
Then the rational map $\phi_{\Omega}: B \dashrightarrow \pp^3$ can be resolved as
$$
\xymatrix
{
   B^*  \ar[r]\ar[d]^{\pi}  & \pp^3 \ar@{-->}[d]^{pr}  \\
   B     \ar[r]            &  \pp^2
}$$
where $\pi: B^* \to B$ is the normalization of $B$.
\end{lemma*}

Note that $T_p C \neq T_p C_1$ implies that $f_1 \notin H^0(\pp^2,O(1)) \tensor kf$,
and also that $\Omega \to T_p C$ is epimorphic at every point $p \in \xi$.

\begin{proof}
   It is clear that we only have to verify the statement at nodes and cusps of $B$
as well as smooth points $p$ on $B$ such that  $f_1(p) = f(p) = 0$.

   For a node $p$ we can choose coordinates in the local ring of $\pp^2$ at $p$ such
that $B$ is given by the equation $xy = 0$.

   Assume that $f_1$ is given by the equation
$a_{1,0} x + a_{0,1} y + ( \text{order } 2 \text{ terms} )$, and
$f$ is given by the equation
$b_{1,0} x + b_{0,1} y + ( \text{order } 2 \text{ terms} )$.
Note that $\phi_{\Omega} = (f_1, fx, fy,fw) = (f_1/f, x,y,w)$.
One can easily see that
$\phi_{\Omega}$ maps the point $p$ on the  branch $(y = 0)$ of $B$ to
$a_{1,0} / b_{1,0}$, and
the same point on the  branch $(x = 0)$ to  $a_{0,1} / b_{0,1}$. Thus, if
$a_{1,0} b_{0,1} - a_{0,1} b_{1,0} \neq 0$, then $\phi_W$ can be lifted to a
regular map $B^* \to \pp^3$ with a smooth image in the neighborhood of $p$.

   In the same way, in a neighborhood of a cups $B$ can be given by the local equation $y^2 - x^3 = 0$,
and thus
$$
    f_1/f =  \frac{a_{1,0} x + a_{0,1} y + (\text{order } 2)}{b_{1,0} x + b_{0,1} y + (\text{order } 2)} =
             \frac{a_{1,0} + a_{0,1} t + (\text{order } 2)}{b_{1,0}  + b_{0,1} t + (\text{order } 2)},
$$

where $t = y/x$ is the coordinate on the exceptional divisor in the resolution of the cusp.
Now it is clear that if $a_{1,0} / b_{1,0} \neq a_{0,1} / b_{0,1}$, then $\phi_W$ lifts to an embedding of the
exceptional divisor and thus the normalization of the curve as well.

   If now $p$ is a smooth point of $B$ such that $f_1(p) = f(p) = 0$,
then it is a standard fact that the map $(B - p) \to \pp^3$ can be uniquely extended
to the map $B \to \pp^3$ in a neighborhood of the point $p$, since $\pp^3$ is proper.
(Note also that we do not have any such points in the application of this Lemma below,
due to the intersection multiplicity computation for $C_1$ and $C$.)
\end{proof}

This gives a non-singular model $C \subset \cp^3$, and a projection
$\pi: C \to B$ with some center $O$. Note that if we start from a given ramification
curve $B^*$, the curve we reconstruct from $B$ coinsides with $B^*$.

\begin{lemma*}[\textbf{B}]
  If $B$ is a branch curve of the generic projection $\pi: S \to
  \pp^2$, where $S$ is a smooth surface in $\pp(V) \isom \pp^3$, and
  $B^*$ is the ramification curve of $\pi$, then there is an
  isomorphism $\PP(V) \to \PP(k L_1 \oplus H^0(\cp^2,\OO(1))
  \tensor kL)$ which takes $B^* \subset \pp(V)$ to $C$. In other words, the linear
  system $(f_1,xf,yf,wf)$ reconstructs the curve $B^*$.
\end{lemma*}

The idea of the proof is, as in the previous lemma, to set $z = f_1/f$ on $B^*$.

Recall that preimages of the nodes of $B$ belong to the bisecant lines to $B^*$
containing ithe point $O$, and preimages of cusps belong to the tangent lines
to $B^*$ containing the point $O$.  Considering tangent lines to $B^*$ as
a limiting case of bisecants to $B^*$,  we see that $B^*$ has  $$n + c =
\frac{1}{2}\n(\n-1)^2(\n-2)$$ of bisecants (and tangents) containing the
point $O$, which belong to a cone of order
$(\n-1)(\n-2)$ above $L$ with vertex $O$.

\begin{lemma*}[\textbf{C}]
   $B^*$ does not belong to a surface of degree $m < \n-1$.
\end{lemma*}
\begin{proof}
  Assume that $S_1$ is such a surface of degree $m$; we can assume
  that it is irreducible. Consider $S_1' = \Pol_O(S_1)$. First, if $S_1$ is smooth, note that
  $S_1'$ contains the preimage of the 0-cycle of cusps $Q^*$, since at
  each point $q \in Q^*$, the tangent line $l$ to $B^*$ is contained
  in $T_q S_1$, and also $l$ contains $O$, since $q$ projects to a
  cusp of $B$. It follows that $q \in S_1 \intersect S_1'$. Secondly, if $S_1$ is not smooth,
  then $S_1'$ still contains $q$.

  However, then it follows that the number of cusps $c \leq
  \n(\n-1)\cdot(m-1)$, which contradicts to assumption that $c =
  \n(\n-1)(\n-2).$
\end{proof}

We now have to prove that the model $B^*$ we constructed is a complete
intersection of a surface $S$ of degree $\nu$ and its polar $\Pol_O(S)$ of
degree $\nu-1$ with respect to the (fixed) point $O$ which is the center of the
projection $\pi: B^* \to B$. For these, following Segre, we apply the following
theorem belonging to Halphen (See \cite[pg. 359]{Hal}):

  \begin{thm*}[Halphen] \label{thmHalphen}
    Let $C$ be a space curve of order $a\cdot b$ in $\cp^3$ s.t. $a<b$ which has
    $\frac{1}{2}a(a-1)b(b-1)$ bisecants all lying on a cone of degree
    $(a-1)\cdot(b-1)$. Assume also that $C$ is not on a surface of degree smaller than $a$.
    Then $C$ is a complete intersection of two surfaces of degree $a$ and $b$.
  \end{thm*}

The inverse statement to
the Halphen's theorem is easy;  see \cite[art. 343]{Sa} or \cite[Chapter IX,
sections 1.1, 1.2]{SR}.

Alternatively, instead of invoking Halphen's theorem, one
can invoke a theory of Gruson and Peskine, as it is done
by D'Almeida in \cite{DA}; we cite his reasoning for the convenience of
the reader:

\begin{lemma*}[\textbf{D}] \cite[pg. 231]{DA}
  The curve $B^*$ constructed above is a complete  intersection of two surfaces
  of degrees $\n$ and $\n-1$.
\end{lemma*}

\begin{proof}
To prove the lemma, we introduce first the following definition:
%
\begin{definition}
Given a space curve $C$, we define its index of speciality as
$$
   s(C) = \emph{max} \{n : h^1(C, \OO_C(n))\neq 0\}.
$$
\end{definition}
Now we state the following \textbf{Speciality Theorem} of Gruson and Peskine \cite{GP}:
\\\\
\emph{Let $C$ be an integral curve in $\pp^3$ of degree $d$, not contained in a
surface of degree less than $t$. Let $s = s(C)$. Then $s \leq t + \frac{d}{t} - 4,$ with equality holding if
and only if $C$ is a complete intersection of type $(t,\frac{d}{t})$ (and thus $\OO_C(s)$ is special, i.e., $h^1(\OO_C(s)) \neq 0$)}.\\\\

Let now $p: B^* \to B$ be the projection from the point $O$. The
conductor of the structure sheaf $\OO_{B^*}$ in $\OO_B$ is
$Ann(p_*\OO_{B^*} / \OO_B)$,
which by duality is isomorphic to
$Ann(\omega_B / p_*(\omega_{B^*}))$ (see e.g. \cite[Chapter 8]{AK}).
By the definition  of the conductor,
we get that
$Ann(\omega_B / p_*(\omega_{B^*}))  = Hom(\omega_B , p_*(\omega_{B^*})) =  p_*(\omega_{B^*}) \tensor \omega_B^{\dual}$. It is well known that for a nodal-cuspidal curve, $H$ is a global section of the conductor sheaf
iff $H$ passes through the nodes and the cusps of the curve (see e.g. \cite[Proposition 3.1]{Chian}).

 By Serre duality, for
all $i,\, H^1(\OO_{B^*}(i)) = H^0(\omega_{B^*}(-i))$. Thus, the
minimal degree of the curve containing the singular points of $B$
is
$$
\n(\n-1) - 3 - s(B^*).
$$
Indeed, for a curve to pass through the singular points of $B$, the
conductor has to have sections, i.e. $p_*(\omega_{B^*}) \tensor (\omega_B)^{\dual}$ has sections.
Since we know that the
minimal degree of the curve containing the singular points of $B$ is $(\n-1)(\n-2)$, we get $s(B^*) = 2\n - 5$.

As $B^*$ does not lie on any surface of degree $\n-2$ (by Lemma (C)),
then the Speciality Theorem shows that $B^*$ is a complete
intersection of two surfaces of degrees $\n$ and $\n-1$ (taking $t~=~\n~-~2, d~=~\n(\n-1)$).
\end{proof}


Either way, by results of Halphen or Gruson-Peskine,
the curve $B^*$ is a complete
intersection of two surfaces, say, $S^{\n}$ and $F^{\n-1}$ of
degrees $\n$ and $\n-1$.

  We still have to prove that $B^*$ can be written as an intersection
of a surface of degree $\nu$ and its polar with respect to the given point $O$.

 Let $W = H^0(\pp^3,J_{B^*}(\nu))$ be the linear system of surfaces of
degree $\n$ containing $B^*$,
$$
    W =  k S \oplus \left( H^0(\pp^3,\OO(1)) \otimes k F \right) ,
$$
as for any complete intersection of type $(\nu, \nu-1)$. For a point $t \in \pp W$, let
$S_t$ be the corresponding surface of degree $\nu$ containing $B^*$.
(here we also denoted by $S$ and $F$ some particular equations for the surfaces $S$ and $F$,
even though they are defined only up to $G_m$ action).

Consider now the linear map
$$
      \partial_O : W = H^0(\pp^3,J_{B^*}(\nu))
      \to
      H^0(\pp^3,\OO(\n-1)),
$$
which maps $f$ to $Pol_O f = \sum O_i \partial_i f$, its polar with respect to the
fixed point $O$.  We claim that $\partial_0$ is injective. Indeed, if
$\partial_0(f)=0$, then $f$ vanishes on a cone of degree $\n$, containing the
curve $B^*$. Note that $F^{\n-1}$ vanishes on $B^*$ but also gives a degree
$\n-1$ form on every line generator of the cone $(f=0)$, which implies that the
projection map $B^* \to B$ has degree $\n-1$, which is not the case.

Now, for every $t \in \pp(W)$ and the corresponding surface
$S_t$ of degree $\nu$, consider the triple intersection
$$
 \eta_t = S_t \intersect F^{\n-1} \intersect \Pol_O S_t
$$

First, we have $S_t \intersect F = B^*$.  Let $R_t = S_t \intersect \Pol_O S_t$.
$R_t$ is a ramification curve for the surface $S_t$ with respect to the projection with the given
center $O$. 
We have $\eta_t = B^* \intersect R_t$.

Note that $Q^* \subset R_t$ for every $t$, since $B^*$ belongs to $S_t$ and has all tangent lines at the
points of $Q^*$ contain the projection center $O$.

Thus for every $t$ either the polar surface $\Pol_O S_t$ contains the curve $B^*$, or we have a decomposition
of 0-cycles on $B^*$ of the form
$$
  \eta_t = Q^* + r_t
$$

Also note that $Q^* \subseteq B^* \intersect \Pol_O F$ by the same geometric argument,
i.e., since at the points of $Q^*$ the tangent lines to $B^*$ contain the projection center $O$,
these points are on the intersection of $B^*$ with $O$-polar of every surface containing $B^*$.
But since these two 0-cycles have the same degree, they coinside. It follows that
$Q^* \in |(\n-2)h|$ on the curve $B^*$, where $h$ is a class of hyperplane section.

Now, since $\eta_t \in | \partial_0 S_t | = |(\n-1) h|$ on $B^*$, we have
$r_t \in  |h|$ on $B^*$ whenever $\Pol_O S_t$ intersects non-trivially with the curve $B^*$,
i.e., does not contain it.

Since $B^*$ is complete intersection, it is linearly normal (which follows
easily from the cosideration of Koszul complex). It follows that
$r_t$ gives a map
$$
     W \to H^0(B^*,\OO(1))
$$ from the 5-dimensional space $W$ to the 4-dimensional vector space $H^0(B^*,\OO(1)) \isom H^0(\pp^3,\OO(1))$.

Such a map must have a kernel, and let $S_0$ be the corresponding surface
in the linear system $|W|$.  It follows that $\Pol(O,S_0)$ contains the curve
$B^*$, and thus $B^* = S_0 \intersect \Pol(O,S_0)$, i.e., $B^*$ is a
ramification curve for the projection of the surface $S_0$ to $\pp^2$ with the given
center $O$. This finishes the proof.

\end{proof}

\begin{remark}
   \emph{We generalize Segre's theory for smooth surfaces in $\pp^N$,  $N>3$,
   in the subsequent paper \cite{FLL}.}
\end{remark}

Let us notice that the 0--cycle of singularities of the branch curve $B$ is special. We would like
to emphasize this in the next subsection.

\subsection[Special 0-cycles]{Special 0-cycles} \label{subSecSpCycle}
Let $\xi$ be a 0-cycle in $\cp^2$. Define the superabundance of $\xi$ (relative to degree $n$ curves) as:
$$
   \delta(\xi,n) = h^1 \Jk(n)   
$$

We have the following
\begin{lemma}
 If $\deg \xi \le \dim |nh|$, then
$$ \dim |nh - \xi| = (\dim |nh| - \deg \xi) + \delta(\xi,n),$$
in other words, $\delta(\xi,n)$ is the speciality index of the
0-cycle $\xi$ with respect to the linear system $|nh|$.
\end{lemma}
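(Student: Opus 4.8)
The plan is to deduce this purely cohomologically, from the long exact sequence attached to the ideal sheaf of the $0$-cycle $\xi$ on $\pp^2$, translating everything into dimensions of projective linear systems only at the very end. First I would write down the structure sequence
$$ 0 \to J_\xi(n) \to \OO_{\pp^2}(n) \to \OO_\xi(n) \to 0, $$
where $\OO_\xi$ is the structure sheaf of $\xi$ regarded as a $0$-dimensional subscheme. Since $\xi$ is a $0$-cycle (length $1$ at each point), twisting by $\OO(n)$ is irrelevant and $\OO_\xi(n) \isom \OO_\xi$ is a skyscraper sheaf of total length $\deg \xi$; hence $h^0(\OO_\xi(n)) = \deg \xi$ and $h^i(\OO_\xi(n)) = 0$ for $i>0$.

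Next I would pass to cohomology and invoke the vanishing $H^1(\pp^2, \OO_{\pp^2}(n)) = 0$, which holds for every $n$ because $\pp^2$ carries no intermediate cohomology for line bundles. This collapses the long exact sequence to the four-term sequence
$$ 0 \to H^0(J_\xi(n)) \to H^0(\OO_{\pp^2}(n)) \to H^0(\OO_\xi(n)) \to H^1(J_\xi(n)) \to 0, $$
and taking the alternating sum of dimensions gives
$$ h^0(J_\xi(n)) = h^0(\OO_{\pp^2}(n)) - \deg \xi + h^1(J_\xi(n)) = h^0(\OO_{\pp^2}(n)) - \deg \xi + \delta(\xi,n). $$

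Finally I would convert vector-space dimensions into projective dimensions. By definition $\dim |nh| = h^0(\OO_{\pp^2}(n)) - 1$ and $\dim |nh - \xi| = h^0(J_\xi(n)) - 1$, so subtracting $1$ from both sides of the displayed identity yields exactly
$$ \dim |nh - \xi| = \bigl( \dim |nh| - \deg \xi \bigr) + \delta(\xi,n), $$
as claimed. The hypothesis $\deg \xi \le \dim |nh|$ enters only to ensure that $|nh - \xi|$ is nonempty, so that its dimension is a genuine integer rather than $-\infty$: indeed the identity forces $h^0(J_\xi(n)) \ge \dim|nh| + 1 - \deg \xi \ge 1$ since $\delta(\xi,n) \ge 0$. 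There is no serious obstacle here; the only points requiring care are the identification $h^0(\OO_\xi(n)) = \deg\xi$ (which uses that $\xi$ is $0$-dimensional) and the routine off-by-one bookkeeping between affine and projective dimensions.
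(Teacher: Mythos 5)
Your proof is correct. In fact the paper states this lemma without any proof, treating it as a standard fact, so there is no argument to compare against; your derivation --- the ideal-sheaf sequence $0 \to J_\xi(n) \to \OO_{\pp^2}(n) \to \OO_\xi \to 0$, the vanishing $H^1(\pp^2,\OO_{\pp^2}(n))=0$, the resulting identity $h^0(J_\xi(n)) = h^0(\OO_{\pp^2}(n)) - \deg\xi + \delta(\xi,n)$, and the passage to projective dimensions --- is exactly the standard justification the authors implicitly rely on, including the correct observation that the hypothesis $\deg\xi \le \dim|nh|$ only guarantees $|nh-\xi| \neq \emptyset$.
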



Also note that
$$
    \delta(\xi, n+1) \le \delta(\xi, n).
$$
Let now $\xi = P+Q$ - the zero cycle of singularities of $B$, and, as before, $a = (\n-1)(\n-2)$.

\begin{prs} (Speciality index of $\xi$)
There are following identities for the speciality index of $\xi$:
\begin{align*}
     \delta(\xi,a)    & =  \frac{1}{2} (\nu - 1) ( \nu - 2) (2 \nu - 5)  \\
     \delta(\xi,a+1)  & =  \frac{1}{2} (\nu - 3)( 2\nu^2  - 7\nu + 4)
\end{align*}
In particular, the 0-cycle $\xi$ is special with respect to $|a h|$ for all  surfaces of degree at least 3,
and special with respect to $|(a + 1) h|$ for all  surfaces of degree at least 4.
\end{prs}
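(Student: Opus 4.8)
The plan is to reduce the computation of $\delta(\xi,n) = h^1(\Jk(n))$ to a Riemann--Roch style count by means of the ideal-sheaf sequence
$$
   0 \to \Jk(n) \to \OO_{\pp^2}(n) \to \OO_\xi \to 0,
$$
where $\OO_\xi$ is the structure sheaf of the reduced $0$-cycle $\xi = P+Q$, so that $\OO_\xi(n) \isom \OO_\xi$ and $h^0(\OO_\xi) = \deg\xi$. Since $H^1(\pp^2,\OO(n)) = 0$ for every $n$, the long exact sequence collapses to
$$
   0 \to H^0(\Jk(n)) \to H^0(\OO_{\pp^2}(n)) \to H^0(\OO_\xi) \to H^1(\Jk(n)) \to 0,
$$
whence
$$
   \delta(\xi,n) = \deg\xi - \tbinom{n+2}{2} + h^0(\Jk(n)).
$$
So the entire computation rests on three numbers: $\deg\xi$, the dimension $\binom{n+2}{2}$ of plane forms of degree $n$, and $h^0(\Jk(n))$ at $n=a$ and $n=a+1$.

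The first two inputs are immediate. By Lemma \ref{lemNumNodeCuspSm} the branch curve carries $\tfrac{1}{2}\n(\n-1)(\n-2)(\n-3)$ nodes and $\n(\n-1)(\n-2)$ cusps, so, $\xi$ being reduced, $\deg\xi = \tfrac{1}{2}\n(\n-1)^2(\n-2)$; and $\binom{n+2}{2}$ is evaluated at $n=a=(\n-1)(\n-2)$ and $n=a+1$. The decisive input is $h^0(\Jk(n))$, which I would obtain from the main adjunction isomorphism of Theorem \ref{theorem:main-adjunction}: for $a+i < \deg B = \n(\n-1)$ one has $H^0(\pp^2,\Jk(a+i)) \isom H^0(B^*,\OO_{B^*}(i))$. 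Since $a+1 = \n^2-3\n+3$ and $\deg B = \n^2-\n$ differ by $2\n-3>0$, the bound $a+i<\deg B$ holds for both $i=0$ and $i=1$. For $i=0$ the target is $H^0(B^*,\OO_{B^*})$, which is one-dimensional because $B^*$ is integral (this is exactly the uniqueness of the adjoint curve $L$ of degree $a$); for $i=1$, Remark \ref{remO1Iso} gives $H^0(B^*,\OO_{B^*}(1)) \isom H^0(\pp^3,\OO(1))$, of dimension $4$ since $B^*$ spans $\pp^3$. Hence $h^0(\Jk(a)) = 1$ and $h^0(\Jk(a+1)) = 4$.

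It then remains only to substitute into the displayed formula and simplify. With $a=(\n-1)(\n-2)$ the two cases read
$$
   \delta(\xi,a) = \tfrac{1}{2}\n(\n-1)^2(\n-2) - \tbinom{a+2}{2} + 1 = \tfrac{1}{2}(\n-1)(\n-2)(2\n-5),
$$
$$
   \delta(\xi,a+1) = \tfrac{1}{2}\n(\n-1)^2(\n-2) - \tbinom{a+3}{2} + 4 = \tfrac{1}{2}(\n-3)(2\n^2-7\n+4),
$$
each obtained by expanding the binomial coefficient as a quartic in $\n$ and collecting terms (a routine check, confirmed at $\n=3,4$). The closing \emph{in particular} clause follows by reading off signs: the first expression is positive exactly when $2\n-5>0$, i.e. for $\n\ge 3$, and the second exactly when $\n-3>0$ (the quadratic factor being positive throughout $\n\ge 3$), i.e. for $\n\ge 4$.

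The computation has no deep obstacle; the only points demanding care are bookkeeping ones. One must verify that the degree bound $a+i<\deg B$ is genuinely in force, so that Theorem \ref{theorem:main-adjunction} applies unaltered rather than its corrected higher-degree variant, and one must keep $\xi$ reduced so that $h^0(\OO_\xi)$ equals the number of singular points $n+c$ and not some weighted length. Once these are fixed, everything reduces to the cohomology vanishing on $\pp^2$ together with the two clean values $h^0(\Jk(a))=1$ and $h^0(\Jk(a+1))=4$ supplied by the adjunction theory.
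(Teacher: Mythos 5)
Your proof is correct and takes essentially the same route as the paper: the ideal-sheaf sequence you write out is exactly the unpacked form of the paper's speciality-index lemma ($\dim |J_{\xi}(n)| = \vdim |J_{\xi}(n)| + \delta(\xi,n)$), and both arguments rest on the same two inputs from Segre's adjunction theory, namely $h^0(J_{\xi}(a)) = 1$ (uniqueness of the adjoint curve $L$) and $h^0(J_{\xi}(a+1)) = 4$ (Remark \ref{remO1Iso}). The degree bookkeeping and the final simplifications match the paper's computation.
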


\begin{proof}
  For the expected dimension $\vdim |\Jk(a)|$ we have
$$
  \vdim |\Jk(a)| = \dim |a h| - \deg \xi = \frac{1}{2} a (a+3) - \frac{1}{2}  \nu (\nu - 1)^2 ( \nu - 2)
$$
Since $a = (\nu - 1) (\nu - 2)$, we get
$$
   \vdim |\Jk(a)| =  \frac{1}{2} (\nu - 1) (\nu - 2) (5 - 2 \nu)
$$
Since, by definition of speciality index,
$$
  \dim |J_{\xi}(d)| = \vdim |J_{\xi}(d)|  + \delta(\xi,d)
$$
and since
$|J_{\xi}(a)| = \{ L \} $, we get the first equality.

The proof of the second formula is parallel; we use isomorphism $|\Jk(a+1)|
\isom \pp H^0(\pp^3, \OO(1))$ (see Remark \ref{remO1Iso}).
\end{proof}

\begin{example}[6-cuspidal sextic]
\emph{ Let $\xi_6$ be a 0-cycle of degree 6 on a plane which is an
  intersection of conic and cubic curves in $\pp^2$, given by a degree 2 (resp. 3) polynomial $f_2$ (resp. $f_3$).
  Note that generic 0-cycle of degree 6 is not like this, because generic 6 points do not belong to
  a conic.
Note that for $\xi_6$ given by $(f_2,f_3)$ there is a Koszul
resolution
$$
   0 \to \OO_{\pp^2}(n-5) \overset{
{\left[ {\begin{array}{*{5}c}
   {\small{f_3} }  \\
   {\small{f_2} }  \\
\end{array}} \right]}
}{\to} \OO_{\pp^2}(n-2) \oplus \OO_{\pp^2}(n-3) \overset{[-f_2
\,\,f_3]}{\to} J_{\xi_6}(n) \to 0
$$
%
An easy computation shows that
%
$\delta(\xi_6,2) = 1 $, $\delta(\xi_6,3) = 0 $, $\delta(\xi_6,4) =
0 $ (see Subsection \ref{subSecSpCycle} for the definition of
$\delta(\cdot, \cdot)$), and that $ H^0 J_{\xi_6}(2) = k f_2
$,\quad $ H^0 J_{\xi_6}(3) = k f_3 + k x f_2 + k y f_2 +  k w
f_2.$
Note that we start the computation from $n = 2$, since
$\delta(\xi,1) = 3$ is not a defect w.r.t. the linear system. Also
note that For a generic 0-cycle $\xi$
 of degree 6, $\delta(\xi,2) = 0$, otherwise it would lie on conic.}
 \end{example}

\subsection[Dimension of $B(d,c,n)$]{Dimension of $B(d,c,n)$} \label{subsecDimB3}
In this subsection, let $d(\nu)=\nu(\nu-1)$, $c(\nu)=\nu(\nu-1)(\nu-2) $,
 $n(\nu)=\frac{1}{2}\nu(\nu-1)(\nu-2)(\nu-3).$ Motivated by Segre's theory and the Chisini conjecture, We want to compute the dimension of the component
$B_3(\n)$ of $B(d(\n),c(\n),n(\n))$ which consists of branch curves of smooth surfaces in $\pp^3$ of degree $\n$ with respect to generic projection.

Let $S(\n)$ be the variety parameterizes  smooth surfaces in $\pp^3$ of degree $\n$. It is well known that
$\dim S(\n) = \frac{1}{6}(\nu+1)(\nu+2)(\nu+3)-1$. Let $B \in B_3(\n)$, a branch curve in the plane $\Pi$ of a smooth surface $S$ in $\pp^3$ of degree $\n$, when projected from the point $O=(0:0:0:1)$ (we work with the coordinates $(x:y:w:z)$). Now, $B$ is also the branch curve of a smooth surface $S'$ iff there is a
 linear transformation in $PGL_4(\C)$
 that fixes that point $O$, fixes the plane $\Pi$ (with coordinates $(x:y:w)$) and takes $S$ to $S'$. It is easy to
 see that the dimension of this subgroup of transformations $G$
 is $5$ (in $GL_4(\C)$), but as we are in a projective space, $\dim P(G) = 4$.

 By the Chisini's conjecture (proven completely for a generic projection, see \cite{Ku2}), the branch curve $B$ determines the surface uniquely up to an action of $P(G)$. Thus,
 \begin{equation}
 \dim S(\n) -4 = \dim B_3(\n).
 \end{equation}
Denote by
$V(\n) = \frac{1}{2}d(\n)(d(\n)+3) - n(\n) - 2c(\n)$  the virtual dimension of a family of degree $d(\n)$ curves
with $n(\n)$ nodes and $c(\n)$ cusps.

\begin{example}

\begin{enumerate}
   \item [(1)] \emph{For $\n=3,4$ , $\dim B_3(\n) = S(\n) - 4 = V(\n)$,  as expected (as for these
 branch curves, $c(\n) < 3d(\n)$. See \cite[p. 219]{Za2}).}
 \item [(2)]
 \emph{For $\n \geq 5$, $\dim B_3(\n) = S(\n) - 4 > V(\n)$. This gives examples of   nodal cuspidal curves,
 whose characteristic linear series is incomplete (for other examples see e.g. Wahl \cite{Wahl}).}
 \end{enumerate}
\end{example}

\subsection{Projecting surfaces with ordinary singularities.}\label{subSecSingHyp}
We bring here a short subsection on surfaces in $\pp^3$ with ordinary singularities, as we use it
in the next section, where we classify branch curves of small degree. The generalization of Segre's theory
for these surfaces will be presented in \cite{FLL}.

It is classical that (see e.g. \cite{GH}) any projective surface
in characteristics 0 can be embedded in $\pp^3$ in such a way that
its image has at most so-called ordinary singularities, i.e., a
double curve with some triple and pinch points on it. Any
projection $S \subset \pp^n \rightarrow \pp^2$ can be factorized
then as a composition of projections $S \subset \pp^n \rightarrow
\pp^3 \rightarrow \pp^2$ such that the image $S_1$ of $S$ in
$\pp^3$ has ordinary singularities in $\pp^3$. However, if we
project $S$ to $\pp^3$ first, and then from $\pp^3$ to $\pp^2$, we
get an extra component of the branch curve, which would be  the image of the double curve.

Assume now that we are given a degree $\n$ surface $S \subset
\cp^3 = \cp(V)$ with ordinary singularities and a point $O$ not on
$S$. Let $E^*$ be the double curve of $S$. Consider the projection
map $\pi: S \to \cp(V/l_O) \simeq \cp^2$. We define the
ramification curve $B^*$ of the projection as an intersection of
$S$ and the polar surface $S'_O$. (To justify this definition, one
can check that $S \intersect S'_O$ is the support
of the sheaf $\Omega^1_{S / \pp^2}$.)

 One can now see that $B^*$ can be decomposed as
$$
    B^* =  B^*_{res} + F^*,
$$
where $[F^*] = 2 [E^*]$, when $[F^*]$ is the Weil divisor
associated  with the 1-dimensional Cartier divisor $2 [E^*]$. Note
that  $B^*_{res}$ in its intersection with the smooth locus of $S$
is set-theoretically the set of smooth points $p$ on $S$ such that
the tangent plane $T_p(S)$ contains $O$. (To be more careful,
$B^*_{res}$ is the scheme-theoretical support of the kernel sheaf
of the canonical map $\Omega^1_{S / \pp^2} \to i_* i^* \Omega^1_{S
/ \pp^2} \to 0$, where $i$ is the embedding of $F^*$ to $S$. For a
different scheme-theoretic description of $E^*$ and $B^*_{res}$,
see \cite[Section 2]{Piene}).

It follows that the branch curve $B$ can also be decomposed as
$$
    B = B_{res} + 2E,
$$
where $E$ is the image of $E^*$.

Let $e = \deg E^*$ and $d = \deg B^*_{res} = \n(\n-1) - 2e$.  Now a
generic hyperplane section of $S$, $S \intersect H$, is a plane curve of degree $\n$
with nodes at the finite set $E^* \intersect H$, and thus
there is a restriction
$$
  0 \leq e \leq \frac{(\n-1)(\n-2)}{2},
$$ since the number of nodes of a plane curve can not exceed its arithmetic
genus.

 It follows that the pair $(\n,d)$
satisfies
$$
     2(\n-1) \leq d \leq \n(\n-1),
$$
as illustrated on Figure 6 below.

\begin{center}
  \epsfig{file=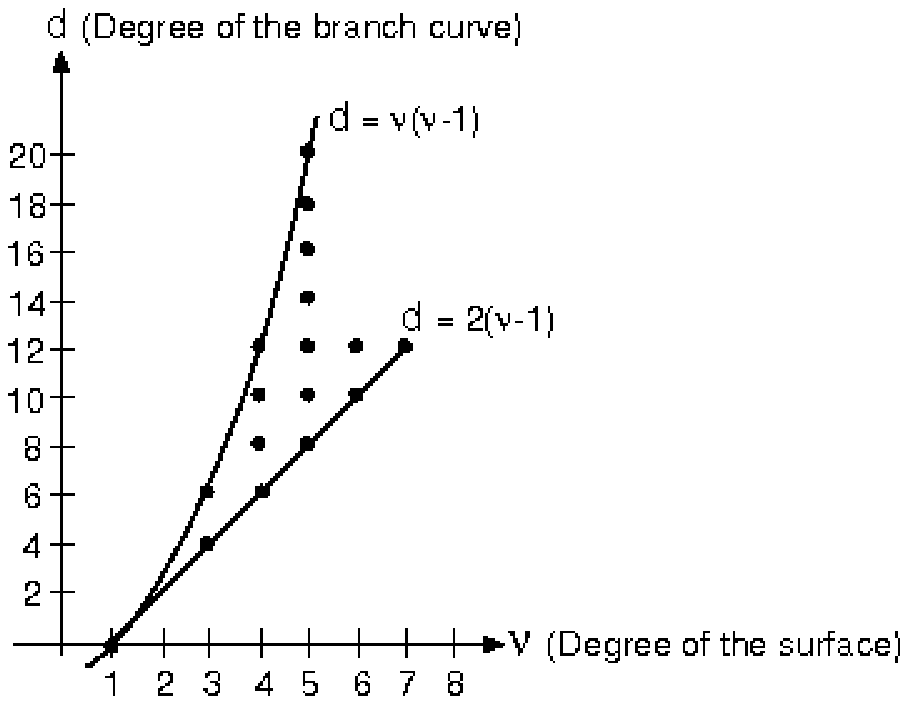} \\
  \small{Figure 6 : Geography of surfaces in $\cp^3$ with a double curve. \\
   We examine in Subsection \ref{subsecExSingSur} the cases where $\n = 3,4$.}
\end{center}

What is important here is that for a given $d$ there is only a finite
number of possible $\nu$'s such that a plane curve $C$ of degree $d$
can be a pure branch curve of degree $\nu$ surface in $\cp^3$ with ordinary singularities. \\

As before, we define $Q^*$ to be an intersection of $B^*$
and the second polar surface $S''_O$, i.e., as an intersection of
$S$, $S'_O$ and $S''_O$. However, for a singular surface $S$ not
all points of $Q^*$ form cusps on the branch curve. This is shown,
for example, at \cite[Chapter IX, section 3.1]{SR}.

\begin{notation}
\emph{Denote by $v^* \in E^*$ a point, such that the tangent plane to
$S$ at $v^*$ contains the center of projection $O$. These points
are called {\em vertical points} (or {\em points of immersion}) and we denote the set of such
points as $V^*$.\\ Denote by $T^*$ the set of triple points of
$E^*$, and by $t$ the number of these points. Let also $Pi^*$ be the set of pinch points of $E^*$ and let $p$ be the number of these points.}
\end{notation}

\begin{remark} \label{remNumOfPinchPositive}
\emph{Note that the number of pinch points $p$ is always positive (see \cite{FH}). We will use this fact to prove the inexistence of branch curves in $V(8,12,0)$ in Section \ref{subSecExample}.}
\end{remark}

The following Lemma is proved at \cite[Chapter IX, sections 3.1,
3.2]{SR}. This Lemma is the base for generalizing Segre's theory for singular surfaces, a generalization
which will be presented in \cite{FLL}.

\begin{lemma}
(1)
$Q^* = S''_O \intersect B^*$ can be decomposed as
$$
   Q^* =  (S''_O \intersect 2E^*)   +  \Qpure^*
$$
Note that the images of $(S''_O \intersect E^*)$ under the projection are
smooth points on $\Kpure$.

(2) points in $\Kpure^* \intersect E^*$ do not form cusps of the
branch curve, i.e., their images are smooth points on $\Kpure$.
Explicitly, $$\Kpure^* \intersect E^* = Pi^* + V^*.$$

(3) $ S''_O \intersect E^*$ can be decomposed as
$$
   S''_O \intersect E^* = V^*   +  3T^*
$$
and $ S''_O \intersect B^*_{res}$ can be decomposed as
$$
   S''_O \intersect B^*_{res} = V^*   +  Q^*_{res}
$$

\end{lemma}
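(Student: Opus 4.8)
\emph{The plan is as follows.} The three assertions are local: $E^*$ and $\Kpure^*$ are curves lying on $S$, and the $0$-cycles $S''_O \intersect E^*$, $S''_O \intersect \Kpure^*$ and $S''_O \intersect B^*$ are supported at the finitely many points where the second polar meets these curves. Hence I would work in a formal neighborhood of each point of $E^*$, and of each point of $\Kpure^*$, classify the possible local model of the ordinary singularity, and in each model write $S''_O = \{ \sum_{i,j} O_i O_j f_{ij} = 0 \}$ explicitly in order to read off the intersection multiplicities. The models to treat are: a generic point of the double curve (two transverse sheets, $f = xy$); a triple point (three transverse sheets, $f = xyz$); a pinch point (Whitney umbrella, $f = x^2 - y^2 z$); and a smooth point of $S$ lying on $\Kpure^*$, where the computation reduces to the smooth-surface theory of Lemma \ref{lemNumNodeCuspSm}.

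\emph{Part (3).} At a generic double point one has $f'' = 2 O_1 O_2 \neq 0$ for generic $O$, so $S''_O$ avoids the generic locus of $E^*$ and $S''_O \intersect E^*$ is concentrated at the special points. At a triple point $f = xyz$ the second polar is the plane $\{ O_1 O_2 z + O_1 O_3 y + O_2 O_3 x = 0 \}$, which meets each of the three local branches of $E^*$ (the three coordinate axes) transversally at the triple point, contributing $3$; this yields the term $3 T^*$. At a pinch point $f'' = 2 O_1^2 - 2 O_2^2 z - 4 O_2 O_3 y$ is nonzero at the origin, so $S''_O$ misses the pinch points. The remaining points of $S''_O \intersect E^*$ are the vertical points, each entering simply, which gives $S''_O \intersect E^* = V^* + 3 T^*$. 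The companion formula is obtained the same way: on the smooth locus of $S$ the points of $S''_O \intersect \Kpure^*$ reproduce the cusp preimages of the smooth case and form $\Qpure^*$, while the only points of $\Kpure^*$ lying on $E^*$ that also lie on $S''_O$ are the vertical points (pinch points being excluded by the computation above); hence $S''_O \intersect \Kpure^* = V^* + \Qpure^*$.

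\emph{Part (2).} Here I would show that $\Kpure^*$ meets $E^*$ exactly at the pinch and vertical points and that their images are smooth, not cuspidal, points of $\Kpure$. At a pinch point one parametrizes the branch of $\Kpure^*$ through the umbrella and checks that its image is a single smooth branch of $\Kpure$ (classically tangent to $E$), so no cusp is created; at a vertical point the local sheet of $S$ projects to a smooth branch meeting $E$ and again no cusp appears. \textbf{This is the step I expect to be the main obstacle}, since it requires the explicit normalization of the Whitney umbrella and a verification that the higher-order contact which produces a cusp on the smooth locus degenerates to ordinary contact at pinch and vertical points; it is precisely here that one separates the genuine cusps $\Qpure^*$ from the ``spurious'' intersection points of $\Kpure^*$ with $E^*$.

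\emph{Part (1).} Finally I would assemble the pieces. Since $B^* = \Kpure^* + F^*$ with $[F^*] = 2[E^*]$, one has $Q^* = S''_O \intersect B^* = (S''_O \intersect 2 E^*) + (S''_O \intersect \Kpure^*)$, and substituting the decompositions of Part (3) gives $Q^* = (S''_O \intersect 2 E^*) + \Qpure^*$ once the vertical-point contributions common to $E^*$ and $\Kpure^*$ are accounted for. That the images of $S''_O \intersect E^*$ are smooth points of $\Kpure$ then follows from Part (2) together with the fact that $E^*$ projects onto the double curve $E$, a component of $B$ appearing with multiplicity two. Throughout, the bookkeeping of multiplicities at triple, pinch and vertical points can be cross-checked against the total degree $\deg Q^* = (\n - 2)\deg B^*$ coming from $[S''_O] = (\n - 2) H$, exactly as in Lemma \ref{lemNumNodeCuspSm}(c), which provides a useful consistency test for the assignment.
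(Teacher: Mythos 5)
Your proposal cannot be checked against a proof in the paper, because the paper gives none: the lemma is quoted from Semple--Roth, with a pointer to \cite[Chapter IX, sections 3.1, 3.2]{SR}, and is then used as a black box in the examples that follow. So what you wrote is a reconstruction of the classical local analysis, and in outline it is the correct one. The part (3) computations you actually carried out are right: at a triple point $f=xyz$ the second polar is the plane $O_1O_2z+O_1O_3y+O_2O_3x=0$, meeting each branch of $E^*$ simply, and at a pinch point $f=x^2-y^2z$ one has $f''(0)=2O_1^2\neq 0$. One refinement is needed, though: the rigid model $f=xy$ cannot see the vertical points (its tangent planes are constant along $E^*$), so for the first formula of (3) you should write $f=f_1f_2$ with varying sheets and use $f''|_{E^*}=2(D_Of_1)(D_Of_2)$, whose zero locus on $E^*$ is exactly $V^*$; it is this identity, not genericity of $O$ alone, that produces the cycle $V^*+3T^*$.

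Two further points. First, part (2), which you flag as the main obstacle, does go through by the method you name, and more cheaply than you fear: the normalization of $x^2=y^2z$ is $(u,v)\mapsto(uv,u,v^2)$; the polar $D_Of=2O_1x-2O_2yz-O_3y^2$ pulls back to $u\,(2O_1v-2O_2v^2-O_3u)$, so $\Kpure^*$ pulls back to the smooth curve $2O_1v-2O_2v^2-O_3u=0$; parametrizing by $v$, the space curve $\Kpure^*$ has tangent vector proportional to $(0,1,0)$ at the pinch point, a line which avoids $O$ for generic $O$, so the projection restricted to $\Kpure^*$ is an immersion there and the pinch point maps to a smooth point of $\Kpure$ (which, incidentally, meets $E$ transversally; the classical tangency of $\Kpure$ and $E$ occurs at the vertical points, where the whole tangent plane of the sheet, hence both $T_pE^*$ and $T_p\Kpure^*$, projects to a single line --- not at pinch points, as your parenthesis has it). You must also check that $\Kpure^*$ misses the triple points: near $f=xyz$ the intersection $S\cap S'_O$ consists of the three branches of $E^*$ only, each with multiplicity two and with no residual component; this is needed for the equality $\Kpure^*\cap E^*=Pi^*+V^*$. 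Second, the assembly in part (1) cannot be left at ``accounted for'': your own part (3) forces, as cycles, $Q^*=(S''_O\cap 2E^*)+V^*+Q^*_{res}=3V^*+6T^*+Q^*_{res}$, and the extra $V^*$ does not cancel. The resolution is notational, but it must be stated: the symbol $\Qpure^*$ in part (1) of the lemma has to denote all of $S''_O\cap\Kpure^*$, i.e. $V^*+Q^*_{res}$ in the notation of part (3), and it is only part (3)'s $Q^*_{res}$, with the vertical points removed, that counts the cusps of $\Kpure$ --- this is the reading used in the degree counts of Subsection \ref{subsecExSingSur} (for the cubic with a double line, $\deg(S''_O\cap\Kpure^*)=4$ splits as one vertical point plus three cusps). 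With that convention made explicit, your derivation of (1) from (3) is a one-line cycle identity.
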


\begin{remark} \label{remNumCuspsNodesSingSur}
\emph{Denote by $e^*$ the degree of $E^{\vee}$ the dual curve of $E$ in $\pp^2$. Given
a surface $S$ in $\pp^3$, we can express the number of nodes and
cusps of its branch curve $\Kpure$ by terms of $\n,e,e^*$ and $t$.
The following result is proved at \cite[Chapter IX, section
3]{SR}:}
\begin{gather*}
c = \n(\n-1)(\n-2) - 3e(\n-2) + 3t ,\\
n = \frac{1}{2}\n(\n-1)(\n-2)(\n-3) - 2e(\n-2)(\n-3) - 2e^* - 12t
+ 2e(e-1).
\end{gather*}
\end{remark}

\begin{remark} \label{remNumOfPinch}
\emph{Let $u$ be the number of components of $E^*$, and $g = \sum^u_{i=1} g_i$ the geometric genus of $E^*$. By \cite[pp. 624, 628]{GH} we can express $c_1^2,\,c_2$ and the number of pinch points $p$ by terms of $\n, e, t$ and $(g-u)$:
\begin{gather*}
c_1^2 = \n(\n-4)^2 - 5\n e + 24e + 4(g-u) + 9t,\\
c_2 = \n^2(\n-4) + 6\n + 24e - 7\n e + 8(g-u) + 15t,\\
p = 2e(\n-4) - 4(g-u) - 6t.
\end{gather*}}
\end{remark}

\subsubsection[Examples]{Examples} \label{subsecExSingSur}

We survey the well known examples of surfaces of degree 3 and 4 in $\pp^3$ with ordinary singularities and
use the results from Remarks \ref{remNumCuspsNodes} and
\ref{remNumCuspsNodesSingSur} in order to calculate the number of
nodes and cusps of the branch curve $\Kpure$ of the surface $S$.
These numbers can be expressed in
terms of $c_1^2(S), c_2(S), \deg(S)$ and $\deg(\Kpure)$ or in terms of $\n,e,e^*$ and $t$.\\\\

\noindent\underline{Degree 3 surfaces}\\

We know from the inequality above that $0 \leq \deg E^* = e \leq 1$,
in other words, the only cubic surfaces with ordinary singularities
are those with double line.

\begin{enumerate}
\item[(1)]$e = 0$.
  This is a smooth cubic surface, with the branch curve $B$ being a
  6-cuspidal sextic.
\item[(2)]$e = 1$.
%


  Such a surface has a double line, and thus $d = \deg \Kpure = 4$.
  Since we consider only generic projections, we can choose
  coordinates $(x,y,w,z)$ in $\cp^3$ in such a way that the projection
  center $O = (0,0,0,1)$ and the double line $E^* = l^*$ is given by
  equations $(z = w = 0)$. In these coordinates the projection is
  given by the (rational) map $(x,y,w,z) \mapsto (x,y,w)$, and $E = l$
  is the ``line at infinity'' $(w = 0)$ in the ``horizontal'' plane
  $(z = 0)$.

  It is easy to see that such a cubic surface can be given by a degree
  3 form
$$
   f = z^3 + a_1 z^2 + b_1 w z + c_1 w^2,
$$
where $(a_1,b_1,c_1)$ are homogeneous forms in $(x,y)$ of degree 1.

One can see from the definition of the normal cone (\cite{Ful})
that the normal cone to $l^*$ in $S$ is given by the degree 2 part
of $f$ in $(z,w)$, i.e., by the form
$$
  [f]_2 =  a_1 z^2 + b_1 w z + c_1 w^2
$$

We can consider $[f]_2$ as a section of $\OO(1,2)$ on the ruled
surface
$$
  \PP N_{l^* / \cp^3} \isom l^* \times \cp^1.
$$

Note that $[f]_2$, being a quadratic form of the variables $(z,w)$
with coefficients in $k[x,y]$, degenerates in the zeroes of its
discriminant $\Delta([f]_2) = b_1^2 - 4 a_1 c_1$. It follows that
there are 2 points $p_1$ and $p_2$ on $l^*$ where this quadratic form
degenerates into a double line, which proves that a cubic surface with
a double line has 2 pinch points.

Note also that $\Kpure^* \intersect l^*$ consists of such points
$p$ on $l^*$ such that one of the normal lines to $l^*$ in $S$ at
$p$ is the ``vertical'' line (one that contains the point $O$): it
is the only point of immersion. In the normal plane to the line
$l^*$ with coordinates $(z,w)$ this vertical line is given by the
equation $(w = 0)$. It follows that such points $p$ are exactly
those where $a_1$ vanishes. This gives just one point $p_0$,
different from the two pinch points $p_1$ and $p_2$ defined above,
and a decomposition
$$
   S''_O \intersect B^*_{res} = p_0 + \Qpure^*
$$
We have $\deg (S''_O \intersect B^*_{res}) = \deg \Kpure^* = 6
- 2 = 4$, and thus $\deg \Qpure^* = 3$. It follows that the pure
branch curve $\Kpure$ has 3 cusps. Note also that $\Kpure$ has no
nodes, since a plane quartic with 3 cusps is rational and can not have
any other singularities; i.e., we obtain a point $[\Kpure]$ in
$B(4,3,0)$.

\end{enumerate}

\noindent\underline{Degree 4 surfaces}\\

We should have $0 \leq e = \deg E^* \leq 3$.
\begin{enumerate}
\item[(1)]
  $e = 0$. This is the case of a smooth quartic surface with degree 12
  branch curve, which belongs to $B(12,24,12)$.

\item[(2)]$e = 1$. Let $S$ be a quartic surface with a double line $l^*$.
We have
$$
   B^* = 2l^* + \Kpure^*,
$$
where $d = \deg \Kpure^* = 4 \cdot 3 - 2 = 10$.

Arguing as above we can see that the normal cone to $l^*$ in $S$ can
be given by the equation
$$
  a_2 z^2 + b_2 w z + c_2 w^2 = 0
$$
for some homogeneous forms $(a_2,b_2,c_2)$ of degree 2 of
variables $(x,y)$.  It follows that $S$ has 4 pinch points on the
line $l^*$, and the intersection of $l^*$, $\Kpure^*$ and $S''_O$
consists of two (different) points $p_1,p_2$ which are the points
of immersion. It follows that
$$
   S''_O \intersect B^*_{res} = p_1 + p_2 + \Qpure^*,
$$
where $\deg \Qpure^* = \deg (S''_O \intersect B^*_{res}) - 2 = 18$.

It is known that a quartic surface with a double line is the image
of $\cp^2$ blown up at 9 points (see \cite[pg. 632]{GH}).
Computing its Chern invariants $c_1^2$ and $c_2$ and using the
formulas from Remark \ref{remNumCuspsNodes}, one can check that
the number of cusps is indeed 18, and the number of nodes is 8.
Thus $[\Kpure] \in B(10,18,8)$. Alternatively, since $e^* = t = 0$, by
remark \ref{remNumCuspsNodesSingSur} we find out that indeed
$[\Kpure] \in B(10,18,8)$.

\begin{remark}
\emph{It is easy to see from the above the classical fact that for a singular surface of degree $\n$ in $\pp^3$ with a double line, the number of pinch points is $p = 2(\n-2)$ and the number of the vertical points is $\n-2$.}
\end{remark}

\item[(3)]$e = 2$.
  In this case we have $\deg E^* = 2$. A curve of degree 2 in $\cp^3$
  is either a smooth conic contained in a plane, or a union of two
  skew lines, or a union of two intersecting lines, or a double line.
  By definition, the last two curves can not be double curves of a
  surface with ordinary singularities. 
  Both of the two remaining cases are actually realized, as explained,
  for example, in \cite{GH}.

  If the double curve is a smooth conic, then it is classical that the
  surface $S$ is a projection to $\cp^3$ of the intersection of two
  quadrics in $\cp^4$, (cf. \cite{GH}), and one can check (using remark \ref{remNumCuspsNodes}
  or \ref{remNumCuspsNodesSingSur}) that the branch curve is in $B(8,12, 4)$.

  If the double curve is a union of 2 skew lines, then it is known
  that $S$ is a ruled surface over elliptic curve (cf, say, \cite{GH},
  who deduces it from the classification of surfaces with $q = 1$).

From this classification one can conclude now that $c_2(S)=c_1^2(S)=0$, and, using the same
formulas as before, that the branch curve $B_{res}$ gives a point
in $B(8,12,8)$.

\item[(4)]$e = 3$.
  A double curve $E^*$ of a surface $S$ with ordinary singularities is
  either smooth, or has some triple points. Thus $E^*$ can be either
  (a) a rational space cubic, or (b) a non-singular plane cubic, or
  (c) a union of a conic and a non-intersecting line, or (d) a union
  of 3 skew lines, or (e) union of 3 lines intersecting in a point. It
  is explained, for example, in \cite{GH}, that only cases (a) and (e)
  are realized. 

  In the case (a) the surface $S$ is the projection of $\cp^1 \times
  \cp^1$ embedded with the linear system $|\ell_1 + 2\ell_2|$ to
  $\cp^5$, and the branch curve is in $B(6,6,4)$. We discuss this case
  in details in Subsection \ref{subSecExample}.

  In the case (e) the surface $S$ is the projection of the
  2-Veronese-embedded $\pp^2$ in $\cp^5$, and the branch curve is in
  $B(6,9,0)$; see the discussion in Subsection \ref{subSecExample}.

\end{enumerate}

\section{Classification of singular branch curves in small degrees} \label{subSecExample}


For $B$ a smooth curve of even degree $d$ defined by the equation
$\{f_B = 0\}$, let $\pi: S \to \cp^2$ a degree $\nu$ cover
ramified over $B$ which is generic in sense of Subsection
\ref{subsecGenProj}. We have that $d$ is even (see Remark
\ref{remEvenDeg}). By Zariski-Van Kampen theorem \cite{VK}, $\p
(\cp^2 - B) \simeq \Z/d\Z $ which is abelian. Since the monodromy
representation of a generic cover into the symmetric group should
be epimorphic (see the proof of Lemma \ref{lemNoAbelian}), we
conclude that $\pi$ is of degree 2, i.e., isomorphic to the double
cover given by $z^2 = f_B$ in the total space of the line bundle
$O_{\cp^2}(d/2)$.
\\
\begin{remark}
\emph{As was stated in subsection \ref{subsecGenProj}, we study generic linear projections $p: \pp^N = \PP(V) \to
\PP(V/W)$ (where $W \subset V$ be a codimension 3 linear subspace) where $\PP(W) \intersect S =
\emptyset$. Explicitly, if $S$ is a surface in $\pp^3$, then the projection is from a point $O \not\in S$.
However, see remark \ref{remQuarticDoubleLine}.}
\end{remark}

All possible non-smooth branch curves of degrees 4 and 6 are known
and we list them in the next paragraphs.  For each case we give
examples (and sometimes complete classification) of coverings with
a given branch curve. We then give all the numerical possible
singular degree 8 branch curves (see Theorem \ref{thmPosDeg8}).
\\

We denote
    $\langle a,b \rangle = (aba)(bab)^{-1}$,\quad %
  $ a^b = bab^{-1}$, and for the rest of this section we will use
  the coordinates $(d,c,n)$ in the variety of the nodal--cuspidal
  curves.
\\\\

\noindent\underline{Degree 4 singular branch curves}

There is only one branch curve of degree 4, as the following has
to be satisfied: $4 | n$, $3 | c$, and the
geometric genus $g(B) = (d-1)(d-2)/2 - n - c \geq 0$. 
It follows that the only possibility is
 $(c = 3,\, n = 0)$. This unique curve is the famous complexification of
  the classical deltoid curve, which is a cycloid with 3 cusps, i.e., the trace of a point on a
  circle of radius 1/3 rotating within a circle of radius 1. It is not hard to show that all other curves in
  $V(4,3,0)$ are obtained from the deltoid by linear transformation, since the dual curve
  belongs to $V(3,0,1)$, which is an irreducible space.

Zariski computed the braid monodromy for a deltoid using elliptic
curves \cite{Za1} and proved that $\p(\cp^2 - B)$ is isomorphic to
the group with presentation $$ \,\bigg \{ a,b : \langle a,b\rangle
= 1, a^2b^2=1 \bigg \},$$ where the notation $\langle a,b \rangle
$ was introduced above.  This is the dicyclic group of order 12.
The monodromy representation is the obvious one: $a \mapsto (1,2),
b \mapsto (2,3)$.

Zariski \cite{Za1} noted that the discriminant of a cubic surface
$S$ in $\cp^3$ with a double line is a plane curve of degree 6
which is a union of double line (the image of the double line of
$S$) and a quartic curve (which is straightforward), and moreover
proved that the residual quartic has 3 cusps. Thus the variety $
B(4,3,0)$ is not empty and thus $B(4,3,0) = V(4,3,0)$
\\\\

\noindent\underline{Degree 6 singular branch curves}

\begin{enumerate}
\item [(i)] The cases $(c=0, n > 0)$ and $c = 3$ are not realized.

  For $c=0$, $\p(\cp^2-B)$ is abelian (by Remark \ref{remNodalCurve}), and there are no generic covers ramified over $C$,
  as we argued in Lemma \ref{lemNoAbelian}. In the second case, $c = 3$ , Nori's result 
  we cited (see Equation (\ref{eq2_2})) implies that the group $\p(\cp^2-B)$
  is also abelian.

  It follows that there are no branch curves with these $(d,c,n)$
  triples, even though the corresponding varieties $V(6,0,n)$ and
  $V(6,3,n)$ are not empty.

\item [(ii)] $(c = 6, n = 0)$: This case was studied by Zariski, as we
  discussed in the introduction to Section \ref{subsecSmHyp}. If $S$ is a smooth cubic surface in $\cp^3$,
   then the branch curve
$B$ of a generic projection
  of $S$ to $\cp^2$ is in $V(6,6,0)$, and Segre's result we
  discussed (or a direct computation) shows that these 6 cusps lie on a
  conic. See subsection \ref{subsecCubicSurExample}  and Corollary \ref{corSexticBranch}.

Zariski proved the inverse statement: $C \in V(6,6,0)$ is a branch
curve if and only if 6 cusps of $C$ lie on a conic. It follows
from Zariski's work that branch curves form one of the connected
components of $V(6,6,0)$; and, moreover, Zariski proved the
existence of other connected components. Degtyarev proved in
\cite{D} that $V(6,6,0)$ has exactly two irreducible components.

Zariski also proved \cite{Za1} that for $B \in B(6,6,0)$ the group $\p(\cp^2-B)$
is isomorphic to $\Z/2 \ast \Z/3$, whereas for $C \in
V(6,6,0)\setminus B(6,6,0)$ the group $\p(\cp^2-C)$ is isomorphic to
$\Z/2 \oplus \Z/3$.

\begin{remark}
  \emph{ As a generalization of the above result, Moishezon \cite{Mo0} proved that the fundamental group of the
    complement of $B$ in $\cp^2$ is isomorphic to the quotient
    $\Braid_{\n}/Center(\Braid_\n)$ of the braid group $\Braid_{\n}$ by its center.  }
\end{remark}

\item [(iii)] \label{deg6_3} $(c = 6,\, n = 4)$.  Consider the surface
  $S = \cp^1 \times \cp^1$ embedded to $\cp^5$ by linear system $|
  \ell_1 + 2\ell_2 |$. Then $S$ is of degree 4 in $\cp^5$, and the
  image of its generic projection to $\cp^3$ is a quartic with a
  rational normal curve (the twisted cubic) as its double curve (see
  \cite[pg. 631]{GH}). The branch curve $B$ of $S$ is in
  $B(6,6,4)$ as can be seen from Remark \ref{remNumCuspsNodes} or from Remark \ref{remNumCuspsNodesSingSur} and
  it is known \cite{ZaPoincare} that the fundamental group $\p(\pp^2 - B)$ is braid group of the sphere with 3 generators (see \cite{AmS} for an explicit calculation).
    Note that $V(6,6,4)$ is irreducible since it is dual to $V(4,0,3)$.

\item[(iv)]$(c = 9,\, n = 0)$. First, we describe the variety $V = V(6,9,0)$. For a curve $B \in
  V$, its dual is a smooth plane cubic; this gives an isomorphism of V
  and an open subset in the linear system of plane cubics $|3h|$ consisting of smooth curves.
It follows immediately that $V$ is irreducible.

In this case $B(6,9,0)=V(6,9,0)$: there is a direct classical
construction of a cover with a given branch curve $C \in V(6,9,0)$
from the dual smooth cubic, discussed in Remark \ref{remSpecCons}. Moreover, every curve in
$V(6,9,0)$ is a branch curve of exactly four different ramified
coverings, the construction of which was given by Chisini (\cite{Ci}).
This is the only counterexample to the Chisini's conjecture (see
subsection \ref{secChisiniConj}).


More precisely, we have the following proposition:
\begin{prs} \label{PrsCountEx} \emph{(a)} Given a sextic $B$ with 9
  cusps and no nodes, there are four covers having $B$ as a branch
  curve. Three of them are degree 4 maps $\cp^2 \to \cp^2$, obtained
  as three various projections of Veronese-embedded $\cp^2$ in
  $\cp^5$, and the fourth one is of degree 3. The construction of the
  fourth is given in Remark \ref{remSpecCons}.

  \emph{(b)} The fundamental group $\pi(\cp^2 - B)$ has exactly 4
  non-equivalent representations into symmetric groups $\Sym_\nu$ for all
  $\nu$ which rise to smooth generic covers ramified over $B$.
\end{prs}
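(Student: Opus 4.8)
The plan is to prove both parts by exploiting the exceptional geometry of the $9$-cuspidal sextic, namely that its dual is a smooth cubic, and by counting representations of $\pi_1(\cp^2 - B)$ into symmetric groups via the local conditions of Proposition \ref{prsChisiniEquiv}. The key structural fact, established in the preceding discussion of case (iv), is that $V(6,9,0)$ is irreducible and isomorphic to an open subset of the linear system $|3h|$ of smooth plane cubics, under the duality $B \mapsto B\dual$. This lets us transport the entire problem to the geometry of a fixed smooth cubic $E = B\dual$.

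\medskip

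For part (a), I would construct the four covers explicitly rather than abstractly. First, the three degree-$4$ covers come from the three coordinate projections $\cp^5 \dashrightarrow \cp^2$ of the $2$-Veronese-embedded $\cp^2 \subset \cp^5$; each such generic projection has a $9$-cuspidal sextic as branch curve, as recorded in case (e) of Subsection \ref{subsecExSingSur} where we found the branch curve lies in $B(6,9,0)$. The nontrivial point is that these three projections are genuinely distinct (non-isomorphic as covers) and yet share the same branch curve $B$; this follows because the three projections correspond to the three ways of writing the Veronese surface as a conic bundle, equivalently to the three non-isomorphic degree-$4$ monodromy representations. For the fourth, degree-$3$ cover, I would invoke the classical Chisini construction referenced in Remark \ref{remSpecCons}: starting from the dual smooth cubic $E = B\dual$, one recovers a triple cover $S \to \cp^2$ whose branch locus is $B$. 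The hard part of (a) is to verify there are \emph{exactly} four and no more; this is where part (b) does the real work.

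\medskip

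For part (b), I would count the admissible monodromy representations directly. By Proposition \ref{prsChisiniEquiv}, isomorphism classes of generic covers ramified over $B$ of degree $\nu$ correspond bijectively to $\Sym_\nu$-conjugacy classes of epimorphisms $\varphi : \pi_1(\cp^2 - B) \to \Sym_\nu$ sending each geometric generator to a transposition and each of the nine cusps to a pair of non-commuting transpositions (there being no nodes, condition (iii) is vacuous). The plan is to use Zariski's computation of $\pi_1(\cp^2 - B)$ for the $9$-cuspidal sextic, which is known to be a finite group (indeed an extension related to $(\Z/3)^2 \rtimes \Z/2$ coming from the $3$-torsion of the elliptic curve $E$). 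I would then enumerate, up to conjugacy, the epimorphisms onto $\Sym_\nu$ satisfying the cusp condition: the structure of the group forces $\nu \in \{3,4\}$, with the three degree-$4$ representations distinguished by the $3$-torsion structure and a single degree-$3$ representation. The main obstacle is the finite but delicate group-theoretic enumeration — showing that the cusp non-commutativity conditions, combined with transitivity (from irreducibility of $S$) and epimorphicity, leave precisely these four conjugacy classes and admit no representations for $\nu \geq 5$. I would organize this by first pinning down the abelianization and the image of each cuspidal subgroup $G_i = \{a,b : aba = bab\}$, then using the $3$-torsion interpretation to match representations with the geometric covers constructed in (a), thereby closing the loop between the two parts.
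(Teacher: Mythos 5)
Your sketch of part (a) is broadly consistent with the paper, which simply cites Catanese \cite{C} for the four covers, and your remark that exactness is carried by part (b) matches the paper's logic. The fatal problem is part (b): your entire enumeration rests on the claim that $\p(\cp^2-B)$ is a finite group, ``an extension related to $(\Z/3)^2 \rtimes \Z/2$'', and this is false. By Zariski \cite{ZaTop} (quoted in the paper's own proof), $G=\p(\cp^2-B)\simeq \ker(B_3(T)\to H_1(T))$, where $B_3(T)$ is the braid group of the torus, and this group is \emph{infinite}. You can see this from the very geometry you invoke: the degree-3 Chisini cover of Remark \ref{remSpecCons} is the restriction of the incidence variety $\Sigma=\{(\lambda,y):\lambda\in E,\ y\in \ell_\lambda\}$, a $\pp^1$-bundle over the elliptic curve $E=B^{\vee}$, so removing the ramification locus gives an unramified triple cover $\Sigma_0\to \cp^2-B$ with $\pi_1(\Sigma_0)$ surjecting onto $\pi_1(\Sigma)\cong\Z\oplus\Z$; hence $G$ contains an index-3 subgroup surjecting onto $\Z^2$ and is infinite. (Independently, a group of order 18 cannot even surject onto $\Sym_4$, which has order 24, so your proposed structure is incompatible with the three degree-4 covers you want to count; the group $(\Z/3)^2\rtimes\Z/2$ is related to the 3-torsion/Hesse configuration of $E$, not to $\p(\cp^2-B)$.) With finiteness gone, your assertion that ``the structure of the group forces $\nu\in\{3,4\}$'' and the ensuing ``finite but delicate enumeration'' have no starting point: you have given no mechanism to bound $\nu$ or to list the admissible epimorphisms from an infinite group.

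The paper closes exactly this gap by different means. Using irreducibility of $V(6,9,0)$ it reduces to the branch curve of a generic projection of the Veronese surface, degenerates the Veronese into a union of four planes, and applies the Moishezon--Teicher braid monodromy techniques \cite{MoTe3,MoTe4,MoTe5} to obtain an explicit finite presentation of $G$; GAP \cite{GAP2007} then shows $G$ is generated by four geometric generators. Since any admissible monodromy sends geometric generators to transpositions, the image in $\Sym_\nu$ is generated by four transpositions, which can act transitively only for $\nu\le 5$; a second GAP computation enumerates all epimorphisms to $\Sym_3,\Sym_4,\Sym_5$ satisfying conditions (i)--(iii) of Proposition \ref{prsChisiniEquiv}, finding exactly one for $\nu=3$, three for $\nu=4$, and none for $\nu=5$. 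To repair your proposal you would need a concrete substitute for this step: either the explicit presentation route, or an a priori degree bound plus an enumeration argument that works for an infinite source group.
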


%

\begin{proof}
  (a) See \cite{C}.\\
  (b): Note that $G= \p(\cp^2 - B)$ was already calculated by Zariski in
  \cite{ZaTop}, showing that:
$$
G \simeq \ker(B_3(T) \to H_1(T)),
$$
where $B_3(T)$ is the braid group of the torus. We compute it here in a different method, using the
  degeneration techniques explained in \cite{MoTe3}. Let $S$ be the
  image of the Veronese embedding of $\cp^2$ into $\cp^5$; the branch
  curve $B$ of a generic projection $S \to \cp^2$ belongs to
  $V(6,9,0)$ (see e.g. \cite{MoTe0}). Since $V(6,9,0)$ is irreducible,
  it is enough to look at $B$. Now $S$ can be degenerated into a union
  of four planes is $\cp^5$, with combinatorics shown on the Figure 7
  below, as explained in \cite{MoTe3}.

 \begin{center}
\epsfig{file=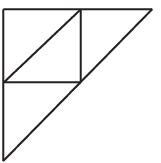}\\    %
\small{{Figure 7 : degeneration of $V_2$}}
\end{center}
Using the techniques of \cite{MoTe4},\cite{MoTe5}, one can prove that
$G$ has a presentation with generators\\ $\{\g_1, \g_{1'},\g_2,
\g_{2'},\g_3, \g_{3'}\}$ and relations
\begin{gather*}
\{\langle \g_2,\g_1 \rangle,
       \langle \g_2,\g_{1'} \rangle,
       \langle \g_{1'},\g_{2'}^{\g_2} \rangle,\quad
       \langle \g_3,\g_1 \rangle,
       \langle \g_3,\g_{1'} \rangle,
       \langle \g_3,\g_{1}^{\g^{-1}_{1'}} \rangle,\\
       \langle \g_{2'},\g_3 \rangle,
       \langle \g_{2'},\g_{3'} \rangle,
       \langle \g_{2'},\g_{3'}^{\g_3}  \rangle, \quad
        \g_2^{-1}\cdot \g_{2'}^{\g_2\g_{1'}\g_1} ,
       \g_3^{-1}\cdot \g_{3'}^{\g_3\g_{1'}\g_1},
       \g_2^{-1}\cdot \g_{2'}^{\g_3^{-1}\g_{3'}^{-1}} , \\
       \g_1\g_{1'}\g_2\g_{2'}\g_3\g_{3'} \}
\end{gather*}
Using GAP \cite{GAP2007} one can prove that $G$ is actually
generated by the set $\{\g_{1'}, \g_2, \g_3, \g_{3'}\}$.
and having the following relations
$$\bigg \{
 \g_{3}^{-1}\g_{2}\g_{3}\g_{2}\g_{3}^{-1}\g_{2}^{-1},
 \g_{1'}^{-1}\g_{3'}\g_{1'}\g_{3'}\g_{1'}^{-1}\g_{3'}^{-1},
 \g_{2}^{-1}\g_{3'}^{-1}\g_{2}^{-1}\g_{3'}\g_{2}\g_{3'},
 \g_{2}\g_{1'}\g_{2}\g_{1'}^{-1}\g_{2}^{-1}\g_{1'}^{-1},$$$$
 \g_{3}\g_{1'}\g_{3}\g_{1'}^{-1}\g_{3}^{-1}\g_{1'}^{-1},
 \g_{3'}\g_{1'}^{-1}\g_{3'}^{-1}\g_{3}^{-1}\g_{1'}^{-1}\g_{2}^{-1}\g_{3'}^{-1}\g_{2}\g_{3}^{-1}\g_{2}^{-1},
 \g_{3'}^{-1}\g_{1'}\g_{3}\g_{1'}^{-1}\g_{3'}\g_{1'}\g_{2}\g_{3}\g_{3'}\g_{2},$$$$
 \g_{3}\g_{3'}\g_{2}\g_{3'}^{-1}\g_{3}^{-1}\g_{3'}\g_{2}^{-1}\g_{1'}^{-1}\g_{3'}^{-1}\g_{3}^{-1}\g_{1'}^{-1}\g_{3'}^
1\g_{3}\g_{2}^{-1}\g_{3'}^{-1}\g_{2}\g_{3}^{-1}\g_{3'}^{-1} \bigg
\}. $$
It follows that it suffices to look for the homomorphisms $G
\rightarrow Sym_\nu$, when $\nu=3,4,5$, since 4 transpositions can generate at most
symmetric group on 5 letters. Note also that the homomorphisms, in order to correspond to
generic covers, have to satisfy Proposition \ref{prsChisiniEquiv}. Using GAP again, one shows that the
only
epimorphisms are  the following:\\

$\p(\cp^2 - B) \rightarrow Sym_3: $
\begin{enumerate}
\item[(1)]$  \{\g_{1'}, \g_2, \g_3, \g_{3'}\}\to \{(1,3), (2,3), (1,2), (1,2)\} $
\end{enumerate}
$\p(\cp^2 - B) \rightarrow Sym_4: $
\begin{enumerate}
\item[(2)]$ \{\g_{1'}, \g_2, \g_3, \g_{3'}\}\to \{(2,4), (2,3), (3,4), (1,2)\}$
\item[(3)]$ \{\g_{1'}, \g_2, \g_3, \g_{3'}\}\to \{ (1,3), (2,3), (3,4), (1,2)\}$
\item[(4)]$ \{\g_{1'}, \g_2, \g_3, \g_{3'}\}\to \{ (2,4), (2,3), (1,2), (1,2) \}$
\end{enumerate}
and there are no epimorphisms to $\Sym_5$.
\end{proof}

\begin{remark} \label{remSpecCons} \emph{We recall a construction of
    Chisini (see \cite{Ci} or \cite[Section 3]{C}). Let $B \subset
    \cp^2$ a curve with nodes and cusps only, such that its dual $A =
    B^{\vee} \subset (\cp^2)^{\vee}$ is a
    smooth curve of degree $d$. Let
    $$
    \Sigma = \{(\lambda,y) \in (\cp^2)^{\vee} \times \cp^2
    : \lambda(y) = 0,\, \lambda \in A\}
    $$
    and $\psi : \Sigma \to \cp^2$ be the projection to the second
    factor.  For a given point $y \in \cp^2 - B, $ the line
    $l_{\lambda}$ in $\cp^2$ is not tangent to $A $ (i.e. intersects
    $A$ in $d$ distinct points) iff $\psi^{-1}(y)$ has $d$ points.
    Hence $\Sigma$ is a degree $d$ covering of $\cp^2$ with $B$ as the
    branch curve. For $d=3$ we get the fourth example in Proposition
    \ref{PrsCountEx}, a degree 3 ramified cover of $\cp^2$ branched
    along a 9-cuspidal sextic. Note that this does not contradict to
    the fact that the only plane curves which are branch curves of
    projections of smooth cubic surfaces in $\cp^3$ are sextics with
    six cusps, since the surface constructed above is naturally
    embedded into $Fl \doteq \{(l,x) : l $ is a line in $\cp^2, x \in
    \cp^2, x\in l\} \subset (\cp^2)^{\vee} \times \cp^2$,
    and not in $\cp^3$.\\}
\end{remark}

\end{enumerate}
\noindent\underline{Degree  8 singular branch curves}

\begin{thm} \label{thmPosDeg8}
The only degree 8 singular branch curves of generic linear projections have either 9 cusps and
12 nodes or 12 cusps and 4  nodes.
\end{thm}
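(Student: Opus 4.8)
The plan is to first reduce the possible pairs $(c,n)$ to a short finite list by collecting the numerical necessary conditions proved above, and then to decide each surviving case by identifying the smooth surface $S$ that could give rise to it. For $d=8$ one has $p_a=21$, hence $g=21-c-n\geq 0$; combining this with the divisibility relations $3\mid c$ and $4\mid n$ (Equation (\ref{eq2_1})), Nori's inequality $6c+2n\geq 64$ (Lemma \ref{lemNoAbelian}), Shimada's inequality $2n<32$, and Zariski's inequality (\ref{eqZar}), which for $d=8$ gives $\beta=[7/6]=1$ and $c<16$, forces $(c,n)$ into the list $(9,8),\,(9,12),\,(12,0),\,(12,4),\,(12,8),\,(15,0),\,(15,4)$.

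Next I would determine the degree $\nu$ of the cover. Since $B$ is singular the cover cannot be a smooth double cover, so $\nu\geq 3$, while Lemma \ref{lem0} gives $\nu\leq 5$. The curve $C=\pi^{-1}(l)$ is a smooth hyperplane section of $S$, of degree $\nu$ and (by the Riemann--Hurwitz computation in the proof of Lemma \ref{lem0}) of genus $g(C)=5-\nu$; since an irreducible curve of degree $\nu$ has genus at most $\binom{\nu-1}{2}$, the value $\nu=3$ is impossible ($g(C)=2>1$). Thus $\nu\in\{4,5\}$ and $S$ has sectional genus $5-\nu$. From Lemmas \ref{lem_c_1^2} and \ref{lem_c_2} I record $c_1^2(S)=9\nu-16-c-n$, $c_2(S)=3\nu+40-3c-2n$ and $\chi(\OO_S)=\nu+2-\tfrac{c}{3}-\tfrac{n}{4}$, turning each candidate into numerical constraints on a hypothetical surface.

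For $\nu=5$ the sectional genus is $0$, so $S$ is rational with $\chi(\OO_S)=1$ and $c_2(S)\geq 3$. The condition $\chi(\OO_S)=1$ selects exactly $(9,12)$ and $(15,4)$; the latter gives $c_2(S)=2<3$ and is excluded, while the former gives $(c_1^2,c_2)=(8,4)$, realized by a degree-$5$ rational scroll (a Hirzebruch surface), whose generic projection I would exhibit as a cover branched along a curve in $V(8,9,12)$. For $\nu=4$ the sectional genus is $1$, and by the classification of smooth surfaces of degree $4$ the only one with elliptic hyperplane sections is the del Pezzo surface of degree $4$ (a complete intersection of two quadrics in $\pp^4$), whose invariants $(c_1^2,c_2)=(4,8)$ match precisely $(12,4)$. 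All remaining candidates are then excluded: for $\nu=5$ one has $\chi(\OO_S)\neq 1$ for $(9,8),(12,0),(15,0)$, and for $(12,8)$ the data $(c_1^2,c_2)=(9,3)$ would force $S=\pp^2$, which admits no degree-$5$ embedding; for $\nu=4$ the invariants of $(9,8),(15,0)$ match no surface of sectional genus $1$, the pair $(15,4)$ gives $c_2=-1<0$, and $(12,8)$ would require a smooth elliptic ruled surface of degree $4$, which does not exist. Finally $(12,0)$ is excluded directly by the positivity of the number of pinch points (Remark \ref{remNumOfPinchPositive}). Only $(9,12)$ and $(12,4)$ survive, and the node--cusp formulas of Remark \ref{remNumCuspsNodes} confirm that the two surfaces above indeed realize them.

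The main obstacle is precisely this surface-existence step: the numerical conditions alone leave seven candidates, and isolating the two genuine branch curves from the five numerical ghosts requires the classical classification of smooth surfaces of degrees $4$ and $5$, together with the elementary facts that a rational surface has $c_2\geq 3$ and that $\pp^2$ carries no degree-$5$ embedding. The most delicate pair is $(12,8)$, whose Chern data are internally consistent for both admissible values of $\nu$ yet which corresponds to no smooth surface yielding a degree-$8$ branch curve; ruling it out is exactly where the nonexistence of a degree-$4$ elliptic ruled surface and of a degree-$5$ plane must be invoked.
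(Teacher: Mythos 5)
Your numerical reduction to the seven candidates $(9,8)$, $(9,12)$, $(12,0)$, $(12,4)$, $(12,8)$, $(15,0)$, $(15,4)$ and your two realizations (the degree-$5$ rational scroll for $(9,12)$, the complete intersection of two quadrics in $\pp^4$, i.e.\ the degree-$4$ del Pezzo, for $(12,4)$) coincide with the paper's. Where you genuinely diverge is in the exclusions: the paper factors the generic projection through $\pp^3$, identifies the image as a quintic with a degree-$6$ double curve, and derives contradictions from that singular model ($q=-1$ for $(9,8)$, vanishing of the number of pinch points for $(12,0)$, a plane through the three triple points for $(15,0)$, and irreducibility of the double curve of a generic projection for $(12,8)$), whereas you work intrinsically with the smooth surface $S$: sectional genus $5-\nu$, rationality and $\chi(\OO_S)=1$ when $\nu=5$, the classification of smooth degree-$4$ surfaces when $\nu=4$, and Chern-invariant matching. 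This is a legitimate and arguably more uniform alternative; it trades the paper's double-curve computations for classical classification facts about surfaces of sectional genus $0$ and $1$ in low degree. (Your step ``$(c_1^2,c_2)=(9,3)$ forces $S=\pp^2$'' is fine, but only because you have already shown $S$ rational for $\nu=5$; Chern numbers alone would also allow ball quotients.)

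Two points need repair. First, your claim that $\chi(\OO_S)=1$ ``selects exactly $(9,12)$ and $(15,4)$'' for $\nu=5$ is false: $(12,8)$ also gives $\chi(\OO_S)=1$ (indeed $(c_1^2,c_2)=(9,3)$ satisfies Noether's formula); your separate treatment of $(12,8)$ covers the case, but the sentence as written is wrong. Second, and more substantively, the pair $(12,0)$ with $\nu=4$ is never validly excluded. The reason you cite --- positivity of the number of pinch points --- produces a contradiction only for the quintic model in $\pp^3$ (where $e=6$, $t=2$, $g-u=0$ force $p=0$, which is the paper's computation); for a quartic model one has $e=2$, and the formula of Remark \ref{remNumOfPinch} gives $p=4$ or $p=8>0$, so there is no contradiction there. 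Since your $\chi(\OO_S)\neq 1$ argument already kills $(12,0)$ for $\nu=5$, what is missing is precisely the $\nu=4$ subcase. The fix lies inside your own framework: $(12,0)$ with $\nu=4$ would require sectional genus $1$ and $\chi(\OO_S)=2$, while by your classification the degree-$4$ surfaces of sectional genus $1$ are del Pezzo, with $\chi(\OO_S)=1$; in other words, simply add $(12,0)$ to the list ``$(9,8),(15,0)$ match no surface of sectional genus $1$.''
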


\begin{proof}
By simple calculations (see subsection
\ref{subsecNodeCupsCur},\ref{subsecBranch} for the obstructions),
one can conclude that there are only finite number of
possibilities for $c$ and $n$ for a degree 8 singular branch
curve.

It was proven by Zariski-Deligne-Fulton theorem
\ref{remNodalCurve} on nodal curves that the case $(c=0, n>0)$
cannot be realized as a branch curve. The cases  $(c=3, n > 0)$,
$(c=6, n > 0)$, $(c=9, n = 0)$ and $(c=9, n = 4)$ are ruled out as
branch curves by Nori's theorem \ref{lemNoAbelian} (though the
corresponding nodal--cuspidal varieties are not empty). Moreover,
the case $(c=18,n=0)$ cannot be realized even as nodal--cuspidal
curve: By the Zariski's inequality
  (\ref{eqZar}), the number of cusps of a degree 8 curve should be
  less then 16, and thus $V(8,18,0)$ is empty. By considering the dual curve, it's easy to see that also
$V(8,15,4)$ is empty.

We are left to show that there are no degree 8 branch curves with
$(c,n) = (9,8),(12,0),(15,0), (12,8)$ (although the corresponding
nodal--cuspidal varieties are not empty since the genus of these
curves is less than 5).
\begin{enumerate}
\item [(i)] $(c = 9, n = 8)$.

Assume that there exists a surface $S \subset \pp^3$ such that its
branch curve is $B \in B(8,9,8)$, such that $\tilde{S}$ is its
smooth model in $\pp^5$ (i.e. $\tilde{S} \to S$ by generic
projection). Since $d \geq 2\n - 2$, $\n =
3,4$ or $5$. By the examples in subsection \ref{subsecExSingSur}
we see that $\n = 5$ and thus $e = 6$. In this case, by Lemma
\ref{lem_c_1^2} and \ref{lem_c_2}, we can see that
  $c_1^2(\tilde S) =  c_2(\tilde S) = 12$. The degree 6 double curve (of the quintic) cannot lie on a hyperplane for degree reasons. Therefore the canonical system $|K_{\tilde{S}}|$  is empty (since it is the pull-back
 of the linear system cut out by hyperplanes passing through the
 double curve. See \cite[pp. 627]{GH}). Therefore $p_g(\tilde{S}) = 0$.
But since $c_1^2 = c_2 = 12$, we get that
$\chi(\mathcal{O}_{\tilde{S}}) = 2$. Thus $2 = 1 - q + p_g$ or $q
= -1$ -- contradiction. Thus $B(8,9,8)$ is empty.

\item [(ii)] $(c = 12,\, n = 0$) Assume that there exist $B \in
B(8,12,0)$. s.t. it is the branch curve of a surface
  $S$ in $\pp^3$. By the same argument as in case (i) we see that $\n = 5$ and thus $e = 6$. In this case,
   by Lemma \ref{lem_c_1^2} and \ref{lem_c_2}, we can see that
  $c_1^2(S) = 17, c_2(S) = 19$. So by Remark \ref{remNumOfPinch}, we can find that the number of pinch
  points $p = 0$ -- but this cannot happen, by Remark \ref{remNumOfPinchPositive}. Therefore $B(8,12,0)$ is empty.

  \begin{remark}
  \emph{Note that  Zariski proved
  (\cite{Za0}) that the twelve cusps of $C \in V(8,12,0)$ cannot be the intersection of a
  cubic and a quartic curves. We conjecture that this restriction is directly linked to the
  fact that $B(8,12,0)$ is empty.}
  \end{remark}

  \begin{remark} \label{remQuarticDoubleLine}
  \emph{Considering a quartic surface $S$ with a double line in $\pp^3$, we can project it from a generic smooth point
  $O \in S$. The resulting branch curve will be a curve in $V(8,12,0)$ (see \cite{DuVal}). However, we do not consider this projection as generic. Note that this phenomena happens  also in other cases. For example, a branch curve in $V(10,18,0)$ of a generic projection does not exist, but if we project a smooth quartic surface in $\pp^3$ from a point on the quartic, the branch curve of this projection would be in $V(10,18,0)$.}
  \end{remark}

\item [(iii)]  $(c = 15,\, n = 0)$ As in cases (i) and (ii), we
can see that a surface $S$ with such a branch  curve could only be
a quintic in $\cp^3$ with a degree 6 double curve $E^*$
  with 3 triple points (by Remark \ref{remNumCuspsNodesSingSur}).
  Considering $\Pi$ -- the plane passing through these three points -- and looking at $E^* \cdot \Pi$, we see
  that $E^* \subset \Pi$. However, deg\,$\Pi \cap S = 5$, so such a surface does not exist.


   Note that in this case
  Zariski demonstrated in (\cite{Za0}) that the 15 cusps cannot lie
  on a quartic curve.

  \begin{remark}
  \emph{The nonexistence in cases (ii) and (iii) can also be proven by the method indicated in (i).}
  \end{remark}

  \item [(iv)] $(c = 12,\, n = 8$). The variety $V(8,12,8)$ is
  irreducible, since it is dual to the $V(4,0,2)$.  If $S$ is a
  quartic surface in $\cp^3$ which double curve is a union of two skew
  lines, then we prove in Subsection \ref{subSecSingHyp} that a branch
  curve of $S$ is in $B(8,12,8)$. By \cite{ZaTop},
$\p(\pp^2 - B) \simeq \ker(B_4(T) \to H_1(T)).$ However the double curve of an image a smooth surface in $\pp^N$ in $\pp^3$ is an irreducible curve (unless the surface is the Veronese surface, where in this case the double curve is a union of three lines. See e.g. \cite[Theorem 3]{Mo1}). Thus  $B$ is not a branch curve of generic linear projection.

\end{enumerate}

We now shall construct degree 8 branch curves with $(c,n) =
(9,12),(12,4)$. With this we covered all the possible
numerics for the possible number of nodes and cusps of a degree 8
branch curve.

\begin{enumerate}
\item [(I)] $(c = 9, n = 12)$. First, note that $V(8,9,12)$ is
  irreducible, since it is dual to the variety $V(5,0,6)$.

  Now note that if we consider the Hirzebruch surface $F_1$ embedded
  into $\cp^6$ by $|2f + s|$ (where $f$ is the class of a fiber and
  $s$ is the class of a movable section, so that $f^2 = 1$, $f \cdot s
  = 1$, $s^2 = 1$.) A projection of this model of $F_1$ to $\cp^2$
  factorizes as a composition of a projection to $\cp^3$, where the
  image of $F_1$ is a quintic surface with a double curve of degree 6,
  and a projection from $\cp^3 \to \cp^2$. One can check that the
  branch curve $B$ of the resulting map has 9 cusps and 12 nodes (see
  \cite{MoRoTe} or Remark \ref{remNumCuspsNodes}) and that $\p(\pp^2 - B)$ is isomorphic to the braid group of the sphere with 4 generators (see \cite{ZaPoincare}).

\item [(II)] $(c = 12,\, n = 4$). We do not know whether
$V(8,12,4)$
  is irreducible. If $S$ is a smooth intersection of two quadrics in
  $\cp^4$, then the branch curve of a projection of $S$ to $\cp^2$ is
  in $B(8,12,4)$, see Subsection \ref{subSecSingHyp} for the details. By \cite{Robb}, $\p(\C^2 - B)  \simeq Braid_4/\langle [x_2,x_2^{x_1x_3}] \rangle$.

\end{enumerate}
\end{proof}

\section[Appendix A : New Zariski pairs (By Eugenii Shustin)]
{Appendix A : New Zariski pairs (By Eugenii Shustin)}

\subsection{Introduction}\label{intro}
Along Lemma \ref{lemBranchVar}, The family $B(d,c,n)$ of the plane
branch curves of degree $d$ with $c$ cusps and $n$ nodes as their
only singularities consists of entire components of $V(d,c,n)$, the
space parameterizing all irreducible plane curves of degree $d$ with
$c$ cusps and $n$ nodes as their only singularities. In the
particular case of the branch curves of generic projections of
smooth surfaces of degree $\nu\ge 3$ in $\PP^3$ onto the plane, one
has (cf. \cite{Sa} and Lemma \ref{lemNumNodeCuspSm})
\begin{equation}d(\nu)=\nu(\nu-1),\quad c(\nu)=\nu(\nu-1)(\nu-2),\quad
n(\nu)=\frac{1}{2}\nu(\nu-1)(\nu-2)(\nu-3)\
.\label{e1}\end{equation} The celebrated Zariski result \cite{Za1}
says that, in the case $\nu=3$, $d=6$, $c=6$, $n=0$, the variety
$V(6,6,0)$ contains a component which is disjoint with $B(6,6,0)$.
This suggests

\begin{conjecture}\label{c1}
For each $\nu\ge 3$, the variety $V(d(\nu),c(\nu),n(\nu))$ contains
a component disjoint with $B(d(\nu),c(\nu),n(\nu))$.
\end{conjecture}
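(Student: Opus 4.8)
The plan is to reduce Conjecture \ref{c1} to a single existence statement and then to produce the required curve by an explicit construction. By Lemma \ref{lemBranchVar} the branch locus $B(d,c,n)$ is a union of connected components of $V(d,c,n)$; hence, to exhibit a component of $V(d(\nu),c(\nu),n(\nu))$ disjoint from $B(d(\nu),c(\nu),n(\nu))$ it is enough to construct one curve $C_\nu \in V(d(\nu),c(\nu),n(\nu))$ that is \emph{not} a branch curve. The connected component of $V$ through $C_\nu$ then automatically contains no branch curve.

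First I would translate ``not a branch curve'' into a statement about the fundamental group. As in the proof of Lemma \ref{lemNoAbelian}, a \emph{singular} branch curve $B$ arises from a generic cover $\pi\colon S \to \pp^2$ of degree $\nu' \geq 3$: the degree cannot be $2$, since no smooth double cover of $\pp^2$ is ramified over a singular curve, and the monodromy image, being generated by transpositions acting transitively, is all of $\Sym_{\nu'}$, which is non-abelian for $\nu' \geq 3$. Consequently $\pi_1(\pp^2 \setminus B)$ is non-abelian for every singular branch curve. Therefore any nodal-cuspidal curve $C_\nu$ with \emph{abelian} $\pi_1(\pp^2 \setminus C_\nu)$ cannot be a branch curve of any cover, and the problem is reduced to constructing, for each $\nu \geq 3$, a nodal-cuspidal curve with the invariants $(d(\nu),c(\nu),n(\nu))$ whose complement has abelian fundamental group. (I prefer this route over Segre's criterion, Theorem \ref{thmSegre}, because failing the adjoint-curve condition only excludes covers coming from smooth surfaces in $\pp^3$, whereas an abelian $\pi_1$ excludes every generic cover at once.)

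The essential and hardest step is this construction, and the difficulty is intrinsic: the invariants lie exactly in the range where no general abelianness theorem applies. Indeed one computes
\[
  6\,c(\nu) + 2\,n(\nu) - d(\nu)^2 = 2\,\nu(\nu-1)(\nu-3) \geq 0,
\]
so Nori's criterion (cf. Lemma \ref{lemNoAbelian}) does not force abelianness; and a similar computation gives $2\,n(\nu) < d(\nu)^2 - 5\,d(\nu) + 8$, so Shimada's criterion does not apply either. Thus abelianness is governed by the \emph{position} of the singular points rather than by their number, and the curve must be built so that this position is abelianizing. I would do so by a degeneration/patchworking construction in the style of Viro and Shustin: assemble $C_\nu$ from elementary pieces carrying the prescribed cusps and nodes, follow the braid monodromy of a generic pencil of lines through the degeneration, and show that in the resulting presentation of $\pi_1(\pp^2 \setminus C_\nu)$ the local cuspidal relations $\langle a,b\rangle = 1$ become redundant modulo the commutator subgroup, so that the group collapses to $\Z/d(\nu)\Z$.

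The main obstacle is precisely this braid-monodromy/abelianness computation, carried out on the critical locus where abelianness is not automatic; it subsumes two nontrivial tasks, namely verifying the existence of an equisingular curve with the prescribed numerics (an $h^1$-vanishing question for equisingular families, where Shustin's methods are decisive) and controlling its fundamental group. Once a single such $C_\nu$ is produced the two reductions above close the argument: $C_\nu$ is not a branch curve, so by Lemma \ref{lemBranchVar} its connected component in $V(d(\nu),c(\nu),n(\nu))$ is disjoint from $B(d(\nu),c(\nu),n(\nu))$, which proves the conjecture. Moreover, since the smooth-surface branch component $B_3(\nu)$ consists of curves of the same combinatorial type $(d(\nu),c(\nu),n(\nu))$ but with non-abelian $\pi_1$, the pair $(B_3(\nu)\text{-curve},\, C_\nu)$ is a new Zariski pair.
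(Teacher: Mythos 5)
Your reduction steps are sound and in fact coincide with the paper's framework: by Lemma \ref{lemBranchVar} it suffices to exhibit a single curve in $V(d(\nu),c(\nu),n(\nu))$ that is not a branch curve, and (as in the proof of Lemma \ref{lemNoAbelian}) a singular branch curve always has non-abelian complement, so abelian $\pi_1$ would certainly do. Your computations showing that neither Nori's nor Shimada's criterion applies to these numerics are also correct. The genuine gap is that the curve $C_\nu$ is never produced: the entire content of the statement is pushed into the claim that a patchworking construction can be arranged so that the braid monodromy presentation collapses to $\Z/d(\nu)\Z$, and no argument is given for this, nor is one available. Since $6c(\nu)+2n(\nu)\ge d(\nu)^2$, abelianness can only come from a special position of the singularities, and there is no known technique that certifies abelian $\pi_1$ for a curve assembled by patchworking in this range; it is not even known that curves with these invariants and abelian $\pi_1$ exist for $\nu>3$. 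Note also that the phrase ``the cuspidal relations become redundant modulo the commutator subgroup'' is vacuous: modulo the commutator subgroup the group of any irreducible degree-$d$ curve complement is already $\Z/d\Z$; what must be shown is that the commutator subgroup itself is trivial, which is exactly the hard part you have not addressed.

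Demanding abelian $\pi_1$ is moreover a substantial overshoot, and avoiding it is precisely how the paper proceeds. Shustin's appendix uses Segre's necessary condition (Theorem \ref{thmSegre}, cf.\ Proposition \ref{prsLcurve}): every curve in $B(d(\nu),c(\nu),n(\nu))$ has all its singular points on a plane curve of degree $d'(\nu)=(\nu-1)(\nu-2)$. One then constructs by patchworking a $T$-smooth curve with one \emph{extra} node, i.e.\ an element of $V^T(d(\nu),c(\nu),n(\nu)+1)$, and shows via Noether's $AF+BG$ theorem that some node $p$ can be chosen so that $\Sing(C)\setminus\{p\}$ lies on no curve of degree $d'(\nu)$; $T$-smoothness allows that node to be smoothed independently, producing the desired non-branch curve (for $\nu\ge5$ a dimension count gives an alternative argument). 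This replaces the uncontrollable fundamental group computation by a position-of-singularities argument. Even with this weaker obstruction, the construction only succeeds for $3\le\nu\le10$ (Theorem \ref{t1}); the statement you set out to prove for all $\nu\ge3$ is explicitly a conjecture in the paper, left open because the patchworking method does not reach large $\nu$, so a plan claiming to settle it in full necessarily contains a serious unproved step --- and yours does.
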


Here we confirm this conjecture for few small values of $\nu$.

\begin{thm}\label{t1}
Conjecture \ref{c1} holds true for $3\le\nu\le10$.
\end{thm}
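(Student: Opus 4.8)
The plan is to reduce the assertion to a construction problem and then settle it in the stated range. By Lemma \ref{lemBranchVar} every connected component of $V(d(\nu),c(\nu),n(\nu))$ consists \emph{either} entirely of branch curves \emph{or} entirely of non-branch curves. Hence, in order to produce a component disjoint from $B(d(\nu),c(\nu),n(\nu))$, it suffices to exhibit, for each $\nu$ with $3\le\nu\le 10$, a \emph{single} irreducible curve $C_\nu\in V(d(\nu),c(\nu),n(\nu))$ that is not a branch curve: its whole component will then automatically avoid $B$. This turns the problem into the existence of one non-branch curve of the prescribed numerical type for each such $\nu$.

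To certify non-branchness I would use the necessary condition isolated by Proposition \ref{prsLcurve} and Theorem \ref{thmSegre}: the branch curve $B$ of a smooth degree-$\nu$ surface in $\pp^3$ carries an adjoint curve $L$ of degree $a=(\nu-1)(\nu-2)$ through its whole $0$-cycle of singularities $\xi$, so that $h^0(\pp^2,J_{\xi}(a))\ge 1$. A direct count gives $\deg\xi=c(\nu)+n(\nu)=\tfrac12\nu(\nu-1)^2(\nu-2)=\tfrac12\nu(\nu-1)a$, while $\dim|ah|=\tfrac12 a(a+3)$, whence $\dim|ah|-\deg\xi=\tfrac12 a\bigl(5-2\nu\bigr)<0$ for every $\nu\ge 3$. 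Consequently a $0$-cycle of $c(\nu)+n(\nu)$ points in \emph{general position} imposes independent conditions on curves of degree $a$ and satisfies $h^0(J_{\xi}(a))=0$; such a cycle cannot be the singular cycle of a branch curve. Thus it is enough to construct, for each $\nu$ in range, an irreducible nodal--cuspidal curve of type $(d(\nu),c(\nu),n(\nu))$ whose singular points lie on no curve of degree $a$.

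The heart of the matter is the construction of $C_\nu$. The plan is to build these curves by an equisingular patchworking/degeneration argument (in the spirit of Viro's method): one degenerates to a suitable reducible or toric configuration in which the $c(\nu)$ cusps and $n(\nu)$ nodes are placed combinatorially, smooths it to an irreducible curve with exactly the prescribed singularities, and reads off from the construction that the resulting singular $0$-cycle imposes independent conditions on $|ah|$ (equivalently, admits no degree-$a$ adjoint). For the larger values $5\le\nu\le10$ one has, in addition, the dimension count of Subsection \ref{subsecDimB3}, where $\dim B_3(\nu)=\dim S(\nu)-4$ strictly exceeds the virtual dimension $V(\nu)=\tfrac12 d(d+3)-n-2c$; so once a curve lying in a component of the expected dimension $V(\nu)$ is produced, that component is already distinct from the branch-curve component $B_3(\nu)$ for purely numerical reasons, giving an independent check. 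For $\nu=3$ the resulting pair is Zariski's classical six-cuspidal sextic (cusps not on a conic), so the genuinely new content is the range $4\le\nu\le 10$.

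The main obstacle will be the general-position control of the rather large number $c(\nu)\sim\nu^3$ of cusps. Cusps are obstructed singularities, and forcing an irreducible curve to carry exactly $c(\nu)$ of them while keeping the full singular scheme in sufficiently general position relative to $|ah|$ is delicate; this is precisely why the construction is carried out case by case and only for $3\le\nu\le 10$ rather than for all $\nu$. Once each $C_\nu$ is produced and its irreducibility and singular type are verified, the conclusion is immediate: by the displayed inequality $C_\nu$ admits no degree-$a$ adjoint, so by Theorem \ref{thmSegre} it is not a branch curve, and by Lemma \ref{lemBranchVar} its component of $V(d(\nu),c(\nu),n(\nu))$ is disjoint from $B(d(\nu),c(\nu),n(\nu))$, which proves Conjecture \ref{c1} in this range.
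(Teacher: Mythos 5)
Your overall strategy---reduce to exhibiting a single non-branch curve via Lemma \ref{lemBranchVar}, and certify non-branchness by showing the singular $0$-cycle $\xi$ lies on no curve of degree $a=(\nu-1)(\nu-2)$---is indeed the skeleton of the paper's proof. But there is a genuine gap at the decisive step: you give no mechanism for ensuring that the curves you construct satisfy $h^0(J_{\xi}(a))=0$. The general-position count you make ($\dim|ah|-\deg\xi=\tfrac12 a(5-2\nu)<0$) is beside the point: the singular points of an actual curve in $V(d(\nu),c(\nu),n(\nu))$ are never in general position---the branch curves themselves, with the very same numerics, have all $c+n$ of their singular points on a degree-$a$ curve---so the fact that \emph{generic} points of this cardinality impose independent conditions says nothing about the curves a construction produces. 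Patchworking controls the number and type of singularities and yields $T$-smoothness of the equisingular family, but it does not let you ``read off'' the position of the $\sim\nu^3$ singular points relative to adjoint curves of degree $a$; that is precisely the hard part, which your proposal acknowledges but leaves unresolved.

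The paper circumvents this difficulty with a device absent from your proposal: it constructs $T$-smooth curves with one \emph{extra} node, i.e.\ elements of $V^T(d(\nu),c(\nu),n(\nu)+1)$ (Proposition \ref{p1}(2), by patchworking followed by smoothing superfluous singularities, which $T$-smoothness permits), and then proves in Proposition \ref{p1}(1) that such a curve $C$ has a node $p$ for which $\Sing(C)\setminus\{p\}$ lies on \emph{no} curve of degree $d'(\nu)=a$. This is where the real work happens: since $d(\nu)\cdot d'(\nu)=2(c(\nu)+n(\nu))$, Bezout forces any degree-$d'$ adjoint through $\Sing(C)\setminus\{p_1\}$ to be reduced, nonsingular along its intersection with $C$, and to miss $p_1$; assuming two distinct nodes $p_1,p_2$ both admitted such adjoints, Noether's $AF+BG$ theorem produces a contradiction. $T$-smoothness then allows smoothing exactly the node $p$, giving a curve in $V(d(\nu),c(\nu),n(\nu))$ whose singularities lie on no degree-$a$ curve, hence a non-branch curve by Segre's condition. (For $\nu\ge5$ the paper also records the dimension-count shortcut you sketch, but notes it only separates the constructed component from $B_3(\nu)$ and does not cover $\nu=3,4$.) Without the extra node and the $AF+BG$ argument---or some substitute that actually controls degree-$a$ adjoints---your construction step cannot be completed, so the proposal as it stands does not prove the theorem.
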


We prove Theorem \ref{t1} explicitly constructing curves \mbox{$C\in
V(d(\nu),c(\nu),n(\nu))\backslash B(d(\nu),c(\nu),n(\nu))$}. Our
construction is based on the patchworking method as developed in
\cite{Sh98}. It seems that this method does not allow one to cover
sufficiently large values of $\nu$. 

\subsection{Construction}\label{sec-con}

\subsubsection{The main idea} The variety $V(d,c,n)$ is said to be $T$-smooth at $C\in
V(d,c,n)$ if the germ of $V(d,c,n)$ at $C$ is the transverse
intersection in $| {\OO}_{\pp^2}(d)|$ of the germs at $C$ of the
smooth\footnote{In the case of nodes and cusps, the smoothness of
these equisingular strata always holds.} equisingular strata
corresponding to individual singular points of $C$ (cf.
\cite{Sh96}). In particular, this implies that, for any subset
$S\subset\Sing(C)$, there exists a deformation $C^S_t$,
$t\in(\C,0)$, such that $C^S_0=C$, $C^S_t\in V(d,c',n')$, where
$c'$, $n'$ are the numbers of cusps and nodes in $\Sing(C)\backslash
S$, respectively, and, furthermore, the deformation $C^S_t$ smoothes
out all the singular points of $C$ in $S$. Clearly, the $T$-smooth
part $V^T(d,c,n)$ of $V(d,c,n)$ is an open subvariety (if not empty)
of $V(d,c,n)$.

We derive Theorem \ref{t1} from

\begin{prs}\label{p1}
(1) Let $\nu\ge 3$. If\, $V^T(d(\nu),c(\nu),n(\nu)+1)\ne\emptyset$,
then $$V(d(\nu),c(\nu),n(\nu))\backslash
B(d(\nu),c(\nu),n(\nu))\ne\emptyset.$$

(2) If $3\le\nu\le10$, then
$V^T(d(\nu),c(\nu),n(\nu)+1)\ne\emptyset$.
\end{prs}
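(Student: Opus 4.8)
The plan is to prove the two parts separately: for part (1) I would use T-smoothness to pass from the stratum $V(d(\nu),c(\nu),n(\nu)+1)$ down to $V(d(\nu),c(\nu),n(\nu))$ by smoothing a single node, and then show the resulting curve cannot be a branch curve by a dimension comparison; for part (2) I would produce the required $(n(\nu)+1)$-nodal curve explicitly by the patchworking technology of \cite{Sh98}.

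For part (1), suppose $C_1\in V^T(d(\nu),c(\nu),n(\nu)+1)$ is given. By T-smoothness, as recalled in the paragraph preceding Proposition \ref{p1}, there is a deformation of $C_1$ smoothing exactly one of its nodes; this yields a curve $C_0\in V(d(\nu),c(\nu),n(\nu))$ which is again a T-smooth point of its stratum (T-smoothness being an open condition that is preserved under smoothing a node). Consequently the component $V_0$ of $V(d(\nu),c(\nu),n(\nu))$ through $C_0$ has the expected dimension $\dim V_0=\tfrac12 d(\nu)(d(\nu)+3)-n(\nu)-2c(\nu)=V(\nu)$, the virtual dimension computed in Subsection \ref{subsecDimB3}. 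It then remains to show $C_0\notin B(d(\nu),c(\nu),n(\nu))$. Here I would invoke Lemma \ref{lemBranchVar}, which says $B$ is a union of connected components of $V$: it therefore suffices that $V_0$ not be a branch component. I would establish this by the dimension count of Subsection \ref{subsecDimB3}, where $\dim B_3(\nu)=\dim S(\nu)-4>V(\nu)=\dim V_0$ for all $\nu\ge5$, so $V_0\neq B_3(\nu)$.

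For the borderline values $\nu=3,4$, where $\dim B_3(\nu)=V(\nu)$ and the dimension count is inconclusive, I would instead appeal to the Segre criterion (Theorem \ref{thmSegre}) together with the speciality of the $0$-cycle of singularities in Subsection \ref{subSecSpCycle}: a branch curve of a smooth cubic or quartic surface has its singular $0$-cycle lying on the adjoint curve $L$ of degree $a=(\nu-1)(\nu-2)$, a closed and \emph{special} condition (for $\nu=3$ this is exactly Zariski's ``six cusps on a conic'' \cite{Za1}), whereas the curve $C_0$ obtained by smoothing a node of a T-smooth $(n+1)$-nodal curve has its singularities in general enough position to violate it. For part (2) I would construct a member of $V^T(d(\nu),c(\nu),n(\nu)+1)$ by a convex subdivision of the Newton triangle of size $d(\nu)$, placing over its pieces polynomials each realizing a controlled number of nodes and cusps in T-smooth position (for nodes and cusps the individual equisingular strata are always smooth, cf. \cite{Sh96}); the gluing theorem of \cite{Sh98} then yields a single degree-$d(\nu)$ curve carrying all $c(\nu)$ cusps and $n(\nu)+1$ nodes and lying again in the T-smooth stratum, and one checks the arithmetic can be carried out precisely for $3\le\nu\le10$.

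The main obstacle is twofold. In part (1) the delicate passage is from ``$V_0$ has the expected dimension'' to ``$C_0$ is not a branch curve'': for $\nu\ge5$ this follows from the strict dimension inequality, but for $\nu=3,4$ it needs the finer Segre-type argument above, and in all cases one must also rule out that some branch curve coming from a surface other than a smooth one in $\PP^3$ shares these numerical invariants and happens to lie on $V_0$. In part (2) the real work, and the source of the bound $\nu\le10$, is the combinatorial bookkeeping of the patchworking: fitting the rapidly growing numbers $c(\nu)$ and $n(\nu)$ into admissible T-smooth charts while keeping the subdivision convex and the total degree equal to $d(\nu)$; beyond $\nu=10$ the singularities no longer fit, which is exactly the limitation noted after Theorem \ref{t1}.
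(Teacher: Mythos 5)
Your part (2) and the $\nu\ge5$ half of part (1) do follow the paper's own route: patchworking in the sense of \cite{Sh98} for the construction, and, for $\nu\ge5$, the observation that a component through a $T$-smooth point has the expected dimension while $\dim B_3(\nu)=\dim S(\nu)-4$ exceeds it (Subsection \ref{subsecDimB3}). The genuine gap is your treatment of $\nu=3,4$, which in the paper is handled by what is in fact the technical core of the whole proposition. You assert that the curve obtained by smoothing one node of a $T$-smooth member of $V(d(\nu),c(\nu),n(\nu)+1)$ ``has its singularities in general enough position'' to violate Segre's adjoint condition; but this is exactly what has to be proved, and it does not follow from $T$-smoothness, which is an unobstructedness statement about the equisingular deformation germ and carries no information about the position of the singular $0$-cycle relative to curves of degree $d'(\nu)=(\nu-1)(\nu-2)$. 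The count here is borderline Bezout, since $d(\nu)\,d'(\nu)=2(c(\nu)+n(\nu))$, so nothing numerical prevents $\Sing(C)\setminus\{p\}$ from lying on a curve of degree $d'(\nu)$ for some node $p$ of $C$ --- and this really can happen: for $\nu=3$ the discriminant of a one-nodal cubic surface is a sextic with six cusps and one node whose six cusps \emph{do} lie on a conic. What the paper proves, and what is missing from your proposal, is that this can occur for at most one node: if two distinct nodes $p_1,p_2$ admitted curves $C_i\supset\Sing(C)\setminus\{p_i\}$ of degree $d'(\nu)$, then writing $C_i=DD_i$ with $D$ the common part and applying Noether's $AF+BG$ theorem to $D_2^2L_i^2$ (with $L_1,L_2$ generic lines through $p_1$), a degree comparison forces $D_2^2=A'_1D_1+B'_1C$, contradicting $p_2\in D_1\cap C$ while $p_2\notin D_2$. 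Since $n(\nu)+1\ge2$ for $\nu\ge4$, some node $p$ is ``good''; smoothing exactly that node (this is where $T$-smoothness is actually used) yields curves in $V(d(\nu),c(\nu),n(\nu))$ whose singularities lie on no curve of degree $d'(\nu)$ --- a closed condition on configurations of points --- hence are not branch curves, by the necessary direction of Segre's theory (Lemma \ref{lemNumNodeCuspSm}, Theorem \ref{thmSegre}).

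Note also that this $AF+BG$ argument repairs the other defect you flagged but left unresolved: it excludes membership in all of $B(d(\nu),c(\nu),n(\nu))$, not merely in the component $B_3(\nu)$ of branch curves of smooth surfaces in $\pp^3$, because it invokes only the necessary adjoint-curve condition for branch curves with these invariants and never a dimension count; your $\nu\ge5$ argument (like the paper's own first remark) only shows the $T$-smooth component is different from $B_3(\nu)$, and needs Lemma \ref{lemBranchVar} plus this mechanism to conclude disjointness from $B$ itself. As written, the paper's argument requires at least two nodes, hence $\nu\ge4$; for $\nu=3$ the conclusion of Theorem \ref{t1} rests on Zariski's classical result \cite{Za1}. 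So your proposal is incomplete precisely at the step you yourself identified as delicate: for $\nu=3,4$ no proof is given that the smoothed curve violates the Segre condition, and the ``general position'' appeal cannot be made rigorous without an argument of the $AF+BG$ type.
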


\subsubsection{Proof of Proposition \ref{p1}(1)}
If $\nu\ge5$ then as noticed in Subsection \ref{subsecDimB3},
$B_3(d(\nu),c(\nu),n(\nu))$ is not $T$-smooth, since a $T$-smooth
family must have the expected dimension. On the other hand,
smoothing out one node of a curve $C\in
V^T(d(\nu),c(\nu),n(\nu)+1)$, we obtain an element of
$V^T(d(\nu),c(\nu),n(\nu))$, a component whose dimension differs
from that of $B_3(d(\nu),c(\nu),n(\nu))$.

This reasoning does not cover the case of $\nu=3$ and $4$. We then
provide another argument which, in fact, works for all $\nu\ge3$.

Let $\nu\ge 3$, $C\in V^T(d(\nu),c(\nu),n(\nu)+1)$. We intend to
show that there is a nodal point $p\in C$ such that
$\Sing(C)\backslash\{p\}$ is not contained in a plane curve of
degree $d'(\nu)=(\nu-1)(\nu-2)$. This is enough, since by \cite{Se}
(see also Proposition \ref{lemNumNodeCuspSm}), all the singular points of a
branch curve $D\in B(d(\nu),c(\nu),n(\nu))$ lie on a plane curve of
degree $d'(\nu)$, and, on the other hand, a deformation $C^p_t\in
V(d(\nu),c(\nu),n(nu))$, $t\in(\C,0)$, of $C$ which smoothes out the
node $p$, contains curves whose singular points are not contained in
a curve of degree $d'(\nu)$.

We prove the existence of the required node $p\in C$ arguing for
contradiction. Let $p_1,p_2$ be some distinct nodes of $C$ and let
$C_1\supset\Sing(C)\backslash\{p_1\}$,
$C_2\supset\Sing(C)\backslash\{p_2\}$ be some curve of degree
$d'(\nu)$. Since $d(\nu)\cdot d'(\nu)=2(c(\nu)+n(\nu))$, the curves
$C_1$ and $C_2$ are non-singular along their intersection with $C$,
in particular, they are reduced. Furthermore, $p_1\not\in C_1$ and
$p_2\not\in C_2$. Let $D$ be the (possibly empty) union of the
common components of $C_1$ and $C_2$ with $\deg D=k$, $0\le
k<d(\nu)$. So, $C_1=DD_1$, $C_2=DD_2$, where the curves $D_1,D_2$ of
degree $d(\nu)-k$ have no component in common. By Bezout's theorem
$D\cap C$ consists of $kd(\nu)/2$ points of $\Sing(C)$, and $D_i\cap
C=\Sing(C)\backslash(D\cap C\cup\{p_i\})$, $i=1,2$. Take two
distinct generic straight lines $L_1,L_2$ through $p_1$. By
Noether's $AF+BG$ theorem (see, for instance,
\cite{VDW})\footnote{For the sake of notation we denote a plane
curve and its defining homogeneous polynomial (given up to a
constant factor) by the same symbol, no confusion will arise.},
\begin{itemize}\item if $k\le d'(\nu)+1-d(\nu)/2$, then there are polynomials
$A_1,A_2$ of degree $d'(\nu)+2-k$ and polynomials $B_1,B_2$ of
degree $2d'(\nu)+2-d(\nu)-2k$ such that $D_2^2L_i^2=A_iD_1+B_iC$,
$i=1,2$,
\item if $k\ge d'(\nu)+2-d(\nu)/2$, then there are polynomials $A_1,A_2$ of degree
$d'(\nu)+2-k$ such that $D_2^2L_i^2=A_iD_1$, $i=1,2$.
\end{itemize} The latter case is impossible since $D_2^2L_i^2$ and
$D_1$ have no component in common. In the former case, we obtain
that $D_1$ divides $D_2^2(L_1^2B_2-L_2^2B_1)$, and hence divides
$L_1^2B_2-L_2^2B_1$. In view of $$\deg
D_1=(\nu-1)(\nu-2)-k>2+(\nu-1)(\nu-4)-2k=\deg(L_1^2B_2-L_2^2B_1)\
,$$ we conclude that $L_1^2B_2=L_2^2B_1$, in particular, $L_1^2$
divides $B_1$, but then $L_1^2$ divides $A_1$ too, and hence
$D_2^2=A'_1D_1+B'_1C$ contrary to the fact that $p_2\not\in D_2$ and
$p_2\in D_1\cap C$.

\subsubsection{Proof of Proposition \ref{p1}(2)}
We suppose that $\nu\ge4$. Using the patchworking construction of \cite[Theorem
3.1]{Sh98}, we obtain curves in $V^T(12,24,16)$, $V^T(20,63,67)$,
$V^T(30,126,191)$, $V^T(42,216,435)$, $V^T(56,336,902)$,
$V^T(72,504,1550)$, and $V^T(90,720,2526)$, what suffices for our
purposes in view of Proposition \ref{p1}(1), since
$$d(4)=12,\quad c(4)=24,\quad n(4)=12<16\ ,$$
$$d(5)=20,\quad c(5)=60<63,\quad n(5)=60<67\ ,$$ $$d(6)=30,\quad
c(6)=120<126,\quad n(6)=180<191\ ,$$ $$d(7)=42,\quad
c(7)=210<216,\quad n(7)=420<435\ ,$$ $$d(8)=56,\quad c(8)=336,\quad
n(8)=840<902\ ,$$ $$d(9)=72,\quad c(9)=504,\quad d(9)=1512<1550\ ,$$
$$d(10)=90,\quad c(10)=720,\quad n(10)=2520<2526\ .$$

Referring to \cite{Sh98} for details, we only recall that the
patchworking construction uses a convex\footnote{Convexity means
that the subdivision lifts up to a graph of a convex function linear
on each subdivision polygon and having a break along each common
edge.} lattice subdivision of the triangle
$T_d=\conv\{(0,0),(0,d),(d,0)\}$. Pieces $\Del_1,...,\Del_N$ of the
subdivision will serve as Newton polygons of polynomials in two
variables (called {\it block polynomials}) which define curves with
nodes and cusps leaving in the respective toric surfaces:
$C_k\subset\Tor(\Del_k)$, $k=1,...,N$. Along \cite[Theorem 4.1]{Sh98},
the patchworking construction can be performed under the following
sufficient conditions:
\begin{enumerate}\item[(C1)] Any two block polynomials have the same
coefficients along the common part of their Newton polygons, and the
truncations of block polynomials to any edge is a nondegenerate
(quasihomogeneous) polynomial.
\item[(C2)] The adjacency graph of the pieces of the subdivision can be oriented
without oriented cycles so that if, for each polygon $\Del_k$, $\le
k\le N$, we mark its sides which correspond to the arcs of the
adjacency graph coming inside the polygon and denote by
$D_k\subset\Tor(\Del_k)$ the union of the unmarked toric divisors,
then the number of cusps of any component $C$ of $C_k$ is less than
$CD_k$. \end{enumerate}

By \cite[Theorems 3.1 and 4.1]{Sh98}, the resulting curve with the
Newton triangle $T_d$ belongs to $V^T(d,c,n)$, and the numbers $c$
and $n$ are obtained by summing up the numbers of cusps and nodes
over all the block curves.

Condition (C2) formulated above is, in fact, sufficient for the
following transversality property defined in \cite[Definition
2.2]{Sh98} and used in the patchworking construction of \cite[Theorem
3.1]{Sh98}: the variety $V_C$ consisting of the curves in the linear
system $|C|$ on $\Tor(\Del_k)$ which are equisingular to $C$ and
intersect $D'_k$ at the same points as $C$, where $D'_k$ is the
union of the marked toric divisors of $\Tor(\Del_k)$, is smooth at
$C$ of codimension $2c(C)+n(C)+CD'_k$, where $c(C),n(C)$ are the
numbers of cusps and nodes of $C$. We shall call this property {\it
the $T$-smoothness relative to $D'_k$}. Let $L$ be a coordinate line
in $\pp^2$ corresponding to a side $\sigma$ of $T_d$. Then the
patchworking construction produces a curve, where the variety
$V(d,c,n)$ is $T$-smooth relative to $L$, if one replaces condition
(C2) by the following one:
\begin{enumerate}\item[(C2')] The adjacency graph of the pieces of the subdivision can be oriented
without oriented cycles so that if, for each polygon $\Del_k$, $\le
k\le N$, we mark its sides which correspond to the arcs of the
adjacency graph coming inside the polygon or are contained in
$\sigma$, and denote by $D_k\subset\Tor(\Del_k)$ the union of the
unmarked toric divisors, then the number of cusps of any component
$C$ of $C_k$ is less than $CD_k$.\end{enumerate}

\medskip

We consider the subdivisions of $T_{12}$, $T_{20}$, $T_{30}$,
$T_{42}$, $T_{56}$, $T_{72}$, and $T_{90}$ shown in Figures
\ref{f1}, \ref{f2}(b), \ref{f3}, where all the slopes are $0$, $-1$,
or $\infty$. We leave to the reader an easy exercise to check that
these subdivisions are convex. Next we describe the block
polynomials.

\medskip

(1) If $d=12$, we take the block polynomial $F_1(x,y)$ with Newton
triangle $T_6$ defining a curve $C_1\in V(6,6,4)$ (such a curve is
dual to an irreducible quartic with three nodes). The other block
polynomials are obtained via affine automorphisms of $\Z^2$ which
interchange two adjacent triangles of the subdivision keeping their
common side fixed. Thus, the patchworking construction gives a curve
$C_{12}\in V^T(12,24,16)$.

\medskip

(2) If $d=20$, we take the block polynomial $F_2(x,y)$ with Newton
triangle $T_6$ defining a curve $C_2\in V(6,9,0)$ (such a curve is
dual to a non-singular cubic). The other block polynomials with the
Newton triangles with side length $6$ are obtained from $F_2$ by
suitable reflections. For any other polygon in the given
subdivision, we take a block polynomial splitting into the product
of linear polynomials, defining (reducible) nodal curves, and
satisfying (C1). It is easy to check that condition (C2) holds, and
thus, one obtains a curve $C_{20}\in V^T(20,63,67)$.

\medskip

(3) If $d=30$ or $42$, for each triangle in the subdivision having
side length $6$ and intersecting with a coordinate axis, we take the
block polynomial obtained from $F_2$ as described above, and, for
any other polygon of the subdivision, we take a suitable polynomial
splitting into the product of linear polynomials. This gives us the
curves $C_{30}\in V^T(30,126,191)$ and $C_{42}\in V^T(42,216,435)$.

\begin{figure}
\setlength{\unitlength}{1cm}
\begin{picture}(14,16)(0,0)
\thinlines \put(0,1){\vector(0,1){5.5}}\put(0,1){\vector(1,0){5.5}}
\put(6.5,1){\vector(0,1){7.5}}\put(6.5,1){\vector(1,0){7.5}}
\put(0,10.5){\vector(0,1){5}}\put(0,10.5){\vector(1,0){5}}
\put(6.5,10.5){\vector(0,1){4.5}}\put(6.5,10.5){\vector(1,0){4.5}}
\put(1,1){\line(0,1){4}}\put(2,1){\line(0,1){3}}\put(3,1){\line(0,1){2}}\put(4,1){\line(0,1){1}}
\put(0,2){\line(1,0){4}}\put(0,3){\line(1,0){3}}\put(0,4){\line(1,0){2}}\put(0,5){\line(1,0){1}}
\put(1,1){\line(-1,1){1}}\put(2,1){\line(-1,1){2}}\put(3,1){\line(-1,1){1}}\put(4,1){\line(-1,1){1}}
\put(1,3){\line(-1,1){1}}\put(1,4){\line(-1,1){1}}\put(5,1){\line(-1,1){5}}\put(2,0){\rm
(c)}
\put(0.9,0.6){$6$}\put(1.8,0.6){$12$}\put(2.8,0.6){$18$}\put(3.8,0.6){$24$}\put(4.8,0.6){$30$}
\put(7.5,1){\line(0,1){6}}\put(8.5,1){\line(0,1){5}}\put(9.5,1){\line(0,1){4}}
\put(10.5,1){\line(0,1){3}}\put(11.5,1){\line(0,1){2}}\put(12.5,1){\line(0,1){1}}
\put(6.5,2){\line(1,0){6}}\put(6.5,3){\line(1,0){5}}\put(6.5,4){\line(1,0){4}}
\put(6.5,5){\line(1,0){3}}\put(6.5,6){\line(1,0){2}}\put(6.5,7){\line(1,0){1}}
\put(7.5,1){\line(-1,1){1}}\put(8.5,1){\line(-1,1){2}}\put(9.5,1){\line(-1,1){1}}
\put(10.5,1){\line(-1,1){1}}\put(11.5,1){\line(-1,1){1}}\put(12.5,1){\line(-1,1){1}}
\put(7.5,3){\line(-1,1){1}}\put(7.5,4){\line(-1,1){1}}\put(7.5,5){\line(-1,1){1}}
\put(7.5,6){\line(-1,1){1}}\put(13.5,1){\line(-1,1){7}}
\put(10,0){\rm (d)}
\put(7.4,0.6){$6$}\put(8.3,0.6){$12$}\put(9.3,0.6){$18$}\put(10.3,0.6){$24$}\put(11.3,0.6){$30$}
\put(12.3,0.6){$36$}\put(13.3,0.6){$42$}
\put(2,10.5){\line(0,1){2}}\put(0,12.5){\line(1,0){2}}\put(2,10.5){\line(-1,1){2}}
\put(4,10.5){\line(-1,1){4}}\put(2,9.5){\rm
(a)}\put(1.9,10.1){$6$}\put(3.8,10.1){$12$}
\put(7.5,10.5){\line(0,1){2.5}}\put(8.5,10.5){\line(0,1){1.5}}\put(6.5,11.5){\line(1,0){2.5}}
\put(6.5,12.5){\line(1,0){1.5}}\put(7.5,10.5){\line(-1,1){1}}\put(8.5,10.5){\line(-1,1){2}}
\put(9.5,10.5){\line(-1,1){1}}\put(10,10.5){\line(-1,1){3.5}}\put(8.5,9.5){\rm
(b)}\put(7.4,10.1){$6$}\put(8.3,10.1){$12$}\put(9.3,10.1){$18$}\put(9.9,10.1){$20$}
\end{picture}
\caption{Patchworking construction: The case
$\nu=4,5,6,7$}\label{f1}
\end{figure}
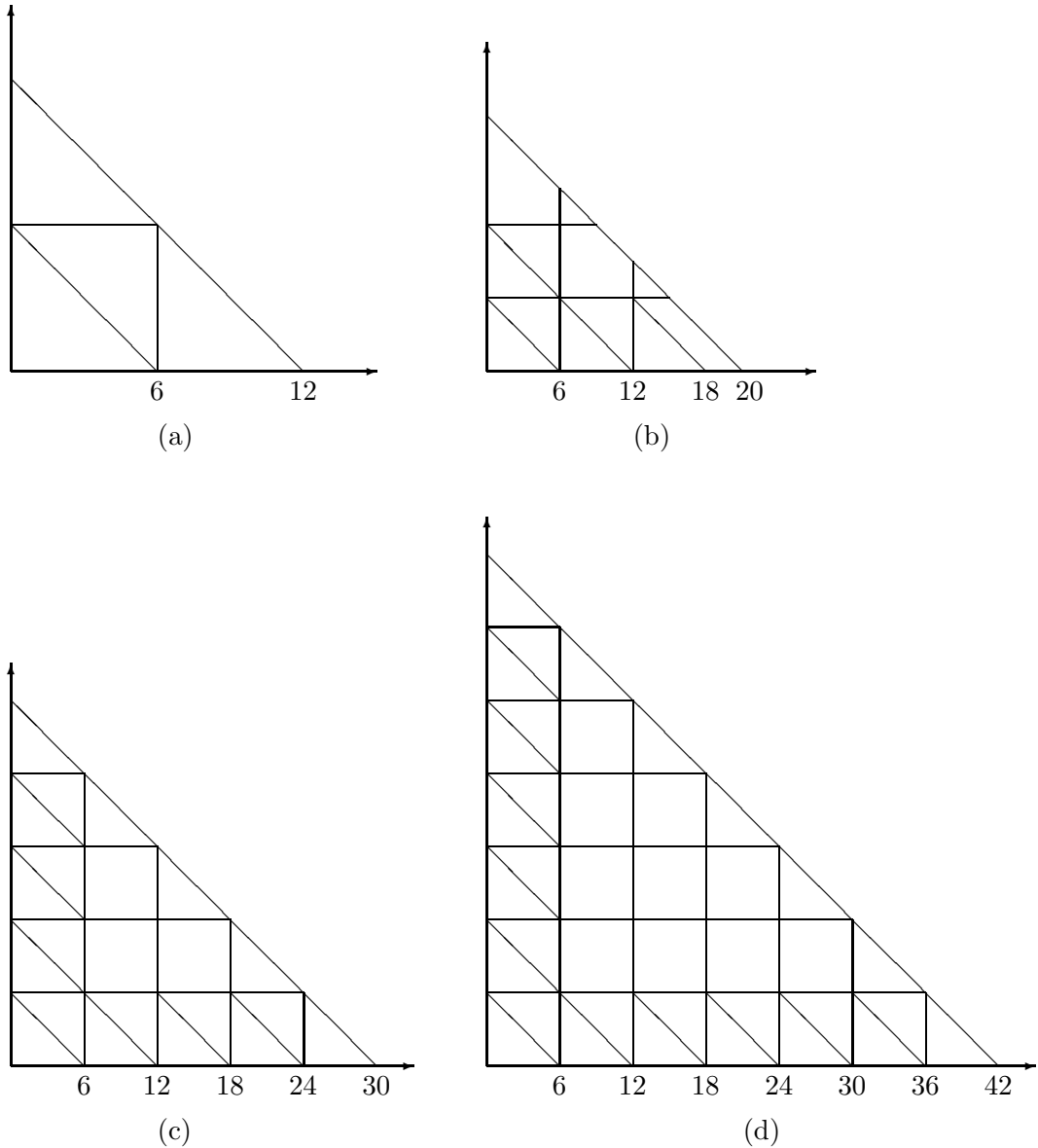

\medskip

(4) If $d=56$ we use the block polynomial $F_3$ defining a curve
$C_9\in V(9,16,10)$ which is obtained via a slight modification of
the construction of a curve $C'_9\in V(9,20,0)$ in \cite[Section
4.3]{Sh98}. We consider the subdivision of $T_9$ shown in Figure
\ref{f2}(a) (cf. \cite[Figure 2]{Sh98}) and take the following block
curves: those with two symmetric Newton quadrangles have $8$ cusps
each (as in \cite[Proof of Theorem 4.3]{Sh98}), the block curve with
Newton triangle has one node, and the block curve with Newton square
splitting onto $6$ lines, has $9$ nodes.

Now we subdivide the triangle $T_{56}$ as shown in Figure
\ref{f2}(b) and take the following block curves:\begin{itemize}\item
The block curves with the triangles intersecting with the coordinate
axes and the triangle marked with asterisk are defined by the
polynomial $F_3$ and its appropriate transforms, \item each other
block curve is defined by a polynomial splitting into linear
factors.
\end{itemize} Observe that the conditions (C1) and (C2) can be
satisfied in this situation, which, finally, gives us a curve in
$V^T(56,336,902)$.

\begin{figure}
\setlength{\unitlength}{1cm}
\begin{picture}(12,14.5)(0,0)
\thinlines \put(2,1){\vector(0,1){7}}\put(2,1){\vector(1,0){7}}
\put(2,9.5){\vector(0,1){5}}\put(2,9.5){\vector(1,0){5}}
\put(3,1){\line(0,1){5.5}}\put(4,1){\line(0,1){4}}\put(5,1){\line(0,1){3}}\put(6,1){\line(0,1){2}}
\put(7,1){\line(0,1){1}}\put(3.5,9.5){\line(0,1){1.5}}
\put(2,2){\line(1,0){5.5}}\put(2,3){\line(1,0){4.5}}\put(2,4){\line(1,0){3.5}}\put(2,5){\line(1,0){2.5}}
\put(2,6){\line(1,0){1.5}}\put(2,11){\line(1,0){1.5}}
\put(2,2){\line(1,-1){1}}\put(2,3){\line(1,-1){2}}\put(2,4){\line(1,-1){3}}\put(2,5){\line(1,-1){1}}
\put(2,6){\line(1,-1){1}}\put(2,7){\line(1,-1){6}}\put(2,7.5){\line(1,-1){6.5}}
\put(5,2){\line(1,-1){1}}\put(6,2){\line(1,-1){1}}\put(3.5,11){\line(2,1){1}}
\put(3.5,11){\line(1,2){0.5}}\put(2,14){\line(1,-1){4.5}}
\dashline{0.2}(4,9.5)(4,12)\dashline{0.2}(4.5,9.5)(4.5,11.5)
\put(10,4){\rm (b)}\put(10,12){\rm (a)}
\put(3.4,9.1){$3$}\put(1.7,10.9){$3$}\put(3.9,9.1){$4$}\put(4.4,9.1){$5$}\put(6.1,9.1){$9$}
\put(2.9,0.6){$9$}\put(3.8,0.6){$18$}\put(4.8,0.6){$27$}\put(5.8,0.6){$36$}
\put(6.8,0.6){$45$}\put(7.8,0.6){$54$}\put(8.4,0.6){$56$}
\put(3.6,2.6){$*$}
\end{picture}
\caption{Patchworking construction: The case $\nu=8$}\label{f2}
\end{figure}

\medskip

(5) Observe that the construction of step (3) gives a curve $C_{30}$
at which the variety $V(30,126,191)$ is $T$-smooth relative to the
$y$-axis. Clearly, this $T$-smoothness property at $C_{30}$ hold
relatively to almost all lines in $\pp^2$, and hence by an
appropriate coordinate change we can make $V(30,126,191)$ to be
$T$-smooth at $C_{30}$ relative to each of the coordinate lines. Let
$F_4(x,y)$ be a defining polynomial of $C_{30}$.

Consider the subdivision of $T_{72}$ presented in Figure
\ref{f3}(a). For the triangle incident to the origin, we take the
above polynomial $F_4(x,y)$, and, for the other triangles with side
length $30$, we take the transforms of $F_4$ as described in step
(1). For the other polygons of the subdivision we take appropriate
polynomials splitting into linear factors. The relative
$T$-smoothness of $V(30,126,191)$ at $C_{30}$ ensures condition
(C2); hence the patchworking procedure is performable, and it gives
a curve $C_{72}\in V^T(72,504,902)$.

\medskip

(6) In the case $d=90$ we need a curve $C'_{30}\in V(30,120,199)$ at
which the variety $V(30,120,199)$ is $T$-smooth relatively to each
of the coordinate lines. Assuming that such a curve exists, we take
its defining polynomial $F_5(x,y)$ and spread it through all the
triangles in the subdivision shown in Figure \ref{f3}(b) except for
the right-most one (marked by asterisk). For the latter triangle and
for the parallelogram we take suitable polynomials splitting into
linear factors. Again, due to the relative $T$-smoothness of
$V(30,120,199)$ at $C'_{30}$, the condition (C2) holds true, and
hence the patchworking procedure gives a curve in $V(90,720,2526)$.

The required curve $C'_{30}$ can be constructed using the modified
construction of step (3) and a generic coordinate change afterwards.
The modification is as follows: for the upper and the right-most
triangles with side length $6$ in the subdivision shown in Figure
\ref{f1}(c), we take polynomials defining curves in $V(6,6,4)$
(instead of $V(6,9,0)$ as in the original construction of step (3)).
Curves of degree $6$ with $6$ cusps and $4$ nodes do exist: they are
dual to rational quartics with $3$ nodes. To ensure condition (C1),
we need the sextics as above which cross one of the coordinate lines
along a prescribed configuration of $6$ points. Notice that, given a
straight line $L\subset\pp^2$, the varieties $V(6,6,4)$ and
$V(6,9,0$ are $T$-smooth elative to $L$ at each curve crossing $L$
transversally (it immediately follows from condition (C2')). In
particular, this yields that the rational maps
$V(6,6,4)\to\Sym^6(L)$ and $V(6,9,0)\to\Sym^6(L)$ defined by
$C\mapsto C\cap L$ are dominant. Hence we can choose polynomials
defining curves a curve in $V(6,6,4)$ and a curve in $V(6,9,0)$ so
that the considered patchworking data will meet condition (C1).

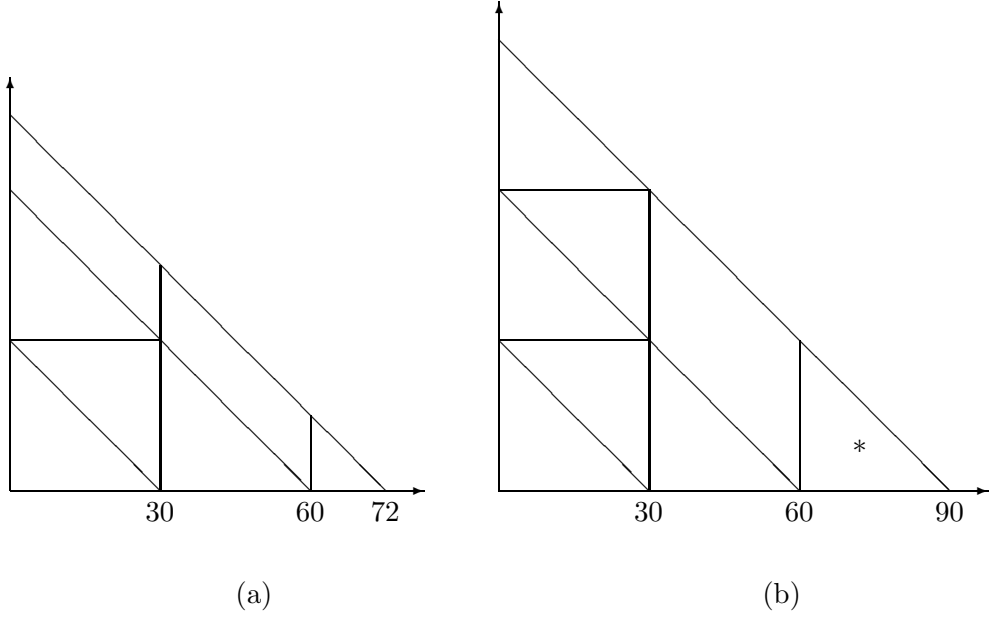
\begin{figure}
\setlength{\unitlength}{1cm}
\begin{picture}(13,8)(0,0)
\thinlines
\put(0,1.5){\vector(0,1){5.5}}\put(0,1.5){\vector(1,0){5.5}}
\put(6.5,1.5){\vector(0,1){6.5}}\put(6.5,1.5){\vector(1,0){6.5}}
\put(2,1.5){\line(0,1){3}}\put(4,1.5){\line(0,1){1}}\put(0,3.5){\line(1,0){2}}\put(0,3.5){\line(1,-1){2}}
\put(0,5.5){\line(1,-1){4}}\put(0,6.5){\line(1,-1){5}}
\put(8.5,1.5){\line(0,1){4}}\put(10.5,1.5){\line(0,1){2}}\put(6.5,3.5){\line(1,0){2}}\put(6.5,5.5){\line(1,0){2}}
\put(6.5,3.5){\line(1,-1){2}}\put(6.5,5.5){\line(1,-1){4}}
\put(6.5,7.5){\line(1,-1){6}} \put(3,0){\rm (a)}\put(10,0){\rm (b)}
\put(1.8,1.1){$30$}\put(3.8,1.1){$60$}\put(4.8,1.1){$72$}\put(8.3,1.1){$30$}\put(10.3,1.1){$60$}
\put(12.3,1.1){$90$} \put(11.2,2){$*$}
\end{picture}
\caption{Patchworking construction: The case $\nu=9$ and
$10$}\label{f3}
\end{figure}

\section[Appendix B : Picard and Chow groups for nodal-cuspidal curves]
{Appendix B : Picard and Chow groups for nodal-cuspidal curves}

In this Appendix we remind the reader the connections between
Cartier and Weil divisors and the connection of the Picard and
Chow groups on a nodal--cuspidal curve with $c$ cusps and $n$ nodes. This connection is
implicit in  Segre \cite{Se}, and here we recall the explicit
formulation.


%
%
%

Let $B$ a nodal--cuspidal plane curve, with  $B^*$ its
normalization in $\pp^3$. First, by definitions of $\Pic$ and
$A_0$, we have for $B$
$$
  \xymatrix
  {
       {} & {} & {0}\ar[d] & {0}\ar[d] & {} \\
       {} & {0}\ar[r]\ar[d] & {Cart.\, Princ.\, B}\ar[d]\ar[r] & {Weil.\,Princ.\, B}\ar[r]\ar[d]  & {0} \\
       {0}\ar[r] & {G_S}\ar[d]\ar[r] & {Cartier\, B}\ar[d]\ar[r] & {Weil\, B}\ar[d]\ar[r] &  {0} \\
       {0}\ar[r] & {G_S}\ar[d]\ar[r] & {\Pic B}\ar[d]\ar[r] & {A_0 B}\ar[d]\ar[r] & {0} \\
       {} & {0} & {0} & {0} & {}
  }
$$

where the canonical map $\Pic B \to A_0 B$ is the map induced by
associating the class of a Weil divisor with each Cartier divisor
on $B$, $S$ is the set of singular points of $B$, and $G_S$ is the
subgroup of the group of Cartier divisors on $B$ such that their
associated Weil divisors are trivial.
Note that the map $Cartier\,
B \to Weil\, B$ is surjective since $B$ is a nodal-cuspidal curve.

Secondly, there is an exact sequence
$$
    0 \to H^0 Q_S  \to \Pic B
    \stackrel{\pi^*}{\to} \Pic B^* \to 0
$$

where $Q_S = \pi_*(\OO^*_{B^*}) /\OO^*_B = \prod\limits_{p \in
\Sing B} (\prod\limits_{p^* \stackrel{\pi}{\to} p} O^*_{p*} ) /
O^*_p$. 

We also have the following excision diagram:

$$
  \xymatrix
  {
         0 \ar[d]             &        0  \ar[d]         &                                 &      \\
     { \ZZ^P } \ar[d]\ar[r]   & {\ZZ^P / \backsim } \ar[d]\ar[r] & 0  \ar[d]               &      \\
     {A_0 \xi^*} \ar[r]\ar[d] & A_0 B^* \ar[r]\ar[d]     & A_0 U^* \ar[r]\ar[d]^-{\simeq}  &   0  \\
     {A_0 \xi}   \ar[r]\ar[d] & A_0 B   \ar[r]\ar[d]     & A_0 U   \ar[r]\ar[d]            &   0  \\
        0                     &  0                       &   0                             &
  }
$$
where $\xi = P + Q$, $\xi^* = P^* + Q^*$, $U = B - \xi$, $U^* =
B^* - \xi^*$, and the map $A_0 \xi^* \to A_0 \xi$ can be described
as $\ZZ^{P^*} \oplus \ZZ^{Q^*} \to \ZZ^{P} \oplus \ZZ^{Q}$ which
is the factorization of $\ZZ^{P^*}$ by a subgroup generated by
$(p_1^* - p_2^*)$ for a preimage of each node $p$ of $B$. (For a
different proof that the map $A_0B^* \to A_0B $ is epimorphic, see
\cite[Example 1.9.5]{Ful}). Denote $T = \ZZ^P / \backsim$.

 Combining the two diagrams together, we get
$$
  \xymatrix
  {
       {}       & {0}\ar[d]          & {0}\ar[d]                     & {T}\ar[d]     &  \\
     0 \ar[r] & H^0 Q_S \ar[r]\ar[d] &  {\Pic B}\ar[r]^{\pi^*}\ar[d]^{\cong} & {\Pic  B^*}\ar[r]\ar[d] &  0 \\
     0 \ar[r] & G_S \ar[r]\ar[d] &  {\Pic B}\ar[r]\ar[d] & {A_0 B}\ar[r]\ar[d] &  0 \\
     {}       & {T'}   & {0}                    & 0            &  \\
  }
$$

Where the last column is induced from the fact that $\Pic B^*
\simeq A_0 B^*$ and from the exact sequence
\begin{equation} \label{eqnExSeqA0}
0 \to T  \to A_0B^* \to A_0B \to 0.
\end{equation}
Note that the map $H^0 Q_S \to G_S$ is injective, and cannot be
surjective, otherwise the map $\Pic  B^* \to A_0 B$ would be an
isomorphism. Thus  $T' \cong T$.
%



\section[Appendix C : Bisecants to a complete intersection curve in $\pp^3$]
{Appendix C : Bisecants to a complete intersection curve in
$\pp^3$} \label{subsecBisecantComInter} We note here that the
inverse statement to Theorem \ref{thmHalphen} is easy. Explicitly,
we have
the following Theorem:\\\\
\emph{Let $C$ be a curve in $\PP = \PP^3$ which is a complete
intersection of type $(\mu, \nu)$, and let $a$ be a point in $\PP$
which is not on $C$ such that $C$ does not admit any 3-secants
through $a$.  Then $C$ has $\frac{1}{2} \mu \nu (\mu-1)(\nu-1)$
bisecants passing through $a$.\\}

\begin{remark} \label{remNumNodes}
\emph{The above theorem gives a direct proof that the number of
nodes of the branch curve $B$ is indeed $n = \frac{1}{2} \n (\n-1)
(\n-2)(\n-3)$ (recall that $B^*$ is a complete intersection of $S$
and $Pol_OS$, i.e., of type $(\n,\n-1)$ and that the line
$\overline{Oq^*}$ for each $q^* \in Q^*$ is also considered as a
bisecant of $B^*$). }\end{remark}

\begin{proof} Our proof is essentially a reformulation of a proof by
Salmon, \cite[art. 343]{Sa}. See also \cite[Chapter IX, sections
1.1,1.2]{SR} for another way to induce this formula. Consider the
moduli space $M$ of data \{line $l$ in $\PP$ which is bisecant for
$C$, a point $p' \in l \intersect C$, a point $p \in l$, $p \notin
C$\}. (see the following Figure)
\begin{center}
\epsfig{file=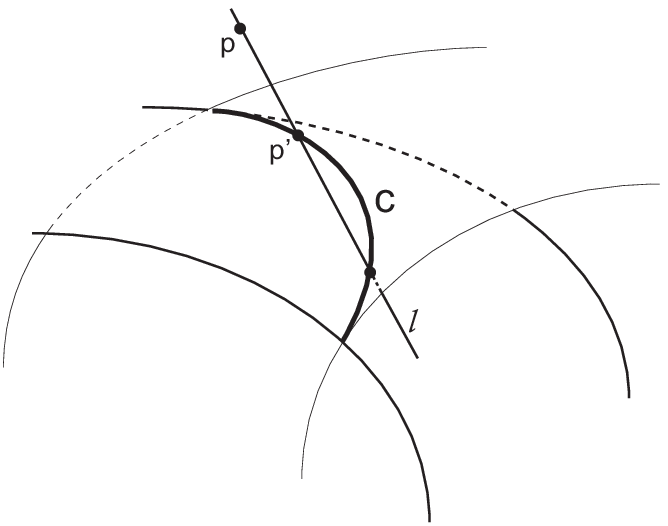} \\
\small{ the parameters $(l,p',p)$}
\end{center}
   It is clear that the line $l$ can be reconstructed uniquely from
$p'$ and $p$ as $l_{p,p'}$, and thus $M$ can be embedded into $\PP
\times \PP$, $(l,p',p) \mapsto (p',p)$.
For a point $(l,p',p)$ in $M$, let $q$ be a point in $l \intersect
C$ different from $p'$. Then there is a number $t \in k$ such that
$q = p' + t p$. If $C$ is given by 2 equations $u$, $v$, then we
have $u(q) = 0$, $v(q) = 0$.  Let us write $u(q) = u(p' + t p) =
u_0(p') + t u_1(p',p) + \dots + t^{\mu} u_{\mu} (p',p)$, where
$u_i$ is of degree $\mu-i$ in $p'$ and $i$ in $p$. In the same way
we can write $v(q) = v(p' + t p) = v(p') + t v_1(p',p) + \dots +
t^{\nu} v_{\nu} (p',p).$
Consider now two polynomials,
\begin{gather*}
   a(t) = u_1(p',p) + \dots + t^{\mu}- u_{\mu} (p',p), \\
   b(t) = v_1(p',p) + \dots + t^{\nu}- u_{\nu} (p',p).
\end{gather*}
Let $R(p',p)$ be the resultant of $a(p',p,t)$ and $b(p',p,t)$ in
$t$. It has (see the the Sylvester definition of resultant)
bidegree $\left( (\mu-1)(\nu-1), \mu \nu - 1 \right)$ in $(p',p)$.
Lemma:  Let $U \subset \PP \times \PP = \{(p', p) : p' \neq p \}.$
Then
$$
      U \intersect (R = 0) \intersect  (C \times \PP) = M
$$
Indeed, let $(p',p)$ be such that $R(p',p) = 0$, $p' \neq p$. Then
there is a number $t \in k$ such that $a(p',p,t)=0, b(p',p,t) =
0$. Let $q = p' + t p$.
We have
\begin{gather*}
u(q) = u(p') + t a(p',p) = u(p') \\
v(q) = v(p') + t b(p',p) = v(p').
\end{gather*}
Thus $q \in C$ iff $p' \in C$. It follows that $(p',p)$ is in $M$
iff $p' \in C$. This proves the lemma.
It follows now that
$$ R \intersect (C \times {a}) = M \intersect (\PP \times {a}) = ( %
    \text{ bisecants through $a$ to $C$ with a marked point $p'$ in $l
    \intersect C$ }) %
$$
The order of this set is equal to $\deg_{p'} C \cdot deg C =
(\mu-1)(\nu-1) \mu \nu$. Since C does not have any 3-secants
through $a$, it follows that the number of bisecants through $a$
is one half of the number above. \end{proof}


\begin{thebibliography}{99}


\bibitem{Sa} G. Salmon, {\em A treatise on analytic  geometry  of three
dimensions} edition by Hodges, Smith, \& Co., Dublin (1862),
fourth edition.

\bibitem{BN} A. Brill, M. N\"{o}ther, {\em \"{U}ber die algebraischen Functionen und ihre Anwendung in der Geometrie},
Math. Ann. 7 (1874) 269-310.

\bibitem{Hal} G. H. Halphen, {\em M\'{e}moire sur la classification des courbes gauches alg\'{e}briques}
J. Ec. Polyt. 52 (1882), 1–200.


\bibitem{L}S. Lefschetz, {\em On the existence of loci with given singularities},
Transactions of the American Mathematical Society, Vol. 14, No. 1
(1913), 23-41.
%




\bibitem{Za0} O. Zariski, {\em On the linear connection index of the algebraic surfaces $z^n =
f(x,y)$}, Proceedings of the National Academy of Sciences, vol. 15
(1929), 494-501.

\bibitem{Za1} O. Zariski, {\em On the Problem of Existence of Algebraic Functions of
Two Variables Possessing a Given Branch Curve}, American Journal
of Mathematics, Vol. 51, No. 2 (1929), 305-328.


\bibitem{ZaPoincare} O. Zariski, {\em On the Poincare group of rational plane curves}, American Journal
of Mathematics, Vol. 58, (1930), 607-619.


\bibitem{Se} B. Segre, {\em Sulla Caratterizzazione delle curve di diramazione dei piani multipli generali}
Mem. R. Acc. d'Italia, I 4 (1930), 5–31.


\bibitem{Za} O. Zariski, {\em On the irregularity of cyclic multiple
planes}, Annals of Mathematics (2), vol. 32 (1931), 485 - 511.

\bibitem{DuVal}P. Du Val, {\em On Triple Planes Having Branch Curves of Order Not Greater Than Twelve},
J. London Math. Soc.,  s1-8 (1933), 199-206.

\bibitem{VK}  E.R. Van Kampen, {\em On the fundamental group of an algebraic curve}, Am. J. Math. 55
(1933), 255-260.


\bibitem{ZaTop} O. Zariski, {\em On
the topological discriminant group of a Riemann surface of genus
$p$}, Amer. J. Math., 59, (1937), 335-358.

\bibitem{Hol} T.R. Hollcroft, {\em Anomalous plane curve systems associated with singular surfaces},
Bull. Amer. Math. Soc. Volume 46, Number 4 (1940), 252-257.


\bibitem{Ci} O. Chisini, {\em Sulla identita birazionale delle funzioni algebriche di due variabili
dotate di una medesima curva di diramazione}, Rend. Ist. Lombardo
77 (1944), 339-356.


\bibitem{SR} J. Semple, L. Roth, {\em Introduction to Algebraic Geometry}, Oxford University Press (1949).

\bibitem{GR}  E.R. Grauert, R. Remmert, {\em Komplexe Raume}, Math. Ann. 136 (1958), 245 - 318.




\bibitem{Grothendieck.descent-1}  
A. Grothendieck, {\em Fondements de la G\'{e}ometrie
alg\'{e}brique, S\'{e}minaire Bourbaki}, Exp. No. 190, (1959/60).

\bibitem{Nara}
R. Narasimhan, {\em Introduction to the Theory of Analytic Spaces}, Lecture Notes in Mathematics, Volume 25, Springer-Verlag (1966).


\bibitem{AK} A. Altman, S. Kleiman, {\em Introduction to Grothendieck duality theory}, Springer (1970).


\bibitem{Iver} B. Iversen, {\em Numerical invariants and multiple planes}, Amer.
J. Math. 92 (1970), 968-996.


\bibitem{SGA1} A. Grothendieck, {\em SGA1, Rev\^{e}tements \'{e}tales et groupe fondamental, 1960–-1961},
 Lecture Notes in Mathematics 224 (1971).


\bibitem{Za2} O. Zariski, {\em Algebraic surfaces} Springer, Heidelberg (1971), 2nd. ed.


\bibitem{VDW} Van der Waerden, B. L.: {\it Einf\"uhrung in die
algebraische Geometrie}, 2nd edition. Springer (1973).

\bibitem{Wahl} J. M. Wahl, {\em Deformations of plane curves with nodes and cusps}, Amer. J. Math. 96 (1974), 529-577.


\bibitem{Mo1} B. Moishezon, {\em Complex surfaces and connected sums of complex projective planes}, Lecture Notes in
Mathematics, 603. Springer-Verlag, New York, 1977.

\bibitem{Piene} R. Piene, {\em Some formulas for a surface in $\pp^3$}, Algebraic
geometry (Proc. Sympos., Univ. Troms{\o}, Troms{\o}, (1977)), pp.
196--235, Lecture Notes in Math., 687, Springer, Berlin, (1978).


\bibitem{GP} L. Gruson, C. Peskine, {\em Genre des courbes de l'espace projectif}, Algebraic Geometry (Troms{\o} 1977), Springer-Verlag, Lect. Notes in Math. 687(1978), 31-59.


\bibitem{GH} P. Grithis, J. Harris, {\em Principles of Algebraic Geometry} Wiley, New York
(1978).


\bibitem{GV} S. Greco, P. Valabrega, {\em On the Theory of Adjoints}, in "Algebraic Geometry" (K. L{\o}nsted,
ed.), Springer-Verlag LNM, vol. 732, (1979), pp. 98-123.


\bibitem{FH} W. Fulton, J. Hansen,{\em A Connectedness Theorem for Projective Varieties, with Applications
to Intersections and Singularities of Mappings}, Annals of Mathematics, Second Series, Vol. 110, No. 1 (1979), 159-166.




\bibitem{F} W. Fulton, {\em On the fundamental group of the complement of a node curve}, Ann. of Math. 111 (1980), 407-409.


\bibitem{Chian} L. Chiantini, {\em On some algebraic and geometric extension of the theory of adjoints}, Rendiconti del Seminario Matematico della Universita di Padova, tome 64 (1981).

\bibitem{Mo0} B. Moishezon, {\em Stable branch curves and braid
monodromy}, Lectur Notes in Math., 862, Springer-Verlag, (1981)
pp. 107-192.

\bibitem{Nori} M. Nori, {\em Zariski's conjecture and related problems}, Ann. Sci. \'{E}cole Norm. Sup. (4) 16
(1983), 305-344.


\bibitem{Libgo} A. Libgober, {\em Fundamental groups of the complements to plane singular curves}, Algebraic geometry, Bowdoin, 1985 (Brunswick, Maine, 1985), Proc. Sympos. Pure Math., vol. 46, Amer. Math. Soc., Providence, RI, 1987, pp. 29–45.

\bibitem{C} F. Catanese, {\em On a problem of Chisini}, Duke Math. J. 53 (1986), 33–42.


%


\bibitem{H} J. Harris, {\em On a problem of Severi}, Invent. Math. 84 (1986), 445-461.


\bibitem{MoTe0} B. Moishezon, M. Teicher, {\em Galois covers in theory of algebraic surfaces} ,
 Amer. Math. Soc. Pub. PSPM  46 (1987), 47-65.

\bibitem{MoTe1} B. Moishezon, M. Teicher, {\em Braid group technique
in complex geometry, I, Line arrangements in $\CP^2$}, Contemp.
Math. 78, (1988), 425-555.

\bibitem{MoTe2}B. Moishezon, M. Teicher, {\em Braid group technique in
complex geometry, II, From arrangements of lines and conics to
cuspidal curves}, Algebraic Geometry, Lecture Notes in Math., vol.
1479 (1990), 131-180.

\bibitem{DA} J. D'Almeida, {\em Courbe de ramification de la projection sur $\mathbb{P}^2$
d'une surface de $\mathbb{P}^3$}, Duke Math. J. Volume 65, Number
2 (1992), 229-233.

%


\bibitem{MoTe3}B. Moishezon, M. Teicher, {\em Braid group techniques in complex geometry III:
Projective degeneration of V3}, Contemp. Math. 162, (1993), 313-332.



\bibitem{MoTe4}B. Moishezon, M. Teicher, {\em Braid group techniques in complex geometry IV: Braid
monodromy of the branch curve $S_3$ of $V_3 \rightarrow \cp^2$ and
application to $\p:(\cp^2 - S_3)$}, Contemp. Math. 162 (1993),
332-358.


\bibitem{Mo} B. Moishezon, {\em The arithmetic of braids and a statement of Chisini}, Contemporary
Math 164 (1994), 151-175.


\bibitem{MoTe5}B. Moishezon, M. Teicher, {\em Braid group techniques in complex geometry, V: The
fundamental group of complements of a branch curve of Veronese
generic projection}, Communications in Analysis and Geometry 4,
(1996), no. 1, 1-120.

\bibitem{MoRoTe}B. Moishezon, A. Robb and M. Teicher, {\em On Galois covers
of Hirzebruch surfaces}, Math. Ann. 305, (1996), 493-539.


\bibitem{Sh96} Shustin, E.  {\em Geometry of equisingular families of plane algebraic curves},
 J. Alg. Geom. {\bf 5} (1996), no. 2, 209--234.

\bibitem{Ful} W. Fulton, {\em Intersection theory}, Springer-Verlag  (1997).


\bibitem{Robb} A. Robb, {\em On branch curves of algebric surfaces}, AMS/IP Studies in Advanced Mathematics, vol. 5 (1997).

\bibitem{Sh98} Shustin, E. {\em Gluing of singular and critical points}.
 Topology {\bf 37} (1998), no. 1, 195--217.


\bibitem{Sh} I. Shimada, {\em On the commutativity of fundamental groups of complements to plane
curves}, Math. Proc. Cambridge Philos. Soc. 123 (1998), no. 1,
49--52.

%


\bibitem{Ku} Vik. Kulikov, {\em On Chisini's Conjecture}  Izv. RAN. Ser. Mat. (1999), 63:6, 83–116

\bibitem{KuTe}Vik. Kulikov, M. Teicher,{\em Braid monodromy factorizations and diffeomorphism types},
Izv. Ross. Akad. Nauk Ser. Mat. 64(2), 89-120 (2000) [Russian];
English transl., Izvestiya Math. 64(2), 311-341 (2000).
%

\bibitem{Ne} S. Nemirovski, {\em A remark on the Chisini conjecture}, e-print AG/0001113.


\bibitem{ADE} V. S. Kulikov, Vik. S. Kulikov {\em Generic
coverings of the plane with A-D-E-singularities}, Izv. RAN. Ser.
Mat., (2000), 64:6.


\bibitem{CRT} C. Ciliberto, R. Miranda, M. Teicher, {\em Pillow Degenerations of K3 Surfaces},
 Applications of Algebraic Geometry to Coding Theory, Physics, and Computation,
 NATO Science Series II, Vol. 36 (2001), 53-64.


\bibitem{Gen} Vik. S. Kulikov, {\em Generalized Chisini's Conjecture}, Tr. MIAN (2003).



\bibitem{AmS} M. Amram, O. Shoetsu,
{\em Degenerations and fundamental groups related to some special Toric varieties},
 Michigan Math. J. Volume 54, Issue 3 (2006), 587-610.

\bibitem{Shus} G.-M. Greuel, C. Lossen, E. Shustin, {\em Global aspects of complex geometry}, F. Catanese et al., eds., Springer (2006), pp. 171-209.

\bibitem{Kul} Val. S. Kulikov , {\em On characterization of branch curves of generic coverings},
Interuniversity proceedings ``Mathematics: fundamental problems, applications,
teaching", vol.2, 66-102,  Moscow State University Publisher (2002) (In Russian).

\bibitem{Vak} R. Vakil, {\em Murphy's law in algebraic geometry:
Badly-behaved deformation spaces}, Invent. Math. 164 (2006), no.
3, 569-590.




\bibitem{GAP2007}
The GAP Group, GAP -- Groups, Algorithms, and Programming, Version 4.4.10; 2007. (http://www.gap-system.org)


\bibitem{CiroFla} C. Ciliberto, F. Flamini {\em On the branch curve of a general projection of a surface to a plane},
arXiv:0811.0467v1 (2008).


\bibitem{D} A. Degtyarev, {\em On deformations of singular plane sextics}, J. Algeb. Geom. 17 (2008),
101-135.

\bibitem{Ku2}Vik. Kulikov, {\em On Chisini's Conjecture II} Izv. Math. 72 (2008).

\bibitem{FrTe}  M. Friedman, M. Teicher, {\em On non fundamental group equivalent surfaces}, Algebraic \& Geometric Topology 8 (2008) 397-433.

\bibitem{FLL}M. Friedman, M. Leyenson, R. Lehman, {\em On ramified covers
  of the projective plane  II: Segre's theory for singular surfaces in $\pp^3$ }, in preperation.

  %





\end{thebibliography}
\end{document}